\documentclass[11pt]{amsart}

\usepackage{amssymb,amsfonts}
\usepackage{graphicx}
\usepackage[all,arc]{xy}
\usepackage{enumerate}
\usepackage{hyperref}
\usepackage{mathrsfs}
\usepackage{tikz-cd}
\usepackage{mathtools}
\usepackage{nameref}
\usepackage{rotating}
\usepackage[left=1in,top=1in,right=1in,bottom=1in]{geometry}

\usepackage{todonotes}
\usepackage{comment}
\usepackage{float}

\usepackage[dvipsnames]{xcolor}

\usepackage{lmodern}
\usepackage[T1]{fontenc}
\usepackage[utf8]{inputenc}
\usepackage{microtype} %Better font spacing

\usepackage{amsmath,amssymb,amsthm,mathrsfs,latexsym,mathtools,mathdots,booktabs,enumerate,tikz,bm,url,comment}
\usepackage[enableskew,vcentermath]{youngtab}
\usepackage[centertableaux]{ytableau}

\usepackage{tkz-graph}
\usetikzlibrary{arrows,positioning}
\usetikzlibrary{arrows.meta,tikzmark,decorations.markings,fit,calc,shapes,matrix,math}
\usepackage[capitalize, noabbrev]{cleveref}

\usepackage{graphicx}

\tikzstyle{vertex}=[
	minimum size=0.1cm,
    inner sep=0pt,
    outer sep=1pt,
    circle,
    draw=black,
]
\tikzstyle{filled}=[
	vertex,
	fill=black
]
\tikzstyle{unfilled}=[
	vertex,
	fill=white
]
\tikzstyle{blank}=[
	vertex,
	fill=white,
	draw=white
]

\tikzstyle{hourglass}=[
	out=#1*20,
	in={#1*20+180},
	relative,
	looseness=1.5
]

\tikzstyle{dot}=[
  inner sep=0.5pt,
  outer sep=0.5pt,
]

\definecolor{orange}{rgb}{1.0,0.6,0}
\definecolor{purple}{rgb}{.6,0.5,1}
\definecolor{grey}{rgb}{.7,0.7,.7}
\definecolor{green}{rgb}{0,0.7,.2}
\definecolor{maroon}{rgb}{0.7,0,0.8}

\pgfdeclarelayer{bg}
\pgfdeclarelayer{fg}
\pgfsetlayers{bg,main,fg}

\newtheorem{thm}{Theorem}[section]
\newtheorem*{thm*}{Theorem}

\newtheorem{cor}[thm]{Corollary}
\newtheorem{prop}[thm]{Proposition}
\newtheorem{lemma}[thm]{Lemma}

\newtheorem{question}[thm]{Question}
\newtheorem{problem}[thm]{Problem}
\newtheorem*{mainresultA}{Theorem A} 
\newtheorem*{mainresultB}{Theorem B} 
\newtheorem*{prop*}{Proposition.} 
\newtheorem*{mainresultC}{Theorem C} 
\newtheorem*{mainresultD}{Theorem D} 
\theoremstyle{definition}
\newtheorem{defn}[thm]{Definition}

\newtheorem{exmp}[thm]{Example}

\newtheorem{rem}[thm]{Remark}

\theoremstyle{remark}

\newcommand{\ehr}{\mathrm{ehr}} 

\newcommand{\IntEhr}{\mathrm{IntEhr}} 
\newcommand{\calS}{\mathcal{S}}
\newcommand{\calD}{\mathcal{D}} 

\newcommand{\Gr}{Gr} 

\newcommand{\initial}{\mathrm{in}}

\newcommand{\calL}{\mathcal{L}} 

\newcommand{\intehr}{\mathrm{intehr}}

\newcommand{\calK}{\mathcal{K}} 

\newcommand{\F}{\mathcal{F}}
\newcommand{\calT}{\mathcal{T}} 
\renewcommand{\d}{\partial}
\newcommand{\revlex}{\mathrm{revlex}} 
\newcommand{\Ehr}{\mathrm{Ehr}}

\newcommand{\bbP}{\mathbb{P}} 
\newcommand{\bbZ}{\mathbb{Z}} 

\newcommand{\bbC}{\mathbb{C}} 
\newcommand{\bbR}{\mathbb{R}} 

\renewcommand{\O}{\mathcal{O}}

\newcommand{\calF}{\mathcal{F}}

\newcommand{\ch}{\mathrm{ch}}

\newcommand{\om}{\omega}

\newcommand{\Cone}{\mathrm{Cone}} 

\newcommand{\bdy}{\partial} 
\newcommand{\bbN}{\mathbb{N}}

\newcommand{\calO}{\mathcal{O}}

\newcommand{\id}{\text{id}}

\newcommand{\rsl}{\mathrm{rsl}}

\renewcommand{\emptyset}{\varnothing}

\newcommand{\w}{\mathrm{word}}
\newcommand{\col}{\mathrm{col}}
\newcommand{\row}{\mathrm{row}}
\renewcommand{\int}{\mathrm{int} \:}
\newcommand{\rotL}
{\rotatebox[origin=c]{180}{L}}
\newcommand{\rotgeqa}
{\rotatebox[origin=c]{315}{$\geq$}}
\newcommand{\rotgeqb}
{\rotatebox[origin=c]{45}{$\geq$}}
\newcommand{\dem}{\delta}
\makeatletter
\let\c@equation\c@thm
\makeatother
\numberwithin{equation}{section}

\crefname{equation}{}{}

\definecolor{maroon}{rgb}{0.7,0,0.8}
\definecolor{grey}{rgb}{0.7,0.7,0.7}

\title{Fence Complexes and Toric Degenerations of Positroid Varieties}
\author[Chang]{Cameron Chang}
\address[Chang]{Department of Mathematics, University of California, Berkeley, CA, United States of America}
\email{changca@berkeley.edu}
\author[Enugandla]{Pranav Enugandla}
\address[Enugandla]{Department of Mathematics, University of California, Berkeley, CA, United States of America}
\email{shreepranav\_varma@berkeley.edu} 
\author[Hlavinka]{Josephine Hlavinka}
\address[Hlavinka]{Department of Mathematics, University of California, Berkeley, CA, United States of America}
\email{josephhlavinka@berkeley.edu} 

\begin{document}
\begin{abstract}
    We associate to each positroid variety in the Grassmannian $\Gr(k,n)$ a polyhedral complex, which we call a fence complex. Fence complexes consist of unions of faces of the Gelfand–Tsetlin polytope $P_{k,n}$ associated to a fundamental weight $\omega_k$. We show that these fence complexes are homeomorphic to closed balls. Furthermore, they endow the Gelfand–Tsetlin polytope with the structure of a regular CW complex, giving a polyhedral complex presentation of the regular CW complex structure on $\Gr(k,n)_{\geq 0}$. We also show that the Ehrhart polynomial of a fence complex equals the Hilbert polynomial of the associated positroid variety. We prove that under the Sturmfels–Gonciulea–Lakshmibai degeneration of $\Gr(k,n)$ to the toric variety of the Gelfand–Tsetlin polytope, positroid varieties degenerate to the reduced union of toric varieties corresponding to their fence complexes. As an application, we classify when positroid varieties contained inside hook Schubert varieties are arithmetically Gorenstein. We also derive a recursive character formula for cyclic Demazure modules, which we show is equivalent to a formula of Almousa, Gao and Huang.
\end{abstract}
\maketitle
\tableofcontents 

\section{Introduction}\label[section]{Introduction}
The theory of projective toric varieties over $\bbC$ gives a beautiful connection between algebraic geometry and the combinatorics of polytopes. Namely, to any toric variety $X\subset \bbP^N_\bbC$, one can canonically associate (in a way depending only on $X$ and its embedding) a lattice polytope $P\subset \bbZ^n$, where $n=\dim X$. Then, the following remarkable phenomena hold: 
\begin{enumerate}
     \item \textit{(Correspondence of strata)} Since $X$ is toric, we have a torus $T\subset X$ with an action on itself that extends to all of $X$. There are only finitely many orbits of this action, and they form a stratification of $X$, in the sense that the closure of an orbit $O$ is the union of orbits. 
     There is an inclusion preserving bijection between the faces of $P$ and the torus orbit closures of $X$, with a face $F$ contained in another face $F'$ if and only if the corresponding orbit closure $\overline{O}_F$ is contained in $\overline{O}_{F'}$. 
    \item \textit{(Nonnegative geometry)} We say a point $p\in \bbP^N_\bbC$ is nonnegative if $p=[x_0:x_1:\cdots :x_N]$ where the $x_i$ are all nonnegative real numbers. The locus of nonnegative points in $\mathbb{P}^N_\mathbb{C}$ is denoted $(\bbP^N_\bbC)_{\geq 0}$.  Given a variety $X\subset \bbP^N_\bbC$, we define \emph{the nonnegative part of $X$} to be \[X_{\geq 0}:= \{p\in X:  p\in (\bbP_\bbC^N)_{\geq 0} \}. \] 
    
    When $X$ is toric, the moment map with respect to the torus action gives a homeomorphism between $X_{\geq 0}$ and $P$. This homeomorphism maps the nonnegative parts of torus orbit closures to their corresponding faces. In particular, the faces of $P$ endow $X_{\geq 0}$ with the structure of a regular CW complex structure.
    \item \textit{(Lattice point count interpretation)} We define the \emph{Ehrhart polynomial} of $P$ to be $\ehr_P(m) = |mP\cap \bbZ^n|$, where $mP= \{x\in \bbR^n: \frac{1}{m} x \in P\}$.  Then, the Hilbert polynomial $H_X(m)$ of $X$ equals $\ehr_P(m)$. 
    \item \textit{(Ehrhart–Macdonald Reciprocity)} We also have a combinatorial interpretation for the Hilbert polynomial of $X$ evaluated at negative numbers. Namely, $H_X(-m) = (-1)^{\dim P} \intehr_P(m)$, where $\intehr_P(m)$ equals the number of interior lattice points of $mP$. 
\end{enumerate}
 
For more details and definitions regarding these facts see \cite{CLS} or \cite{FultonToricVarieties}. Motivated by the above phenomena, one could ask whether there exist similar polyhedral models for other stratified varieties. A natural candidate is the Grassmannian $\Gr(k,n)$ of $k$-planes in $n$-space with the Schubert stratification. The group of upper triangular invertible matrices $B \subset GL(n)$ acts on $\bbC^n$, and hence $\Gr(k,n)$. There are finitely many orbits under this action, and the closure of a $B$-orbit is the union of $B$-orbits, so this is indeed a stratification. The closures of $B$-orbits are called Schubert varieties and have been widely studied for a multitude of reasons, not least of which the fact that they form a basis for the cohomology ring of $\Gr(k,n)$. 

For the purpose of generalizing (2), however, this stratification does not behave particularly well. Namely, if we take an open Schubert variety $X^\circ\subset \Gr(k,n)$, then, it is not true that $(X^\circ)_{\geq 0}$ is homeomorphic to $\bbR^m$ for some $m$. For instance, in $\bbP^1_\bbC = \Gr(1,2)$ with coordinates $x,y$, there are two Schubert varieties, $X_{(1)}$ and $X_\emptyset$. $X_\emptyset$ is the point $[1:0]$ and $X_{(1)}$ equals the $\bbP^1_\bbC$. 
In particular, the open Schubert variety $X_{(1)}^\circ$ corresponds to setting $x\neq 0$, so $(X_{(1)})_{\geq 0} = \{[1:y] : y\in \bbR_{\geq 0}\}.$ Hence, $(X_{(1)})_{\geq 0}$ is homeomorphic to $\bbR_{\geq 0}$, which is not homeomorphic to $\bbR$.

\begin{figure}[H]
    \centering
    \begin{tikzpicture}[scale=2]

    \node[filled, color=maroon, scale=2] at (0,0) {};
    \node[filled, color=blue, scale=2] at (1,0) {};
    
    \begin{pgfonlayer}{bg}
        \node[blank] at (1.2,.35){$(X_{\emptyset})_{\geq 0}$};
        \node[blank] at (-.2,.35){$(X_{(1)})_{\geq 0}$};
        \draw[color=maroon] (0,0) to (1,0) {};
    \end{pgfonlayer}
\end{tikzpicture}

    \caption{The TNN locus of $\mathbb{P}_\mathbb{C}^1$, denoted by $\mathbb{P}^1_{\geq 0}$, is a $1$-simplex. The TNN Schubert cell decomposition of $\mathbb{P}^1_{\geq 0}$ consists of $(X_\emptyset)_{\geq 0}$, which is a point, and $(X_{(1)})_{\geq 0}$, which is a ray and therefore not homeomorphic to any $\mathbb{R}^n$.}\label{fig:tnn-schubert strat}
\end{figure}

In \cite{postnikovpositroids}, Postnikov introduces the \emph{positroid stratification}, which is a refinement of the Schubert stratification. The locally closed strata $(\Pi_u^w)^\circ$, indexed by Bruhat intervals $[u,w]$ with $w$ a $k$-Grassmannian permutation, now have the property that $(\Pi_u^w)_{\geq 0}^\circ$ is homeomorphic to Euclidean space. In \cite{Speyer-Williams-Postnikov}, Postnikov, Speyer and Williams show that the nonnegative parts of these strata endow $\Gr(k,n)_{\geq 0}$ with a CW complex structure. Moreover, Postnikov conjectured in \cite{postnikovpositroids} that the positroid cells gave $\Gr(k,n)_{\geq 0}$ the structure of a regular CW complex, a difficult theorem which was proved by Galashin, Karp and Lam in \cite{Galashin-karp-Lam}.

Given this, one may wonder whether $\Gr(k,n)_{\geq 0}$ can in fact be identified with a polytope, with faces of the polytope corresponding to positroid cells. It is not hard to see this is impossible. For instance, in \cite{Rietsch-Williams}, Rietsch and Williams observe that $\Gr(2,4)_{\geq 0}$ is four dimensional, and has four $3$-dimensional positroid cells, but one can easily see that no four dimensional polytope with only four facets exists. Thus, it may seem unlikely that one can hope for the same nice interplay between polytopes, Ehrhart theory and nonnegative geometry as in the toric case. 

Nonetheless, in this paper, we observe that by replacing "faces of polytopes" with "unions of faces of a polytope", one can formulate analogues of toric phenomena (1)-(4) for positroids. In particular, for each $k\leq n$, we consider a polytope $P_{k,n}$, to be thought of as a polyhedral model of $\Gr(k,n)$. This is the order polytope of the product poset $[k]\times [n-k]$, as well as the Gelfand–Tsetlin polytope in type $A$ associated to the fundamental weight $\omega_k$. To each positroid $\Pi_u^w\subset \Gr(k,n)$, we associate a polyhedral complex $\calF_u^w$, called a \emph{fence complex}, consisting of a union of faces of $P_{k,n}$. We then prove the following analogues of the toric phenomena. 
\begin{mainresultA}[\Cref{regular CW complex structure}] 
Each $\calF_u^w$ is homeomorphic to a closed ball, and the fence complexes $\calF_u^w$ give $P_{k,n}$ the structure of a regular CW complex with cell poset the same as $\Gr(k,n)_{\geq 0}$. In particular, $\Gr(k,n)_{\geq 0}$ and $P_{k,n}$ with the fence complex structure are isomorphic as regular CW-complexes.   
\end{mainresultA}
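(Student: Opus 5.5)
The plan is to reduce Theorem A to a shellability statement, and then invoke the standard recognition theorem for regular CW complexes. That theorem is due to Björner: a finite graded poset with a bottom element adjoined is the face poset of a regular CW decomposition of a ball or sphere as soon as the poset is thin and shellable, and in particular CL-shellable.

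For the combinatorial input, recall that the poset $Q(k,n)$ of positroid cells is the set of intervals $[u,w]$ with $w$ $k$-Grassmannian, ordered by containment of closures. Williams showed that $\hat{Q}(k,n)$, the poset $Q(k,n)$ with a bottom element $\hat{0}$ adjoined, is graded, thin and EL-shellable. Björner's theorem then says that $\hat Q(k,n)$ is the face poset of a regular CW ball. What is left is to realize this abstract structure concretely by the complexes $\calF_u^w$ inside $P_{k,n}$. I would carry this out in four steps.

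\textbf{Step 1 (order and dimension).} Show that $\Pi_u^{w}\subseteq \Pi_{u'}^{w'}$ holds if and only if $\calF_u^w\subseteq \calF_{u'}^{w'}$. Show also that $\calF_u^w$ is pure of dimension $\ell(w)-\ell(u)$. This should follow from the construction of fence complexes: each face of $P_{k,n}$ in $\calF_u^w$ is an order-polytope face indexed by a fence or chain-partition datum determined by $[u,w]$. I would compare these data with Bruhat order on intervals.

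\textbf{Step 2 (boundary).} Define the open cell $\calF_u^{w,\circ}:=\calF_u^w\setminus\bigcup \calF_{u'}^{w'}$, where the union is over the covered intervals $[u',w']\subsetneq[u,w]$. Then show that these open cells partition $P_{k,n}$. This amounts to saying that each relative-interior point of each face of $P_{k,n}$ lies in a unique minimal fence complex. I expect to get this by exhibiting, for each face $G$ of $P_{k,n}$, a unique minimal interval $[u,w]$ with $G\subseteq\calF_u^w$, for instance via the Bruhat interval read off from the tableau or lattice path attached to $G$.

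\textbf{Step 3 (balls).} Prove that each $\calF_u^w$ is a closed ball of dimension $d=\ell(w)-\ell(u)$ with boundary $\bigcup\calF_{u'}^{w'}$. My approach is to show that the facets of $\calF_u^w$, which are maximal faces of $P_{k,n}$ of dimension $d$, admit a shelling order. The shelling of $\calF_u^w$ should be induced from the EL-labeling of $\hat Q$, or proved directly by induction on $d$ using the recursive structure of fences, by removing a corner of the fence. A shellable pseudomanifold with nonempty boundary is a PL ball, so I also need a pseudomanifold check: every $(d-1)$-face lies in one or two facets, and the faces lying in exactly one facet are precisely those in the lower fence complexes. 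Once these are established, induction on dimension together with Step 2 gives a regular CW structure on $P_{k,n}$. Its face poset is $Q(k,n)$ by Step 1. Björner's theorem, or the fact that a regular CW complex is determined up to homeomorphism by its face poset, then gives a cellular homeomorphism with $\Gr(k,n)_{\geq0}$, using the Galashin–Karp–Lam theorem.

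\textbf{Main obstacle.} I expect Step 3 to be the hardest. The problem is that a union of polytope faces need not be a ball, and fence complexes are not convex. I would first try to construct an explicit shelling, ordering the facets of $\calF_u^w$ lexicographically by their fence data. Failing that, I would fall back on a topological route: exhibit $\calF_u^w$ as a regular subdivision, or use a collapsing deformation retraction onto $\calF_{u}^{w'}$ for $w'\lessdot w$, and argue by induction that gluing a ball along a ball on its boundary produces a ball.
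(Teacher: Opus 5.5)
Your skeleton is the paper's: show each $\calF_u^w$ is a closed ball, identify $\partial\calF_u^w$ with the union of the lower fence complexes, show that the relative interior of every face of $\calO(P_{k,n})$ lies in the interior of exactly one $\calF_u^w$, and then invoke Galashin--Karp--Lam together with the fact that a regular CW complex is determined by its face poset. The Williams/Bj\"orner input is not needed and does not help. It yields an abstract ball with face poset $\hat{\mathcal Q}(k,n)$, but says nothing about the concrete sets $\calF_u^w$.

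The gap is that none of the load-bearing steps is carried out, and your proposed route to the ball property points at the wrong object. A shelling of $\calF_u^w$ cannot be induced from the EL-labeling of $\hat{\mathcal Q}(k,n)$, because maximal chains of positroid cells below $\Pi_u^w$ are unrelated to facets of $\calF_u^w$. For instance, for $\Pi=\Gr(k,n)$ the fence complex is the single polytope $\calO(P_{k,n})$, while $\hat{\mathcal Q}$ has many maximal chains.

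The missing idea is to restrict the canonical unimodular triangulation of the order polytope to $\calF_u^w$. Its vertices are $\pi_k([u,w])$. Its facets are standard fence tableaux on the reduced fence diagrams in $\mathfrak F_u^w$, and by \Cref{prop: reduced fence faces know} these are in bijection with saturated $k$-Bruhat chains from $u$ to $w$. Hence the triangulation is exactly $\pi_k(\Delta[u,w])$, which Knutson--Lam--Speyer showed is shellable, subthin and a ball (\Cref{shellable}). That settles the ball part of Step 3 without constructing any shelling by hand.

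For Steps 1--2 and the boundary half of Step 3, you propose to read off a unique minimal interval from ``the tableau or lattice path attached to $G$''. But the shape of a face $G$ only recovers $\pi_k(u)$ and $\pi_k(w)$. The remaining datum $u_Jw_J^{-1}$ has to be extracted from the fences, and the correct invariant is the Demazure product $\delta(\w_G)$ of the (generally non-reduced) fence word. The paper's key results are:
\begin{itemize}
\item a face $G$ of shape $\pi_k(w)/\pi_k(u)$ lies in $\calF_u^w$ iff $\delta(\w_G)\ge u_Jw_J^{-1}$ (\Cref{dem prod characterization});
\item $\int G\subset\int\calF_u^w$ iff $G$ has this shape and $\delta(\w_G)=u_Jw_J^{-1}$ (\Cref{interior of the fence complex}).
\end{itemize}
Their proofs need two further inputs. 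First, the greedy reduced subword of a fence word is again a fence diagram (\Cref{AAL}). Second, a case analysis shows that a codimension-one face of a facet, with the same shape and the same Demazure product, lies in a second facet (\Cref{lem:mario}). These give both the boundary description and the uniqueness of the open cell containing each face. Without them, your pseudomanifold check ``faces in exactly one facet are those in lower complexes'' has no mechanism.

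Finally, in Step 2 the union must run over all $\langle u',w'\rangle<\langle u,w\rangle$ in $\mathcal Q(k,n)$, not over sub-intervals $[u',w']\subsetneq[u,w]$ with $k$-Grassmannian top. For $k=2$, $u=213$, $w=231$ one has $\Pi_{123}^{123}\subset\Pi_u^w$ although $[123,123]\not\subseteq[213,231]$.
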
 

The result can be thought of as an analogue to (2). As the positroid stratification poset for $\Gr(k,n)_{\geq 0}$ is the same as the positroid stratification poset for $\Gr(k,n)$, this also gives a good analogue to (1).

We also prove analogues of the Ehrhart theory of toric varieties. The following is an analogue to (3):
\begin{mainresultB}[\Cref{ehrhart poly equals hilbert poly}] 
  The Ehrhart polynomial $\ehr_{\calF_u^w}$  of $\calF_u^w$ equals the Hilbert polynomial $H_{\Pi_u^w}$ of $\Pi_u^w$.
\end{mainresultB}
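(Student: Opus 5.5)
Since the Hilbert polynomial is preserved under flat degeneration, I will deduce Theorem B from the degeneration result stated in the abstract. Under the Sturmfels–Gonciulea–Lakshmibai degeneration, $\Pi_u^w$ degenerates to the reduced union of the toric varieties $X_F$, one for each maximal face $F$ of $\calF_u^w$. Concretely, I work with the Plücker homogeneous coordinate ring. The Plücker coordinates are indexed by $k$-subsets, which are identified with the lattice points of $P_{k,n}$ (order ideals of $[k]\times[n-k]$). The standard monomial basis of $\bbC[\Gr(k,n)]$ in degree $m$ is indexed by multichains in the distributive lattice $J([k]\times[n-k])$, and hence by lattice points of $mP_{k,n}$.

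The first step is to describe the ideal of $\Pi_u^w$ in these terms. I will use the standard monomial theory for positroid varieties due to Knutson–Lam–Speyer. In degree $m$, the coordinate ring of $\Pi_u^w$ has a basis of standard monomials $p_{I_1}\cdots p_{I_m}$, with $I_1\le\cdots\le I_m$, such that every $I_j$ lies in the Bruhat interval associated to $[u,w]$; equivalently, every $I_j$ satisfies the Grassmann necklace conditions. Therefore $H_{\Pi_u^w}(m)$ equals the number of multichains of length $m$ in the lattice all of whose elements lie in the vertex set $V_u^w$ of the fence complex. I take as given, from the construction of $\calF_u^w$, that $V_u^w$ consists of the lattice points of $P_{k,n}$ belonging to $\calF_u^w$.

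The second step is to identify these multichains with lattice points of $m\calF_u^w$. For the order polytope $P_{k,n}$ there is the canonical unimodular triangulation by chains: a lattice point of $mP_{k,n}$ corresponds bijectively to a multichain $I_1\le\cdots\le I_m$ of order ideals, and the point lies in the simplex spanned by the chain $\{I_j\}$. A point of $mP_{k,n}$ lies in $m\calF_u^w$ exactly when its supporting chain simplex lies in some face of $\calF_u^w$. So the claim reduces to the following combinatorial statement: a chain of lattice vertices spans a simplex contained in $\calF_u^w$ if and only if every element of the chain lies in $V_u^w$. In other words, $\calF_u^w$ is the full subcomplex of the chain triangulation on $V_u^w$. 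Given this, the counts agree for every $m$, which proves $\ehr_{\calF_u^w}=H_{\Pi_u^w}$.

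I expect the main obstacle to be this full-subcomplex property, specifically the direction saying that a chain of vertices in $V_u^w$ spans a simplex inside some face of the fence complex. My first approach would be to show that the faces of $\calF_u^w$ are exactly the faces of $P_{k,n}$ cut out by the fence (equality) conditions defining the complex. I would then check that chains whose elements all satisfy the necklace inequalities $I\ge_{a} J_a$ lie in a common such face, using that these conditions are closed under meets and joins in the lattice. If the direct argument becomes too messy, I would fall back on the degeneration theorem combined with flatness. In that case, Theorem B follows from inclusion–exclusion over the union of toric pieces $X_F$, using that Hilbert polynomials of reduced unions of toric varieties glued along faces equal Ehrhart polynomials of the union.
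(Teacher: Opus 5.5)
There is a genuine gap. Both of your steps rest on the same false claim: that the relevant simplicial complex is the full order complex on the set of bases.

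For a general positroid variety, the degree-$m$ standard monomials are \emph{not} all multichains of bases. By Knutson--Lam--Speyer, $\initial_{\revlex}(I_u^w)$ is the Stanley--Reisner ideal of $\pi_k(\Delta[u,w])$. Its faces are the chains of $k$-subsets that are projections of Bruhat chains in $[u,w]$, and in general this is a proper subcomplex of the order complex of the induced subposet $\pi_k([u,w])\subset\binom{[n]}{k}$. The two do agree when $u$ and $w$ are both $k$-Grassmannian, i.e.\ for Grassmannian Richardson varieties.

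The paper's own example $k=2$, $u=2143$, $w=3412$ already breaks your argument. The bases are $12<13<24<34$, a chain in Bruhat order, so your count gives $\binom{m+3}{3}$, the Hilbert function of $\bbP^3$. But $p_{14}=p_{23}=0$ on $\Pi_u^w$, so the Plücker relation gives $p_{12}p_{34}=p_{13}p_{24}$. Hence $\Pi_u^w$ is a quadric surface with Hilbert function $(m+1)^2$. Correspondingly, $\calF_u^w$ is the union of the two triangles $\{12,24,34\}$ and $\{12,13,34\}$, coming from the saturated $k$-Bruhat chains $2143\lessdot 2413\lessdot 3412$ and $2143\lessdot 3142\lessdot 3412$. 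In particular $p_{13}p_{24}$ lies in the initial ideal even though $13<24$. The $3$-simplex of the canonical triangulation spanned by all four vertices is not in this $2$-dimensional complex. So the full-subcomplex property is false, and no argument via necklace inequalities and closure under meets and joins can establish it.

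Your fallback is circular. In the paper, the degeneration theorem is proved by showing $I(\calF_u^w)\subset\initial_\lambda(I_u^w)$ and then upgrading to equality \emph{using} the equality of Hilbert polynomials, which is Theorem B itself.

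The repair is to keep your lattice-point/multichain bijection, which is correct: lattice points of $m\calF_u^w$ correspond exactly to size-$m$ multisets of vertices supported on faces of the canonical triangulation $\mathcal{T}_u^w$. What you must change is the target statement. Replace the full-subcomplex claim by the statement that $\mathcal{T}_u^w$ and $\pi_k(\Delta[u,w])$ coincide as simplicial complexes on the vertex set $\pi_k([u,w])$.

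That identification is the real content, and it is proved by matching facets:
\begin{enumerate}
\item A maximal simplex of $\mathcal{T}_u^w$ lying in a facet $F\in\mathfrak{F}_u^w$ is a linear extension of the quotient poset $P_{\pi_k(w)/\pi_k(u)}/{\sim_F}$, i.e.\ a standard fence tableau.
\item Standard fence tableaux biject with saturated $k$-Bruhat chains from $u$ to $w$.
\item These chains index the facets of $\pi_k(\Delta[u,w])$.
\end{enumerate}
With this in hand, comparing standard monomials of $SR(\pi_k(\Delta[u,w]))$ with lattice points of $m\calF_u^w$ degree by degree works. It is equivalent to the paper's argument, which compares the $h$-polynomial of $\pi_k(\Delta[u,w])$ with the $h^*$-polynomial of $\calF_u^w$ through the same unimodular triangulation.
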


By understanding the topology of $\calF_u^w$, we prove an analogue of (4): 
\begin{mainresultC}[\Cref{mainresultC}] 
Let $m\in \bbN$. The Hilbert polynomial of $\Pi_u^w$ evaluated at $-m$ is, up to sign, the number of lattice points in the interior of $m\calF_u^w$. In symbols, \[(-1)^{\dim \calF_u^w} H_{\Pi_u^w}(-m) = \intehr_{\calF_u^w}(m).\] 
\end{mainresultC}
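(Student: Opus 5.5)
We derive Theorem C from Theorem A and Theorem B by an Ehrhart reciprocity argument for polyhedral complexes homeomorphic to balls. By Theorem B, $H_{\Pi_u^w}(m)=\ehr_{\calF_u^w}(m)$ for all $m\ge 0$. Both sides are polynomials, so it suffices to show
\[
\ehr_{\calF_u^w}(-m) = (-1)^{d}\intehr_{\calF_u^w}(m),
\]
where $d=\dim \calF_u^w$. Here "interior" means the interior of $\calF_u^w$ viewed as a topological $d$-ball, i.e.\ the complement of its boundary sphere.

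First, write $\calF=\calF_u^w$ as a polyhedral complex whose cells are faces of $P_{k,n}$. Since $P_{k,n}$ is the order polytope of $[k]\times[n-k]$, every face is a lattice polytope; faces of order polytopes are themselves order polytopes of quotient posets, so they are lattice polytopes. By inclusion–exclusion over the relatively open cells,
\[
\ehr_\calF(m)=\sum_{G\in\calF}\intehr_G(m),
\]
where $G$ ranges over all faces of the complex (including vertices) and $\intehr_G$ counts relative-interior lattice points. This holds for $m\ge1$, and at $m=0$ if we set $\intehr_G(0)=[\dim G=0]$.

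Next, apply Ehrhart–Macdonald reciprocity to each lattice polytope $G$: $\intehr_G(m)=(-1)^{\dim G}\ehr_G(-m)$. Expanding each $\ehr_G(-m)$ again as a sum of $\intehr$ over faces, and reversing the roles of $m$ and $-m$, we get
\[
\ehr_\calF(-m)=\sum_{G}\ehr_G(-m)\cdot\bigl(\text{coefficient}\bigr)= \sum_{H\in\calF}\intehr_H(m)\sum_{G\supseteq H}(-1)^{\dim G}.
\]
More cleanly, $\ehr_\calF(-m)=\sum_{G}(-1)^{\dim G}\sum_{H\subseteq G}\intehr_H(m)$, using $\ehr_G(-m)=(-1)^{\dim G}\intehr_G(m)$. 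Swapping the sums, the coefficient of $\intehr_H(m)$ is
\[
\sum_{G\supseteq H}(-1)^{\dim G}=(-1)^{\dim H}\,\chi\text{-type term}.
\]
This is computed from the link of $H$: it equals $(-1)^{\dim H}\bigl(1-\chi(\mathrm{lk}\,H)\bigr)$ up to the usual sign conventions, i.e.\ a reduced Euler characteristic of the link.

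The main step, and the expected obstacle, is evaluating these local Euler characteristics. Theorem A says $\calF$ is a regular CW (polyhedral) complex homeomorphic to a closed $d$-ball. So $\calF$ is a PL $d$-ball: it is a polyhedral complex homeomorphic to a ball. We need links to behave like those of a homology manifold with boundary, and for that it suffices to know $\calF$ is a homology manifold with boundary, which holds for any polyhedral complex homeomorphic to a manifold. Local homology is a topological invariant, and links in a polyhedral complex compute it. Consequently:

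- the link of an interior face $H$ is a homology $(d-\dim H-1)$-sphere, so $\sum_{G\supseteq H}(-1)^{\dim G}=(-1)^d$;
- the link of a boundary face is acyclic, so the sum vanishes.

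Hence
\[
\ehr_\calF(-m)=(-1)^d\sum_{H\not\subseteq\partial\calF}\intehr_H(m)=(-1)^d\intehr_\calF(m).
\]
Combined with Theorem B, this gives Theorem C.

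One must also check that $\calF$ is pure of dimension $d=\dim\Pi_u^w$, which follows from being a $d$-ball, and that the interior lattice points of $m\calF$ are exactly the disjoint union of the relative interiors of the interior faces. The latter is immediate from the cell decomposition.
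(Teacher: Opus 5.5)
Your proposal is correct and follows essentially the paper's route: Theorem C is deduced from the ball property of $\calF_u^w$, Theorem B, and a reciprocity statement for polyhedral complexes homeomorphic to balls. Your evaluation of $\sum_{G\supseteq H}(-1)^{\dim G}$ via local homology of links is the same topological input as the Möbius values $\mu(F,\hat{1})$ that the paper takes from Stanley's EC1, Proposition 3.8.9; the paper Möbius-inverts $\ehr$ over faces, while you expand in $\intehr$ of open cells, which is a cosmetic difference.

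Two small slips, neither of which affects the argument. The aside that a polyhedral ball is automatically PL is false in general, but you never use it. The $m=0$ convention you state is wrong (it would count vertices) and is unnecessary, since agreement for $m\ge 1$ already determines the polynomials.
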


A partial explanation for the connection of $\Pi_u^w$ to $\calF_u^w$ is the existence of a toric degeneration of $\Pi_u^w$ . In \cite{Sturmfels-degen}, Sturmfels proves the existence of a $T$-equivariant flat family $\pi: \mathscr{X}\to \bbC$ such that:\\
\begin{enumerate}[(i)] 
    \item $\pi^{-1}(\bbC-\{0\})\to \bbC-\{0\}$ is isomorphic to $\Gr(k,n)\times (\bbC-\{0\})$ as schemes over $\bbC-\{0\}$.
    \item The fiber over $0$ is the toric variety associated to $P_{k,n}$.
\end{enumerate} 
Let $X(\calF_u^w)$ denote the reduced union of torus orbit closures corresponding to the faces of $\calF_u^w$. We then have the following theorem.

\begin{mainresultD}[\Cref{mainresultD}]
The scheme-theoretic closure of $\Pi_u^w\times (\bbC-\{0\})$ in $\mathscr{X}$ is a flat family $\mathscr{X}_u^w \to \bbC$  with generic fiber $\Pi_u^w$ and special fiber $X(\calF_u^w)$. Furthermore, this degeneration is $T$-equivariant with respect to the action of the standard torus on $\Pi_u^w$.  
\end{mainresultD}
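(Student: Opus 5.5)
Since the Sturmfels family $\mathscr{X}\to\bbC$ is flat and $T$-equivariant, I will realize it as a Gröbner degeneration. Take the Plücker ideal $I_{k,n}$ with a weight (term order) $\lambda$ for which the standard monomials are the semistandard (Hodge) ones. The initial ideal $\initial_\lambda(I_{k,n})$ is then the toric ideal of $P_{k,n}$, generated by the binomials $p_Ip_J-p_{I\wedge J}p_{I\vee J}$ over the distributive lattice $\binom{[n]}{k}$. The closure of $\Pi_u^w\times(\bbC-\{0\})$ in $\mathscr{X}$ is then the Gröbner family of the ideal $I_u^w+I_{k,n}$ of $\Pi_u^w$, where $I_u^w$ is generated by the Plücker coordinates $p_J$ with $J$ not in the positroid interval. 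A scheme-theoretic closure over $\bbC$ is automatically flat, since it has no $t$-torsion, and it is $T$-equivariant. Its generic fiber is $\Pi_u^w$. Its special fiber is $\operatorname{Proj}$ of $S/\initial_\lambda(I_{\Pi_u^w})$. So the whole content is the equality of ideals
\[\initial_\lambda(I_{\Pi_u^w}) = I(X(\calF_u^w)),\]
where the right side is the radical ideal of the reduced union of the torus orbit closures indexed by the faces of $\calF_u^w$.

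Step one is the containment $\supseteq$, which I will check on the level of varieties and monomial support. Each face $F$ of $P_{k,n}$ corresponds to a face of the Hibi toric variety, whose ideal is generated by the toric binomials together with the variables $p_J$ for $J\notin F$. I will show $I(X(\calF_u^w))$ is generated by $\initial(I_{k,n})$ plus squarefree monomials $\prod p_{J_i}$ over chains $J_1\le\cdots\le J_r$ whose lattice points do not lie in any common face of $\calF_u^w$. Knutson–Lam–Speyer give a standard monomial basis for positroid varieties: the semistandard chains $J_1\le\cdots\le J_r$ that lie in the positroid and satisfy the cyclic "Bruhat interval" condition. The lattice points of $m\calF_u^w$ correspond exactly to such chains; this is how the fence complex is defined and is what drives Theorem B. So the non-standard monomials for $\Pi_u^w$ are precisely the chains whose convex hull is not in a face of $\calF_u^w$. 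Each such monomial reduces, via the straightening relations and the vanishing of $I_u^w$, to a combination of standard monomials of lower weight. Hence it lies in $\initial_\lambda(I_{\Pi_u^w})$, which gives $\supseteq$.

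Step two is equality, which I will get by comparing Hilbert functions rather than proving generation directly. The Hilbert function of $S/\initial_\lambda(I_{\Pi_u^w})$ equals that of $\Pi_u^w$, which by Theorem B is $\ehr_{\calF_u^w}(m)$. The Hilbert function of the Stanley–Reisner-type coordinate ring of the reduced union $X(\calF_u^w)$ counts lattice points of $m\calF_u^w$. This holds because the Hibi toric variety is normal and the union of faces is glued along faces, so degree-$m$ sections are exactly the lattice points of the union; this is again $\ehr_{\calF_u^w}(m)$. A containment of homogeneous ideals with equal Hilbert functions is an equality, so the special fiber is $X(\calF_u^w)$, reduced.

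The main obstacle is step one. I must show that every monomial supported outside $\calF_u^w$, not just those involving a vanishing $p_J$, lies in the initial ideal. The subtle cases are chains of Plücker coordinates, each individually nonzero on $\Pi_u^w$, whose product is non-standard because of the cyclic shifts in the positroid conditions. My first attempt will be to identify the standard monomials for $\Pi_u^w$ with lattice points of $m\calF_u^w$ using the same bijection as in the proof of Theorem B. Then the leading-term argument reduces to the classical fact that Hodge standard monomials are the $\lambda$-minimal terms under straightening. If that fails, I will instead intersect Schubert and opposite Schubert degenerations, treating $\Pi_u^w$ as a projected Richardson variety, and use compatibility of initial ideals with Frobenius splitting to keep everything reduced.
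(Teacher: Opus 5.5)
\paragraph{The gap: Step one.} Your architecture matches the paper's. You identify the closure with the weight-$\lambda$ Gröbner family of $I_u^w$, prove $I(\calF_u^w)\subseteq\initial_\lambda(I_u^w)$, and upgrade to equality by comparing Hilbert functions. For that comparison you should justify degree-by-degree equality, or do as the paper does and use only Hilbert polynomials plus radicality of $I(\calF_u^w)$. The problem is Step one, which is the whole difficulty, and your argument does not supply it.

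The KLS standard monomial theory you invoke is for the revlex \emph{term order}. It tells you that a chain monomial $\mathbf{m}=p_{J_1}\cdots p_{J_t}$ which is a non-face of $\pi_k(\Delta[u,w])$ is a revlex leading term of an element of $I_u^w$. Equivalently, $\mathbf{m}$ can be rewritten modulo $I_u^w$ as $\sum c_i\mathbf{m}_i$ with the $\mathbf{m}_i$ faces. Neither fact says anything about the $\lambda$-initial form of $g=\mathbf{m}-\sum c_i\mathbf{m}_i$. Your phrase ``reduces \dots to a combination of standard monomials of lower weight'' is exactly the statement to be proved, and it is not automatic. For $N\gg 0$ the $\lambda$-weight of a monomial is just its row-sum vector, a coarse linear projection of its lattice point, so distinct standard monomials of the same torus weight can tie. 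For example, in $\Gr(2,5)$ the chains $p_{13}p_{23}p_{45}$ and $p_{12}p_{34}p_{35}$ both have content $\{1,2,3,3,4,5\}$ and row sums $(7,11)$. If some $\mathbf{m}_i$ ties with or beats $\mathbf{m}$, then $\initial_\lambda(g)\neq\mathbf{m}$ and you learn nothing.

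The ``classical fact'' about Hodge standard monomials does not rescue this. It concerns straightening of incomparable pairs $p_Ip_J$, whose $\lambda$-initial forms are Hibi binomials. Here you must kill \emph{chain} monomials whose factors are all individually nonzero on $\Pi_u^w$, which is a different family of relations.

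\paragraph{How the paper closes it.} First it reduces to minimal non-face chains, using the Hibi binomials (in $\initial_\lambda(I_u^w)$ by Gonciulea--Lakshmibai) and \Cref{Hibi preserves faces}. These chains are minimal generators of $\initial_{\revlex}(I_u^w)$, so each is the revlex leading term of a non-straightening element $f_i$ of a revlex Gröbner basis. It therefore suffices that each such $f_i$ has $\initial_\lambda(f_i)$ a monomial with $\initial_{\revlex}(\initial_\lambda(f_i))=\initial_{\revlex}(f_i)$.

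This compatibility is then verified by hand for the explicit Almousa--Gao--Huang basis (\Cref{AGH Grobner basis}, \Cref{so we are done}). For $\sum_{w\in S_D}(-1)^{\ell(w)}w\cdot T$, the tableau $T$ is the unique row-sum-lex minimal term (\Cref{non dual case}). For the dualized elements, the analogous statement for $T^\vee$ needs a delicate cycle-by-cycle analysis (\Cref{dual case}). Without some explicit compatibility of this kind between the revlex and weight orders on a generating set, your Step one is an assertion rather than a proof.

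\paragraph{The fallback.} Your fallback is too vague to count. Positroid varieties are intersections of \emph{cyclically rotated} Schubert varieties, not of one Schubert and one opposite Schubert variety in $\Gr(k,n)$. The Sturmfels degeneration is adapted to the standard flag, and its relation to degenerations of Richardson varieties in the full flag variety is left open in the paper.
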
  

While Theorem B can be thought of as a combinatorial shadow of Theorem D, neither Theorem A nor Theorem C follow from Theorem D. The Ehrhart reciprocity of Theorem C requires crucially that the polyhedral complex $\calF_u^w$ is a ball, while verifying that the fence complexes fit together nicely into a CW complex structure on $P_{k,n}$ requires careful analysis of the combinatorics of their boundaries.

We present two applications of our construction. In one, we characterize completely when the homogeneous coordinate rings of positroid varieties contained in Schubert varieties are Gorenstein. 

\begin{prop*}[\Cref{hook gorenstein}] 
Assume that $u \leq w$, and that the Young diagram of $\pi_k(w)$ is a hook. Then $\Pi_u^w$ is arithmetically Gorenstein if and only if $$a(\pi_k(w)/\pi_k(u)) - |\{s_i\:|\:s_i \in u_J, i \geq k\}| = l(\pi_k(w)/\pi_k(u)) - |\{s_i\:|\:s_i \in u_J, i < k\}|.$$
\end{prop*}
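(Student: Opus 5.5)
We will reduce arithmetic Gorensteinness of $\Pi_u^w$ to a statement about the fence complex $\calF_u^w$. Positroid varieties are projectively normal and arithmetically Cohen--Macaulay; this can be taken from the literature, or read off from Theorem D together with the ball property in Theorem A. By Stanley's criterion, a Cohen--Macaulay standard graded domain is Gorenstein if and only if its Hilbert series satisfies the functional equation $H(1/t) = (-1)^{d}t^{a}H(t)$ for some integer $a$. Theorems B and C turn this into Ehrhart data. The Ehrhart series of $\calF_u^w$ equals the Hilbert series of $\Pi_u^w$, and the reciprocity of Theorem C identifies $(-1)^{d}H(1/t)$ with the generating function of interior lattice points of $m\calF_u^w$. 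Gorensteinness is therefore equivalent to the following: there is an $a$ such that the interior lattice points of $m\calF_u^w$ are in bijection with the lattice points of $(m-a)\calF_u^w$, compatibly in $m$. In polytope language, $\calF_u^w$ behaves like a Gorenstein polytope, meaning a unique interior point appears at the first dilation $a$ at which interior points exist.

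Next we use the hook hypothesis. When $\pi_k(w)$ is a hook, the Schubert variety $X_w$ corresponds to the face of the Gelfand--Tsetlin polytope $P_{k,n}$ cut out by the hook shape, namely the order polytope of a hook-shaped poset. The fence complex $\calF_u^w$ is then a union of faces of this hook face obtained by imposing equalities coming from $u$. Concretely, $\calF_u^w$ should decompose as a product, or a join-like union, of two chains. One is an "arm'' chain whose length is $a(\pi_k(w)/\pi_k(u))$, lowered by the number of simple reflections $s_i$ in $u_J$ with $i\ge k$. The other is a "leg'' chain whose length is $l(\pi_k(w)/\pi_k(u))$, lowered by the number of $s_i$ in $u_J$ with $i<k$. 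I would first verify this structural description from the definition of fences: each $s_i\in u_J$ collapses one coordinate on the arm or leg side, according to whether $i\ge k$ or $i<k$.

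With this description in hand, $\calF_u^w$ (or its Ehrhart data) should look like a product or free join of two simplices $\Delta_p$ and $\Delta_q$, with $p$ and $q$ the two quantities in the displayed formula. The interior lattice points of $m(\Delta_p\times\Delta_q)$ first appear at $m=\max(p,q)+1$. The interior is a shifted copy of the original exactly when $p=q$; otherwise the $a$-invariants of the two factors differ, and the Hilbert function fails the symmetry. This yields the condition stated in the proposition.

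The main obstacle is the middle step: showing that the fence complex really has this product or join structure, and computing which coordinates the elements of $u_J$ collapse. The fence complex is only a union of faces, not a polytope, so an alternative would be to compute the $h$-vector directly via shellability and check its palindromicity. I would try the explicit description first, and fall back on computing the $h^*$-polynomial from the recursion for fence complexes if that fails.
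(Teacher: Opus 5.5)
Your opening reduction is sound and matches the paper. Positroid varieties are arithmetically Cohen--Macaulay, so Stanley's criterion together with Theorem B reduces everything to palindromicity of $h^*(\calF_u^w,t)$; routing this through Theorem C is harmless but unnecessary. The gap is the middle step, which you leave as ``should decompose'' even though it is the whole content of the proposition.

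The missing observation is the following. In a hook, every vertical fence lies in column $1$ and every horizontal fence lies in row $1$. So $\col_F$ and $\row_F$ consist of distinct letters in a forced order (increasing, resp.\ decreasing). They are therefore determined by the support of $u_J$, and each letter corresponds to exactly one possible fence. Hence $\mathfrak{F}_u^w$ is a single diagram $F$, and $\calF_u^w$ is one order polytope $\calO(P_{\pi_k(w)/\pi_k(u)}/\sim_F)$. The paper then uses that $h^*(\calO(P),t)$ is palindromic iff $P$ is graded. It compares the arm chain and the leg chain of the quotient poset, whose sizes (cells minus fences) are the two sides of the displayed equation.

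Your guessed shape for $\calF_u^w$ is right in only one of the two cases.

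\emph{Case $\pi_k(u)$ nonempty.} The skew shape is an incomparable row piece and column piece. The quotient poset is a disjoint union of chains with $p$ and $q$ elements, and $\calF_u^w=\Delta_p\times\Delta_q$, as you say.

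\emph{Case $\pi_k(u)=\emptyset$.} The corner cell $(1,1)$ is a common minimum of both chains, so the poset is a ``V''. Its order polytope is a height-one lattice pyramid (apex the all-ones vertex) over $\Delta_{p-1}\times\Delta_{q-1}$. It has dimension $p+q-1$, so it is not $\Delta_p\times\Delta_q$. Its canonical triangulation has $\binom{p+q-2}{p-1}$ maximal simplices, so it is not a join of simplices either. Its $h^*$ equals that of $\Delta_{p-1}\times\Delta_{q-1}$. This again gives the criterion $p=q$, but only after this computation, and only when both factors are positive-dimensional.

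Indeed, ``Gorenstein exactly when $p=q$'' fails when one factor is a point. Take $k=1$, $n=3$, $\pi_k(w)=(2)$, $u=e$. Then $p=2$ and $q=1$, and your product model predicts the non-Gorenstein $\Delta_2\times\Delta_1$. In fact $\calF_u^w=\Delta_2$ and $\Pi_u^w=\Gr(1,3)=\mathbb{P}^2$, which is arithmetically Gorenstein.

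The paper's own proof has the same blind spot, since it asserts the quotient poset always has exactly two maximal chains. The ``only if'' direction of the statement needs the extra hypothesis that the arm and leg chains are genuinely distinct maximal chains. Otherwise the poset is a chain and $\Pi_u^w$ is always arithmetically Gorenstein.
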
 

Here, $a(\pi_k(w)/\pi_k(u))$ equals the size of the arm of $\pi_k(w)/\pi_k(u)$ and $l(\pi_k(w)/\pi_k(u))$ equals the size of the leg of $\pi_k(w)/\pi_k(u)$. This result follows from our interpretation of the Hilbert polynomial of $\Pi_u^w$ as an Ehrhart function. In the case of positroids contained in hook Schuberts this Ehrhart function is that of an order polytope, and hence poset-theoretic tools can be applied to understand the Gorensteinness of $\Pi_u^w$. More generally, we characterize the arithmetic Gorensteinness of positroid varieties $\Pi_u^w$ with $\pi_k(w)/\pi_k(u)$ a border strip purely in terms of fence complexes in \Cref{border strip arith}.

Our second application recovers a recursive formula of Almousa–Gao–Huang in \cite{AGH} for the characters of cyclic Demazure modules. We use the $T$-equivariant nature of our degeneration to turn this formula into a statement about lattice points of $\calF_u^w$, which can then be visualized via the polyhedral geometry of $\calF_u^w$. In particular, we show the following character formula: 
\begin{prop*}[\Cref{character formula}] 
\[ \ch(V_u^w(d\om_k)) = t^{u[k]}\ch(V_u^w((d-1)\om_k)) + \sum_{x\in \calL_k'(u,w)} (-1)^{l(w)-l(x)+1} \ch(V_x^w(d\om_k)).\] 
\end{prop*}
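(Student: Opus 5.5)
The plan is to reduce the character identity to a lattice-point identity on fence complexes, using Theorem D and its $T$-equivariance, and then prove that identity by peeling off one layer of the dilated complex. The first step is to identify $\ch(V_u^w(d\om_k))$ with the $T$-graded character of the degree-$d$ piece of the homogeneous coordinate ring of $\Pi_u^w$; up to the usual dualization convention, this is the standard description of cyclic Demazure modules as sections of $\calO(d)$ on positroid varieties.

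Flatness and $T$-equivariance of the family $\mathscr{X}_u^w$ preserve graded characters. So this character equals the character of the degree-$d$ part of the coordinate ring of $X(\calF_u^w)$. Because $X(\calF_u^w)$ is a reduced union of torus orbit closures, that ring is the face ring of the polyhedral complex. Hence
\[\ch(V_u^w(d\om_k)) = \sum_{p\in d\calF_u^w\cap \bbZ^N} t^{\phi(p)},\]
where $\phi$ is the linear map sending a Gelfand–Tsetlin pattern to its weight. I expect the relevant monomials to be read off from a semistandard-tableau description of the lattice points of $P_{k,n}$.

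Next I split the lattice points of $d\calF_u^w$ into two groups.
\begin{enumerate}[(a)]
\item The first group consists of points of the form $v+q$ with $q\in (d-1)\calF_u^w$. Here $v$ is the vertex of $P_{k,n}$ corresponding to the $T$-fixed point $u[k]$, presumably the vertex minimizing the relevant order in $\calF_u^w$. This group contributes $t^{u[k]}\ch(V_u^w((d-1)\om_k))$.
\item The second group is the complement: the points of $d\calF_u^w$ not in $v+(d-1)\calF_u^w$.
\end{enumerate}
I would show that the complement is the union of the dilates $d\calF_x^w$ over the maximal elements $x$ of $\calL_k'(u,w)$. I expect $\calL_k'(u,w)$ to index the fence subcomplexes $\calF_x^w\subset\calF_u^w$ with $x>u$ whose faces avoid $v$, closed under the relevant intersections.

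Inclusion–exclusion over this union then gives a sum with signs $(-1)^{\#\text{terms}+1}$. The intersections of the $\calF_x^w$ should again be fence complexes $\calF_y^w$, by the regular CW structure of Theorem A, whose cells are fence complexes whose closures are unions of cells. After rewriting, the signs should collapse to $(-1)^{l(w)-l(x)+1}$. The mechanism is Möbius-function computations on the Bruhat interval: this interval is Eulerian, so $\mu(x,y)=(-1)^{l(y)-l(x)}$, which is the source of the length parity.

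The main obstacle is the geometric decomposition in the second step. I need to show that a lattice point of $d\calF_u^w$ either lies in the translate $v+(d-1)\calF_u^w$ or lies in a face of $d\calF_u^w$ not containing $v$. I also need to show that those faces are exactly covered by the $\calF_x^w$ with $x\in\calL_k'(u,w)$.

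For the first point I would use that each face of $P_{k,n}$ is an order polytope of a quotient poset. A lattice point $p$ of $dF$, with $F\ni v$, fails to lie in $v+(d-1)F$ precisely when $p$ touches a facet of $dF$ opposite $v$. This is a coordinatewise check on fences, where subtracting $v$ amounts to decrementing an order ideal. For the second point I would match the faces opposite $v$ to positroid data via the boundary description of fence complexes used in proving Theorem A.

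If the sign bookkeeping proves messy, I would fall back on the following route. I would state the formula directly with the Möbius function of the cell poset restricted to $\calL_k'(u,w)\cup\{u\}$, then invoke the fact that the cell poset is that of the regular CW complex $\Gr(k,n)_{\geq0}$. That complex is thin and shellable, so intervals have Möbius function $\pm1$ by length. This yields the stated signs.
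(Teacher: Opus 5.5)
Your first two steps agree with the paper. By $T$-equivariance of the degeneration, $\ch(V_u^w(d\om_k))$ becomes a weighted count of lattice points of $d\calF_u^w$. Translating by the vertex $v$ of $\pi_k(u)$ splits these points into two groups. The points of $v+(d-1)\calF_u^w$ contribute $t^{u[k]}\ch(V_u^w((d-1)\om_k))$. The remaining points are the lattice points of $d\calD_u^w$, where $\calD_u^w$ is the union of the faces of $\calF_u^w$ avoiding $v$. That part is a routine coordinate check.

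\textbf{The gap: the signed sum over $\calL_k'(u,w)$.} The gap is the remaining step, turning $\sum_{p\in d\calD_u^w}t^{\psi_d(p)}$ into the signed sum over $\calL_k'(u,w)$.
\begin{enumerate}
\item $\calD_u^w$ is the union of \emph{all} fence complexes $\calF_{u'}^{w'}\subset\calF_u^w$ with $\pi_k(u')>\pi_k(u)$. These include complexes with $w'<w$, and complexes $\calF_x^w$ with $x\ge\bar u\gtrdot u$ and $\pi_k(\bar u)=\pi_k(u)$. None of these appear in the formula. So your expectation that $\calL_k'(u,w)$ indexes all $\calF_x^w$ avoiding $v$ is wrong.
\item Any bookkeeping, inclusion--exclusion included, ends up writing the indicator function of $\calD_u^w$ as a combination of indicators of closed cells. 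That expression is unique, and the coefficient of a closed cell $c$ is $-\mu(c,\hat 1)$, computed in the cell poset of $\calD_u^w$ with $\hat 0,\hat 1$ adjoined.
\item Since $\calD_u^w$ is not itself a cell, this is not an interval Möbius function of Bruhat order or of $\mathcal{Q}(k,n)$. Eulerianness of Bruhat intervals, or thinness and shellability of $\Gr(k,n)_{\ge 0}$ as in your fallback, says nothing about it.
\item Your fallback restricts the poset to $\calL_k'(u,w)\cup\{u\}$. This presupposes exactly the vanishing that has to be proved.
\item Your inclusion--exclusion also relies on a false claim. Intersections of the facets of $\calD_u^w$ are in general only unions of fence complexes, since intersections of positroid varieties can be reducible. (The facets are the $\calF_{u_1}^w$ with $u\lessdot_k u_1$, i.e.\ the minimal elements of $\calL_k'(u,w)$, not the maximal ones.)
\end{enumerate}

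\textbf{The missing topological input.} The paper supplies this as follows.
\begin{enumerate}
\item From $\pi_k(\Delta[u,w])=\Cone_{\pi_k(u)}(\calD_u^w)$ and the KLS shellability and subthinness results, it shows $\calD_u^w$ is a closed ball. This also needs a codimension-one face lying in only one facet.
\item It proves that a simplex $S$ lies in $\partial\calD_u^w$ if and only if $\Cone(S)\subset\partial\calF_u^w$.
\item Combining this with the boundary description of $\calF_u^w$, it shows $\partial\calD_u^w$ is exactly the union of the extra complexes above. Hence $\mathrm{int}\,\calD_u^w=\bigsqcup_{x\in\calL_k'(u,w)}\mathrm{int}\,\calF_x^w$.
\item \cite[Proposition 3.8.9]{EC1} then gives coefficient $0$ on boundary cells and $(-1)^{\dim\calD_u^w-\dim\calF_x^w}$ on interior ones.
\end{enumerate}

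\textbf{The sign you are aiming for is wrong.} The coefficient above equals $(-1)^{(\ell(w)-\ell(u)-1)-(\ell(w)-\ell(x))}=(-1)^{\ell(x)-\ell(u)+1}$. This is the sign proved in the body of the paper and in Almousa--Gao--Huang's recurrence. The sign $(-1)^{\ell(w)-\ell(x)+1}$ in the statement as quoted differs from it by $(-1)^{\ell(w)-\ell(u)}$ and is a typo. Check $\Gr(1,2)$ with $u=12$, $w=21$: there $\calL_1'(u,w)=\{21\}$ and $\ch(V_u^w(d\om_1))=t_1\ch(V_u^w((d-1)\om_1))+t_2^d$. So the coefficient must be $+1$, and no sign bookkeeping can make yours collapse to $(-1)^{\ell(w)-\ell(x)+1}$.
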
 

Here, the set $\calL_k'(u,w)$ consists of all $x$ such that there does not exist $u\lessdot \bar{u} \leq x$ with $\pi_k(\bar{u})= \pi_k(u)$. While this set is not obviously equal to the indexing set in the character formula given by Almousa, Gao and Huang, we show the two sets are equal in an appendix. 

\subsection{Relationship to Previous Work}

Toric degenerations of various subvarieties of the Grassmannian have been studied often in the literature. While this is by no means an exhaustive list, we mention a few results particularly relevant to our work. Sturmfels was the first to consider a toric degeneration of $\Gr(k,n)$ to the toric variety of $P_{k,n}$ in \cite{Sturmfels-degen}. In \cite{Gonciulea-Lakshmibai}, Gonciulea and Lakshmibai calculate the flat limit of all Schubert varieties in $\Gr(k,n)$ under this degeneration. Since Schubert varieties are special instances of positroid varieties, our fence complex construction agrees with the Gonciulea–Lakshmibai degeneration.\\[-5pt]

Gonciulea and Lakshmibai also show that the complete flag variety degenerates to the toric variety associated to a Gelfand–Tsetlin polytope. In \cite{Kogan-Miller}, Kogan and Miller show that in an affine neighborhood of the standard flag, Schubert varieties degenerate to the union of toric varieties associated to faces indexed by pipe dreams. In Kim's thesis (cf. \cite{Kim}), he removes the hypothesis of restricting to an affine neighborhood. Kim also shows that Richardson varieties in the complete flag variety degenerate to the union of toric varieties corresponding to certain faces of the Gelfand–Tsetlin polytope, indexed by pairs of pipe dreams. This can be viewed as a complete flag variety analogue of our Theorem D (and hence of our Theorem B). As positroid varieties can be viewed as projected Richardsons,  it would be very interesting to understand the relationship between Kim's degeneration and ours.\\[-5pt] 

It does not seem like the topology of the Gelfand–Tsetlin polytope faces associated to a Richardson variety in the complete flag variety has been considered before. It would be interesting to prove an analogue of Theorems A and C for Kim's faces in the complete flag variety case. Of course, it would also be interesting to extend these results to all partial flag varieties. Our proof that $\calF_u^w$ is homeomorphic to a ball heavily uses the fact that the polytope $P_{k,n}$ has a triangulation coming from its realization as an order polytope, as well as a result from \cite{KLS0} on the topology of this simplicial complex. It is not clear what the analogous triangulation should be for Gelfand–Tsetlin polytopes corresponding to degenerations of other partial flag varieties.\\[-5pt] 

Degenerations of positroid varieties to subspace arrangements under the Hodge degeneration have also been studied in the literature. In particular, \cite{KLS0} exhibited a Gröbner degeneration of the homogeneous coordinate ring $\bbC[\Pi_u^w]$ to the Stanley–Reisner ring of a certain simplicial complex. This degeneration is compatible with the Hodge degeneration. In \cite{AGH}, Almousa, Gao and Huang give an explicit Gröbner basis for this degeneration. Our work makes heavy use of this Gröbner basis in tandem with a technical result (see \Cref{so we are done})  in order to prove Theorem D.\\[-5pt]

In the same paper, Almousa, Gao and Huang also give a proof of a recursive formula for the character of cyclic Demazure modules, independent of their Gröbner basis construction. Their arguments connect the multigraded Hilbert series of positroid varieties (and hence cyclic Demazure characters) with the multigraded Hilbert series of matrix Schubert varieties. They then use an inductive recurrence on the multigraded Hilbert series of matrix Schubert varieties, coming from a geometric argument, to deduce the corresponding recursion for cyclic Demazure characters. We use our toric degeneration and the polyhedral geometry of $\calF_u^w$ to prove a seemingly different character formula, which makes no reference to matrix Schubert varieties. However, we show our formula is actually the same as theirs, giving a new proof of their formula.\\[-5pt]

The fact that the totally nonnegative Grassmannian with the positroid stratification is a regular CW complex was proved in \cite{Galashin-karp-Lam} by Galashin, Karp and Lam. The proof is rather hard and relies on, for instance, the generalized Poincaré conjecture. Our results do not give a new proof of this theorem. Instead, we simply observe that the $\calF_u^w$ give a regular CW complex structure on $P_{k,n}$, and since $\Gr(k,n)_{\geq 0}$ is also a regular CW complex with the same cell closure poset, the two must be isomorphic as regular CW complexes. It would be very interesting to give a new proof of Galashin, Karp and Lam's theorem using our results. 

\subsection{Acknowledgements} 
We would like to thank Ibrahim Ahmad, Chayapa Darayon, David Eisenbud, Christian Gaetz, Yuhan Jiang, Allen Knutson, Thomas Lam, John Lentfer, Sam Mayo, David Nadler, Lizzie Pratt, Linus Setiabrata, David Speyer, Alan Yan, and Alex Yong for helpful conversations. CC and JH were supported by an NSF GRFP Fellowship.\footnote{This material is based upon work supported by the National Science Foundation Graduate Research Fellowship Program under Grant No. DGE 2146752. Any opinions, findings, and conclusions or recommendations expressed in this material are those of the author(s) and do not necessarily reflect the views of the National Science Foundation.} 
CC and PE would like to thank ICERM for the pleasant working environment during their workshop "Webs in Algebra, Geometry, Topology and Combinatorics." 

%todo{conventions} 

\section{Preliminaries}\label[section]{Recall...}
%Let $G = GL(n)$, and let $B, B^{-}$ denote standard and opposite Borels in $G$, respectively. We write $P$ to denote a parabolic subgroup of $G$, and $P_k$ to denote the $k$-th maximal parabolic of $G$. Therefore, we write $Gr(k,n) := G/P_k$ to denote the Grassmannian variety of $k$-planes in $\mathbb{C}^n$. We write $B_i$ for the $i$-th cyclic shift of the standard Borel. Given $\lambda \leq ((n - k)^k)$ in dominance order we write $X^\circ_\lambda$ and $X_\lambda$ for the corresponding Schubert cell and variety, resp., in $Gr(k,n)$. We also write $X_w^\circ$ and $X_w$ for Schubert cells and varieties, resp., in $Fl(n)$, as well as ${R_u^w}^\circ$ and $R_u^w$ for Richardson cells and varieties.

\subsection{Positroid Varieties and k-Bruhat Combinatorics}
%\begin{defn} 
   % An \textit{open positroid cell} $\Pi^\circ$ is an intersection of cyclically shifted Grassmannian Schubert cells. That is, it is a variety of the form $$\Pi^\circ = \bigcap_i X^\circ_{\lambda, i} \subset Gr(k,n)$$ \noindent where $X^\circ_{\lambda, i}$ is the Schubert cell corresponding to $\lambda \subset ((n-k)^k)$ with respect to the $i$-th cyclic shift $(\langle e_i \rangle, \langle e_i, e_{i+1} \rangle, ..., \langle e_i, ..., e_n, e_1, ..., e_{i - 1}\rangle)$ of the standard flag. The closure $\Pi := \overline{\Pi^\circ}$ of a positroid cell in $Gr(k,n)$ is called a \textit{positroid variety}.
%\end{defn}

%We now justify the name positroid. 

%\begin{defn}
   % A rank $k$ matroid realizable over $\mathbb{C}$ is called a \textit{positroid} if it can be realized by a vector arrangement $\{v_1, ..., v_n\} \subset \mathbb{C}^k$ such that the matrix $[v_1, ..., v_k]$ has all non-negative maximal minors.
%\end{defn}

%\begin{thm}[\cite{KLS1} Theorem 3.1]
    %There is a bijection between positroids of rank $k$ which are realizable by a vector arrangement in $\mathbb{C}^n$ and positroid varieties in $Gr(k,n)$.
%\end{thm}
 %Let $P$ denote a Young subgroup $S_{n_1} \times S_{n_2 - n_1} \times ... \times S_{n_i - n_{i-1}} \times S_{n - n_i} \subset S_n$, and write $\pi_P$ for the parabolic quotient map $S_n \rightarrow S_n^P$. We write $u \lessdot_P w$ and say that \textit{$w$ $P$-covers $u$} if $w$ is a Bruhat cover of $u$ and $\pi_P(u) \neq \pi_P(w)$. The $P$-Bruhat order on $S_n$, denoted by $\leq_P$, is defined to be the transitive closure of the $P$-covering relations.

We briefly define (closed) positroid varieties here, but we recommend that readers unfamiliar with positroid varieties consult the survey \cite{speyer2024richardsonvarietiesprojectedrichardson}. Throughout the paper, all of our varieties are over $\bbC$. We fix $G=GL_n(\bbC)$, $B\subset GL_n(\bbC)$ the subgroup of upper triangular matrices, and $B^-\subset GL_n(\bbC)$ the subgroup of lower triangular matrices. Then, $G/B$ is the complete flag variety, parametrizing all flags \[0=E_0\subsetneq E_1 \subsetneq \cdots \subsetneq E_n = \bbC^n.\] For $u,w \in S_n$ such that $u\leq w$ in Bruhat order, we define the \emph{Richardson variety} $X_u^w\subset G/B$ to be the subvariety  $\overline{BuB/B}\cap \overline{B^-wB/B}$. 

Recall there is a projection map $\pi_k:G/B\to \Gr(k,n)$ defined by mapping a flag $0=E_0\subset E_1\subset \cdots \subset E_n = \bbC^n$ to the $k$-dimensional subspace $E_k$. 

\begin{defn}
    A \emph{positroid variety} $\Pi$ is the image $\pi_k(X)$ of a Richardson variety $X$ under $\pi_k$. 
\end{defn}

Throughout the paper, we write $\Pi_u^w = \pi_k(X_u^w)$. Since $\pi_k$ is proper and Richardson varieties are irreducible, positroid varieties are closed irreducible subvarieties of $\Gr(k,n)$. They form a stratification of $Gr(k,n)$ which refines the Schubert and Richardson stratifications. Positroid varieties are indexed by many different beautiful combinatorial objects, but we will focus on an indexing set that comes naturally from the projected Richardson description. 

 The naive hope would be that positroids are in bijection with the Richardson varieties projecting down to them, and hence in bijection with Bruhat intervals $[u,w]$. This turns out to be far from true: in many cases we have $\Pi_u^w = \Pi_{u'}^{w'}$ for two different Bruhat intervals $[u,w]$ and $[u',w']$. However, if we place conditions on the Bruhat interval, this does become a bijection. 

\begin{defn}
   A permutation $w\in S_n$ is \emph{$k$-Grassmannian} if its only possible descent is at $k$, i.e. we have $w(1)<w(2)<\cdots <w(k)$, and $w(k+1)<w(k+2)<\cdots <w(n)$.  
\end{defn}

\begin{rem}
If we identify the permutations $w$ in $S_n$ such that $w([k])=[k]$  with $S_k\times S_{n-k}$ in the natural way, then the $k$-Grassmannian permutations are precisely the representatives of cosets of $S_k\times S_{n-k}$ in $S_n$ that are minimal with respect to Bruhat order. As this is a useful description, we identify $S_k\times S_{n-k}$ with the subgroup of permutations $w\in S_n$ fixing $[k]$ for the remainder of the paper. 
\end{rem}

One advantage of considering $k$-Grassmannian permutations is the following theorem. 

\begin{thm}[\protect{\cite[Theorem 5.9]{KLS1}, \cite[Theorem 1.29]{speyer2024richardsonvarietiesprojectedrichardson}}]\label[theorem]{projected richardsons}
    For every positroid variety $\Pi$, there exists a unique Bruhat interval $[u,w]$ with $u,w\in S_n$ such that $w$ is $k$-Grassmannian and $\pi_k(X_u^w) = \Pi$. 
\end{thm}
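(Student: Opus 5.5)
Existence is automatic from the definition, since $\Pi=\pi_k(X_u^w)$ for some $u\le w$; the work is to replace $w$ by a $k$-Grassmannian permutation without changing the image, and then to prove uniqueness.

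\textbf{Existence.} Let $W_k=S_k\times S_{n-k}$ and write $w=w'v$ with $w'$ the minimal coset representative (hence $k$-Grassmannian) and $v\in W_k$, so $\ell(w)=\ell(w')+\ell(v)$. I will induct on $\ell(v)$, peeling off one simple reflection $s\in W_k$ at a time, with $w=w_1s$ and $\ell(w_1)<\ell(w)$.

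The claim to prove is that there is $u_1\le w_1$ with $\pi_k(X_u^w)=\pi_k(X_{u_1}^{w_1})$. Take $u_1=\min(u,us)$ in Bruhat order. The mechanism is the $\bbP^1$-fibration $G/B\to G/P_s$. Both $\overline{B^-wB/B}$ and the Schubert variety $\overline{BuB/B}$ are unions of fibers or else map birationally, and $\pi_k$ factors through $G/P_s$ because $s\in W_k$. A Demazure-product style argument, for example via the known description of projected Richardsons through Knutson–Lam–Speyer's $k$-Bruhat order, then shows that the images agree.

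At the level of torus-fixed points this says $\pi_k([u,w])=\pi_k([u_1,w_1])$ as sets of cosets, and irreducibility together with dimension gives equality of the varieties. If I am instead allowed to cite that $\Pi_u^w$ depends only on the equivalence class of $[u,w]$ under right multiplication by $W_k$ on pairs with lengths adding, this step is immediate.

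\textbf{Uniqueness.} Suppose $\pi_k(X_u^w)=\pi_k(X_{u'}^{w'})$ with $w,w'$ both $k$-Grassmannian. Since $w$ is a minimal coset representative, $\pi_k$ restricts to a birational map $X_u^w\to\Pi$. Hence $\dim\Pi=\ell(w)-\ell(u)$, and $\Pi$ is contained in the Schubert variety $\pi_k(\overline{B^-wB/B})$.

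I would recover $w$ from $\Pi$ as the $B^-$-Schubert variety of smallest dimension containing $\Pi$, equivalently from the generic Schubert cell meeting $\Pi$. Because $w$ is Grassmannian and $u\le w$, the point $\pi_k(w)$ lies in $\Pi$. I then need that no smaller opposite Schubert variety contains $\Pi$, which follows by looking at a generic point of $X_u^w$, lying in the open opposite cell $B^-wB/B$. This determines $w=w'$.

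Next I recover $u$ from $\Pi$ and $w$. The $T$-fixed points of $\Pi$ are the cosets $xW_k$ with $u\le x\le w$ for some representative $x$. Among elements $x\le w$ whose coset meets $\Pi$ appropriately, $u$ should be the unique maximal $x\le w$ such that $\pi_k(\overline{BxB/B}\cap\overline{B^-wB/B})$ is exactly $\Pi$. Combined with the dimension count $\ell(w)-\ell(u)=\ell(w)-\ell(u')$, this should force $u=u'$.

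\textbf{Main obstacle.} The hardest step is this last injectivity on $u$. Two different $u\le w$ could a priori have the same images of fixed points, since each coset contains many elements. I would first try to compare the $B$-orbit structure: look at the generic point of $X_u^w$ and record which cyclically shifted Schubert cells it lies in, i.e.\ its positroid data, and show that this data recovers $u$ given $w$. If that fails, I would fall back to the citation \cite[Theorem 5.9]{KLS1}.
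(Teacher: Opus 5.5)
The paper does not prove this statement; it quotes it from Knutson--Lam--Speyer and Speyer's survey, so your self-contained attempt has to stand on its own. Your recovery of $w$ from $\Pi$ is fine. The recovery of $u$, however, is a genuine gap, as you partly acknowledge.

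Your characterization of $u$ as ``the unique maximal $x\le w$ with $\pi_k(X_x^w)=\Pi$'' has no content. By the birationality you already invoked, every $x\le w$ with $\pi_k(X_x^w)=\Pi$ satisfies $\ell(x)=\ell(w)-\dim\Pi$. So all such $x$ are pairwise incomparable, and ``unique maximal'' is literally equivalent to ``unique,'' which is the claim to be proved. Likewise, $\ell(u)=\ell(u')$ alone cannot force $u=u'$.

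The missing idea is to use birationality in its strong form, which recovers the Richardson variety itself from $\Pi$:
\begin{enumerate}
\item Let $C_w$ be the Schubert cell indexed by $w$, so that $X_u^w\subseteq\overline{C_w}=\bigsqcup_{y\le w}C_y$.
\item Because $w$ is a minimal coset representative, $\pi_k|_{C_w}$ is an isomorphism onto the Grassmannian cell $\pi_k(C_w)$.
\item Moreover $\overline{C_w}\cap\pi_k^{-1}(\pi_k(C_w))=C_w$. Indeed, a cell $C_y$ with $y\le w$ meets $\pi_k^{-1}(\pi_k(C_w))$ only if $yW_k=wW_k$, and together with $y\le w$ this forces $y=w$.
\item Hence $\Pi\cap\pi_k(C_w)=\pi_k(X_u^w\cap C_w)$.
\item Since $X_u^w\cap C_w$ is a nonempty open, hence dense, subset of the irreducible $X_u^w$, you get
\[X_u^w=\overline{(\pi_k|_{C_w})^{-1}\bigl(\Pi\cap\pi_k(C_w)\bigr)}.\]
\end{enumerate}
Thus $\Pi$, which already determines $w$, determines $X_u^w$, and $u$ is recovered as the Bruhat-minimal $T$-fixed point of $X_u^w$.

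On existence, your fallback ``equal $T$-fixed points plus irreducibility and dimension give equality'' is not a valid inference. Distinct irreducible $T$-stable subvarieties of the same dimension can have the same fixed points, and when $us>u$ the Richardson dimension even drops by one. The phrase ``a Demazure-product style argument'' also leaves the key step unproved.

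Your $\bbP^1$-fibration idea does finish it in a few lines. Let $p\colon G/B\to G/P_s$ and $u_1=\min(u,us)$; then $u_1\le ws$ by the lifting property.
\begin{itemize}
\item The Schubert variety of $u_1$ (fixed points $\ge u_1$) is a union of $p$-fibers with the same $p$-image as that of $u$.
\item The Schubert variety of $w$ (fixed points $\le w$) is a union of fibers with the same $p$-image as that of $ws$.
\item The $p$-image of an intersection in which one factor is a union of fibers is the intersection of the $p$-images.
\end{itemize}
Hence $p(X_{u_1}^{ws})=p(X_u^w)$, and $\pi_k$ factors through $p$ because $s\in W_k$.
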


Thus, positroid varieties in $\Gr(k,n)$ are indexed by Bruhat intervals $[u,w]$ where $w$ is $k$-Grassmannian. In general, it will be more convenient to index each positroid by an equivalence class of Bruhat intervals, which we now explain. In the below definition, we abuse notation and let $\pi_k$ denote the parabolic quotient map from $S_n\to {[n]\choose k}$ given by sending a permutation $w\in S_n$ to $\{w(1),\ldots, w(k)\}$. 

\begin{defn} Let $u,w\in S_n$. We say that $w$ $k$-Bruhat covers $u$, denoted $u\lessdot_k w$,  if $w$ covers $u$ in Bruhat order, and $\pi_k(u)\neq \pi_k(w)$. We define $k$-Bruhat order, denoted $\leq_k$, to be the partial order given by the transitive closure of $k$-Bruhat covers. \end{defn}

Observe that if $u\leq_k w$, then $u\leq w$, but the converse is not true.  

\begin{rem}\label[remark]{pik notation}
    We can also view $\pi_k$ as a map from $S_n$ to the set of $k$-Grassmannian permutations, taking a permutation $u$ to its minimal coset representative with respect to $S_k\times S_{n-k}$. This is done by identifying a $k$-Grassmannian permutation $w$ with the $k$-element set $w([k])$. For more details, cf. \cite[Section 2.4]{Bjorner-Brenti}. 

    Observe that $\pi_k$ induces an isomorphism of posets between $S_n^P$ with Bruhat order and $\binom{[n]}{k}$ with its Bruhat (also known as \textit{Gale}) order, defined by $I = \{i_1 < ... < i_k\} \leq J = \{j_1 < ... < j_k\}$ if and only if $i_m \leq j_m$ for all $m \leq k$. Similarly, there is also a bijection $\lambda \leftrightarrow I_\lambda = \{\lambda_{k+1-i}+i:i\in[k]\}$ between the interval $[\emptyset, (n-k)^k]$ of Young's lattice ordered by containment and ${[n]\choose k}$ with Bruhat order.
%%The latter bijection admits a convenient pictorial description. Namely, given a partition $\lambda\in [\emptyset, (n-k)^k]$, the lower right boundary of the Young diagram for $\lambda$ gives a lattice path (of length $n$) from the bottom left to the top right of the $k\times (n-k)$ rectangle. $I_\lambda\in {[n]\choose k}$ is then the set of upward steps.

In light of this, throughout this work we will further abuse notation and, given $w \in S_n$, write $\pi_k(w)$ to refer to the following three objects: the minimal coset representative of $w$ modulo $S_k \times S_{n-k}$, its associated Young diagram, and its associated set. It will be clear from context which we mean. 
\end{rem}

We have the following characterization of $k$-Bruhat order, due to Bergeron and Sottile in \cite{Bergeron-Sottile}. 
\begin{thm}\label[theorem]{thm: bergeron-sottile} 
   Let $u,w\in S_n$. Then, $u\leq_k w$ if and only if the following two conditions hold: 
   \begin{enumerate}
       \item $a\leq k<b$ implies $u(a)\leq w(a)$ and $u(b)\geq w(b)$. 
       \item If $a<b$, $u(a)<u(b)$, and $w(a)>w(b)$, then $a\leq k<b$. 
   \end{enumerate}
\end{thm}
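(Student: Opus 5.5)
We write $R(u,w)$ for the relation given by conditions (1) and (2). The plan is to prove two things. First, $R$ contains every $k$-Bruhat cover and is transitive, so $u\le_k w$ implies $R(u,w)$. Second, whenever $R(u,w)$ holds and $u\ne w$, there is a $k$-Bruhat cover $u\lessdot_k ut$ with $R(ut,w)$. Inducting on $\ell(w)-\ell(u)$ then gives the converse. Throughout, $t=(a\,b)$ denotes a transposition with $a<b$.

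\emph{Covers and transitivity.} Recall that $u\lessdot ut$ exactly when $u(a)<u(b)$ and no $c$ with $a<c<b$ has $u(c)$ strictly between $u(a)$ and $u(b)$. Moreover, $\pi_k(ut)\ne\pi_k(u)$ exactly when $a\le k<b$.

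For such a cover, condition (1) holds: the only changed positions are $a\le k$, whose value increases, and $b>k$, whose value decreases. For condition (2), suppose a pair $(c,d)$ with $c<d$ is a non-inversion of $u$ but an inversion of $ut$. Since $ut$ covers $u$ with no intermediate values, the only such pair is $(a,b)$, and it straddles $k$.

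For transitivity, suppose $R(u,v)$ and $R(v,w)$. Condition (1) composes directly. For condition (2), take $c<d$ with $u(c)<u(d)$ and $w(c)>w(d)$. If $v(c)<v(d)$, then $R(v,w)$ forces $c\le k<d$; otherwise $R(u,v)$ does. This completes the first direction.

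\emph{Finding a cover below $w$.} Assume $R(u,w)$ and $u\ne w$.

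We first show that some $a\le k$ has $u(a)<w(a)$. Suppose not, so $u(a)=w(a)$ for all $a\le k$ by (1). Then $u$ and $w$ take the same set of values on $[n]\setminus[k]$, while $u(b)\ge w(b)$ for all $b>k$. Comparing sums forces equality at every position, so $u=w$, a contradiction.

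Next, fix such an $a$ and look among positions $b>k$ for one with $u(a)<u(b)\le w(a)$. Existence should follow from a counting argument. The values of $u$ in $(u(a),w(a)]$ cannot all sit in positions $\le k$. The reason is that position $a$ must attain the value $w(a)$ in $w$, and the straddling constraint (2) forbids creating the needed inversions among positions on the same side of $k$. Among these candidates, we take $b$ minimizing $u(b)$, and if necessary re-choose $a$ (for instance, the largest $a\le k$ with $u(a)<w(a)$, or one minimizing $w(a)$). The goal is that no intermediate $c\in(a,b)$ has $u(c)\in(u(a),u(b))$, which makes $u\lessdot_k ut$.

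Finally, we check $R(ut,w)$. For (1), the new values are $ut(a)=u(b)\le w(a)$ and $ut(b)=u(a)<u(b)\le w(a)$. The latter must be compared with $w(b)$ using $u(b)\ge w(b)$ together with the extremal choices. For (2), the new non-inversions of $ut$ compared with $u$ involve only pairs containing $a$ or $b$. These should be handled by the minimality of $u(b)$ and the choice of $a$.

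\emph{Main obstacle.} I expect the main obstacle to be this last step: choosing $a$ and $b$ so that simultaneously (i) $u\to ut$ is a genuine cover, (ii) $ut(b)\ge w(b)$, and (iii) no forbidden non-straddling pair $(c,d)$ appears. If the naive extremal choices fail, I would instead use the known reformulation of (1) and (2) in terms of the sets $u[j]$ and $w[j]$ for $j$ on each side of $k$, and argue via the tableau/Gale-order comparison of these sets.
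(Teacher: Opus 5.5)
The paper does not prove this statement; it is quoted from Bergeron--Sottile, so your argument has to stand on its own. The forward direction does, with one inaccuracy. For a cover $u\lessdot ut$ with $t=(a\,b)$, the pair $(a,b)$ is not the only new inversion. Pairs $(c,b)$ with $c<a$, and pairs $(a,c)$ with $c>b$, where $u(c)$ lies between $u(a)$ and $u(b)$, also become inversions, because the cover condition only excludes such $c$ strictly between $a$ and $b$. For example, with $k=2$ the cover $213\lessdot_2 231$ gains $(1,3)$ as well as $(2,3)$. These extra pairs still straddle $k$, so your conclusion holds.

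\textbf{The gap is in the converse.} Its entire content is the choice of $t$, and the choices you sketch fail.
\begin{itemize}
\item For a fixed $a\le k$ with $u(a)<w(a)$, there need not be any $b>k$ with $u(a)<u(b)\le w(a)$. Take $k=2$, $u=213$, $w=321$; these satisfy (1) and (2), and indeed $213\lessdot_2 312\lessdot_2 321$. For $a=2$, the only value in $(1,2]$ is $2=u(1)$, at a position $\le k$. Both re-choices you mention (largest $a$, or smallest $w(a)$) select exactly this $a$.
\item Minimizing $u(b)$ also fails. Take $k=1$, $u=1324$, $w=4123$; again (1) and (2) hold, and $1324\lessdot_1 3124\lessdot_1 4123$. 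Here $a=1$ is forced and your rule gives $b=3$. Then $ut=2314$ has $ut(3)=1<2=w(3)$, so by your own forward direction $ut\not\le_k w$.
\end{itemize}
The requirement $w(b)\le u(a)$ has to be built into the choice of $b$, and the existence of such a $b$ needs an argument.

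Here is a choice that works.
\begin{enumerate}
\item Take $a$ maximizing $u(a)$ over $\{a\le k: u(a)<w(a)\}$, and put $x=u(a)<y=w(a)$. By maximality and (1), every $c\le k$ with $u(c)\in(x,y]$ has $w(c)=u(c)$. In particular, $y$ sits at a position $>k$ in $u$.
\item Let $C=\{c>k: u(c)\in(x,y]\}$. For $c\in C$, $w(c)\le u(c)\le y$, $w(c)\ne y$, and $w(c)$ is not one of the fixed values from step 1. So $w$ maps $C$ injectively into $[1,x]\cup(u(C)\setminus\{y\})$. Since $y\in u(C)$, pigeonhole makes $C_0=\{c\in C: w(c)\le x\}$ nonempty.
\item Let $b\in C_0$ minimize $u(b)$. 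A $c\in(a,k]$ with $u(c)\in(x,u(b))$ would give, by (2) applied to $(a,c)$, $y=w(a)<w(c)=u(c)<y$. A $c\in(k,b)$ with $u(c)\in(x,u(b))$ would give, by (2) applied to $(c,b)$, $w(c)<w(b)\le x$, contradicting minimality. Hence $u\lessdot_k ut$.
\item Condition (1) for $(ut,w)$ reduces to $u(b)\le y$ and $x\ge w(b)$, both of which hold.
\item For (2), the only pairs that are inversions of $u$ but not of $ut$ are $(c,a)$ with $c<a$ and $(b,c)$ with $c>b$, where $u(c)\in(x,u(b))$. In the first case $w(c)=u(c)<y=w(a)$. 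In the second case $c\notin C_0$ by minimality, so $w(c)>x\ge w(b)$. Neither is an inversion of $w$.
\item Induct on $\sum_{c\le k}(w(c)-u(c))$. It is nonnegative by (1), zero only when $u=w$ (your sum argument), and drops by $u(b)-x>0$ at each step. This also removes the need to know $\ell(u)\le\ell(w)$ in advance.
\end{enumerate}
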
 

$k$-Bruhat intervals have the following natural geometric interpretation: they index the Richardsons $X_u^w$ where $\pi_k:X_u^w\to \Pi_u^w$ is birational. 

\begin{prop}[\protect{\cite[Proposition 1.27]{speyer2024richardsonvarietiesprojectedrichardson}}]
    Let $u,w \in S_n$ with $u\leq w$ in Bruhat order. Then, $\pi_k: X_u^w \rightarrow Gr(k,n)$ is birational onto $\Pi_u^w$ if and only if $u \leq_k w$. If $w$ is further assumed to be $k$-Grassmannian, then every other birational representative of $\Pi$ is of the form $\pi(R_{ux}^{wx})$, where $x \in S_k\times S_{n-k}$ and $\ell(ux) = \ell(u) + \ell(x)$.
\end{prop}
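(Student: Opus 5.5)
The plan is to reduce both statements to a dimension count for $\pi_k|_{X_u^w}$, and then to move between Bruhat intervals with the same image using simple reflections in $S_k\times S_{n-k}$.

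\emph{Step 1: birationality reduces to dimension.} Since $\pi_k$ is proper and $X_u^w$ is irreducible of dimension $\ell(w)-\ell(u)$, the map $\pi_k:X_u^w\to\Pi_u^w$ is birational as soon as it is generically finite. For this I would use two facts about positroid varieties from \cite{KLS1}: they are normal, and the generic fibres of $X_u^w\to\Pi_u^w$ are connected (in fact they are again Richardson varieties in $P_k/B$). Stein factorization then shows that a generically finite map with connected generic fibres is birational. So the first claim becomes
\[
\dim \Pi_u^w=\ell(w)-\ell(u)\quad\Longleftrightarrow\quad u\leq_k w .
\]

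\emph{Step 2: ($\Leftarrow$).} I would induct on $\ell(w)-\ell(u)$. Since $u\le_k w$, there is a $k$-Bruhat cover $w'\lessdot_k w$ with $u\le_k w'$. The divisor $X_u^{w'}\subset X_u^w$ has image of dimension $\ell(w)-\ell(u)-1$ by induction. I then need $\Pi_u^{w}\neq \Pi_u^{w'}$. For this I would look at the $T$-fixed points: $\pi_k(w)$ is a $T$-fixed point of $\Pi_u^w$, because the $T$-curve joining $w'$ and $w$ maps to a non-constant $T$-curve ($\pi_k(w')\ne\pi_k(w)$). I would then argue that $\pi_k(w)\notin\Pi_u^{w'}$, since every $T$-fixed point of $\Pi_u^{w'}$ is $\pi_k(v)$ for some $v\le w'$, and $\pi_k(v)\le\pi_k(w')<\pi_k(w)$ in Gale order. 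Irreducibility then gives $\dim\Pi_u^w>\dim\Pi_u^{w'}$, which closes the induction.

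\emph{Step 3: ($\Rightarrow$).} Assume $u\le w$ but $u\not\le_k w$. By \Cref{thm: bergeron-sottile}, I would first try to extract a simple reflection $s\in S_k\times S_{n-k}$ with $ws<w$ and $us>u$, or with $ws<w$ and $us<u$ after a suitable reduction. I would then use the standard identity: if $s\in W_P$, $ws<w$ and $us>u$, then $\pi_k(X_u^w)=\pi_k(X_{us}^{w})=\pi_k(X_u^{ws})$. This holds because $X_u^w$ is swept out by the $P_s/B$-fibres over $X_u^{ws}$. The identity exhibits $\Pi_u^w$ as the image of a strictly smaller Richardson variety, so $\dim\Pi_u^w<\ell(w)-\ell(u)$. \textbf{I expect this step to be the main obstacle}: translating the failure of the Bergeron–Sottile conditions into the existence of such a reflection, possibly only after replacing $(u,w)$ by an equivalent pair via the identity above. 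I would handle this by induction on $\ell(w)-\ell(\pi_k(w))$, peeling off right descents of $w$ lying in $S_k\times S_{n-k}$.

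\emph{Step 4: the second statement.} Suppose $\pi_k(X_{u'}^{w'})=\Pi_u^w$ birationally, with $w$ $k$-Grassmannian. Comparing the top $T$-fixed point (the Gale-maximal one), which is $\pi_k(w')$ on one side and $w$ on the other, gives $\pi_k(w')=w$. Hence $w'=wx$ with $x\in S_k\times S_{n-k}$ and $\ell(wx)=\ell(w)+\ell(x)$. Next I peel off $x$ one simple reflection at a time using the identity from Step 3. Birationality forces us to land in the case $u's>u'$ at each step, since otherwise the dimension would drop. This yields $\Pi=\pi_k(X_{u'x^{-1}}^{w})$ with $\ell(u')=\ell(u'x^{-1})+\ell(x)$. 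Finally, uniqueness in \Cref{projected richardsons} gives $u'x^{-1}=u$, that is, $u'=ux$ with $\ell(ux)=\ell(u)+\ell(x)$.
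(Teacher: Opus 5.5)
The paper does not prove this statement; it is quoted from Speyer's survey. There is therefore no in-paper argument to compare with, and I am judging your outline on its own.

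The skeleton is sound. Step 2 correctly shows $\dim\Pi_u^w=\ell(w)-\ell(u)$ when $u\le_k w$: the point $\pi_k(w)$ lies in $\Pi_u^w$, whereas every $T$-fixed point of $\Pi_u^{w'}$ is some $\pi_k(v)$ with $\pi_k(v)\le\pi_k(w')<\pi_k(w)$. Your identity for $s\in S_k\times S_{n-k}$ with $ws<w$, $us>u$ is also correct, with the lifting property supplying $u\le ws$ and $us\le w$. Three points need repair.

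\textbf{Step 1.} All of the difference between ``generically finite'' and ``birational'' sits in the claim that generic fibres are connected. The only support you give is the parenthetical that they are Richardson varieties in $P_k/B$, which you do not prove. Normality of $\Pi_u^w$ adds nothing once connectedness is granted. Unless you point to a precise statement such as $(\pi_k)_*\mathcal{O}_{X_u^w}=\mathcal{O}_{\Pi_u^w}$ in Knutson--Lam--Speyer, the degree-one claim is unsupported.

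It can be proved directly. Let $w$ be $k$-Grassmannian and $u\le w$.
\begin{enumerate}
\item The fibre of $X_u^w\to\Pi_u^w$ over $\pi_k(w)$ has $wB$ as its only $T$-fixed point: an element of $[u,w]$ in $w(S_k\times S_{n-k})$ is $\ge w$, hence equals $w$.
\item So this fibre is the single point $wB$, since a positive-dimensional $T$-stable closed subvariety of $G/B$ has at least two $T$-fixed points.
\item The point is reduced. Near $wB$ the $\ell(w)$-dimensional Schubert variety containing $X_u^w$ is its open cell, and that cell maps isomorphically onto its image in $\Gr(k,n)$ because $w$ is a minimal coset representative.
\item Hence the map is finite over a neighbourhood of $\pi_k(w)$ with a fibre of length one, which forces degree one (and also gives the dimension count).
\end{enumerate}
The general case reduces to this via the moves below, which preserve birationality. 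For $us<u$ and $ws<w$, both $X_u^w$ and $X_{us}^{ws}$ map birationally onto the same subvariety of $G/P_s$. Indeed, $G/B\to G/P_s$ is an isomorphism from the open opposite cell of $u$ onto its image, while the Schubert variety of $w$ is a union of fibres; symmetrically for $(us,ws)$.

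\textbf{Step 3.} This is not really an obstacle, and the reflection does not come from the failure of \Cref{thm: bergeron-sottile}. As the paper notes, $u\le w$ with $w$ $k$-Grassmannian already gives $u\le_k w$. So under your hypothesis $w$ has a right descent $s=s_i$ with $i\ne k$.
\begin{itemize}
\item If $us>u$, your identity drops the dimension.
\item If $us<u$, the same identity applied to $(us,w)$ gives $\Pi_u^w=\Pi_{us}^{ws}$ with $\ell(ws)-\ell(us)=\ell(w)-\ell(u)$. The Bergeron--Sottile conditions give $u\le_k w\iff us\le_k ws$, since $s_i$ only swaps two positions on the same side of $k$ and $(i,i+1)$ is a non-inversion of both $us$ and $ws$.
\end{itemize}
These two facts are exactly what your induction on $\ell(w)-\ell(\pi_k(w))$ needs, and you should state them. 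Also, $us>u$ together with $ws<w$ violates condition (2) at the pair $(i,i+1)$. So when $u\le_k w$ only the second kind of move occurs, which is what makes the reduction in Step 1 above work.

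\textbf{Step 4.} The case is reversed. After peeling a right descent $s$ of $x$ (so $w's<w'$), the case $u's>u'$ is exactly the dimension-dropping case of your identity. Birationality therefore forces $u's<u'$, and then $\Pi=\Pi_{u's}^{w's}$ with the same length difference. That is what produces $\ell(u')=\ell(u'x^{-1})+\ell(x)$, which you state correctly. Only the dimension count is used at each stage, and the closing appeal to uniqueness in \Cref{projected richardsons} is fine.
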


In fact, every positroid variety is the birational projection of some Richardson indexed by a $k$-Bruhat interval. 

\begin{prop}[\protect{\cite[Proposition 1.26]{speyer2024richardsonvarietiesprojectedrichardson}}]
    For every positroid variety $\Pi_u^w = \pi_k(X_u^w)$, there exists $u \leq u' \leq_k w' \leq w$ such that $\pi_k(X_u^w) = \pi_k(X_{u'}^{w'})$.
\end{prop}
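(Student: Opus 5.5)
The plan is to induct on $\ell(w)-\ell(u)$ and, at each step, strictly shrink the interval $[u,w]$ from the inside (raising $u$ or lowering $w$) without changing the image $\pi_k(X_u^w)$. If $u\leq_k w$ already, we take $u'=u$, $w'=w$. Otherwise I would first establish a local lemma about a simple reflection $s\in S_k\times S_{n-k}$. Let $q_s:G/B\to G/P_s$ be the $\bbP^1$-bundle, where $P_s$ is the minimal parabolic of $s$; since $s\in S_k\times S_{n-k}$, the map $\pi_k$ factors through $q_s$. The lemma has two parts.
\begin{enumerate}[(a)]
\item If $ws<w$ and $us>u$, then $q_s(X_u^w)=q_s(X_{us}^w)$, with $us\le w$.
\item If $ws<w$ and $us<u$ \textemdash{} or, symmetrically, if $ws>w$ and $us>u$ \textemdash{} then $q_s(X_u^w)=q_s(X_{u}^{ws})$ or $q_s(X_u^w)=q_s(X_{us}^{w})$ respectively, whenever the new interval is still nonempty.
\end{enumerate}
Both parts follow from the fact that $q_s^{-1}q_s(X_w)=X_{\max(w,ws)}$, from the analogous statement for opposite Schubert varieties, and from irreducibility of Richardson varieties. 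Comparing dimensions then shows that the image is unchanged. In every case the new interval lies inside $[u,w]$ and is strictly shorter, so pushing forward through $\pi_k$ preserves $\Pi$.

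Second, I would show that whenever $u\not\leq_k w$ (with $u\le w$), some simple $s\in S_k\times S_{n-k}$ applies as in the lemma, or else the failure can be repaired by a lower-level reduction. For this I would use the Bergeron--Sottile characterization, \Cref{thm: bergeron-sottile}. If condition (2) fails, there are positions $a<b$ on the same side of $k$ with $u(a)<u(b)$ and $w(a)>w(b)$. Choosing such a pair with $b-a$ minimal should produce adjacent positions $a,a+1$, where $s=s_a$ is an ascent of $u$ and a descent of $w$. Part (a) of the lemma then replaces $u$ by $us$. If instead condition (1) fails, I would reduce to the previous case by right-multiplying both $u$ and $w$ by suitable elements of $S_k\times S_{n-k}$. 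This should not change $\pi_k(X_u^w)$, by repeated use of the lemma, and it makes the relevant pairs comparable.

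The hard step, I expect, is this second one: turning the failure of the Bergeron--Sottile conditions into an adjacent simple reflection that is an ascent of $u$ and a descent of $w$ (or the symmetric configuration), without leaving the constraint $u\le u'\le w'\le w$. The minimality argument on $b-a$ has to use Bruhat-order tableau criteria carefully, because intermediate values may interleave.

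As a fallback, I would argue by dimension. Among all pairs $u\le u'\le w'\le w$ with $\pi_k(X_{u'}^{w'})=\Pi$, choose one with $\ell(w')-\ell(u')$ minimal. If $\pi_k|_{X_{u'}^{w'}}$ had positive-dimensional generic fibers, the lemma above would produce a strictly smaller interval with the same image, contradicting minimality. Therefore $\pi_k$ is birational on $X_{u'}^{w'}$, and so $u'\leq_k w'$ by the preceding proposition (\cite[Proposition 1.27]{speyer2024richardsonvarietiesprojectedrichardson}).
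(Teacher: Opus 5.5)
The paper gives no proof of this statement (it is quoted from Speyer's survey), so your proposal has to stand on its own. It has genuine gaps exactly where the content lies.

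\textbf{Part (b) of your local lemma is false.} Part (a) is correct. But if $us<u$, then $B^-uB/B$ is the smaller $B^-$-orbit in $B^-uP_s/B$ and maps isomorphically onto $B^-uP_s/P_s$. So $q_s$ is generically injective on $X_u^w$, and $\dim q_s(X_u^w)=\ell(w)-\ell(u)>\dim X_u^{ws}$. For example, $u=s_2$, $w=w_0$ in $S_3$ with $s=s_2$ gives a surface versus a curve. The case $ws>w$, $us>u$ fails symmetrically, because $q_s$ is injective on $BwB/B$. What is true when $s$ is a descent of both endpoints (or an ascent of both) is the simultaneous shift $q_s(X_u^w)=q_s(X_{us}^{ws})$, and that leaves $[u,w]$. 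So the branch ``condition (1) fails'', which rests on repeated use of the lemma to right-multiply both endpoints, has no support.

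\textbf{The adjacency claim in the other branch is also false.} Take $n=4$, $k=3$, $u=2134$, $w=4132$ (one-line notation). Then $u\le w$, condition (1) of Bergeron--Sottile holds, and condition (2) fails only for the non-adjacent pair $(1,3)$. Moreover $s_1$ is a descent of both $u$ and $w$, and $s_2$ is an ascent of both, so lemma (a) has nothing to act on. Nevertheless $\Pi_u^w=\{p_{234}=0\}$ has dimension $2<3=\ell(w)-\ell(u)$. The only codimension-one subintervals with the same image are $[2134,3142]$ and $[3124,4132]$, both reached through the non-simple reflection $(1\,3)$.

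\textbf{The same example refutes the key sentence of your fallback.} The minimal-length skeleton is right, but the shrinking step needs a geometric input rather than a simple-reflection lemma:
\begin{enumerate}
\item The open Richardson $\mathring{X}_{u'}^{w'}$ is affine.
\item Every fiber of the proper map $\pi_k|_{X_{u'}^{w'}}$ is projective and has dimension at least $\ell(w')-\ell(u')-\dim\Pi>0$, so it cannot lie in the affine open part and must meet the boundary.
\item The boundary is the union of the $X_{u''}^{w'}$ with $u'\lessdot u''\le w'$ and the $X_{u'}^{w''}$ with $u'\le w''\lessdot w'$. So these finitely many Richardsons cover $\Pi$ under $\pi_k$.
\item Irreducibility of $\Pi$ forces one of them to surject onto $\Pi$, contradicting minimality.
\end{enumerate}
Even then you only get generic finiteness. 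To invoke the birationality criterion you quote, you also need connectedness of the fibers of $\pi_k|_{X_{u'}^{w'}}$, or the dimension form of the criterion: $u'\leq_k w'$ iff $\dim\pi_k(X_{u'}^{w'})=\ell(w')-\ell(u')$.
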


Observe that if $u\leq_k w$, then since $\pi_k:X_u^w\to \Pi_u^w$ is birational, $\Pi_u^w$ is $\ell(w) - \ell(u)$ dimensional.

One cannot quite say that $\Pi^{w'}_{u'} \subset \Pi^w_u$ if and only if $[u', w'] \subset [u, w]$ (as either $k$-Bruhat or classical Bruhat intervals). 

\begin{exmp}
    Set $k=2$ and $n=3$. Let $u,w \in S_3$ where $u=213$ and $w=231$. Then, $u\leq_2 w$, and $\Pi_u^w$ contains the $T$-fixed point corresponding to the set $\{1,2\}$.  This point equals $\Pi_{123}^{123}$, but $[123,123]$ is not contained in $[213,231]$.  
\end{exmp}

Thus, one should be slightly more careful. We define an equivalence relation on $k$-Bruhat intervals by declaring $[u, w] \sim [x, y]$ if there exists a $z \in S_k \times S_{n-k}$ with length additive factorizations $$uz = x,\:wz = y.$$ Write $\langle u, w\rangle$ for the equivalence class of a $k$-Bruhat interval $[u, w]_k$, and write $\mathcal{Q}(k,n)$ for the poset of all such equivalence classes with the relation $q' \leq q$ if and only if there exist representatives for $q'$ and $q$ such that $[u', w']_k \subset [u, w]_k$. Then we have the following 

\begin{thm}[\protect{\cite[Theorem 5.9]{KLS1}}]\label[theorem]{calQ poset}
    The poset of positroid varieties ordered by inclusion is isomorphic to the poset $(\mathcal{Q}(k,n), \leq)$ defined above: $\Pi^{w'}_{u'} \subset \Pi^w_u$ if and only if $\langle u', w' \rangle \leq \langle u, w \rangle$.
\end{thm}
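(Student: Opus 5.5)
The plan is to prove the two implications separately. The direction ``$\langle u',w'\rangle \leq \langle u,w\rangle \Rightarrow \Pi_{u'}^{w'}\subset \Pi_u^w$'' should be formal. The converse is where the real content lies.

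\textbf{Well-definedness and the easy direction.} First I check that $\langle u,w\rangle\mapsto \pi_k(X_u^w)$ does not depend on the representative. Suppose $x=uz$ and $y=wz$ with $z\in S_k\times S_{n-k}$ and both factorizations length additive. The second part of \cite[Proposition 1.27]{speyer2024richardsonvarietiesprojectedrichardson} says exactly that the birational representatives of a given positroid differ by such right multiplications. So $\pi_k(X_x^y)=\pi_k(X_u^w)$.

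Now suppose $[u',w']_k\subset[u,w]_k$ for some representatives. Since $\leq_k$ implies Bruhat order, we get $u\le u'\le w'\le w$. Hence $X_{u'}^{w'}\subset X_u^w$, and applying $\pi_k$ gives $\Pi_{u'}^{w'}\subset\Pi_u^w$. Surjectivity of the map onto positroid varieties follows from \Cref{projected richardsons} together with \cite[Proposition 1.26]{speyer2024richardsonvarietiesprojectedrichardson}. Injectivity will follow once the order is shown to be reflected.

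\textbf{The hard direction.} Assume $\Pi_{u'}^{w'}\subset\Pi_u^w$. I would translate both sides into cyclic rank conditions. The steps are:
\begin{enumerate}[(i)]
\item Attach to each class $\langle u,w\rangle$ the bounded affine permutation $f_{u,w}=u\,t_{\omega_k}\,w^{-1}$. Check that it is constant on equivalence classes, since the $z$ cancels.
\item Show that $\Pi_u^w$ equals the set of $V\in\Gr(k,n)$ whose cyclic interval ranks satisfy $\operatorname{rank}(V;[i,j])\le r_{f_{u,w}}(i,j)$, where $r_f$ counts elements of the window sent into $[i,j]$. I would prove this by expressing $\Pi_u^w$ as an intersection of cyclically shifted Schubert varieties. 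The inclusion ``$\subset$'' holds because each defining condition is closed and holds on the open Richardson. For ``$\supset$'' I would compare dimensions and use irreducibility of the rank locus.
\item From the inclusion of varieties, compare the ranks at $T$-fixed points, or directly compare the rank conditions, to get $f_{u',w'}\ge f_{u,w}$ in affine Bruhat order. Here one uses that affine Bruhat order on bounded affine permutations is characterized by comparing the functions $r_f$.
\item Finally, prove that $\langle u,w\rangle\mapsto f_{u,w}$ is an order-reversing isomorphism from $\mathcal Q(k,n)$ onto bounded affine permutations. Given $f_{u',w'}\ge f_{u,w}$, lift a chain of affine covers to $k$-Bruhat covers. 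Then choose $z$ so that $[u'z,w'z]_k\subset[u,w]_k$, using the Bergeron--Sottile criterion \Cref{thm: bergeron-sottile} to verify each step.
\end{enumerate}

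\textbf{Main obstacle.} I expect step (iv) to be hardest: constructing compatible representatives inside a fixed $k$-Bruhat interval from an affine Bruhat relation. My first attempt would be induction on $\ell(w)-\ell(u)$. One peels off a single $k$-Bruhat cover $u\lessdot_k \bar u$ or $w'\lessdot_k w$ that moves one step toward the smaller class. One then shows, via \Cref{thm: bergeron-sottile}, that the right-multiplication adjustment by $S_k\times S_{n-k}$ can always be absorbed while keeping lengths additive.
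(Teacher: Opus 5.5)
Your architecture is the one behind \cite[Theorem 5.9]{KLS1}, which the paper cites without reproving: pass to bounded affine permutations via $f_{u,w}=u\,t_{\omega_k}w^{-1}$, use that affine Bruhat order on them is detected by cyclic rank functions, and use the combinatorial order-reversing isomorphism (iv) with $\mathcal{Q}(k,n)$. The easy direction and step (i) are fine. The gap is the geometric bridge (ii)--(iii).

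For ``$\supset$'' you invoke irreducibility and the dimension of the rank locus $Y_f=\{V:\operatorname{rank}(V;[i,j])\le r_f(i,j)\}$, but neither is available beforehand. $Y_f$ is an intersection of Schubert varieties for the $n$ cyclically shifted coordinate flags, which are far from mutually generic, so no Kleiman- or Richardson-type argument applies. Moreover, $Y_f$ is the union of the exact-rank strata $\mathring\Pi_g=\{V:\text{cyclic rank matrix of }V=r_g\}$ over $g\ge f$, with $\mathring\Pi_f$ open in it. So irreducibility of $Y_f$ already contains the closure relations $\mathring\Pi_g\subset\overline{\mathring\Pi_f}$ for $g\ge f$, which are statements of exactly the kind you are trying to prove. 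In \cite{KLS1} the equality $\Pi_u^w=Y_{f_{u,w}}$ is an output, not an input.

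More importantly, the fact the hard direction actually needs is never established. ``$\subset$'' gives only upper bounds on ranks. From $\Pi_{u'}^{w'}\subset\Pi_u^w\subset Y_{f_{u,w}}$, or from $Y_{f'}\subset Y_f$, you can deduce $r_{f_{u',w'}}\le r_{f_{u,w}}$ only if some point of $\Pi_{u'}^{w'}$ attains the bounds $r_{f_{u',w'}}$ exactly. Comparing $T$-fixed points would work only after proving that $\max_{I\in\pi_k([u,w])}|I\cap[i,j]|=r_{f_{u,w}}(i,j)$ for every cyclic interval, which you do not address.

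The missing ingredient is that every point of $\pi_k(\mathring X_u^w)$ has cyclic rank matrix \emph{exactly} $r_{f_{u,w}}$. This is where \cite{KLS1} does the geometric work. Together with the partition of $\Gr(k,n)$ into open projected Richardsons and with (iv), it identifies $\pi_k(\mathring X_u^w)$ with $\mathring\Pi_{f_{u,w}}$.

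With exactness in hand you can drop ``$\supset$'' altogether. Any point of $\pi_k(\mathring X_{u'}^{w'})$ has ranks $r_{f_{u',w'}}$ and lies in $\Pi_u^w$, all of whose points have ranks at most $r_{f_{u,w}}$. Hence $f_{u',w'}\ge f_{u,w}$, and (iv) gives $\langle u',w'\rangle\le\langle u,w\rangle$.

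Separately, the quoted part of Speyer's Proposition 1.27 says every birational representative has the form $[ux,wx]$, which is the converse of what well-definedness needs. The required equality $\pi_k(X_{uz}^{wz})=\pi_k(X_u^w)$ follows instead from the $\mathbb{P}^1$-bundle $G/B\to G/P_s$ for each simple $s\in S_k\times S_{n-k}$ with $us>u$ and $ws>w$, by induction on $\ell(z)$.
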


The poset structure can be described combinatorially as well. The following is due to Rietsch and can be found in \cite[Proposition 3.6]{KLS2}. 
\begin{prop}[Rietsch] 
   Fix $u\leq_k w$. We have $q \leq \langle u,w\rangle$ in $\mathcal{Q}(k,n)$ if and only if there exists some representative $[u',w']\in q$ such that $u\leq u'\leq_k w'\leq w$. 
\end{prop}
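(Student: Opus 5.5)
The plan is to reduce the statement to \Cref{calQ poset}, together with the description of which Richardson varieties project birationally onto a given positroid. First I fix notation. Suppose $q\leq \langle u,w\rangle$. By definition of $\leq$ in $\mathcal{Q}(k,n)$ there are representatives $[x,y]\in\langle u,w\rangle$ and $[u',w']\in q$ with $[u',w']_k\subset[x,y]_k$, which means $x\leq_k u'\leq_k w'\leq_k y$. Since $[x,y]\sim[u,w]$, there is $z\in S_k\times S_{n-k}$ with $x=uz$, $y=wz$, and both factorizations length additive.

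The goal is to transport $[u',w']$ back to a representative sitting inside the Bruhat interval $[u,w]$. There are two natural routes.

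\emph{Geometric route.} Work with the Richardson varieties instead of with the permutations directly. Since $uz$ is length additive and $z\in S_k\times S_{n-k}$, we have $\pi_k(X_{uz}^{wz})=\Pi_u^w$. Then $X_{u'}^{w'}\subset X_{uz}^{wz}$, because Bruhat order contains $k$-Bruhat order. Projecting gives $\Pi_{u'}^{w'}\subset\Pi_u^w$, but containment of projected varieties is what we already know. What we actually need is a representative of $q$ lying inside $[u,w]$.

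\emph{Combinatorial route.} Fix the $k$-Grassmannian representative. By \Cref{projected richardsons} we may take $w$ to be $k$-Grassmannian, after replacing $[u,w]$ by its unique representative with $w$ minimal in its coset. Then $y=wz$ with $w$ a minimal coset representative. Apply the standard lifting property of parabolic quotients (Deodhar's $\min/\max$ lemma, \cite{Bjorner-Brenti}) to the chain $uz\leq u'\leq w'\leq wz$. Write $w'=w''z'$ with $z'\in S_k\times S_{n-k}$ chosen so that $w''\leq w$. Then set $u''=u'(z')^{-1}$, and check that $u\leq u''\leq_k w''$ with $[u'',w'']\sim[u',w']$.

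For the check I will use \Cref{thm: bergeron-sottile}, because right multiplication by an element of $S_k\times S_{n-k}$ permutes positions within $[k]$ and within $[k+1,n]$. Condition (1) is then stable under this, and condition (2) transforms predictably. So $u''\leq_k w''$ follows from $u'\leq_k w'$, provided the length additivity holds.

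The converse direction is immediate. If $u\leq u'\leq_k w'\leq w$, then $\Pi_{u'}^{w'}\subset\pi_k(X_u^w)=\Pi_u^w$. By \Cref{calQ poset} this gives $\langle u',w'\rangle\leq\langle u,w\rangle$.

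The main obstacle is showing that $u\leq u''$. Right-multiplying by $(z')^{-1}$ need not preserve the inequality $uz\leq u'$ in general. The plan is to do the stripping one simple reflection $s_i$ at a time, with $i\neq k$. At each step, use the lifting property: if $xs<x$ and $ys<y$ then $x\leq y \Leftrightarrow xs\leq ys$, and the mixed cases behave similarly. Inducting on $\ell(z)$ then reduces everything to the case $z=s_i$. That base case is a small case check, and it should succeed because $s_i$ preserves $\pi_k$.
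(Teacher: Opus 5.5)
The paper gives no argument of its own for this statement; it quotes it from \cite[Proposition 3.6]{KLS2}, where it is attributed to Rietsch. Your ``if'' direction is fine. Peeling off simple reflections $s_i$ with $i\neq k$ one at a time, using the lifting property, is a workable strategy for ``only if''. However, the proposal stops exactly where the content is, and the recipe you do state is insufficient.

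Choosing $z'$ merely so that $w''\le w$ and putting $u''=u'(z')^{-1}$ does not work. Take $k=1$, $n=3$, $w=s_2s_1$ (one-line $312$, which is $1$-Grassmannian) and $u=s_2$, so $\Pi_u^w$ is the coordinate line through $e_1,e_3$. Let $q$ be the point $e_1$, with representatives $[e,e]$ and $[s_2,s_2]$. Since $\langle u,w\rangle$ has the single representative $[s_2,s_2s_1]$, you get $z=e$ and $[u',w']=[s_2,s_2]$. The minimal-coset choice $w''=\pi_k(w')=e$, $z'=s_2$ satisfies $w''\le w$ but gives $u''=e\not\geq u$.

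What is needed is a one-step lemma with only Bruhat relations on the outside, since that is all that survives a step. If $s=s_i$ with $i\neq k$, $us>u$, $ws>w$ and $us\le u'\le_k w'\le ws$, then some representative of $\langle u',w'\rangle$ lies in $[u,w]$. The rule is as follows.
\begin{itemize}
\item If $s$ is a right descent of both $u'$ and $w'$, pass to $[u's,w's]$. This is a length-additive representative, still a $k$-Bruhat interval by \Cref{thm: bergeron-sottile}, and lifting gives $u\le u's$ and $w's\le w$.
\item If $w's>w'$, keep $[u',w']$. Then $u<us\le u'$, and lifting gives $w'\le w$.
\end{itemize}
The remaining configuration, $u's>u'$ with $w's<w'$, defeats both options. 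Keeping can violate $w'\le w$ (e.g.\ if $w'=ws$), and $[u's,w's]$ is not a length-additive representative. It cannot occur precisely because $u'\le_k w'$: condition (2) of \Cref{thm: bergeron-sottile}, applied at the positions $i<i+1$, which lie in the same block, forbids it. This is the missing idea. The base case works because of the $k$-Bruhat relation between $u'$ and $w'$, not because $s_i$ preserves $\pi_k$.

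Second, the proposition concerns a fixed representative $[u,w]$, and the conclusion $u\le u'\le_k w'\le w$ depends on which representative is fixed. So replacing $[u,w]$ by its Grassmannian representative $[u_0,w_0]$ is not a harmless normalization. Without that replacement, $x=uz$ is not justified either, since in general $x=u_0z_x$ and $u=u_0z_u$ with unrelated $z_x,z_u$.

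After descending from $[x,y]$ to $[u_0,w_0]$, you must climb back to $[u_0z_u,w_0z_u]$ using the mirror-image step. Suppose $u\le u'\le_k w'\le w$, $us>u$ and $ws>w$.
\begin{itemize}
\item If $s$ is a right ascent of both $u'$ and $w'$, pass to $[u's,w's]$.
\item If $s$ is a right descent of $u'$, keep $[u',w']$.
\end{itemize}
The same lifting arguments apply, and the mixed case is again excluded by condition (2) of \Cref{thm: bergeron-sottile}.

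With these two one-step lemmas your induction on lengths goes through. The length additivity you left as a proviso is then automatic, since you only ever strip common right descents or append common right ascents.
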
 
In particular, it follows that if $u'\leq_kw'$ and $u\leq_k w$, with the inclusion of Bruhat intervals, $[u',w']\subset [u,w]$, then $\langle u',w'\rangle \leq \langle u,w\rangle$. This also follows from the interpretation of positroid varieties as projected Richardson varieties. 
 
The above discussion implies that we may index a positroid variety $\Pi$ by an equivalence class of $k$-Bruhat intervals in $\mathcal{Q}(k,n)$. Moreover, one can show that every equivalence class in $\mathcal{Q}(k,n)$ has a unique representative $[u,w]_k$ such that $w$ is $k$-Grassmannian. Moreover, when $w$ is $k$-Grassmannian, we have that $u\leq w$ in Bruhat order implies $u\leq_k w$ in $k$-Bruhat order. Hence, we may also index positroid varieties by permutations $u\leq w$ in Bruhat order, where $w$ is $k$-Grassmannian. 

We now introduce a natural simplicial complex associated to positroid varieties. 

\begin{defn}
    Given a finite poset $Q = (V(Q), \leq)$, we write $\Delta(Q)$ to denote the \textit{order complex} of $Q$, the simplicial complex with vertex set $Q$ and faces $\{v_1,\ldots, v_m\}$ where $v_1 <\cdots <v_m$ is a chain in $Q$. 
\end{defn}
For notational ease, we let $\Delta[u,w]$ denote the order complex $\Delta([u,w])$. 
    
\begin{defn}
    Let $[u, w] \subset S_n$ be an interval in Bruhat order. We write $\pi_k(\Delta[u,w])$ to denote the simplicial complex whose vertex set is $\pi_k([u,w])$ and whose faces are the images of $\Delta[u, w]$ under $\pi_k$. 
    \end{defn}
    
Observe that $\pi_k$ might not preserve the dimension of faces of $\Delta[u,w]$.  For example, if $w_1 < w_2$ but $\pi_k(w_1) = \pi_k(w_2)$, then the edge of $\Delta[u,w]$ corresponding to $w_1 < w_2$ projects to a vertex in $\pi_k(\Delta[u,w])$. In fact, $\pi_k(\Delta[u,w])$ might not even be an order complex! See \cite[Remark 8.2]{KLS0}. 

Let $u\leq_k w$. Then, $\pi_k(\Delta[u,w])$ has a number of nice properties, which we detail below. For definitions of terms we do not define, see \cite[Appendix A2]{Bjorner-Brenti}. 

\begin{prop}[\protect{\cite[Corollary 8.8, Lemma 10.1, Theorem 10.2]{KLS0}}]\label[proposition]{shellable}
    For any interval $[u, w] \subset S_n$, $\pi_k(\Delta[u,w])$ is shellable, subthin, and of pure dimension $\ell(w)-\ell(u)$. Moreover, $\pi_k(\Delta[u,w])$ is homeomorphic to a ball. 
\end{prop}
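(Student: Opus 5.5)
This proposition is cited from \cite{KLS0}, so the plan is to reconstruct their argument rather than derive it from scratch. The key tool is Dyer's EL-shellability of Bruhat intervals together with the parabolic projection. Pure dimensionality and shellability come from a recursive coatom-type ordering of the maximal faces of $\pi_k(\Delta[u,w])$. The ball conclusion then follows from the standard criterion: a shellable, subthin pseudomanifold with nonempty boundary is a PL ball \cite[Appendix A2]{Bjorner-Brenti}.

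\textbf{Step 1 (reduction and purity).} First replace $[u,w]$ by a $k$-Bruhat interval $[u',w']$ with the same projected complex, using the analogue of \cite[Proposition 1.26]{speyer2024richardsonvarietiesprojectedrichardson} at the level of posets. A maximal chain $u=x_0\lessdot x_1\lessdot\cdots\lessdot x_\ell=w$ in $[u,w]$ projects to a face whose vertices are the distinct values $\pi_k(x_i)$. I will show that every facet of $\pi_k(\Delta[u,w])$ is the image of a maximal chain in which every cover is a $k$-Bruhat cover. When $u\le_k w$, Theorem~\ref{thm: bergeron-sottile} implies that the interval $[u,w]_k$ is graded of rank $\ell(w)-\ell(u)$, so each such facet has $\ell(w)-\ell(u)+1$ distinct vertices. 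For purity I must also check that any chain whose projection fails to be injective has image contained in the image of some $k$-Bruhat chain. The plan is to push non-$k$ covers to the ends of the chain, using the exchange property inside $S_k\times S_{n-k}$.

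\textbf{Step 2 (shelling).} Fix a reflection order and order the $k$-Bruhat maximal chains lexicographically by their edge labels, following Dyer's EL-labeling. The shelling condition for the images is that for chains $C'<C$ there is an earlier $C''$ with $\pi_k(C)\cap\pi_k(C')\subseteq\pi_k(C)\cap\pi_k(C'')$ and $|\pi_k(C)\setminus\pi_k(C'')|=1$. I expect to prove this by the standard descent-swap argument: at the first place $C$ has a label descent, swap the two labels. I will then check that the resulting chain $C''$ is still $k$-Bruhat and still gives a distinct facet.

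\textbf{Step 3 (subthinness and the ball conclusion).} I need to show that each codimension-one face lies in at most two facets, and that some face lies in exactly one. A codimension-one face comes from a $k$-chain with one element deleted, which leaves a length-two gap. Since Bruhat intervals of length two have exactly two atoms, this gap has at most two completions. Some of these completions may be non-$k$ or may project to the same face, so the count is at most two, and it is exactly one on the boundary, for example when the deleted element is $u$ or $w$. A shellable, subthin complex with nonempty boundary is a PL ball.

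\textbf{Main obstacle.} The hardest part is Step 2, verifying that the swapped chain remains a $k$-Bruhat chain and produces a genuinely new facet after projection. The difficulty is that distinct chains can collapse to the same facet under $\pi_k$. I would handle this by choosing canonical chain representatives, namely lex-minimal within each fiber of $\pi_k$, and then using the Bergeron–Sottile criterion to control length-two intervals in $k$-Bruhat order.
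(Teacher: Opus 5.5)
\textbf{Verdict: there is a genuine gap in Steps 2 and 3, and Step 1 is incomplete.} The paper does not prove this proposition; it imports it from \cite{KLS0}. Your overall architecture is sound: purity, shelling, subthinness, then the criterion that a shellable subthin complex with nonempty boundary is a ball. The problem is that Steps 2 and 3 assume two facets $\pi_k(C),\pi_k(C')$ interact only through elements common to the chains $C,C'\subset[u,w]_k$. Under that assumption, adjacency comes from exchanging one element inside a length-two Bruhat interval. This is false, because $\pi_k$ is not injective on $[u,w]_k$.

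Take $k=3$, $n=5$, $u=32145$, $w=34512$. The only saturated $k$-Bruhat chains from $u$ to $w$ are
\[C:\ 32145\lessdot_k 32415\lessdot_k 32514\lessdot_k 34512,\qquad C':\ 32145\lessdot_k 34125\lessdot_k 34215\lessdot_k 34512,\]
with images $\{123,234,235,345\}$ and $\{123,134,234,345\}$. These facets share the codimension-one face $\{123,234,345\}$, yet $C\cap C'=\{u,w\}$. The vertex $234$ is $\pi_k(32415)$ in $C$ but $\pi_k(34215)$ in $C'$. In fence language, the two facets are the reduced fence diagrams of shape $(2,2,2)$ with column words $(s_1s_2)(s_1)$ and $(s_2)(s_1s_2)$, which differ by a braid move spread across two columns.

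This example breaks each of your mechanisms.
\begin{enumerate}
\item The order complex of $[u,w]_k=\{u,32415,32514,34125,34215,w\}$ is two tetrahedra glued along the edge $\{u,w\}$, which is not shellable. So the shelling of the image cannot be inherited from anything internal to $[u,w]_k$.
\item Each length-two subinterval along $C$ or $C'$ has exactly one $k$-Bruhat completion. The other atom always produces a non-$k$ cover: $32145\lessdot 32154$, $32415\lessdot 34215$, $34125\lessdot 34152$. So whichever facet you list second, your descent swap in Step 2 yields a non-$k$ chain. The only admissible $C''$ is the other chain, which differs in two elements.
\item In Step 3, deleting $235$ from $\pi_k(C)$ leaves the gap $[32415,34512]$ with a single $k$-completion. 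Your count would therefore place $\{123,234,345\}$ on the boundary, but it lies in two facets. Counting completions of a gap in one fixed chain neither bounds the number of facets through a codimension-one face nor detects the boundary.
\end{enumerate}

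Your stated ``main obstacle'' is also misdiagnosed. By \Cref{maximal chains bij}, distinct saturated $k$-chains never give the same facet. The real difficulty is that distinct \emph{elements} of $[u,w]_k$ can share a projection.

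Step 1 is only sketched. In $32145\lessdot 32154\lessdot_k 34152\lessdot_k 34512$ the non-$k$ cover already sits at the bottom. Yet $34152\notin[u,w]_k$, and the image lies in a facet only through the different permutation $34125$. So ``push the non-$k$ covers to the ends'' does not finish the purity argument.

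A correct proof must relate chains passing through different permutations in the same fiber of $\pi_k$. Local label swaps inside $[u,w]_k$ cannot see these identifications.
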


It actually suffices to consider the $k$-Bruhat interval $[u,w]_k$, and fix a representative in $\mathcal{Q}(k,n)$, as justified by the following proposition. 

\begin{prop}[\protect{\cite[Lemma 8.7, Corollary 8.8]{KLS0}}]\label[proposition]{maximal chains bij}
The complex $\pi_k(\Delta[u,w])$ is independent of choice of representative of $\langle u,w\rangle \in \mathcal{Q}(k,n)$. Furthermore, we have $\pi_k(\Delta[u,w]) = \pi_k(\Delta[u,w]_k)$. For $u \leq_k w$, the map from saturated $k$-Bruhat chains in $[u, w]$ to facets of $\pi_k(\Delta[u,w])$ is bijective.
\end{prop}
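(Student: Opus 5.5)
The statement has three parts: independence of the representative, the identity $\pi_k(\Delta[u,w]) = \pi_k(\Delta[u,w]_k)$, and the bijection between saturated $k$-Bruhat chains and facets. I would prove them in the order: second part, third part, first part.

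\emph{Step 1: the identity.} The inclusion $\pi_k(\Delta[u,w]_k)\subseteq \pi_k(\Delta[u,w])$ is clear, since every $k$-Bruhat chain is a Bruhat chain. For the reverse inclusion, take a Bruhat chain $u\le x_1<\cdots<x_m\le w$ with $u\le_k w$. I would produce a $k$-Bruhat chain $u\le_k y_1\le_k\cdots\le_k y_m\le_k w$ with $\pi_k(y_i)=\pi_k(x_i)$ for every $i$. The natural candidate for $y_i$ is a modification of $x_i$ by an element of $S_k\times S_{n-k}$, which leaves the image under $\pi_k$ unchanged. For instance, one can choose $y_i$ in the coset $x_i(S_k\times S_{n-k})$ lying in $[u,w]$ and compatible with $u$ and $w$ in the sense of the Bergeron--Sottile conditions (\Cref{thm: bergeron-sottile}). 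The verification then amounts to checking condition (1), which constrains values in the two blocks $\{1,\dots,k\}$ and $\{k+1,\dots,n\}$, and condition (2), which forbids inversions created inside a block. A greedy or inductive construction along the chain, re-sorting each block of $y_i$ to agree with the relative order in $u$ (equivalently in $w$) while keeping the set $x_i([k])$, should satisfy both conditions. This is the step I expect to be the main obstacle: showing that the re-sorted permutations still lie between their neighbours in $k$-Bruhat order needs a careful tableau/Gale-order comparison.

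\emph{Step 2: the bijection.} Since $\le_k$ is graded by length, a saturated $k$-Bruhat chain $u=z_0\lessdot_k z_1\lessdot_k\cdots\lessdot_k z_\ell=w$ has $\ell=\ell(w)-\ell(u)$. Each cover changes $\pi_k$, so the projected face has $\ell+1$ distinct vertices. By the purity statement in \Cref{shellable}, this face is a facet. Surjectivity onto facets follows from Step 1: every facet is the image of some $k$-chain, and that chain can be refined to a saturated one, whose image must equal the facet by maximality. For injectivity, I would show that the sequence of sets $\pi_k(z_i)$ determines the $z_i$ inductively from $z_0=u$. Indeed, $z_{i+1}=z_i t$ for a transposition $t=(a\,b)$ with $a\le k<b$, and $t$ is determined by which value leaves and which value enters $z_i([k])$.

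\emph{Step 3: independence of the representative.} If $x=uz$ and $y=wz$ are length-additive with $z\in S_k\times S_{n-k}$, then right multiplication by $z$ gives a poset isomorphism $[u,w]_k\to[x,y]_k$ commuting with $\pi_k$. This uses the criterion of \Cref{thm: bergeron-sottile}, which is invariant under relabelling positions within each block by $z$, together with length additivity. Applying Step 1 to both intervals then shows that the two projected complexes coincide.
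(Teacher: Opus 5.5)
\textbf{The gap is Step 1.} Steps 2 and 3 are essentially formal once you know that every Bruhat chain in $[u,w]$ has a $k$-Bruhat chain with the same projections. That lifting statement is the whole content of what the paper quotes from Knutson--Lam--Speyer, and your sketch does not supply it. Worse, the construction you propose cannot work.

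Condition (2) of \Cref{thm: bergeron-sottile} only says that a within-block non-inversion of $u$ remains a non-inversion of $w$. The converse fails, so the parenthetical ``equivalently in $w$'' is false: $u$ and $w$ need not have the same relative order inside the blocks. Moreover, elements of $[u,w]_k$ genuinely change block pattern.

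Take the paper's own example $k=2$, $u=2143$, $w=3412$. Here $[u,w]=[u,w]_k=\{2143,3142,2413,3412\}$, with saturated chains $2143\lessdot_2 3142\lessdot_2 3412$ and $2143\lessdot_2 2413\lessdot_2 3412$. Both blocks of $u$ and of $3142$ are decreasing, while both blocks of $w$ and of $2413$ are increasing.
\begin{itemize}
\item Re-sorting to the pattern of $u$ sends $w$ to $4321$ and $2413$ to $4231$, neither of which is $\le w$.
\item Re-sorting to the pattern of $w$ sends $u$ to the identity.
\end{itemize}
So no fixed re-sorting rule produces the lift. The block pattern of $y_i$ must depend on the element, and you give no mechanism for choosing it.

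The same example shows the verification cannot be a Gale-order comparison of the sets $x_i([k])$. We have $\{1,3\}<\{2,4\}$ in Gale order, and both are projections of elements of $[u,w]$. Yet they span no face of $\pi_k(\Delta[u,w])$: in shorthand, the facets are $\{12,13,34\}$ and $\{12,24,34\}$. Any correct argument must therefore use the Bruhat relations $x_i\le x_{i+1}$ themselves. Since surjectivity in Step 2 and the conclusion of Step 3 both pass through Step 1, the proposal as written does not prove the statement.

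\textbf{Smaller corrections.}

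In Step 3, condition (2) of \Cref{thm: bergeron-sottile} is \emph{not} invariant under an arbitrary relabelling of positions within blocks, precisely because it is one-directional. What makes $v\mapsto vz$ a $\pi_k$-compatible isomorphism $[u,w]_k\to[uz,wz]_k$ is length-additivity.
\begin{itemize}
\item Length-additivity of $uz$ says $u$ is increasing on every pair $p<q$ in a block with $z^{-1}(p)>z^{-1}(q)$. By condition (2) for $u\le_k v$, so is every $v\in[u,w]_k$. Hence $\ell(vz)=\ell(v)+\ell(z)$, and condition (2) is vacuous on those pairs.
\item Dually, $wz$ is decreasing on the inversions of $z$, hence so is every $v''\le_k wz$. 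This gives the inverse map.
\end{itemize}

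In Step 2, use that a $k$-cover strictly increases $\pi_k$ in Gale order, not merely changes it. This gives distinctness of all vertices, not just consecutive ones, and the ordering you need to recover the chain from the facet.

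You also need not import purity from \Cref{shellable}, which the paper quotes from the same source. Granted Step 1, every face is the image of a $k$-chain. So it has at most $\ell(w)-\ell(u)+1$ vertices and lies in the image of a saturated $k$-chain, which has exactly that many.
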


\subsection{Ehrhart Theory of Polyhedral Complexes}

We now recall facts from the Ehrhart theory of polyhedral complexes. We refer the reader unfamiliar with these concepts to Beck and Robins' text \cite{beck2007computing} for more details. 

\begin{defn}
    A \textit{polyhedral complex} $\calK\subset \bbR^n$ is a set of polyhedra in $\bbR^n$ such that if $\sigma, \tau \in \calK$, then either $\sigma\cap \tau$ is empty or $\sigma\cap \tau \in \calK$. The \textit{underlying space} of $\calK$ is denoted $|\calK|$ and defined as
    \[ \bigcup_{\sigma \in \calK} \sigma.\]
    \noindent where the union is as subsets of $\mathbb{R}^n$.
\end{defn}

When it is not confusing, we abuse notation and refer to $\calK$ both as the polyhedral complex and the underlying space.

\begin{exmp}
    Let $P$ be a polytope. Then, any set of faces of $P$ closed under intersection is a polyhedral complex. We call a polyhedral complex arising in this way a \textit{polyhedral subcomplex} of $P$.
\end{exmp}

A \textit{face} of a polyhedral complex $\calK$ is a face of any of the polyhedra $\sigma \in \calK$. A polyhedral complex is \textit{equidimensional} if all of its maximal faces have the same dimension. It is a \textit{lattice polyhedral complex} if all vertices of $\calK$ are in $\bbZ^n$. We henceforth assume that all polyhedral complexes we consider are equidimensional lattice polyhedral complexes. The \textit{dimension} of a polyhedral complex $\calK$ is defined to be the dimension of any maximal face of $\calK$.

Recall for a polytope $P$, we define the \emph{Ehrhart polynomial}, $\ehr_P(n)$ to be the unique polynomial satisfying 
\[\ehr_P(m) = \#(mP\cap \bbZ^n) \] 
for all $m\in \bbN$. Here, $mP$ denotes the polytope $P$ scaled up by $m$, i.e. the set $\{mx: x\in P\}$. We may also define the interior Ehrhart polynomial:
\[\intehr_P(m) = \#(\int(mP)\cap \bbZ^n), \] 
where $\int_P$ denotes the set of points in $P$ not lying on a proper face of $P$.  

One of the most miraculous features of the Ehrhart polynomial is the following reciprocity result: 
\begin{thm}[Ehrhart–Macdonald reciprocity]
    We have the following equality of polynomials: \[\ehr_P(n) = (-1)^{\dim P} \intehr_P(-n). \] 
\end{thm}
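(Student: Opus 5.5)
The plan is the standard proof via cones and triangulations. Write $d=\dim P$. Since both sides are polynomials, the stated identity is equivalent to $\ehr_P(-m)=(-1)^{d}\intehr_P(m)$ for all $m\geq 1$, and I will prove it in that form. The argument has three steps: handle a single lattice simplex with generating functions, extend to relatively open simplices by Stanley's reciprocity for rational generating functions, and glue along a triangulation of $P$ using a local Euler characteristic computation.

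\emph{Step 1 (simplices).} Let $\Delta$ be a lattice $e$-simplex with vertices $v_0,\dots,v_e$, and set $w_i=(1,v_i)\in\bbZ^{n+1}$.
\begin{itemize}
\item Lattice points of $m\Delta$ correspond to lattice points of height $m$ in the cone $C=\Cone(w_0,\dots,w_e)$.
\item Every lattice point of $C$ is uniquely a point of the half-open parallelepiped $\Pi=\{\sum c_iw_i: 0\leq c_i<1\}$ plus a nonnegative integer combination of the $w_i$. Hence
\[\sum_{m\geq 0}\ehr_\Delta(m)t^m=\frac{h(t)}{(1-t)^{e+1}},\]
where $h(t)=\sum_{x\in\Pi\cap\bbZ^{n+1}}t^{\mathrm{ht}(x)}$.
\item Likewise, the generating function of the open cone is $\tilde h(t)/(1-t)^{e+1}$, where $\tilde h$ counts the lattice points of $\Pi'=\{\sum c_iw_i:0<c_i\leq 1\}$.
\item The involution $x\mapsto \sum_i w_i-x$ is a bijection $\Pi\cap\bbZ^{n+1}\to\Pi'\cap\bbZ^{n+1}$, so $\tilde h(t)=t^{e+1}h(1/t)$.
\end{itemize}
Together these give $\sum_{m\geq1}\intehr_\Delta(m)t^m=(-1)^{e+1}F(1/t)$, where $F$ is the closed generating function. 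Now apply the standard lemma: if $p$ is a polynomial with $F(t)=\sum_{m\geq0}p(m)t^m$ rational, then $\sum_{m\geq1}p(-m)t^m=-F(1/t)$. This yields $\ehr_\Delta(-m)=(-1)^e\intehr_\Delta(m)$ for relatively open lattice simplices.

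\emph{Step 2 (gluing).} Triangulate $P$ into lattice simplices using no new vertices, so $P$ is the disjoint union of the relatively open simplices $\sigma^\circ$ of the triangulation $T$. Then $\ehr_P=\sum_{\sigma\in T}\intehr_\sigma$ as polynomials. Evaluating at $-m$ and applying Step 1 with the roles of open and closed swapped gives
\[\ehr_P(-m)=\sum_{\sigma\in T}(-1)^{\dim\sigma}\ehr_\sigma(m)=\sum_{x\in mP\cap\bbZ^n}\ \sum_{\sigma\ni x}(-1)^{\dim\sigma},\]
where in the last sum $\sigma$ ranges over simplices of $mT$ containing $x$.

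\emph{Step 3 (local Euler characteristic), the main obstacle.} It remains to show that the inner sum equals $(-1)^d$ when $x\in\int(mP)$ and $0$ when $x\in\partial(mP)$. Let $\tau$ be the unique simplex whose relative interior contains $x$. The simplices $\sigma\supseteq\tau$ are in bijection with the faces of $\mathrm{lk}(\tau)$, including the empty face, via $\sigma=\tau*\rho$. Therefore
\[\sum_{\sigma\ni x}(-1)^{\dim\sigma}=(-1)^{\dim\tau+1}\,\tilde\chi(\mathrm{lk}\,\tau),\]
where $\tilde\chi$ is the reduced Euler characteristic.
\begin{itemize}
\item If $x$ is interior, $\mathrm{lk}\,\tau$ is a $(d-\dim\tau-1)$-sphere, so $\tilde\chi=(-1)^{d-\dim\tau-1}$ and the sum is $(-1)^d$.
\item If $x$ is on the boundary, $\mathrm{lk}\,\tau$ is a ball, so $\tilde\chi=0$.
\end{itemize}
I would justify these link facts from the convexity of $P$: near $x$, the triangulation of $P$ is the cone over the link, and $P$ is locally a half-space or a full space. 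This completes the proof.
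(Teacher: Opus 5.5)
Your proof is correct, up to standard facts such as the existence of a lattice triangulation with no new vertices. The paper does not prove this classical statement; it only refers to Beck--Sanyal. The useful comparison is therefore with how the paper handles its generalization, \Cref{general reciprocity}.

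There the authors take polytope reciprocity as known and apply it face by face. They then glue by Möbius inversion over the face poset of $\calK$. The topological input is imported from \cite[Proposition 3.8.9]{EC1}: $\mu(F,\hat 1)=(-1)^{\dim\calK-\dim F+1}$ for interior $F$, and $\mu(F,\hat 1)=0$ for boundary $F$.

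Your Step~3 is exactly the simplicial instance of that input. In the face poset of a triangulation one has $\mu(\tau,\hat 1)=(-1)^{\dim\tau+1}\sum_{\sigma\supseteq\tau}(-1)^{\dim\sigma}$, which is your local sum up to sign.

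Your route gains a little generality. Steps~2--3 use convexity only to know the local topology at $x$. So, given a lattice triangulation, they apply verbatim to any polyhedral complex homeomorphic to a ball, for example $\calF_u^w$ with its canonical triangulation. This proves \Cref{general reciprocity} directly from the simplex case, without passing through general polytopes.

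One tightening. In Step~3 you do not need, and your sketch does not quite establish, that $\mathrm{lk}\,\tau$ is literally a sphere or a ball; that is a PL statement. Only $\tilde\chi(\mathrm{lk}\,\tau)$ enters, and it follows from local homology:
\begin{itemize}
\item The closed star of $\tau$ is the cone from $x$ over $\partial\tau*\mathrm{lk}\,\tau$, which is a $(\dim\tau)$-fold suspension of the link. Hence $H_i(mP,mP\setminus x)\cong\tilde H_{i-\dim\tau-1}(\mathrm{lk}\,\tau)$.
\item If $x$ is in the relative interior, $H_*(mP,mP\setminus x)$ is $\bbZ$ in degree $d$ and zero otherwise.
\item If $x$ is on the boundary, $H_*(mP,mP\setminus x)$ vanishes, because $mP\setminus\{x\}$ is star-shaped about any relative interior point and so is contractible.
\end{itemize}
This gives exactly the reduced Euler characteristics you need.
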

For a proof of this fact, as well as many nice applications in combinatorics, we recommend \cite{Beck-Sanyal}. For a beautiful realization of this theorem as Serre duality for toric varieties, see \cite{FultonToricVarieties}. 

We can define an Ehrhart polynomial for a polyhedral complex $\calK$. Namely, we define 
\[\ehr_{\calK} (m) = \#(m\calK \cap \bbZ^n).\] 
The definition of $\intehr$ is not as clear for an arbitrary polyhedral complex, but if $|\calK|$ is homeomorphic to a manifold with boundary, we declare 
\[\intehr_\calK(m) := \#(\int (m\calK) \cap \bbZ^n),\] 
where $\int (m\calK)$ denotes the interior of $m|\calK|$ viewing it as a manifold with boundary. In the case where the polyhedral complex has sufficiently nice topology, we have a nice generalization of Ehrhart–Macdonald reciprocity. The following is likely known, but we include the proof for lack of an adequate reference.  

\begin{prop}\label[proposition]{general reciprocity} Let $\calK\subset \bbR^N$ be a polyhedral complex homeomorphic to a ball. Then, $$(-1)^{\dim\calK} \ehr_\calK(-m) = \mathrm{intehr}_\calK(m).$$ 
\begin{proof}
Let $Q$ be the poset of faces of $\calK$ with the empty set and $\calK$ appended as a $0$ and $1$ respectively. For all $F\in Q$, we have 
\[\ehr_F(m) = \sum_{F'\leq F}\intehr_{F'}(m).\] Here we declare $\intehr_{\calK}(m) =\intehr_{\emptyset}(m) = 0$. Möbius inversion then implies that \[0 = \intehr_{\calK}(m) = \sum_{F<\calK} \ehr_F(m)\mu_Q(F,\calK)+\ehr_\calK(m). \] 

Plugging in $-m$ and applying Ehrhart–Macdonald reciprocity, we have 
\[\tag{$*$} -\ehr_\calK(-m) = \sum_{F<\calK} \intehr_F(m)\cdot (-1)^{\dim F} \mu(F,\calK).\] 

Now, observe that $\calK$ is a regular CW-complex, with cells given by faces $F\in Q$ and $F\neq \calK, \emptyset$. $Q$ is simply the poset of its cells with a $\hat{0}$ and $\hat{1}$ adjoined. By  \cite[Proposition 3.8.9]{EC1}, we have that for all $F<\calK$, 
$\mu(F,\calK) = (-1)^{\dim\calK-\dim F +1}$ if $\emptyset<F<\calK$ and $F$ is not on the boundary of $\calK$, and $\mu(F,\calK) = 0$ otherwise. With this topological input, we have that ($*$) becomes 
\[\ehr_\calK(-m) = \sum_{F<\calK} \intehr_F(m)\cdot (-1)^{\dim \calK},\] 
as desired. 
\end{proof}
\end{prop}

\begin{rem}
    By \cite[Equations 3.25 and 3.27]{EC1} and surrounding discussion, the above proof works when $\calK$ is homeomorphic to any manifold with boundary and reduced Euler characteristic $0$.
\end{rem}

It is sometimes useful to break a polyhedral complex into more understandable pieces. 

\begin{defn}
    Let $\calK$ be a $d$-dimensional polyhedral complex embedded in $\mathbb{R}^n$. Then, a \textit{triangulation} of $\calK$ is a finite collection $\mathcal{T}$ of $d$-simplices $\Delta \subset \mathbb{R}^n$ with the following properties:

    \begin{itemize}
        \item $\calK = \bigcup_{\Delta \in \mathcal{T}} \Delta$, and
        \item For every $\Delta_1, \Delta_2 \in \mathcal{T}$, $\Delta_1 \cap \Delta_2$ is a face of both $\Delta_1$ and $\Delta_2$.
    \end{itemize}
\end{defn}

\begin{defn}
    A triangulation $\mathcal{T}$ is called \textit{unimodular} if, for every $\Delta = \text{conv} \{v_0, ..., v_d\} \in \mathcal{T}$, the vectors $\{v_i - v_0\}_{i \in [d]}$ form a lattice basis of span$(\Delta) \cap \mathbb{Z}^n \subset \mathbb{R}^n$.
\end{defn}

A polyhedral complex $\calK$ has an associated \textit{Ehrhart series}, whose definition is extended from the one for polyhedra by the formula $$ \Ehr_\calK(t) = \sum_{m=0}^\infty \ehr_{\mathcal{K}}(m)t^m.$$ \noindent In the case where $\calK$ is a polytope $P$, we may define the \textit{interior Ehrhart series}, given by 
\[\IntEhr_P(t) = \sum_{m=1}^\infty \intehr_P(m)t^m.\] 

We then define \textit{$h^*$-polynomial} of $\mathcal{K}$ to be  the unique polynomial in $t$ such that $$ \Ehr_\calK(t)= \frac{h_\calK^*(t)}{(1 - t)^{d + 1}}.$$ 

\begin{exmp}\label[example]{simplex ehrhart}
    The Ehrhart series of a unimodular $d$-simplex $\Delta$ is $ \Ehr_\Delta(t) = \frac{1}{(1 - t)^{d+1}}$, and so $h^*_\Delta(t) = 1$.
\end{exmp}

A triangulation $\mathcal{T}$ of a polyhedral complex also still admits an $h$-vector, defined so that if $f_k$ denotes the number of $k$-dimensional faces of $\mathcal T$ (with $f_{-1} = 1$), then $$h_{\mathcal T }(t) := \sum_{k = -1}^d f_k t^{k+1}(1 - t)^{d - k}.$$

We then have the following useful proposition.

\begin{prop}\label[proposition]{triangle equals complex h-vector}
    If $\calK$ is a $d$-dimensional polyhedral complex that admits a unimodular triangulation $\calT$, then $$h^*_\calK(t) = h_\calT(t).$$
\end{prop}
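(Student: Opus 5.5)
The plan is to compute both sides face by face, using the triangulation $\calT$ to count lattice points. Every lattice point of $m|\calK| = m|\calT|$ lies in the relative interior of exactly one dilated simplex $m\sigma$ with $\sigma\in\calT$, since the relative interiors of the faces of a triangulation partition its underlying space. Hence
\[\ehr_\calK(m) = \sum_{\sigma\in \calT}\intehr_\sigma(m).\]
For the empty face we set $\intehr_\emptyset(m)=\delta_{m,0}$, which accounts for the constant term.

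Unimodularity of $\calT$ gives the interior count of each simplex explicitly. Each face of a unimodular simplex is itself a unimodular $k$-simplex, so it is lattice-equivalent to the standard simplex $\mathrm{conv}\{0,e_1,\dots,e_k\}$. Its interior lattice points in the $m$-th dilate are the tuples $(x_0,\dots,x_k)$ of positive integers with $\sum x_i = m$. Therefore
\[\sum_{m\ge 1}\intehr_\sigma(m)t^m = \left(\frac{t}{1-t}\right)^{k+1}.\]
Equivalently, one can use \Cref{simplex ehrhart} together with Ehrhart–Macdonald reciprocity for a single simplex.

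Summing over faces, $\Ehr_\calK(t)=\sum_{k=-1}^{d} f_k\, t^{k+1}(1-t)^{-(k+1)}$, with the $k=-1$ term equal to $1$ from the empty face. Multiplying by $(1-t)^{d+1}$ gives $\sum_k f_k t^{k+1}(1-t)^{d-k}$, which is exactly $h_\calT(t)$ as defined above. This proves $h^*_\calK=h_\calT$.

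The step that needs care is the disjoint decomposition of $|\calK|$. I must check that the triangulation, as defined, is closed under taking faces, or else pass to the simplicial complex generated by the $d$-simplices of $\calT$. The intersection condition in the definition guarantees that this generated collection is a genuine simplicial complex, so its relative-open faces partition $|\calK|$. I also need that faces of unimodular simplices are unimodular with respect to the lattice in their own affine span. This holds because a subset of a lattice basis spans a saturated sublattice, so it is a basis of $\mathrm{span}\cap\bbZ^n$.
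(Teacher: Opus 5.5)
Your proposal is correct and follows the paper's proof: the paper also partitions $|\calK|$ into the relative interiors of the simplices of $\calT$, writes $\Ehr_\calK(t) = 1 + \sum_{\Delta\in\calT}\IntEhr_\Delta(t)$, evaluates each term as $(t/(1-t))^{\dim\Delta+1}$ via \Cref{simplex ehrhart} and Ehrhart--Macdonald reciprocity, and then clears denominators. Your direct composition count for the interior lattice points, and your remarks on closing $\calT$ under faces and on unimodularity of faces, make explicit details that the paper leaves implicit.
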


\begin{proof}
    The proof follows that of \cite[Theorem 10.3]{beck2007computing}.
    
    Write $\calK = \bigcup_{\Delta \in \calT} \Delta^\circ$ (where the relative interiors of simplices in the disjoint union are taken inside of the polyhedral complex $\calT$). This union is disjoint, so we have $$\Ehr_\calK(t) = 1 + \sum_{\Delta \in \calT} \IntEhr_{\Delta}(t),$$ \noindent which is equal to $1 + \sum_{\Delta \in \calT} (\frac{t}{1 - t})^{dim \Delta + 1}$ by  the computation in \Cref{simplex ehrhart} and Ehrhart–Macdonald reciprocity. Therefore we see that, setting $f_{-1}=1$, $$\frac{h_\calK^*(t)}{(1 - t)^{d + 1}} = \Ehr_\calK(t) = \sum_{k = -1}^d f_k (\frac{t}{1 - t})^{k + 1} = \frac{\sum_{k = -1}^d f_kt^{k+1}(1 - t)^{d - k}}{(1 - t)^{d+1}} = \frac{h_\calT(t)}{(1 - t)^{d+1}}.$$
\end{proof}

%%\begin{defn}
    %%A \textit{semi-toric variety} $X$ is a variety whose irreducible components are toric varieties.
%%\end{defn} 

%%Semi-toric varieties are (roughly) to polyhedral complexes as projective toric varieties are to polytopes. More precisely, we will use the following construction throughout the paper. 
\subsection{Unions of Toric Varieties Associated to Polyhedral Subcomplexes}
 In this paper, we will consider certain unions of toric subvarieties contained in a fixed toric variety. Throughout, given a polytope $P$ we write $X(P)$ to denote the corresponding toric variety. Let $\mathcal{K}$ be a polyhedral subcomplex of a polytope $P$. Then, recall (see \cite[Sections 1.5 and 3.1]{FultonToricVarieties}) that by taking the normal fan and applying the orbit-cone correspondence, these faces correspond to a union of torus orbit closures in $X(P)$. 
 
\begin{defn}\label[definition]{semi-toric from complex}
    We define $X(\mathcal{K})$ to be the subscheme of $X(P)$ given taking the reduced union of the torus orbit closures corresponding to the faces of $\calK$. 
\end{defn}

A choice of polyhedral subcomplex of a polytope $\mathcal{K} \subset P$ induces a choice of ample line bundle $\calL$ on both $X(\mathcal{K})$ and $X(P)$: on the latter by the correspondence between polytopes and ample line bundles on toric varieties, and thus on $X(\mathcal{K})\subset X(P)$ via restriction. Hence, we may, and will, regard both $X(P)$ and $X(\calK)$ as canonically embedded in projective space. We will write $I(\mathcal{K})$ and $I(P)$ for the homogeneous ideals of $X(\mathcal{K})$ and $X(P)$ respectively under this embedding.

\begin{lemma}\label[lemma]{global sections of polyhedral complex} 
   Let $\bbC \langle m\calK\cap \bbZ^n \rangle$ be the free vector space on the lattice points of $m\calK$. Then, \[H^0(X(\mathcal{K}),\calL^{\otimes m}) = \bbC \langle m\calK\cap \bbZ^n \rangle. \]
\begin{proof}
    Recall (cf. \cite[Section 3.4]{FultonToricVarieties}) that $H^0(X(P),\calL^{\otimes m})=\bbC \langle mP\cap \bbZ^n\rangle$. Moreover, the lattice points of $mP$ form a $T$-weight basis for the global sections of $\calL^{\otimes m}$. Next, recall that torus orbit closures of $X(P)$ are compatibly split with respect to a Frobenius splitting on $X(P)$ (see \cite[Proposition 3.2]{Payne}). Moreover, unions and intersections of compatibly split schemes are compatibly split, hence $X(\calK)$ is compatibly split (see \cite[Proposition 1.2.1]{Brion-Kumar}). Since $\calL$ is ample, by \cite[Theorem 1.2.8]{Brion-Kumar} we have the restriction map \[H^0(X(P),\calL^{\otimes m})\to H^0(X(\calK),\calL^{\otimes m})\] is surjective. Moreover, this map is $T$-equivariant, so to calculate $H^0(X(\calK),\calL^{\otimes m})$ it suffices to check which lattice points of $mP$ restrict to nontrivial sections on $X(\calK)$. It follows from the description of sections of $\calL^{\otimes m}$ in \cite[Section 3.4]{FultonToricVarieties} that these sections are precisely the lattice points of $m\calK$. 
\end{proof}
\end{lemma}
The following is an immediate corollary of the above lemma. 
\begin{cor}\label[corollary]{ehr = cohom}
For all $m\in \bbN$ 
    \[\ehr_\calK (m) = \chi(X(\calK), \calL^{\otimes m}).\]
\end{cor}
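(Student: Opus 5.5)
The plan is to upgrade \Cref{global sections of polyhedral complex}, which identifies the global sections of $\calL^{\otimes m}$, to a statement about the full Euler characteristic. To do this we show that all higher cohomology vanishes, using the same Frobenius splitting input as in that lemma.

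First, fix $m \geq 1$. As in the proof of \Cref{global sections of polyhedral complex}, $X(\calK)$ is a union of torus orbit closures in $X(P)$. These are compatibly Frobenius split by \cite[Proposition 3.2]{Payne}, and unions of compatibly split subschemes are compatibly split \cite[Proposition 1.2.1]{Brion-Kumar}. So $X(\calK)$ is a Frobenius split projective scheme. The line bundle $\calL$ is ample on $X(P)$, hence its restriction to $X(\calK)$ is ample. By \cite[Theorem 1.2.8]{Brion-Kumar}, which gives both the surjectivity already used and the vanishing statement, we get $H^i(X(\calK),\calL^{\otimes m}) = 0$ for all $i > 0$. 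Therefore
\[\chi(X(\calK),\calL^{\otimes m}) = \dim H^0(X(\calK),\calL^{\otimes m}) = \#(m\calK\cap\bbZ^n) = \ehr_\calK(m),\]
where the middle equality is \Cref{global sections of polyhedral complex}.

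Second, we handle $m = 0$, which I expect to be the only delicate point, since ampleness no longer gives vanishing directly. Rather than prove $H^i(\calO_{X(\calK)}) = 0$ by hand, I would use a polynomiality argument.
\begin{itemize}
\item The function $m \mapsto \chi(X(\calK),\calL^{\otimes m})$ is a polynomial in $m$, since it is the Hilbert polynomial of a projective scheme.
\item The function $\ehr_\calK$ is also a polynomial, because $\ehr_\calK(m) = \sum_{F}\intehr_F(m)$, summing over the faces $F$ of $\calK$, and each interior Ehrhart function of a lattice polytope is a polynomial.
\end{itemize}
These two polynomials agree for all $m \geq 1$ by the first step, so they agree identically, in particular at $m = 0$.

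If one instead insists on reading $\ehr_\calK(0)$ as the literal lattice-point count $1$, the polynomial value at $0$ is $\sum_F (-1)^{\dim F}$, the Euler characteristic of $|\calK|$. These two readings of $\ehr_\calK(0)$ agree exactly when $|\calK|$ has Euler characteristic $1$, for instance when it is contractible, as will be the case for the fence complexes considered later. With the polynomial interpretation, however, the corollary holds in full generality.
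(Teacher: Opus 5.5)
For $m\ge 1$ your argument is exactly the paper's. $X(\calK)$ is compatibly Frobenius split inside $X(P)$ and the restriction of $\calL$ is ample, so \cite[Theorem 1.2.8]{Brion-Kumar} kills the higher cohomology of $\calL^{\otimes m}$. Then \Cref{global sections of polyhedral complex} identifies $H^0$ with the span of the lattice points of $m\calK$.

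You go beyond the paper at $m=0$. The paper's one-line proof does not treat this case separately, but the vanishing it cites needs ampleness. That vanishing genuinely fails for $\calO_{X(\calK)}$ in general, so your separate treatment is warranted. Your polynomial-extension argument is correct, and your caveat is sharp. Take $P$ to be the standard lattice triangle and $\calK$ its boundary. Then $X(\calK)=\{xyz=0\}\subset\bbP^2$ is Frobenius split yet has $h^1(\calO)=1$, so $\chi(\calO_{X(\calK)})=0$. This agrees with the value at $0$ of the polynomial $\ehr_\calK(m)=3m$, but not with the literal count $\#(0\cdot\calK\cap\bbZ^2)=1$.

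So the literal statement at $m=0$ needs $\chi(|\calK|)=1$, or else $\bbN$ must be read as the positive integers. This is harmless for the paper. The corollary is only used in the proof of \Cref{mainresultD} to compare Hilbert polynomials, where large $m$ suffices, and fence complexes are balls anyway.
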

\begin{proof}
  As in the proof of \Cref{global sections of polyhedral complex}, we have $X(\calK)$ is Frobenius split, so higher cohomology of $\calL$ vanishes by \cite[Theorem 1.2.8]{Brion-Kumar}. The result now follows from \Cref{global sections of polyhedral complex}. 
\end{proof}

\begin{rem}\label[remark]{normal polytope and coordinate ring}
Recall \[\bigoplus_{m=0}^\infty H^0(X(P),\calL^{\otimes m})=\bigoplus_{m=0}^\infty \bbC\langle mP\cap \bbZ^n\rangle \] forms a graded ring, with multiplication given by the addition of lattice points. Then, the above result and the fact that multiplying sections commutes with restriction shows that 
\[\bigoplus_{m=0}^\infty H^0(X(\calK),\calL^{\otimes m})=\bigoplus_{m=0}^\infty \bbC\langle m\calK\cap \bbZ^n\rangle\] 
where multiplication is once again given by addition of lattice points. 

Recall $P$ is a \emph{normal polytope} if for all $m\in \bbN$, every lattice point in $mP$ can be written as a sum of $m$ lattice points in $P$. In this case, $\bigoplus_{m=0}^\infty \bbC\langle mP\cap \bbZ^n\rangle$ is generated in degree $1$, and hence $X(P)$ is projectively normal (see \cite[Theorem 2.4.1]{CLS}). If $P$ is a normal polytope, then one can show that  the ring $\bigoplus_{m=0}^\infty \bbC\langle m\calK\cap \bbZ^n\rangle$ is also generated in degree one, and hence  $\bigoplus_{m=0}^\infty \bbC\langle m\calK\cap \bbZ^n\rangle$ is the homogeneous coordinate ring of $X(\calK)$. 
\end{rem}

\subsection{Order polytopes}
In this paper, we consider polyhedral subcomplexes of a particular polytope, which is an example of an order polytope. 

Let $Q$ be a finite poset, and let $\hat{Q}$ denote $Q\coprod \{\hat{0},\hat{1}\}$ where we declare $\hat{0}$ to be the minimum element and $\hat{1}$ to be the maximum element.  Writing $\mathbb{R}^Q$ for the real space with coordinates $\{x_v\}_{v \in Q}$, we can canonically associate a polytope to $Q$. 
\begin{defn}
The order polytope of $Q$, denoted $\calO(Q)$, is the polytope defined by the inequalities $x_p\leq x_q$ for each $p\leq q$ in $Q$ and $0\leq x_p\leq 1$ for all $p \in Q$. 
\end{defn}
Indicator functions of order ideals of $Q$ correspond to the vertices of $\calO(Q)$. Therefore, given an order ideal $I\subset Q$, we let $v_I$ denote the vertex of $\calO(Q)$ corresponding to $I$.

We now recall a classification of the faces of an order polytope due to Geissinger (see \cite[Theorem 1.2]{Stanley-twoposetpolytopes} for a statement and further references). Let $\mathcal{B}=\{B_1,\ldots, B_k\}$ be a partition of the poset $\hat{Q}$ into a disjoint union. We say that $\mathcal{B}$ is \emph{connected} if each block $B_i\in \mathcal{B}$ is connected as an induced subposet of $Q$. We say that $\mathcal{B}$ is \emph{compatible} if the transitive closure of the following relation $\leq$ on the $B_i$ is a partial order:  $B_i\leq B_j$ if $x\leq y$ for some $x\in B_i, y\in B_j$.  

Given a face $F$ of $\calO(Q)$, we may associate a partition of $\hat{Q}$ as follows: define an equivalence relation $\sim_F$ on $\hat{Q}$, where $p\sim_F q$ if $x_p = x_q$ on all of $F$ (here, we declare $x_{\hat{0}} =0, x_{\hat{1}}=1$. We then define $\mathcal{B}_F$ to be the partition given by the equivalence classes of $\sim_F$. 

\begin{prop}[Geissinger]\label[proposition]{Geissinger} 
    Faces of $\calO(Q)$ are in bijection with connected, compatible partitions of $\hat{Q}$, via $F\mapsto \mathcal{B}_F$ 
\end{prop}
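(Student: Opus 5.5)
We prove Geissinger's classification by identifying each face of $\calO(Q)$ with the set of equalities it forces among the coordinates $x_p$, $p\in \hat{Q}$ (with $x_{\hat 0}=0$, $x_{\hat 1}=1$). Each inequality defining $\calO(Q)$ has the form $x_p\leq x_q$ for a cover $p\lessdot q$ in $\hat{Q}$. So every face $F$ is cut out by turning some set $E$ of cover relations into equalities. The relation $\sim_F$ is then the equivalence relation generated by $E$, enlarged by any further equalities the face happens to force. The plan has three steps: show that $\mathcal{B}_F$ is connected and compatible, build an inverse map, and check that the two maps are mutually inverse.

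\emph{Step 1 ($\mathcal{B}_F$ is connected and compatible).} For compatibility, pick a point $x$ in the relative interior of $F$. On $F$, the value $x_p$ depends only on the block of $p$. If $B_i\leq B_j$ via some $p\leq q$ with $p\in B_i$, $q\in B_j$, then the common values satisfy $x_{B_i}\leq x_{B_j}$. So the transitive closure of $\leq$ is contained in the order on the blocks given by their values at $x$. This closure is therefore antisymmetric, unless two distinct blocks are forced to take equal values, which is impossible by the definition of $\sim_F$.

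Connectedness needs more care, and I expect it to be the main obstacle. Take a block $B$ and suppose it splits into two pieces $B'$ and $B''$ with no comparabilities between them inside $B$. We perturb $x$ by adding a small $\epsilon$ to the coordinates indexed by $B'$. Every defining inequality that is tight on $F$ and involves an element of $B'$ relates $B'$ to something comparable. So I would choose $B'$ to be a connected component of the induced subposet on $B$, and note that any chain from $B'$ to $B''$ through other elements must leave $B$. Such a chain passes through elements with strictly different values at $x$, so those inequalities are not tight. The perturbed point therefore still lies on $F$, yet it separates $B'$ from $B''$, contradicting $\sim_F$. One subtlety is that $B$ may contain $\hat 0$ or $\hat 1$, whose values are fixed. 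In that case we perturb the piece not containing them. If one piece contains $\hat 0$ and the other contains $\hat 1$, they would have values $0=1$, which is impossible.

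\emph{Step 2 (inverse map).} Given a connected, compatible partition $\mathcal{B}$, define
\[F_{\mathcal{B}}=\{x\in\calO(Q): x_p=x_q \text{ whenever } p,q \text{ lie in the same block}\}.\]
Because each block is connected, the defining equalities can be written as equalities of cover inequalities, so $F_{\mathcal{B}}$ is an intersection of faces and hence a face. It is nonempty because compatibility gives a linear extension of the block order. Assigning the blocks strictly increasing values in $[0,1]$ along this extension, with the block of $\hat 0$ at $0$ and the block of $\hat 1$ at $1$, produces a point of $F_{\mathcal{B}}$. This point also shows that distinct blocks are not forced to be equal, so $\mathcal{B}_{F_{\mathcal{B}}}=\mathcal{B}$.

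\emph{Step 3 (the other composite).} It remains to show $F_{\mathcal{B}_F}=F$. Clearly $F\subseteq F_{\mathcal{B}_F}$. For the reverse inclusion, $F$ is cut out by its tight inequalities, each of which is an equality $x_p=x_q$ for some cover $p\lessdot q$. Such $p$ and $q$ lie in the same block of $\mathcal{B}_F$, so these equalities hold on $F_{\mathcal{B}_F}$. Hence the two maps are inverse bijections.
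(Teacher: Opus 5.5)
The paper does not prove this proposition. It quotes it as Geissinger's theorem with a pointer to Stanley's \emph{Two poset polytopes}. Modulo the repairs below, your argument is a correct, self-contained direct proof along standard lines:
\begin{itemize}
\item facets of $\calO(Q)$ come from cover relations of $\hat Q$;
\item a relative-interior point, perturbed on one connected component of a block, gives connectedness;
\item a linear extension of the block order realizes any partition;
\item tight inequalities give the inverse map.
\end{itemize}
Compared with the bare citation, your proof buys an explicit description of $F_{\mathcal B}$: it is the order polytope of the quotient poset, with the blocks of $\hat 0$ and $\hat 1$ pinned at $0$ and $1$, so $\dim F=|\mathcal B_F|-2$. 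This is exactly the identification $F\cong\calO(Q/{\sim_F})$ that the paper later uses without comment, when it restricts the canonical triangulation to faces and reads standard fence tableaux as linear extensions of quotient posets.

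Some points need repair.

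\emph{The empty face.} The one-block partition $\{\hat Q\}$ is connected and compatible, yet your Step 2 cannot put the block of $\hat 0$ at $0$ and the block of $\hat 1$ at $1$. In fact $F_{\{\hat Q\}}=\emptyset$, so your claim that $F_{\mathcal B}$ is always nonempty is false. This is the only exception: if $\hat 0$ and $\hat 1$ lie in a common block $B$, then every other block $B'$ satisfies $B\le B'\le B$, so compatibility forces $\mathcal B=\{\hat Q\}$. Either match this partition with the empty face (note $\mathcal B_\emptyset=\{\hat Q\}$ vacuously, and $\dim\emptyset=-1=1-2$), or restrict both sides to nonempty faces and partitions with at least two blocks. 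Step 1's choice of a relative interior point likewise presumes $F\neq\emptyset$.

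\emph{Compatibility.} A single point does not order the blocks, because distinct blocks can agree at a relative interior point. For the antichain $Q=\{a,b\}$, take the center of the square. Use instead that $B_i\le B_j$ in the transitive closure gives $x_{B_i}\le x_{B_j}$ at every point of $F$. Then $B_i\le B_j\le B_i$ forces equality on all of $F$. Alternatively, take $x$ generic.

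\emph{Step 2's face claim.} Comparable pairs inside a block need not be covers of $\hat Q$. This is harmless, since for any $p<q$ the inequality $x_p\le x_q$ is valid on $\calO(Q)$, so $\{x_p=x_q\}\cap\calO(Q)$ is a face. Equivalently, it is equality along a saturated chain from $p$ to $q$. You should say this.

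\emph{The perturbation step.} State the fact doing the work: at a relative interior point of $F$, a valid inequality is tight only if it is tight on all of $F$. Hence comparable elements of different blocks take strictly different values there. Your special treatment of $\hat 0$ and $\hat 1$ is vacuous, since a block containing either is automatically connected: both are comparable to everything.
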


Order polytopes are particularly nice to work with from the perspective of Ehrhart theory because they come naturally equipped with unimodular triangulations. 

\begin{defn}
    Given an order polytope $\O(Q)$ of a finite poset $Q$, the \textit{canonical triangulation} $\mathcal{T}$ of $\O(Q)$ is defined to consist of all simplices of the form \[\Delta_\sigma = \{(x_q)_{q\in Q} \mid 0\leq x_{\sigma(1)}\leq x_{\sigma(2)}\leq \cdots \leq x_{\sigma(|Q|)}\leq 1\}, \] where $\sigma: [|Q|] \to Q$ ranges over all inverses of total extensions of $Q$.
\end{defn}

The canonical triangulation of $\O(Q)$ restricts to the canonical triangulation of any of its faces when considered as an order polytope $\O(Q/\sim_F)$. This means also that the canonical triangulation of $\O(Q)$ induces a unimodular triangulation on any polyhedral subcomplex of $\O(Q)$.

\begin{defn}\label[definition]{canonical triangle}
    The \textit{canonical triangulation} of a polyhedral subcomplex $\mathcal{K}$ of an order polytope $\O(Q)$, also denoted $\mathcal{T}$, is the intersection of the canonical triangulation of $\O(Q)$ with $\mathcal{K}$. 
\end{defn}

By definition, the vertex set of the simplicial complex of a canonical triangulation of a polyhedral subcomplex $\calK\subset \calO(P)$ is contained in the vertex set of $\calK$. 

\begin{rem}\label[remark]{Order polytopes are normal} 
Since order polytopes have a unimodular triangulation, it follows from \cite[Proposition 2.60]{bruns2009polytopes} that each face is a normal polytope. 
\end{rem} 

\section{Fundamentals of Fence Combinatorics}\label[section]{Fundamentals of Fence Combinatorics}

%%Put succinctly, the goal of this work is to construct a polyhedral complex which admits a triangulation that is isomorphic as a simplicial complex to $\pi_k(\Delta([u, w]))$ but which can see more of the geometry of $\Pi^w_u$. We will then utilize this complex to both give a polyhedral model for the TNN Grassmannian that is compatible with its positroidal CW structure, and to construct toric degenerations of positroid varieties. By design, these degenerations will see things about the geometry of $\Pi^u_w$ that the classical Hodge degenerations can't.

This section will largely be devoted to introducing the basics of the relevant polyhedral complexes, dubbed \textit{fence complexes}. Let us fix conventions to do so; we will turn to geometry in later sections. %%It is worth mentioning that in particular, these polyhedral complexes will be Given the geometric context, however, we find it prudent to first recall how $\pi_k(\Delta([u, w]))$ relates to a more classical, non-toric Gr{\"o}bner degeneration of $\Pi_u^w$.

%%We follow \cite{KLS2} Section 7, though restrict to the case where $W/W_P$ is the portion of Young's lattice consisting of Young diagrams fitting inside of a $k \times (n - k)$ rectangle (so $G = GL_n, P = P_k$). We may therefore think of $G/P$ and all of its subvarieties as embedded via the Pl{\"ucker} embedding into $Proj(k[p_I\:|\: I \in \binom{[n]}{k}])$.

%%Choose any total order $\leq$ on $\binom{[n]}{k}$ refining Bruhat order. By abuse of notation, we also use $\leq$ to denote the corresponding revlex term order on monomoials of $k[p_I]$. Given any subvariety $X$ of $Proj(k[p_I\:|\: I \in \binom{[n]}{k})$ satisfying $k[X] = k[p_I]/J$ (where $k[X]$ denotes the \textit{homogeneous} coordinate ring of $X$), we write $$In_\leq k[X] := k[p_I]/(In_\leq J)$$ \noindent where 
%%$In_\leq(J)$ is the initial ideal of $J$ with respect to the given term order.

%%The following theorem then holds. We note that the special case  where $\Pi_u^w = Gr(k,n)$ is due to Hodge.

%%\begin{thm}[\cite{KLS2} Theorem 7.1.]
    %%For any $u \leq w$, $$In_\leq(k[\Pi_u^w]) = SR(\pi_k(\Delta[u,w])$$ \noindent where SR denotes the Stanley–Reisner ring of a simplicial complex.
%%\end{thm}

%%In particular one gets a flat degeneration from $\Pi_u^w$ to $Proj(SR(\pi_k(\Delta[u,w]))$.

\subsection{Combinatorial Conventions} 
We draw all Young diagrams in English notation. Further, all of our diagrams are assumed to fit inside a $k\times (n-k)$ rectangle. We will number our rows from top to bottom and columns from left to right (with starting index $1$), so that the top left cell of the rectangle has coordinates $(1,1)$. The cell with coordinates $(i, j)$ is then the cell in row $i$ and column $j$. For us, an antidiagonal is the set of all cells $(i,j)$ such that $i+j=c$ for some fixed $c$. For instance, $(i+1, j-1), (i,j), (i-1, j+1)$ all lie in the same antidiagonal. 

A skew-shape is called a \textit{border strip} if it contains no $2 \times 2$ sub-shapes. Similarly, a (row and column weakly-increasing) tableau $T$ is a \textit{border strip tableau} if it contains no $2 \times 2$ sub-shapes which are filled with the same number. 

Given a permutation $u\in S_n$, we define $u_J = \pi_k(u)^{-1}u$. This notation matches that used for a general parabolic quotient, see \cite[Section 2.4]{Bjorner-Brenti}. For permutations, we let $\leq$ denote Bruhat order, and $\leq_k$ denote $k$-Bruhat order. Finally, if two words $\w_1, \w_2$  are equal to each other as permutations, we write $\w_1\equiv \w_2$. In this case, we say that the two words are \emph{move equivalent} (since by Matsumoto's theorem, we can get from one to the other using braid moves and commutation moves). 

\subsection{Fence Diagrams}
Fix a skew shape $\lambda/\mu$ throughout. We will use the term \textit{segment-decorated skew-shape (SDSS) of shape $\lambda/\mu$} to mean a collection of line segments drawn between the centers of adjacent cells in the skew-shape $\lambda/\mu$. If there is a segment between two adjacent cells, we say they are \emph{connected}. The following definition is the critical one of the paper.

\begin{defn}\label[definition]{fence diagrams} A \emph{fence diagram $F$ of shape $\lambda/\mu$} is a segment-decorated skew-shape of shape $\lambda/\mu$ such that
    \begin{equation}\label{eq:fence}
    \tag{$F$}
        \text{the cell } (i,j+1) \text{ is connected to both } (i, j) \text{ and } (i+1, j+1) \text{ if and only if }(i+1, j) \text{ is.}
    \end{equation}

    We say that two cells of an SDSS $F$ of shape $\lambda/\mu$ are in the same \emph{component} or (if $F$ is a fence diagram) \textit{fence component} of $F$ if they are connected by a sequence of segments of $F$. If $F$ is a fence diagram we will refer to its segments as \textit{fences}. Further, we will denote the number of components of $F$ by $|F|$. For a fence diagram $F$ of shape $\lambda/\mu$ without fences (a.k.a a skew-shape) this recovers the notation $|F| = |\lambda/\mu|.$

    Finally, we say that a triple of cells with coordinates $(i, j), (i, j+1), (i+1, j )$ or $(i + 1, j + 1), (i, j+1), (i+1, j)$ is a \textit{bad configuration} of $F$ if one of the following two conditions hold:
    \begin{enumerate}[(a)] 
        \item There are segments between $(i+1,j)$ and $(i,j),(i+1,j+1)$, but there is either no segment between $(i,j+1)$ and $(i,j)$ or no segment between $(i,j+1)$ and $(i+1,j+1)$. 
        \item There are segments between $(i,j+1)$ and $(i,j), (i+1,j+1)$, but there is either no segment between $(i+1,j)$ and $(i,j)$ or no segment between $(i+1,j)$ and $(i+1,j+1)$. 
    \end{enumerate}
\end{defn}
 See \Cref{fig:bad_configurations} for a depiction of this rule. The left configuration illustrates condition (a), and the middle configuration illustrates condition (b). 

\begin{figure}[H]

    \centering
    \def\borderInnerSep{2pt}
\begin{tikzpicture}

\node[filled, color=maroon] at (0.5,0.5) {};
\node[filled, color=maroon] at (1.5,0.5) {};
\node[filled, color=maroon] at (0.5,1.5) {};

\draw (0,0)-- (0,2)--(2,2)--(2,1)--(2,0)--(0,0);
\draw (1,0)--(1,2);
\draw (2,0)--(2,2);
\draw (0,1)--(2,1);
\draw [thick, color=maroon](.5,.5)--(.5,1.5);
\draw [thick, color=maroon](.5,.5)--(1.5,.5);

\begin{scope}[shift={(5,0)}]

\node[filled, color=maroon] at (1.5,1.5) {};
\node[filled, color=maroon] at (1.5,0.5) {};
\node[filled, color=maroon] at (0.5,1.5) {};

\draw (0,0)-- (0,2)--(2,2)--(2,1)--(2,0)--(0,0);
\draw (1,0)--(1,2);
\draw (2,0)--(2,2);
\draw (0,1)--(2,1);
\draw [thick, color=maroon](1.5,1.5)--(1.5,.5);
\draw [thick, color=maroon](1.5,1.5)--(.5,1.5);
\end{scope}

\begin{scope}[shift={(10,0)}]

\node[filled, color=maroon] at (0.5,0.5) {};
\node[filled, color=maroon] at (1.5,0.5) {};
\node[filled, color=maroon] at (0.5,1.5) {};
\node[filled, color=maroon] at (1.5,1.5) {};

\def\nodescl{0.7}
\draw (0,0)-- (0,2)--(2,2)--(2,1)--(2,0)--(0,0);
\draw (0,1) -- (2,1);
\draw (1,0) -- (1, 2);
\draw[thick, color=maroon] (.5,.5)--(1.5,.5)--(1.5,1.5);
\draw[thick, color=maroon](.5,.5) -- (.5,1.5)--(1.5,1.5);
\end{scope}
\end{tikzpicture}
\caption{An SDSS $F$ is a fence diagram if and only if every time $F$ contains either of the first two arrangements of segments (possibly with more segments present), it actually contains the last one. We refer to these segment configurations as the/an  [L], [\rotL], and box, respectively.}
\label{fig:bad_configurations}
\end{figure}
An SDSS is a fence diagram if and only if it contains no bad configurations. 

\begin{rem}\label[remark]{rem:tableau_to_fence}
Recall a \emph{reverse plane partition} of shape $\lambda/\mu$ is any filling of the skew shape with nonnegative integers such that it is weakly increasing from left to right in rows and top to bottom in columns. Given any reverse plane partition $T$ of shape $\lambda/\mu$, we can canonically associate a fence diagram $F$ for $\lambda/\mu$ by drawing fences between adjacent cells with equal entries. 
\end{rem}

 \begin{rem}
    We will later prove, in \Cref{prop:fence_faces}, that fence diagrams are in bijection with the faces of a certain order polytope. The segments will then correspond to setting certain coordinates equal to each other, and the skew shape will correspond to setting certain coordinates equal to $0$ or $1$. Then, fence diagrams will correspond to sets of equality conditions that are actually realized by a face of the polytope. 
\end{rem}

To any positroid variety $\Pi_u^w$, we now associate a set of fence diagrams. To do this we need an intermediate definition, of words associated to any fence diagram.
 
\begin{defn}
\label[definition]{row column words} 
    Let $F$ be a fence diagram of shape $\lambda/\mu$. For $1\le j\le n-k$, let
    \(\col_F^{(j)} = s_{i_1}\cdots s_{i_p},\)
    where $1\le i_1<\cdots<i_p\le k-1$ are the indices such that $(k-i_t+1, j)$ and $(k-i_t, j)$ have a fence between them in $F$. Then
    \[\col_F := \col_F^{(1)}\cdots \col_F^{(n-k)}.\]

    Similarly, for $1\le i\le k$, let
    \(\row_F^{(i)} = s_{j_1}\cdots s_{j_q},\)
    where $n-k-1\ge j_1>\cdots > j_q\ge 1$ are the indices such that $(i,j_t)$ and $(i, j_t+1)$ have a fence between them in $F$. Then
    \[\row_F := \row_F^{(1)}\cdots \row_F^{(k)}.\]

    Finally, we define the word of $F$ to be $$\text{word}_F := (\col_F)(\row_F\{k\}),$$ where the notation $w\{k\}$ means, if $w = s_{i_1}\cdots s_{i_j}$, the word $w\{k\} = s_{i_1 + k} \cdots s_{i_j + k}$. See \Cref{fig:words} for an illustration of this rule.
    
    We say that $F$ is \emph{reduced} if $\col_F$ and $\row_F$ are both reduced words for elements of $S_k$ and $S_{n-k}$ respectively (equivalently, if $\text{word}_F$ is a reduced word for an element of $S_n$).
\end{defn}

In other words, to get $\col_F$, read the columns of $F$ from left to right, and then bottom to top in each column. To get $\row_F$, read the rows of $F$ from top to bottom, and then right to left in each row. 

\begin{rem}
    In the body we will usually write $\text{word}_F = \col_F \row_F\{k\}$, sans parentheses. We stress the parentheses in the definition to eliminate any chance of confusion: only the row word is shifted by $k$ when defining $\text{word}_F$.
\end{rem}

\begin{figure}[H]
    \centering
    \begin{tikzpicture}[scale=1.5]

    \node[filled, color=maroon] at (0,0) {};
    \node[filled, color=maroon] at (1,0) {};
    \node[filled, color=maroon] at (2,0) {};
    \node[filled, color=maroon] at (0,-1) {};
    \node[filled, color=maroon] at (0,-2) {};

    \node[filled, color=maroon] at (1,-1) {};
    \node[filled, color=maroon] at (1,-2) {};
    \node[filled, color=maroon] at (2,-1) {};

    \node[blank] at (0.7, -0.15) {\textcolor{maroon}{$s_1$}};
    \node[blank] at (1.7, -0.15) {\textcolor{maroon}{$s_2$}};
    \node[blank] at (0.15, -0.65) {\textcolor{maroon}{$s_2$}};
    \node[blank] at (0.15, -1.65) {\textcolor{maroon}{$s_1$}};
    \node[blank] at (1.15, -1.65) {\textcolor{maroon}{$s_1$}};
    \node[blank] at (1.7, -1.15) {\textcolor{maroon}{$s_2$}};
    
    \draw[] (-0.5,0.5) to (2.5,0.5) {};
    \draw[] (-0.5,-0.5) to (2.5,-0.5) {};
    \draw[] (-0.5,-1.5) to (2.5,-1.5) {};
    \draw[] (-0.5,-2.5) to (2.5,-2.5) {};

    \draw[] (-.5,0.5) to (-0.5,-2.5) {};
    \draw[] (.5,0.5) to (.5,-2.5) {};
    \draw[] (1.5,0.5) to (1.5,-2.5) {};
    \draw[] (2.5,0.5) to (2.5,-2.5) {};

    \draw[ color=maroon] (0,0) to (2,0) {};
    \draw[ color=maroon] (0,0) to (0,-2) {};
    
    \draw[ color=maroon] (1,-1) to (2,-1) {};
    \draw[ color=maroon] (1,-1) to (1,-2) {};
\end{tikzpicture}

    \caption{A reduced fence diagram $F$ with $\col_F = (s_1s_2)(s_1)()$ and $\row_F = (s_2s_1)(s_2)()$.}
    \label{fig:words}
\end{figure}

\begin{rem}\label[remark]{transpose word comparison}
    Given a fence diagram $F$, let $F^\top$ denote the fence diagram whose underlying skew-shape is the transpose of that of $F$ and whose fences go between the images of the cells of $F$ which are connected by fences. Then, one sees that \[\row_F = w_0 \col_{F^\top}w_0,\] and similarly, 
    \[ \col_F = w_0\row_{F^\top} w_0.\] 
\end{rem}

\begin{defn}\label[definition]{fence facet indexing set}
    Let $u \leq_k w$ be $k$-Bruhat comparable elements of $S_n$, such that $\pi_k(w) = \lambda$ and $\pi_k(u) = \mu$. Then, we define $\mathfrak{F}_u^w$ to be the set of all \emph{reduced} fence diagrams $F$ with underlying skew-shape $\lambda/ \mu$ such that $\text{word}_F = u_Jw_J^{-1}$.
\end{defn}

\begin{rem}\label[remark]{non-k-grass fence complex}
        One sees that $\mathfrak{F}_u^w$ only depends on the class $\langle u, w\rangle \in \mathcal{Q}(k,n)$. This is because if $\langle u, w\rangle = \langle x, y\rangle$ with $y$ $k$-Grassmannian, then by \cite[Lemma 2.4]{KLS2} we have $z = w_J^{-1}$ in the length-additive factorization $uz = x$ and $wz = y$. Therefore $\pi_k(u) = \pi_k(x)$, $\pi_k(w) = \pi_k(y)$ and $u_Jw_J^{-1} = x_J$ by definition. 
\end{rem}

\begin{defn}
    A \emph{standard fence tableau} $T$ for $\mathfrak F_u^w$ is a filling of some $F\in \mathfrak F_u^w$ with entries in $\{1,\dots, 
    |F|\}$, weakly increasing along rows and down columns, such that two cells contain the same entry if and only if they belong to the same fence component. $F$ is called the \emph{underlying fence diagram} of $T$.
\end{defn}
\begin{exmp}
If $w$ is $k$-Grassmannian, and $u$ is the identity permutation, then $u_J = w_J = \id$ and hence $\mathfrak{F}_u^w$ consists only of the empty fence diagram of shape $\lambda$. Then, standard fence tableaux are precisely the standard Young tableaux of shape $\lambda$. 
\end{exmp}

\begin{rem}
    It is not immediately clear that for $F\in \mathfrak F_u^w$, a standard fence tableau with underlying fence diagram $F$ exists. However, this will turn out to be the case (see \Cref{cor:standard_fence_tableaux}).
\end{rem}

Given a fence diagram $F$, a standard fence tableau $T$ with underlying fence diagram $F$ gives a total order on the set of fence components. We give an alternate way to understand $\w_F$ in terms of this order on the components of $F$.

Say the components of $F$ are $A_1,\dots, A_{|F|}$, where $A_i$ is the component containing all the $i$'s in $T$. Considering each $A_i$ as its own fence diagram with fences induced from $F$ gives $\w_i := \w_{A_i}$.

\begin{lemma}\label[lemma]{lem:word_fence_component}
    In the situation described above, 
    \[\w_F\equiv \w_1\w_2\cdots \w_{|F|}.\]
\end{lemma}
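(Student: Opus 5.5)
The plan is to reduce the statement to a commutation argument in the reading word, using the fact that a standard fence tableau is weakly increasing along rows and down columns.

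\emph{Step 1 (separate column and row letters).} Write $m=|F|$. Every letter of $\col_F$ and of each $\col_{A_i}$ is some $s_a$ with $1\le a\le k-1$. Every letter of $\row_F\{k\}$ and of each $\row_{A_i}\{k\}$ is some $s_b$ with $k+1\le b\le n-1$. Since $|a-b|\ge 2$, these letters commute. Hence
\[\w_1\cdots\w_m=\col_{A_1}\row_{A_1}\{k\}\cdots\col_{A_m}\row_{A_m}\{k\}\equiv(\col_{A_1}\cdots\col_{A_m})(\row_{A_1}\cdots\row_{A_m})\{k\}.\]
It therefore suffices to prove $\col_F\equiv\col_{A_1}\cdots\col_{A_m}$ and $\row_F\equiv\row_{A_1}\cdots\row_{A_m}$. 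For the row statement, transpose $F$ and $T$. The transposed filling is still weakly increasing along rows and columns, with the same component labels. By \Cref{transpose word comparison}, the row words become $w_0(\cdot)w_0$ conjugates of column words. Conjugation by $w_0$ sends $s_i\mapsto s_{k-i}$ and preserves commutation equivalence. So the row statement follows from the column statement applied to $F^\top$.

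\emph{Step 2 (column words via commutations).} Each letter of $\col_F$ comes from a unique vertical fence, which lies in a unique component $A_c$. The word $\col_{A_c}$ is exactly the subword of $\col_F$ formed by the letters whose fences lie in $A_c$, kept in the same relative order, since it is read by the same left-to-right, bottom-to-top rule. So $\col_{A_1}\cdots\col_{A_m}$ is a rearrangement of $\col_F$ that preserves the order inside each component. Such a rearrangement can be realized by commutation moves if no two non-commuting letters change relative order. Concretely, we bubble letters of lower label leftward past letters of higher label, and each swap is then a commutation. So it suffices to check the following claim. Suppose a letter $s_i$ from the fence joining rows $r=k-i$ and $r+1$ of column $j$ (in component $A_a$) precedes, in $\col_F$, a letter $s_{i'}$ with $|i-i'|\le 1$, from the fence joining rows $r'=k-i'$ and $r'+1$ of column $j'$ (in component $A_b$). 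Then $a\le b$.

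\emph{Step 3 (verifying the claim).} If $j=j'$, then $i\neq i'$, so $|i-i'|=1$ and the two fences share a cell. Hence $a=b$. If $j<j'$, then $r'\ge r-1$, so the cell $(r'+1,j')$ lies weakly below and strictly to the right of $(r,j)$. Because $T$ weakly increases along rows and down columns, the entry $b$ at $(r'+1,j')$ is at least the entry $a$ at $(r,j)$. This proves the claim, and the lemma follows.

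The only delicate point is Step 2: one must make sure the sorting procedure never swaps two letters of the same component and only swaps letters that commute. This is exactly what the claim guarantees.
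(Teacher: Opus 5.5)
Your proof is correct, and it takes a genuinely different route from the paper.

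\textbf{The paper's route.} The paper inducts on $|F|$. It peels off the minimal component $A_1$ and shows $\w_F\equiv \w_1\w_G$ with $G=F\setminus A_1$. The row-word part is proved by a second induction over the rows. The key fact is that the $1$'s form an order ideal, so every cell of $G$ in rows $\le i-1$ lies strictly to the right of the $1$'s in row $i$. Hence $\row_{A_1}^{(i)}$ commutes past $\row_G^{(\le i-1)}$. The column words are then handled by transposing.

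\textbf{Your route.} You give a one-shot, non-inductive argument.
\begin{itemize}
\item You note that $\col_{A_1}\cdots\col_{A_m}$ is a label-stable rearrangement of $\col_F$.
\item You use the standard criterion that such a rearrangement is realized by commutation moves whenever no pair of non-commuting letters is inverted. Your bubble-sort justification is fine, because every adjacent swap in a stable sort is an inversion of the original word.
\item You verify the pairwise condition directly from the monotonicity of $T$. Your check that $(r'+1,j')$ lies weakly below and strictly right of $(r,j)$ when $j<j'$ and $|i-i'|\le 1$ is right. Comparing the entries is legitimate because the rectangle spanned by two comparable cells of a skew shape lies inside the skew shape.
\end{itemize}

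\textbf{Comparison.} Your version makes the role of the tableau's monotonicity transparent and avoids the nested induction. The paper's version avoids the commutation-class criterion. It also directly yields the peeled form $\col_F\equiv(s_i\cdots s_{j-1})\col_{F'}$ used in the proof of \Cref{prop: reduced fence faces know}. That form also follows from your full statement applied to both $F$ and $F'$.

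\textbf{A small slip.} In your transposition step, the $w_0$ conjugating the row words lives in $S_{n-k}$. So it sends $s_i\mapsto s_{n-k-i}$, not $s_{k-i}$. This is harmless, since that letterwise diagram automorphism still preserves commutation equivalence.
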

\begin{proof}
    By induction on $|F|$. Let $G$ be the fence diagram obtained by removing $A_1$ from $F$. By the induction hypothesis, we need only show that
    \[\w_F\equiv \w_1\w_G.\]
    It suffices to show this for the row word and column word of $F$. Let us work with $\row_F$ for simplicity.

    For $1\le i\le k$, define $\row_F^{(\le i)} := \row_F^{(1)}\cdots \row_F^{(i)}$, and similarly for $\row_G^{(\le i)}$ and $\row_{A_1}^{(\le i)}$. We shall induct on $i$ to show that
    \[\row_F^{(\le i)} \equiv \row_{A_1}^{(\le i)}\cdot \row_G^{(\le i)}\]
    When $i = 1$ this is clear. Suppose the claim is true for $i-1$, and we wish to prove this for $i$. If the $1$'s in row $i$ of $F$ are from columns $a$ to $b+1$ for $a\le b$, then
    \[\row_F^{(i)} = \row_G^{(i)}\row_{A_1}^{(i)}\equiv \row_{A_1}^{(i)}\row_G^{(i)}.\]
    Moreover, all the cells in $G$ in rows $\le i-1$ must be in columns $>b+1$. This shows that $\row_G^{(\le i-1)}$ only contains $s_c$ with $c>b+1$, and hence $\row_{A_1}^{(i)}$ commutes with $\row_G^{(\le i-1)}$. This shows that
    \begin{align*}
        \row_F^{(\le i)} &= \row_F^{(\le i-1)}\row_F^{(i)}\\
        &\equiv \row_{A_1}^{(\le i-1)}\row_G^{(\le i-1)}\row_{A_1}^{(i)}\row_G^{(i)}\\
        &\equiv \row_{A_1}^{(\le i-1)}\row_{A_1}^{(i)}\row_G^{(\le i-1)}\row_G^{(i)}\\
        &= \row_{A_1}^{(\le i)}\row_G^{(\le i)}.
    \end{align*}

   It follows that $\row_F \equiv \row_{A_1}\row_G$. Transposing and using \Cref{transpose word comparison} gives the result for column words, and so in general.
\end{proof}

\begin{defn}\label[definition]{border strip SDSS}
    A fence diagram is \textit{border strip} if its segments do not contain a box.
\end{defn}

\begin{lemma}\label[lemma]{lem:no_box}
    If $F$ is a fence diagram such that either $\row_F$ or $\col_F$ is reduced, then it is a border strip fence diagram.
\end{lemma}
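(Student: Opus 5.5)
We prove the contrapositive: if $F$ contains a box, then $\row_F$ is not reduced, and (by transposition) $\col_F$ is not reduced either. Since a word is non-reduced as soon as some subword of consecutive letters is non-reduced, it suffices to locate a non-reduced consecutive stretch inside $\row_F$.

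\textbf{Step 1: propagating boxes along a row pair.} Suppose $F$ has a box in rows $i,i+1$ and columns $j,j+1$. If $F$ also has a fence between $(i,j-1)$ and $(i,j)$, then $(i,j)$ is connected to both $(i,j-1)$ and $(i+1,j)$. The cell $(i+1,j-1)$ lies in the skew shape, because $(i,j-1)$ and $(i+1,j)$ both do. Condition \eqref{eq:fence}, applied to the square in columns $j-1,j$, then forces $(i+1,j-1)$ to be connected to both $(i,j-1)$ and $(i+1,j)$. So there is a box in columns $j-1,j$.

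Symmetrically, suppose there is a fence between $(i+1,j+1)$ and $(i+1,j+2)$. Then $(i+1,j+1)$ is connected to $(i,j+1)$ and $(i+1,j+2)$. By \eqref{eq:fence} on the square in columns $j+1,j+2$, the cell $(i,j+2)$, which lies in the shape, is connected to both. So there is a box in columns $j+1,j+2$.

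\textbf{Step 2: choosing a maximal run of boxes.} Among the boxes in rows $i,i+1$, take a maximal run of consecutive boxes occupying columns $j,j+1,\dots,m+1$ (with $m\ge j$). By Step 1 and maximality:
\begin{itemize}
\item row $i$ has no fence between columns $j-1$ and $j$;
\item row $i+1$ has no fence between columns $m+1$ and $m+2$;
\item both rows have all horizontal fences between columns $j,\dots,m+1$.
\end{itemize}
Reading rows right to left, $\row_F^{(i)}$ contains the consecutive factor $c:=s_m s_{m-1}\cdots s_j$, followed only by letters $s_a$ with $a\le j-2$. Likewise $\row_F^{(i+1)}$ contains the factor $c$, preceded only by letters $s_a$ with $a\ge m+2$.

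\textbf{Step 3: commuting and concluding.} The letters lying between the two copies of $c$ in $\row_F$ all have index $\le j-2$ or $\ge m+2$. Hence they commute with every letter of $c$. Therefore this stretch of $\row_F$ is move equivalent to a word containing $c\,c$ consecutively. The word $c\,c$ is not reduced:
\begin{itemize}
\item if $m=j$, it is $s_j s_j$;
\item if $m>j$, the element $c^2$ has length strictly less than $2(m-j+1)$. For example, $s_{j+1}s_js_{j+1}s_j=s_js_{j+1}$ via a braid move, and in general $c$ is an $(m-j+2)$-cycle whose square is a product of cycles with fewer inversions than $2\ell(c)$; concretely, one braid move followed by a cancellation shortens the word.
\end{itemize}
So $\row_F$ is not reduced.

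For the column word, note that $F^\top$ contains the transposed box. By \Cref{transpose word comparison}, $\col_F = w_0\row_{F^\top}w_0$, and conjugation by $w_0$ preserves reducedness. So $\col_F$ is also not reduced.

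The main obstacle is Step 2. A single box does not suffice, because the neighbouring letters $s_{j\pm1}$ may sit between the two occurrences of $s_j$. Taking a maximal run is exactly what makes those intervening letters commute with $c$.
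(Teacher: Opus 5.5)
Your proof is correct and is essentially the paper's argument with rows and columns interchanged. The paper works in $\col_F$: it takes a box in columns $j,j+1$ whose top row $i$ is minimal, so the fence axiom rules out a fence from $(i-1,j)$ to $(i,j)$; it extends the vertical run of boxes maximally downward, commutes the two resulting copies of $s_{k-i-t+1}\cdots s_{k-i}$ together, and then transposes to handle $\row_F$. This matches your maximal horizontal run in $\row_F$ followed by transposition.

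One small correction to an aside. For $m\ge j+2$, a single braid move followed by a cancellation does not shorten $c\,c$: already for $s_3s_2s_1s_3s_2s_1$, no single braid move produces a cancellable pair. The non-reducedness of $c\,c$ is still right, since counting inversions gives $\ell(c^2)=2(m-j)<2\ell(c)$.
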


\begin{proof}
    First suppose that $\col_F$ is reduced, and assume for contradiction that $F$ contains a box. Fix the smallest $i$ for which there exists some $j$ with $(i, j), (i, j+1), (i+1, j), (i+1, j+1)$ all in the same component of $F$. Then, if $(i, j)$ is connected to $(i-1, j)$, \Cref{eq:fence} would imply that $(i-1, j)$ is connected to $(i, j), (i-1, j+1)$ and $(i, j+1)$, contradicting the minimality of $i$. Hence $\col_F^{j}$ does not contain $s_{k - i + 1}$.

    Let $t$ be maximal such that $(i, j+1)$ is connected to $(i+t, j+1)$ by vertical segments. Repeated application of \Cref{eq:fence} implies that $(i, j)$ is connected to $(i+t, j)$ by vertical segments. Now $\col_F^{(j)}$ contains the string $s_{k - i - t + 1}\cdots s_{k - i}$, but not $s_{k - i + 1}$. Moreover, $\col_F^{(j+1)}$ contains the same string $s_{k - i - t + 1}\cdots s_{k - i}$.
    
    Since $\col_F^{j}$ contains no factor of $s_{k - i + 1}$ and $\col_F^{j+1}$ contains no factor of $s_{k - i - t}$, this means we may apply commutation moves in $\col_F$ to obtain a substring of the form $s_{k - i - t + 1}\cdots s_{k - i}s_{k - i - t + 1}\cdots s_{k - i}$. But an elementary check in $S_{i+t}$ shows that this element is not reduced, contradicting the fact that $\col_F$ is reduced. The proof for $\row_F$ reduced then follows from \Cref{transpose word comparison} and the fact that being border strip is unchanged under transposition.
\end{proof}

Therefore we see that the box is a forbidden configuration for reduced fence diagrams.

\begin{cor}\label[corollary]{reduced BST}
    Any standard fence tableau is a border strip tableau.
\end{cor}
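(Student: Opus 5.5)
This follows directly from \Cref{lem:no_box}, which I apply to the underlying fence diagram of the tableau. Let $T$ be a standard fence tableau for $\mathfrak F_u^w$, and let $F\in \mathfrak F_u^w$ be its underlying fence diagram. By \Cref{fence facet indexing set}, every element of $\mathfrak F_u^w$ is reduced. Hence $\col_F$ and $\row_F$ are both reduced words, and in particular at least one of them is. \Cref{lem:no_box} then says that $F$ is a border strip fence diagram: its segments contain no box.

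It remains to pass from $F$ to $T$. Suppose, for contradiction, that $T$ contains a $2\times 2$ subshape with cells $(i,j),(i,j+1),(i+1,j),(i+1,j+1)$ all filled with the same entry $c$. By the definition of a standard fence tableau, two cells carry the same entry if and only if they lie in the same fence component of $F$. So all four cells lie in one component, and I need to produce the four segments of a box.

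This step needs one small argument, because lying in the same component does not by itself mean that adjacent cells are joined by a segment. I would argue as follows. Let $A$ be the component of $F$ with entry $c$, viewed as its own fence diagram, as in \Cref{lem:word_fence_component}. The entries of $T$ weakly increase along rows and down columns, and each value occupies exactly one component. Adjacent cells of $A$ receive equal entries, yet a standard fence tableau forces equal entries exactly along the component relation.

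The gap is that this does not yet force the particular segments between adjacent cells. The cleanest fix is to observe that \Cref{lem:no_box} itself produced a contradiction from four cells merely lying in one component. Indeed, its proof begins by fixing the smallest $i$ for which all four cells $(i,j),(i,j+1),(i+1,j),(i+1,j+1)$ are in the same component of $F$. So the lemma excludes exactly the configuration that $T$ would exhibit, and the contradiction follows. The main thing to check carefully is that this reading of the proof of \Cref{lem:no_box} is legitimate, i.e.\ that its argument uses only same-component membership and then derives the needed vertical segments via \Cref{eq:fence}. If it does not, the fallback is to show directly that in a reduced fence diagram, a $2\times 2$ block in one component must have all four segments. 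I would prove that using \Cref{eq:fence} together with connectivity forced by the weakly increasing filling.
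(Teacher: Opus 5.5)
Your reduction is the paper's. The corollary is recorded there as an immediate consequence of \Cref{lem:no_box}, applied to the underlying diagram $F\in\mathfrak F_u^w$, which is reduced by definition.

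\textbf{The gap.} You correctly flag that four cells lying in one fence component need not obviously carry the four segments of a box. Neither of your proposed ways of closing this works as written. The proof of \Cref{lem:no_box} uses ``same component'' only to choose the minimal $i$; after that it uses actual segments.
\begin{itemize}
\item It rules out a fence between $(i,j)$ and $(i-1,j)$ by applying \Cref{eq:fence} to the [L] this fence forms with the segment joining $(i,j)$ and $(i,j+1)$.
\item It needs the segment joining $(i,j+1)$ and $(i+1,j+1)$ to get $t\geq 1$. Otherwise the string $s_{k-i-t+1}\cdots s_{k-i}$ is empty and there is no contradiction.
\item It transports vertical segments from column $j+1$ to column $j$ using the horizontal segments.
\end{itemize}
So that proof excludes a box of segments, exactly as the lemma states, and cannot be reread as excluding a $2\times 2$ block in one component. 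Your fallback is not carried out. The tool you suggest for it, the weakly increasing filling, cannot supply it: the filling of $T$ records nothing beyond the partition of cells into fence components, which you already have. The paper also leaves this step implicit.

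\textbf{The missing fact.} In any fence diagram (reducedness plays no role), two adjacent cells in the same fence component are joined by a segment. This is the case $d(a,b)=1$ of \Cref{lem:taxicab}, and it comes from \Cref{lem:rect_diag}:
\begin{itemize}
\item Take a minimal-length path in the component from $a$ to $b$. Using \Cref{eq:fence}, modify it without changing its length, swapping [\rotL] for [L] at maxima of $j'-i'$ and conversely at minima. Afterwards every cell $(i',j')$ on it satisfies $\col(a)-\row(b)\leq j'-i'\leq \col(b)-\row(a)$.
\item For adjacent $a\leq b$ these bounds allow only two consecutive diagonals. Their cells form a single staircase in which $a$ and $b$ are neighbours.
\item A minimal-length (hence simple) path between neighbours in a staircase is the single segment from $a$ to $b$.
\end{itemize}
Apply this to the four adjacent pairs of your $2\times 2$ block: $F$ contains a box, contradicting \Cref{lem:no_box}. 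Equivalently, the fence diagram attached to $T$ as in \Cref{rem:tableau_to_fence} is $F$ itself. The unit-square argument in \Cref{lem:rect} also produces the box directly. These lemmas appear later in the paper but do not use the corollary, so there is no circularity.
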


\begin{rem}\label[remark]{rem:deleting fences from reduced fence diagrams}
    Let $F$ be a reduced fence diagram, and let $F'$ be an SDSS obtained by deleting some fences in $F$. \Cref{lem:no_box} implies that $F$ contains no boxes, which means that no such fence deletion can produce bad configurations. Thus $F'$ is also a (not necessarily reduced) fence diagram.
\end{rem}

\begin{comment}

Let us put the correspondence between fence diagrams and faces of the order polytope somewhat more scientifically, and in particular in terms of polyhedral geometry. The following is essentially a reinterpretation of the previous lemma in light of the preceding ones.

\begin{lemma}\label{notions agree}
    Let $F$ SDSS of shape $\lambda/\mu$, with components $C_1, ..., C_k$. Assume further that the coordinates on the order polytope $P_{\lambda/\mu}$ corresponding to the boxes in $C_i$ are $x_{i, 1}, ..., x_{i, j}$. Then, $F$ is a fence diagram if and only if there is a face of $\mathcal{O}(P_{\lambda/\mu})$ on whose interior we have $x_{i, 1} = ... = x_{i, j }$ for each components $C_i$ and otherwise $x_{a, b} < x_{a', b'}$ if and only if $(a, b) \leq (a', b')$ in $P_{\lambda, \mu}$, and any face of $\O(P_{\lambda/\mu})$ defines such a fence diagram. This correspondence is bijective.
\end{lemma}

\end{comment}

\subsection{Fence Tableaux, Saturated $k$-Bruhat Chains and $\pi_k(\Delta([u, w]))$}
To see that fence diagrams are related to $\pi_k(\Delta[u,w])$, recall from \Cref{shellable} and \Cref{maximal chains bij} that $\pi_k(\Delta([u, w]))$ is a pure simplicial complex with facets indexed by saturated $k$-Bruhat chains from $u$ to $w$. If $C =  \{u = v_1 < v_2 < \cdots < v_i < \cdots < v_m = w\}$ is a saturated chain in Bruhat order then $\pi_k(C)$ gives a chain in $[\pi_k(u), \pi_k(w)]$, which in turn produces a tableau $T_C$ of shape $\lambda/\mu$ where $\lambda = \pi_k(w), \mu = \pi_k(u)$ by setting $T_C$ to have underlying skew-shape $\lambda / \mu$ and an $i$ in every box contained in $\pi_k(v_i) / \pi_k(v_{i-1})$. The equalities in $T_C$ then give us a fence diagram $F_C$ as in \Cref{rem:tableau_to_fence}.

The assignments sending $C$ to $T_C$ or $F_C$ prove to be very well-behaved. We introduce several properties of fence diagrams and standard fence tableaux which will allow us to analyze it now. Our first lemma follows from the definitions, see \Cref{pik notation}.

%%To start, recall the order preserving bijection $\lambda \leftrightarrow I_\lambda = \{\lambda_{k+1-i}+i:i\in[k]\}$ between the interval $[\emptyset, (n-k)^k]$ of Young's lattice ordered by containment and ${[n]\choose k}$ with Gale order.

%%This bijection admits a convenient pictorial description. Namely, given a partition $\lambda\in [\emptyset, (n-k)^k]$, the lower right boundary of the Young diagram for $\lambda$ gives a lattice path (of length $n$) from the bottom left to the top right of the $k\times (n-k)$ rectangle. $I_\lambda\in {[n]\choose k}$ is then the set of upward steps. This description immediately leads to the following lemma.

\begin{lemma}\label[lemma]{lem:transpose}
    For $\lambda\in [\emptyset, (n-k)^k]$, the transpose partition $\lambda^\top\in [\emptyset, k^{n-k}]$ corresponds to the $n-k$ element set
    \[I_{\lambda^{\top}} = n+1 - I_\lambda^c = (n+1-I_\lambda)^c\in {[n]\choose n-k},\]
    where $A^c := [n]\setminus A$ and $n+1-A:= \{n+1-a\ |\  a\in A\}$ for any subset $A\subset [n]$.
\end{lemma}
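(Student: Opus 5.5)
We prove the statement by reading both $I_\lambda$ and $I_{\lambda^\top}$ off the boundary lattice path of $\lambda$, rather than manipulating the formula $I_\lambda=\{\lambda_{k+1-i}+i\}$ directly. The lower-right boundary of the Young diagram of $\lambda$, traversed from the bottom-left corner to the top-right corner of the $k\times(n-k)$ rectangle, is a lattice path of $n$ unit steps. Its steps are labelled $1,\dots,n$ in order, and each is either an up step or a right step.

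\emph{Step 1: up steps give $I_\lambda$.} The up step that raises the path from height $i-1$ to height $i$, counted from the bottom, runs along the right edge of row $k+1-i$, which has length $\lambda_{k+1-i}$. Before that step the path has made $i-1$ up steps and $\lambda_{k+1-i}$ right steps. So this step has label $\lambda_{k+1-i}+i$, and $I_\lambda$ is exactly the set of labels of up steps. Consequently $I_\lambda^c$ is the set of labels of right steps.

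\emph{Step 2: transposing.} Reflect the picture in the main antidiagonal of the rectangle. This sends $\lambda$ to $\lambda^\top$, now inside an $(n-k)\times k$ rectangle, and sends right steps of the path for $\lambda$ to up steps of the path for $\lambda^\top$. It also reverses the direction of traversal: the bottom-left start and top-right end swap roles. Hence step $t$ of the path for $\lambda$ becomes step $n+1-t$ of the path for $\lambda^\top$.

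\emph{Step 3: conclusion.} By Step 1 applied to $\lambda^\top$, $I_{\lambda^\top}$ is the set of labels of up steps of its path. By Step 2, these are the images $n+1-t$ of the right steps $t$ of the path for $\lambda$. Therefore $I_{\lambda^\top}=n+1-I_\lambda^c$. The second equality $n+1-I_\lambda^c=(n+1-I_\lambda)^c$ holds because $t\mapsto n+1-t$ is a bijection of $[n]$, and bijections commute with taking complements.

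The only place care is needed is the orientation in Step 2. Reflecting in the main antidiagonal, rather than transposing in the main diagonal, keeps the English-notation lower-right boundary as the lower-right boundary; it is this choice that forces the reversal $t\mapsto n+1-t$. We will confirm the orientation on a small example, such as $k=1$, $n=2$, $\lambda=(1)$, which should give $I_\lambda=\{2\}$ and $I_{\lambda^\top}=\{2\}=3-\{1\}$.
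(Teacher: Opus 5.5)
Your approach is the paper's own: the paper treats this lemma as immediate from the definitions, and the intended argument is exactly reading $I_\lambda$ off as the labels of the up steps of the lower-right boundary path. Your Steps 1 and 3 carry this out correctly; the label count $\lambda_{k+1-i}+i$ and the complement identity are both right.

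What needs fixing is Step 2, together with your closing remark on orientation. In this paper an antidiagonal is a line of constant $i+j$, running from bottom-left to top-right. Reflecting in such a line does not do what you claim:
\begin{itemize}
\item It sends the bottom-left corner to the bottom-left corner of the image rectangle and the top-right to the top-right.
\item So it \emph{preserves} the order of the $n$ steps while interchanging right and up steps.
\item It moves the cells of $\lambda$ below and to the right of the image path.
\end{itemize}
Hence the image path is the boundary of the complement of $\lambda^\top$ in the $(n-k)\times k$ rectangle, rotated by $180^\circ$, not of $\lambda^\top$. For example, take $k=2$, $n=5$, $\lambda=(3,1)$. The path of $\lambda$ is $\mathrm{RURRU}$. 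The antidiagonal reflection gives $\mathrm{URUUR}$, which is the path of $(1,1)$ with up steps $\{1,3,4\}=I_\lambda^c$. By contrast, $\lambda^\top=(2,1,1)$ has path $\mathrm{RUURU}$ and $I_{\lambda^\top}=\{2,3,5\}=6-\{1,3,4\}$.

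The map with every property you list in Step 2 is the ordinary transpose. That is, it sends $\lambda$ to $\lambda^\top$, swaps the start and end corners, reverses the traversal, and turns right steps into up steps of the reversed path. It is reflection in the $45^\circ$ line through the top-left corner, the \emph{main diagonal} in English/matrix conventions; this line passes through the bottom-right corner only when $k=n-k$.

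So, in the paper's terminology, your final paragraph has the two reflections the wrong way round. It is transposition in the main diagonal that keeps the lower-right boundary a lower-right boundary and forces $t\mapsto n+1-t$. If you meant the Cartesian line $y=-x$ through the top-left corner, the mathematics of Step 2 is right, but you should say so, since it conflicts with the paper's use of ``antidiagonal''.

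With Step 2 phrased via the transpose, the proof is complete. Alternatively, you could keep the antidiagonal reflection and add the separate fact that replacing a shape by its complement rotated by $180^\circ$ replaces $I$ by $n+1-I$.

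Finally, your proposed sanity check tests only the final formula, which is true, so it cannot catch this. Tracing the reflected path would: already for $k=1$, $n=2$, $\lambda=(1)$, the antidiagonal reflection produces the path of $\emptyset$ rather than of $\lambda^\top=(1)$.
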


\begin{lemma}\label[lemma]{lem:border_strip}
    Let $\mu\subset \lambda$ in the interval $[\emptyset, (n-k)^k]$ of Young's lattice. Then $|I_\lambda\setminus I_\mu|=1$ if and only if $\lambda/\mu$ is a border strip.

\end{lemma}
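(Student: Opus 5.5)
We will translate the statement into the lattice-path picture of \Cref{pik notation}. Via $\lambda \leftrightarrow I_\lambda=\{\lambda_{k+1-i}+i\}$, the lower-right boundary of $\lambda$ inside the $k\times(n-k)$ rectangle is a lattice path of $n$ unit steps from the bottom-left to the top-right corner, and $I_\lambda$ is exactly the set of positions $t\in[n]$ at which the path takes an upward step. The $t$-th step of every such path lies on a fixed antidiagonal strip, so we compare the boundary paths of $\mu$ and $\lambda$ step by step, with $\mu\subset\lambda$ meaning the $\lambda$-path lies weakly southeast of the $\mu$-path.

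\textbf{Antidiagonal count.} Index the antidiagonals of the rectangle by position along the paths. A cell $(i,j)$ lies in $\lambda$ iff it is northwest of the $\lambda$-path, so on each antidiagonal the cells of $\lambda$ form an initial segment. The number of cells of $\lambda/\mu$ on antidiagonal $t$ (between steps $t$ and $t+1$) therefore equals $h_\lambda(t)-h_\mu(t)$. Here $h_\nu(t)=\#\{s\le t: s\notin I_\nu\}-\#\{s\le t: s\in I_\nu\}$, up to normalization, is the height of the $\nu$-path in rotated coordinates after $t$ steps, and we have
\[
h_\lambda(t)-h_\mu(t)=2\bigl(\#(I_\mu\cap[t])-\#(I_\lambda\cap[t])\bigr).
\]
We must check the factor carefully: each unit of height difference in the rotated picture corresponds to one cell. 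The cleaner statement is that the number of cells of $\lambda/\mu$ on the $t$-th antidiagonal equals $d(t):=\#(I_\mu\cap[t])-\#(I_\lambda\cap[t])\ge 0$. Gale order $I_\mu\le I_\lambda$ guarantees $d(t)\ge 0$.

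\textbf{Border strips.} A skew shape is a border strip iff each antidiagonal contains at most one cell and the shape is edgewise connected; a $2\times 2$ block would contain two cells $(i+1,j),(i,j+1)$ on one antidiagonal. Conversely, two cells of a skew shape on the same antidiagonal force a $2\times 2$ block by convexity of skew shapes. Connectivity is part of the convention here, and we will adopt the convention in which border strips are connected and nonempty.

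\textbf{The equivalence.} If $|I_\lambda\setminus I_\mu|=1$, say $I_\lambda=I_\mu\setminus\{a\}\cup\{b\}$ with $a<b$, then $d(t)=1$ for $a\le t<b$ and $d(t)=0$ otherwise. So the nonzero antidiagonals form a consecutive interval, each containing exactly one cell. Consecutive single cells on adjacent antidiagonals of a skew shape are adjacent, so the shape is connected and is a border strip. Conversely, suppose $\lambda/\mu$ is a border strip. Then $d(t)\in\{0,1\}$, and the antidiagonals with $d=1$ are consecutive by connectivity. Since $d$ jumps by $+1$ exactly where $t\in I_\mu\setminus I_\lambda$ and by $-1$ where $t\in I_\lambda\setminus I_\mu$, a single interval of ones forces exactly one element in each difference set.

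\textbf{Main obstacle.} The main obstacle is getting the antidiagonal/step indexing and the count $d(t)$ exactly right. We will verify it on a small example such as $k=2$, $n=4$ before writing it down.
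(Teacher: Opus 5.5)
Your route is sound in outline and differs from the paper's, but one step is wrong as written. The strips you index by path position are the lines $x+y=t$. The cells these lines bisect are those with constant content $j-i=t-k$, which are diagonals running down-right, not antidiagonals in the paper's sense ($i+j$ constant).

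For these diagonals your count is correct. Along $x+y=t$ the cells of $\nu$ run from the top or left edge of the rectangle to the point the $\nu$-path reaches after $t$ steps. That gives $\min(t,k)-\#(I_\nu\cap[t])$ cells, so $\lambda/\mu$ has exactly $d(t)$ cells there. The factor $2$ in your $h$ arises only because $x-y$ changes by $2$ per cell along the line.

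However, the witness you give for a $2\times 2$ block, the pair $(i+1,j),(i,j+1)$, lies on two different strips, since their contents differ by $2$. The correct witness is $(i,j),(i+1,j+1)$. If ``antidiagonal'' is read as in the paper, your characterization of border strips is false: the ribbon $(2,2)/(1)$ has $(2,1)$ and $(1,2)$ on the antidiagonal $i+j=3$ and no $2\times 2$ block. If you use constant-content diagonals consistently, three things go through: the convexity argument (two cells on one such diagonal force $(i,j)$ and $(i+1,j+1)$, hence a $2\times 2$ block), your adjacency claim for cells on consecutive diagonals, and the jump argument for $d$. Both directions then follow.

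The paper's proof works differently. It sorts $I_\lambda=\{a_1<\cdots<a_k\}$ and $I_\mu=\{c_1<\cdots<c_k\}$ and notes that a single swap means $c_\ell=a_{\ell-1}$ on an interval $[i+1,j]$, $c_i<a_i$, and $c_\ell=a_\ell$ elsewhere. Via $a_\ell=\lambda_{k-\ell+1}+\ell$ this becomes $\mu_r=\lambda_{r+1}-1$ on consecutive rows, i.e.\ consecutive rows of $\lambda/\mu$ overlap in exactly one column. That computation is row-explicit: it records the rows spanned and the bottom-row length, which is what the following endpoint lemma uses.

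Your argument instead buys the following:
\begin{itemize}
\item it is transpose-symmetric;
\item it shows directly that the strip has $b-a$ cells with contents $a-k,\dots,b-1-k$;
\item it makes the converse direction more transparent than the paper's one-line ``these are equivalent''.
\end{itemize}

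You are also right to build connectivity and nonemptiness into ``border strip''. Under the paper's literal definition (no $2\times 2$ sub-shape), the lemma fails for $(2,1)/(1)$ with $k=2$, where $|I_\lambda\setminus I_\mu|=|\{2,4\}|=2$, and for $\lambda=\mu$. The paper's proof uses this convention implicitly.
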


\begin{proof}
    Let $I_\lambda = \{a_1<\cdots< a_k\},$ and $I_\mu = \{c_1< \cdots< c_k\}$.

    Let $i$ be the smallest index such that $c_i\ne a_i$, and $j$ the largest index such that $c_j\ne a_j$. Since $I_\mu < I_\lambda$ in Bruhat order, this implies $c_i<a_i$. Then $|I_\lambda\setminus I_\mu| = 1$ is equivalent to 
    \begin{align*} c_\ell &= a_\ell, \hspace{6mm} &\ell\notin [i, j],\\
    c_\ell &< a_\ell, \hspace{6mm} &\ell=i,\\
    c_\ell &= a_{\ell-1}, \hspace{6mm} &\ell\in [i+1, j].\end{align*} 
    Recalling that $a_\ell = \lambda_{k-\ell+1}+\ell$ and $c_\ell = \mu_{k-\ell+1}+\ell$ for all $\ell\in [k]$, the above set of conditions is equivalent to 
    \begin{align*}\mu_{k-\ell+1} &= \lambda_{k-\ell+1}, \hspace{6mm} &\ell\notin [i, j]\\
    \mu_{k-\ell+1} &\le \lambda_{k-\ell+1}, \hspace{6mm} &\ell=i\\
    \mu_{k-\ell+1} &= \lambda_{k-\ell+2}-1, \hspace{6mm} &\ell\in [i+1, j].\end{align*}

    These are equivalent to $\lambda/\mu$ being a border strip going from row $k+1-j$ to row $k+1-i$, with $a_i-c_i = \lambda_{k-i+1}-\mu_{k-i+1}$ cells in row $k+1-i$.      
\end{proof}

\begin{lemma}\label[lemma]{lem:border_strip_endpoints}
  In the situation of \Cref{lem:border_strip}, let $I_\lambda = \{a_1<\cdots < a_k\}$, $I_\lambda^c = \{b_1<\cdots<b_{n-k}\}$,  and $I_{\mu} = I_\lambda\setminus\{a_j\}\cup b_{j'}$. If $i\le j$ and $i'\ge j'$ are the unique indices such that $a_{i-1}<b_{j'}<a_i$ and $b_{i'}<a_j<b_{i'+1}$, then $\lambda/\mu$ has endpoints $(k+1-i, j')$ and $(k+1-j, i')$.
\end{lemma}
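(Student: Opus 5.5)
We build on the proof of \Cref{lem:border_strip}, using the dictionary $a_\ell = \lambda_{k-\ell+1}+\ell$ between $I_\lambda$ and the row lengths of $\lambda$. We also use the dual dictionary for $I_\lambda^c$, obtained from \Cref{lem:transpose}: the elements $b_m$ of $I_\lambda^c$ record the horizontal steps of the boundary lattice path of $\lambda$, traversed from bottom left to top right. Concretely, $a_\ell$ is the step at which the path goes up out of row $k+1-\ell$, and $b_m$ is the horizontal step along the bottom of column $m$.

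First we locate the row range of the strip. Passing from $I_\lambda$ to $I_\mu$ removes $a_j$ and inserts $b_{j'}$. Since $a_{i-1}<b_{j'}<a_i$, sorting $I_\mu$ gives $c_\ell=a_\ell$ for $\ell<i$, $c_i=b_{j'}<a_i$, $c_\ell=a_{\ell-1}$ for $i<\ell\le j$, and $c_\ell=a_\ell$ for $\ell>j$. This is exactly the pattern in the proof of \Cref{lem:border_strip}, so $i\le j$, and $\lambda/\mu$ occupies rows $k+1-j$ through $k+1-i$. The bottom row of the strip is therefore row $k+1-i$, and the top row is row $k+1-j$.

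Next we determine the columns. The bottom-left endpoint of the strip is the leftmost cell of row $k+1-i$, which lies in column $\mu_{k+1-i}+1$. We have $\mu_{k+1-i}+1 = c_i - i + 1 = b_{j'}-i+1$. The number of elements of $I_\lambda$ below $b_{j'}$ is $i-1$, so the number of elements of $I_\lambda^c$ that are at most $b_{j'}$ is $b_{j'}-(i-1)$. Since this count equals $j'$, the column is $j'$, giving the endpoint $(k+1-i,j')$. The top-right endpoint is the rightmost cell of row $k+1-j$, in column $\lambda_{k+1-j} = a_j - j$. The number of elements of $I_\lambda^c$ below $a_j$ is $a_j-j$, and this equals $i'$ because $b_{i'}<a_j<b_{i'+1}$. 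This gives the endpoint $(k+1-j,i')$.

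Finally, $j'\le i'$ follows because the endpoint $(k+1-i,j')$ lies weakly southwest of the endpoint $(k+1-j,i')$ in a border strip. It also follows directly from $b_{j'}<a_j$, which holds since $b_{j'}<a_i\le a_j$.

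The only real obstacle is bookkeeping. We have to handle the degenerate case $i=j$ (a single row) uniformly, and we have to make sure the "endpoints" are the southwest-most and northeast-most cells in the stated order. Both are immediate from the formulas above, since no case split is needed.
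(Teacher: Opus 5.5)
Your proof is correct. The row computation matches the paper's: $i$ and $j$ are the first and last indices where $I_\mu$ and $I_\lambda$ differ, and the last sentence of the proof of \Cref{lem:border_strip} then gives rows $k+1-j$ through $k+1-i$.

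One small point: the sorting pattern you write down already presupposes $i\le j$, equivalently $b_{j'}<a_j$. So your ``so $i\le j$'' is not really a deduction. This is harmless, because $i\le j$ is part of the hypotheses and also follows from $I_\mu<I_\lambda$ in Gale order.

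For the columns you take a genuinely different route from the paper.

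\begin{itemize}
\item The paper transposes. By \Cref{lem:transpose}, the swap $I_\mu^c=I_\lambda^c\setminus\{b_{j'}\}\cup\{a_j\}$ becomes $I_{\mu^\top}=I_{\lambda^\top}\setminus\{n+1-b_{j'}\}\cup\{n+1-a_j\}$. The paper locates these two elements inside $I_{\lambda^\top}$ and $I_{\mu^\top}$ and reruns the row argument on $\lambda^\top/\mu^\top$. This shows the strip spans columns $j'$ through $i'$.
\item You read off the columns of the two extreme cells directly. You use $\mu_{k+1-i}=c_i-i=b_{j'}-i$ and $\lambda_{k+1-j}=a_j-j$, and convert each value into an index in $I_\lambda^c$ by counting complement elements below $b_{j'}$ and below $a_j$.
\end{itemize}

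Your route is shorter and avoids \Cref{lem:transpose} and the index reversal under $A\mapsto n+1-A$. It also pins down the two endpoint cells themselves rather than just the column span.

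The paper's route highlights the row/column symmetry and reuses one argument twice. However, to pass from ``rows $k+1-j$ to $k+1-i$, columns $j'$ to $i'$'' to the two specific endpoints, it also uses implicitly, as you do, that $\lambda/\mu$ is a connected ribbon. Such a ribbon has its southwest end at the leftmost cell of its bottom row and its northeast end at the rightmost cell of its top row.
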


\begin{proof}
    Write $I_\mu = \{c_1< \cdots< c_k\}$. Then, we have $c_i = b_{j'}$, so that $c_i = b_{j'}< a_i$. Since $c_t=a_t$ for all $t<i$, it follows that $i$ is the smallest index such that $c_i\neq a_i$. Observe that since $a_j>b_{j'}$, we have $j$ is the largest index such that $a_j \neq c_j$.  Thus, the final sentence of the proof of \Cref{lem:border_strip} shows that $\lambda/\mu$ goes from row $k+1-j$ to row $k+1-i$.

    Moreover, $I_\mu^c = I_\lambda^c\setminus \{b_{j'}\}\cup \{a_j\}$, so $I_{\mu^\top} = I_{\lambda^{\top}}\setminus \{n+1-b_{j'}\}\cup \{n+1-a_j\}$ by \Cref{lem:transpose}. If $i'\in [n-k]$ is such that $b_{i'}<a_j<b_{i'+1}$, then $n+1-b_{i'+1}< n+1-a_j < n+1-b_{i'}$. Hence, $n+1-b_{j'}$ is the $(n-k-j'+1)^{th}$ largest element of $I_{\lambda^\top}$, and $n+1-a_j$ is the $(n-k-i'+1)^{th}$ largest element in $I_{\mu^\top}$. Applying the arguments in the previous paragraph to $\lambda^\top/\mu^\top$ shows that $\lambda^\top/\mu^\top$ goes from row $(n-k)+1-(n-k-i'+1)=i'$ to row $(n-k)+1-(n-k-j'+1)=j'$. Transposing shows that $\lambda/\mu$ goes from column $i'$ to column $j'$.
\end{proof} 

Let $u=u_0\lessdot_k \cdots \lessdot_k u_m = w$ be a saturated chain in $k$-Bruhat order. By projecting this chain to ${[n]\choose k}$ and considering it as a sequence of partitions, \Cref{lem:border_strip} implies that we get a border strip tableau $T$. Conversely, given a border strip tableau $T$, we can construct a chain in $k$-Bruhat order, $w = u_m>\cdots>u_0$ such that $\pi_k(u_0) = I_\mu$, as follows. One may think of $T$ as a sequence $\mu = \mu_0\subset \cdots \subset \mu_m = \lambda$ of partitions, and hence as a chain $I_\mu = I_0<\cdots <I_m=I_\lambda$ in Bruhat order. \Cref{lem:border_strip} implies that $|I_i\setminus I_{i-1}| = 1$ for $i\in [m]$. Now, starting from $u_m = w$, we inductively define $u_i\le w$ with $\pi_k(u_i) = I_i$ as follows: if $I_{i} = I_{i+1}\setminus \{a\}\cup \{b\}$ with $b<a$, then 
\[u_i := (a\ b)\cdot u_{i+1}.\]
This constructs the descending Bruhat chain $w = u_m>\cdots >u_0$, with $\pi_k(u_0) = I_\mu$.

We now show that under these correspondences, saturated $k$-Bruhat chains correspond precisely to standard fence tableaux for $\mathfrak F_u^w$. More specifically, we show the following proposition. 

\begin{prop}\label[proposition]{prop: reduced fence faces know}
    Fix a representative interval $[u,w]$ for $\langle u, w\rangle\in \mathcal{Q}(k,n)$. The assignment sending a saturated $k$-Bruhat chain $C =\{w = u_m\gtrdot_k \cdots \gtrdot_k u_0 = u\}$ to the corresponding border strip tableau $T_C$ induces a bijection between saturated $k$-Bruhat chains from $u$ to $w$ and standard fence tableaux with underlying fence diagram $F_C \in \mathfrak F_u^w$. 
\end{prop}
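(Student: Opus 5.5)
We will prove the proposition in three steps: (i) for every saturated chain $C$ the diagram $F_C$ lies in $\mathfrak F_u^w$, so $T_C$ is a standard fence tableau on it; (ii) the map $C\mapsto T_C$ is injective; (iii) it is surjective onto the set of standard fence tableaux whose underlying diagram lies in $\mathfrak F_u^w$. By \Cref{non-k-grass fence complex} and the equivalence-class invariance of \Cref{maximal chains bij}, we may take $w$ to be $k$-Grassmannian, so that $w_J=\id$ and the target word is $u_J$.

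\textbf{Step (i).} Write the chain as $u=u_0\lessdot_k u_1\lessdot_k\cdots\lessdot_k u_m=w$. Each cover is $u_{i-1}=t_i u_i$ for a transposition $t_i=(a\ b)$ acting on values. By \Cref{lem:border_strip}, $A_i:=\pi_k(u_i)/\pi_k(u_{i-1})$ is a border strip, and by \Cref{lem:border_strip_endpoints} its endpoints are determined by the positions of $a$ and $b$ in $I_\lambda$ and $I_\lambda^c$. The key local claim is
\[
(u_i)_J\,(u_{i-1})_J^{-1} = \w_{A_i}^{-1},
\]
with $\w_{A_i}$ reduced and $\ell(u_{i-1}) = \ell(u_i)-1$. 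Here the column part of $\w_{A_i}$ is a decreasing cycle $s_{p}\cdots s_{q}$ in $S_k$, coming from the vertical run of the strip, and the row part is the analogous cycle in $S_{n-k}$, coming from the horizontal run. The idea is that multiplying by $t_i$ swaps the value $a$ at a position $\le k$ with the value $b$ at a position $>k$. Re-sorting the first $k$ entries then moves one entry across the positions indexed by the rows of the strip, and re-sorting the last $n-k$ entries does the same across its columns. The cover condition of \Cref{thm: bergeron-sottile} forces these moves to be length-additive. Telescoping and using \Cref{lem:word_fence_component} with the component order given by $T_C$ yields
\[
\w_{F_C}\equiv \w_{A_1}\cdots \w_{A_m} = u_J ,
\]
where $u_J$ is reached from $(w)_J=\id$. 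Reducedness of $\w_{F_C}$ follows by counting lengths: $\ell(u_J)$ equals the sum of $\ell(\w_{A_i})$, since $\ell(w)-\ell(u)=m$ and $|\lambda/\mu|=\sum_i |A_i|$.

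\textbf{Step (ii).} This is immediate from the construction described just before the proposition. The tableau determines the sets $I_i$, and the chain is recovered downward from $w$ via $u_i=(a\ b)u_{i+1}$. So $C$ is determined by $T_C$.

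\textbf{Step (iii).} Let $T$ be a standard fence tableau on some $F\in\mathfrak F_u^w$. By \Cref{reduced BST}, $T$ is a border strip tableau, so the construction above produces a descending Bruhat chain $w=u_m>\cdots>u_0$ with $\pi_k(u_0)=\mu$. By the local computation of step (i), each step is a $k$-cover and $(u_0)_J=\w_F=u_J$. Since $\pi_k(u_0)=\pi_k(u)$ as well, we get $u_0=u$.

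The main obstacle is the local claim in step (i): identifying $(u_i)_J(u_{i-1})_J^{-1}$ with the word of a single strip, and checking length additivity via \Cref{thm: bergeron-sottile}, especially at the endpoints where the cycle lengths are determined by \Cref{lem:border_strip_endpoints}. The approach is to compute explicitly with one-line notation. The first $k$ values of $u_i$ are increasing except as permuted by $(u_i)_J$, and the swap relocates a value through exactly the rows spanned by the strip; this matches the column word read bottom to top.
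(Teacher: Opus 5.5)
Your route is correct in substance but differs from the paper's in how reducedness and cover-ness are checked. Both arguments share the same core. The first shared ingredient is the computation that the value swap multiplies the $J$-part on the left by the strip's word, $(u_{i-1})_J=\w_{A_i}(u_i)_J$. Here $\w_{A_i}$ is the column cycle $s_ps_{p+1}\cdots s_{q-1}$ times the shifted row cycle $s_{p'+k-1}\cdots s_{q'+k}$, matched to the strip via \Cref{lem:border_strip_endpoints}; note the column part has increasing indices, not decreasing. The second shared ingredient is \Cref{lem:word_fence_component}, used to assemble the components.

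The paper then inducts on $\ell(w)-\ell(u)$, peeling off the strip of $1$'s. It uses \Cref{lem:k-reflection} and \Cref{lem:cycle} to show that prepending the strip's cycles to $\col_{F'}$ and $\row_{F'}$ stays reduced exactly when that reflection step is a cover (conditions \eqref{eq:1} and \eqref{eq:2}).

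You replace this with a global count. You use $\ell(x)=|\pi_k(x)|+\ell(x_J)$, $\ell(w)=|\lambda|$, and the fact that each ribbon $A_i$ carries $|A_i|-1$ fences. Hence $\w_{F_C}$ has length $|\lambda/\mu|-m=\ell(u_J)$, so it is reduced. This is valid and avoids the induction and both lemmas. It also makes \Cref{thm: bergeron-sottile} unnecessary: in step (i), the additivity $\ell((u_{i-1})_J)=\ell((u_i)_J)+|A_i|-1$ is just the cover hypothesis rewritten via the length formula. What the paper's local version buys in exchange is finer information: it identifies, strip by strip, exactly which reflection steps fail to be covers.

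As written, however, step (iii) does not justify that each step is a $k$-cover. The identity $(u_{i-1})_J=\w_{A_i}(u_i)_J$ holds for every swap $u_{i-1}=(a\ b)u_i$ in the construction, whether or not it is a cover, so ``the local computation of step (i)'' cannot certify one. The missing input is the reducedness of $\w_F$, which you never invoke in (iii).

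The fix is your own count:
\begin{itemize}
\item The telescoped identity gives $(u_0)_J=\w_{A_1}\cdots\w_{A_m}\equiv\w_F=u_J$, so $u_0=u$.
\item Reducedness of $\w_F$ gives $\ell(u_J)=|\lambda/\mu|-m$, hence $\ell(w)-\ell(u)=m$.
\item Each of the $m$ steps strictly lowers length, because the larger value $a$ sits in a position $\le k$, to the left of $b$.
\end{itemize}
Therefore every step is a Bruhat cover, and it is a $k$-cover because $\pi_k$ changes. Then $T_C=T$, which completes surjectivity.
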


In particular, once one knows that every $F\in \mathfrak F_u^w$ has a standard fence tableau filling, this proposition implies that one can think about the reduced fence diagrams of $\calF_u^w$ as corresponding to all possible equality patterns given by projecting saturated $k$-Bruhat chains from $u$ to $w$ as in \Cref{rem:tableau_to_fence}. We first introduce a couple of preparatory lemmas.

\begin{lemma}\label[lemma]{lem:k-reflection}
    Let $v\in S_n$, and $a>b\in [n]$ such that $v^{-1}(a)\le k$ and $v^{-1}(b)> k$. If $u = (a\ b)\cdot v = v\cdot \big(v^{-1}(a)\ \ v^{-1}(b)\big)$, then $v>u$ is a $k$-Bruhat cover if and only if $v(l)\notin [b,a]$ for all $v^{-1}(a)<l<v^{-1}(b)$.
\end{lemma}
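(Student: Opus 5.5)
The plan is to reduce to the standard characterization of Bruhat covers by transpositions. Set $p = v^{-1}(a)$ and $q = v^{-1}(b)$, so $p \le k < q$, $v(p) = a > b = v(q)$, and $u = v\cdot(p\ q)$. We take as known (cf. \cite[Lemma 2.1.4]{Bjorner-Brenti}) that for $p<q$ with $v(p)>v(q)$, the permutation $u = v\cdot(p\ q)$ is covered by $v$ in Bruhat order if and only if no $l$ with $p<l<q$ has $v(q) < v(l) < v(p)$. Since $a,b$ are themselves attained only at $p,q$, this condition is exactly $v(l)\notin[b,a]$ for all $p<l<q$. So the hypothesis of the lemma is equivalent to $u \lessdot v$ being a Bruhat cover.

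It then remains to see that this cover is automatically a $k$-Bruhat cover, i.e. $\pi_k(u)\neq\pi_k(v)$. This holds because $u([k]) = v([k])\setminus\{a\}\cup\{b\}$: the right multiplication by $(p\ q)$ swaps a position $p\le k$ with a position $q>k$. Since $a \neq b$ and $b \notin v([k])$, we get $\pi_k(u)\ne\pi_k(v)$.

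For the converse direction, if $v \gtrdot_k u$ then in particular $v$ covers $u$ in Bruhat order. The cover criterion then forces $v(l)\notin(b,a)$ for $p<l<q$, which is the stated condition.

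The only point needing care is the length bookkeeping behind the cover criterion, namely $\ell(v)-\ell(v(p\ q)) = 1 + 2\#\{p<l<q : v(q)<v(l)<v(p)\}$. If needed, I would verify this directly by counting the inversions that change under the swap. This count is the main (mild) obstacle; everything else is immediate from the definitions.
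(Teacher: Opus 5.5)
Your proposal is correct and follows the same route as the paper. The Bruhat-cover condition comes from the inversion count under right multiplication by the transposition $(p\ q)$; you cite the standard cover criterion where the paper just says ``count inversions.'' The cover is then automatically a $k$-Bruhat cover because swapping positions $p\le k<q$ changes $v([k])$, and you state the converse direction explicitly, which the paper leaves implicit.
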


\begin{proof}
    That $v > u$ is a Bruhat cover if and only if the condition holds follows from counting the number of inversions in $u$ and $v$. Then, if $v\gtrdot u$ is a Bruhat cover, it will automatically be a $k$-Bruhat cover since $\pi_k(v)\neq \pi_k(u)$ by construction. 
\end{proof}

\begin{lemma}\label[lemma]{lem:cycle}
    Let $\underline v$ be a reduced word for $v\in S_m$. For $1\le i\le j\le m$,     
    \[(s_is_{i+1}\cdots s_{j-1})\underline v \text{ is reduced }\iff v(\ell)\notin [i,j] \text{ for }\ell>v^{-1}(j),\]
    \[(s_{j-1}\cdots s_{i+1}s_i)\underline v \text{ is reduced }\iff v(\ell)\notin [i,j] \text{ for }\ell<v^{-1}(i).\]
\end{lemma}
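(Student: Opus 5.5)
The plan is to use the standard characterization of length: for a word $s_{i_1}\cdots s_{i_r}$ acting on the left, $\ell(s_i x)=\ell(x)+1$ if and only if $x^{-1}(i)<x^{-1}(i+1)$, i.e. $i$ appears to the left of $i+1$ in the one-line notation of $x$. So we add the simple reflections one at a time from the right end of the prefix and check this ascent condition at each stage.

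\textbf{First equivalence.} Multiplying on the left by $s_{j-1}$, then $s_{j-2}$, \dots, then $s_i$ (since $(s_i\cdots s_{j-1})\underline v$ applies $s_{j-1}$ first to $v$) acts on values. Set $x_0=v$ and $x_t=s_{j-t}x_{t-1}$. Reducedness is equivalent to each step increasing length, i.e. to $j-t$ lying to the left of $j-t+1$ in $x_{t-1}$, for $t=1,\dots,j-i$.

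We track the positions inductively. In $x_{t-1}$ the value $j-t+1$ sits at position $v^{-1}(j)$, because the original $j$ has been relabeled successively. The values $j-t+2,\dots,j$ sit at the former positions of $j-t+1,\dots,j-1$, and all other values are unchanged. Hence the condition at step $t$ says $v^{-1}(j-t)<v^{-1}(j)$.

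Ranging over $t$, this says every value in $[i,j-1]$ lies to the left of $j$ in $v$. Equivalently, no $\ell>v^{-1}(j)$ has $v(\ell)\in[i,j]$; the value $j$ itself sits at $v^{-1}(j)$, not beyond it. A short induction on $j-i$ makes this rigorous. Peeling off $s_{j-1}$ first: $s_{j-1}\underline v$ is reduced iff $v^{-1}(j-1)<v^{-1}(j)$. Then apply the inductive hypothesis to $v'=s_{j-1}v$ with the interval $[i,j-1]$, noting $v'^{-1}(j-1)=v^{-1}(j)$ and that $v'$ agrees with $v$ on which positions hold values in $[i,j]$ (it only swaps the labels $j-1$ and $j$).

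\textbf{Second equivalence.} This is symmetric: apply $s_i$ first, then $s_{i+1}$, and so on. Alternatively, deduce it from the first by conjugating with $w_0$ on both sides, i.e. the map $x\mapsto w_0xw_0$, which sends $s_a\mapsto s_{m-a}$, reverses positions, and complements values. Applying the first statement to $w_0vw_0$ with the interval $[m+1-j,m+1-i]$ turns the condition ``right of $j$'' into ``left of $i$''.

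\textbf{Main obstacle.} The only delicate point is bookkeeping the relabeling of values at each step, so the inductive formulation via $v'=s_{j-1}v$ is the cleanest route. The degenerate case $i=j$ is trivial: the empty prefix gives $\underline v$ itself, and the condition on $[i,i]$ holds vacuously.
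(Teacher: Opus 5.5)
Your proof is correct, but it is organized differently from the paper's.

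The paper handles the first equivalence in one step. It writes $v'=(s_i\cdots s_{j-1})v$ as $v$ with its values relabeled by the cycle $a\mapsto a+1$ on $[i,j-1]$, $j\mapsto i$. It then compares inversion counts directly, asserting that $\ell(v')-\ell(v)=j-i=\ell(s_i\cdots s_{j-1})$ exactly when no value of $[i,j-1]$ lies to the right of $j$. The count it leaves implicit is that only pairs involving the value $j$ change inversion status. Each such pair contributes $+1$ or $-1$ according to whether the partner lies to the left or right of $j$. The second equivalence is left to ``similar arguments.''

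You instead split the prefix into its $j-i$ simple reflections and apply the ascent criterion $\ell(s_ax)=\ell(x)+1\iff x^{-1}(a)<x^{-1}(a+1)$ at each step. Equivalently, you induct on $j-i$ via $v'=s_{j-1}v$. This turns the global count into $j-i$ one-line checks $v^{-1}(j-t)<v^{-1}(j)$. The cost is the relabeling bookkeeping, which you carry out correctly.

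In the inductive version, the one step you leave implicit is this. Once $v^{-1}(j-1)<v^{-1}(j)$ holds, the value $j$ of $v'$ sits to the left of $v'^{-1}(j-1)=v^{-1}(j)$. Hence the hypothesis for $v'$ on $[i,j-1]$ coincides with the condition for $v$ on $[i,j]$, which is immediate.

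Your derivation of the second equivalence by conjugating with $w_0$ (sending $[i,j]$ to $[m+1-j,m+1-i]$) gets it for free. It is more explicit than the paper's appeal to symmetry.
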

\begin{proof}
    We sketch the proof of the first equivalence, and the second follows by similar arguments.

    Write $v' = (s_i\cdots s_{j-1})v$, so that
    \[v'(\ell) = \begin{cases}
        v(\ell)+1 &\text{if }v(\ell)\in[i, j-1],\\
        v(i) & \text{if }v(\ell) = j,\\
        v(\ell) &\text{if }v(\ell)\notin [i, j].
    \end{cases}\]
    
    Using this description, $v'$ has $j-i$ inversions more than $v$ if and only if $v(\ell)\notin [i,j]$ for $\ell>v^{-1}(j)$. Since $\ell(s_i\cdots s_{j-1}) = j-i$, this is equivalent to $(s_i\cdots s_{j-1})\underline v$ being reduced, as required.
\end{proof}

\begin{proof}[Proof of \Cref{prop: reduced fence faces know}]

%First, we show that we can reduce to the case where $w$ is $k$-Grassmannian. By Remark \ref{non-k-grass fence complex}, $\mathfrak F_u^w$ is independent of the representative chosen. Moreover, \ref{maximal chains bij} implies that saturated $k$-Bruhat chains correspond to facets of $\pi_k(\Delta[u,w])$, which by \cite[Lemma 8.7]{KLS0} is independent of the representative chosen. Hence, we may pick the unique representative $[u,w]$ of $\langle u, w\rangle$ such that $w$ is $k$-Grassmannian. 

We induct on $r = l(w)-l(u)$. The base case $r = 0$ is clear. Assume now that the assignment $C\mapsto T_C$ is a bijection for all $[u',w']$ with $l(w')-l(u')<r$.  

First, we show that every standard fence tableau in $\mathfrak F_u^w$ comes from a saturated $k$-Bruhat chain from $u$ to $w$. Let $T$ be a standard fence tableau of shape $F$ for some $F\in \mathfrak F_u^w$. By \Cref{reduced BST}, $T$ is a border strip tableau. We will show that $T$ corresponds to a saturated $k$-Bruhat chain from $u$ to $w$. The cells of $T$ containing $1$'s form a border strip, say with endpoints $(k-i+1, j')$ and $(k-j+1, i')$, where $i\le j$ and $i'\ge j'$. Deleting the cells containing $1$'s (and decrementing all entries) gives a smaller border strip tableau $T'$, with corresponding descending $k$-Bruhat chain $C'= \{w = u_m>_k\cdots >_ku_1 = v\}$ (by induction). Let $F'$ be the fence diagram associated to $T'$. We claim that $w=u_m>_k\cdots >_k u_1=v>u$ is our desired $k$-Bruhat chain.

Write $v[1, k]=\pi_k(v) = \{a_1<\dots<a_k\}$ and $v[k+1, n] = \pi_k(v)^c = \{b_1<\dots< b_{n-k}\}$. In one line notation, $v$ has the form 
\[v = a_{\sigma(1)}\cdots a_{\sigma(k)}b_{\psi(1)}\cdots b_{\psi(n-k)}\]
for some $\sigma\in S_k, \psi\in S_{n-k}$. This implies that $v_J = \sigma \cdot (\psi\{k\})$. Moreover, by \Cref{lem:border_strip_endpoints} and the definition of $i, j, i', j'$, it follows that 
\[\pi_k(u) = \pi_k(v)\setminus \{a_j\}\cup \{b_{j'}\},\ \ \pi_k(u)^c = \pi_k(v)^c \setminus \{b_{j'}\}\cup \{a_j\},\]
with $a_{i-1}<b_{j'}< a_i$ and $b_{i'}<a_j<b_{i'+1}$. Finally,

\[u = v\cdot \big(\sigma^{-1}(j)\ \ \psi^{-1}(j')+k\big) = (a_j\ b_{j'})\cdot v.\]
\vspace{-2mm} 

 Thus, as $C'$ is a saturated $k$-Bruhat chain, it suffices now to show that $v>u$ is a $k$-Bruhat cover. By \Cref{lem:k-reflection}, $v>u$ is a Bruhat cover if and only if $v(\ell)\notin [b_{j'},a_j]$ for $\sigma^{-1}(j) = v^{-1}(a_j)<\ell < v^{-1}(b_{j'}) = \psi^{-1}(j)+k$. The fact that $v(\ell) = \begin{cases}
    a_{\sigma(\ell)} & \text{if }\ell\le k\\
    b_{\psi(\ell-k)} & \text{if }\ell>k
\end{cases}$ along with the inequalities $a_{i-1}<b_{j'}< a_i,\ b_{i'}<a_j<b_{i'+1}$ shows that this is equivalent to the pair of conditions
\begin{equation}\label{eq:1}
    \sigma(\ell) \notin [i, j]\hspace{5mm}\text{for }\ell>\sigma^{-1}(j), \text{ and}   
\end{equation}
\begin{equation}\label{eq:2}
    \psi(\ell) \notin [j', i'] \hspace{5mm}\text{for }\ell<\psi^{-1}(j').
\end{equation}
Note that the contribution of the border strip of $T$ with $1$'s to $\col_F$ is $s_is_{i+1}\cdots s_{j-1}$, and to $\row_F$ is $s_{i'-1}\cdots s_{j'+1}s_{j'}$. Furthermore, $\col_F$ and $\row_F$ are reduced, by assumption. Therefore, by \Cref{lem:word_fence_component}, we have 
\[\col_F  \equiv (s_i\cdots s_{j-1})\col_{F'},\ \row_F\equiv (s_{i'-1}\cdots s_{j'})\row_{F'}.\]
Now, \Cref{lem:cycle}  implies that  $C$ is a saturated $k$-Bruhat chain and $T_C = T$.

Next, we show that if $C$ is a saturated $k$-Bruhat chain, then the associated border strip tableau $T_C$ is a standard fence tableau. Let $F$ denote the fence diagram associated to $T_C$ viewing it as a reverse plane partition, as in \Cref{rem:tableau_to_fence}. Let $F'$ be $F$, but with the fence component corresponding to all of the $1$s removed.
As before, since the cells of $T_C$ containing $1$'s form a border strip, denote its endpoints $(k-i+1, j')$ and $(k-j+1, i')$, where $i\le j$ and $i'\ge j'$.

We now show that $F$ is reduced. Let $\lambda = \pi_k(u_1)$ and $\mu=\pi_k(u_0)$. Note that once again the contribution of the border strip of $T$ with $1$'s to $\col_F$ is $s_is_{i+1}\cdots s_{j-1}$, and to $\row_F$ is $s_{i'-1}\cdots s_{j'+1}s_{j'}$. Using commutation braid moves, it follows that 
\[\col_F \equiv (s_i\cdots s_{j-1})\col_{F'},\ \row_F\equiv (s_{i'-1}\cdots s_{j'})\row_{F'}.\]
 Applying \Cref{lem:cycle} along with these equivalences, $\col_F$ is reduced if and only if $\col_{F'}$ is reduced and \Cref{eq:1} holds. Similarly, $\row_F$ is reduced if and only if $\row_{F'}$ is reduced and \Cref{eq:2} holds. Therefore, $\w_F$ is reduced if and only if $\w_{F'}$ is reduced and both equations \Cref{eq:1} and \Cref{eq:2} hold. By the induction hypothesis, $\w_F$ is reduced if and only if $C$ is a saturated $k$-Bruhat chain.

We are left to prove that when this $\w_F$ is reduced, it is in fact a word for $u_J$. Recall that $u = (a_j\ b_{j'})\cdot v$. Since $a_{i-1}<b_{j'}< a_i$, replacing $a_j$ by $b_{j'}$ has the effect of left multiplying $v_J$ by the cycle $(i\ \ i+1\
 \cdots \ j) = s_i\cdots s_{j-1}$. Similarly since $b_{i'}< a_j < b_{i'+1}$, replacing $b_{j'}$ by $a_j$ has the effect of left multiplying $v_J = \sigma$ by the cycle $(i'+k\ \ \ i'+k-1\ \cdots \ j'+k) = s_{i'+k-1}\cdots s_{j'+k}$. Therefore,
 \begin{align*}
     u_J &= (s_i\cdots s_{j-1})(s_{i'+k-1}\cdots s_{j'+k})v_J \\
     &= (s_i\cdots s_{j-1})\sigma\cdot  (s_{i'+k-1}\cdots s_{j'+k})\psi\{k\}\\
     &= (s_i\cdots s_{j-1})\col_{F'}\cdot (s_{i'+k-1}\cdots s_{j'+k})\row_{F'}\{k\}\\
     &= \col_F\row_F\{k\} = \w_F,
 \end{align*}
where the second equality follows from $v_J = \sigma\cdot \psi\{k\}$ with $\sigma\in S_k$ commuting with factors in $S_{n-k}$, and the second to last equality follows from the commutation move equivalences $\col_F \equiv (s_i\cdots s_{j-1})\col_{F'},\ \row_F\equiv (s_{i'-1}\cdots s_{j'})\row_{F'}.$

\end{proof}

\subsection{Faces of the Gelfand–Tsetlin Polytope}
Let $P_{\lambda/\mu}$ be the poset of cells in $\lambda/\mu$, where we declare $a\le b$ if and only if $\row(a)\le \row(b)$ and $\col(a)\le \col(b)$. In other words, cells increase as we go down and to the right. Let $\widehat{P}_{\lambda/\mu} := P_{\lambda/\mu}\coprod
\{\hat 0, \hat 1\}$ be the poset $P_{\lambda/\mu}$ with a smallest and largest element added.

When $\lambda$ is a $k \times (n - k)$ rectangle and $\mu = \emptyset$, we define $P_{k,n} := P_{\lambda/\mu}$. In this case, $P_{k,n}$ is simply the poset product of $[k]\times [n-k]$, where $[k]$ and $[n-k]$ have their natural total orders. Vertices of $P_{k,n}$ correspond to Young diagrams that fit inside a $k\times n-k$ rectangle. In this case, the order polytope $\O(P_{k,n})$ can be identified with the \textit{Gelfand–Tsetlin polytope} corresponding to the fundamental weight $\omega_k$, i.e. the polytope whose vertices are Gelfand–Tsetlin patterns with $k$ $1$'s at the top and $(n - k)$ $0$'s (see \Cref{fig:gt polytopes}). In this subsection we establish correspondences between its faces and fence diagrams. 

Throughout this paper, we will freely identify $\mathcal{O}(P_{\lambda/\mu})$ with the corresponding face of $\O(P_{k,n})$.

\begin{figure}[H]

    \centering
    \def\borderInnerSep{2pt}

$\begin{array}{ccccc}
\vcenter{\hbox{
\begin{tikzpicture}

    \node[blank] at (0, 0) {1};
    \node[blank] at (2,0) {1};
    \node[blank] at (4,0) {0};
    \node[blank] at (6,0) {0};
    \node[blank] at (1,-1) {1};
    \node[blank] at (3,- 1) {$x_1$};
    \node[blank] at (5,-1) {0};
    \node[blank] at (2,-2) {$x_2$};
    \node[blank] at (4,-2) {$x_3$};
    \node[blank] at (3,-3) {$x_4$};
    
    \node[blank] at (.5, -.5) {$\rotgeqa$};
    \node[blank] at (2.5, -.5) {$\rotgeqa$};
    \node[blank] at (4.5, -.5) {$\rotgeqa$};
    \node[blank] at (1.5, -1.5) {$\rotgeqa$};
    \node[blank] at (3.5, -1.5) {$\rotgeqa$};
    \node[blank] at (2.5, -2.5) {$\rotgeqa$};

    \node[blank] at (1.5, -.5) {$\rotgeqb$};
    \node[blank] at (3.5, -.5) {$\rotgeqb$};
    \node[blank] at (5.5, -.5) {$\rotgeqb$};
    \node[blank] at (2.5, -1.5) {$\rotgeqb$};
    \node[blank] at (4.5, -1.5) {$\rotgeqb$};
    \node[blank] at (3.5, -2.5) {$\rotgeqb$};

\end{tikzpicture}
}}
\hspace{2cm}
\vcenter{\hbox{
\begin{tikzpicture}
    \draw (0,0) -- (2,0) -- (2,2) -- (0,2) -- (0,0);
    \draw (1,0) -- (1,2);
    \draw (0,1) -- (2,1);

    \node at (0.5,0.5) {$x_1$};
    \node at (1.5,0.5) {$x_2$};
    \node at (0.5,1.5) {$x_3$};
    \node at (1.5,1.5) {$x_4$};
    \node at (1, .5) {$\leq$};
    \node at (1, 1.5) {$\leq$};
    \node at (.5, 1) {\rotatebox[origin=c]{90}{$\geq$}};
    \node at (1.5, 1)
    {\rotatebox[origin=c]{90}{$\geq$}};
    
\end{tikzpicture}
}}
\end{array}$
\caption{The polytope $\mathcal{O}(P_{2,4})$ is the subset of $\mathbb{R}^4_{x_1, x_2, x_3, x_4}$ consisting of all fillings of either of the above diagrams with $x_i$'s that satisfy the depicted inequalities. The diagram on the left gives the GT polytope, and fillings of it consisting of integers are called \textit{Gelfand–Tsetlin patterns}. The diagram on the right gives $\mathcal{O}(P_{2,4})$ as we defined it. The identification between the GT polytope and $\mathcal{O}(P_{2,4})$ proceeds by rotating the picture on the right clockwise.}
\label{fig:gt polytopes}
\end{figure}

\begin{defn}\label[definition]{fence face shape}
    A face $F$ of $\mathcal{O}(P_{k,n})$ is said to be of shape $\lambda/\mu$ if it is contained in $\mathcal{O}(P_{\lambda/\mu})$ but is not contained in the order polytope of any smaller skew-shape. Equivalently, $F$ is of shape $\lambda/\mu$ if, when thought of as a set of real-valued fillings of $P_{k,n}$, the entries of any such filling outside of a $\lambda/\mu$ skew-shape are required to be $0$ or $1$ and all others may vary.
\end{defn}

The goal of this subsection is to show that fence diagrams of shape $\lambda / \mu$ correspond to faces of the order polytope $\O(P_{\lambda/\mu})$.

\begin{prop}\label[proposition]{prop:fence_faces}
    There is a bijective correspondence
\vspace{0mm}
    \begin{equation}\label{eq:gt_faces}
        \left\{\text{Faces of }\O(P_{\lambda/\mu})\right\}\longleftrightarrow\left\{\text{Fence diagrams with shape contained in }\lambda/\mu\right\}
    \end{equation}

    More explicitly, given a face of $\O(P_{\lambda/\mu})$, pick an interior point. This gives some filling of $\lambda/\mu$ with entries in $[0,1]$. Deleting all cells containing $0$'s and $1$'s, and drawing fences between adjacent cells with equal entries gives a fence diagram contained in $\lambda/\mu$.\\[-5pt]

    In the other direction, let $F$ be a fence diagram of shape $\lambda'/\mu'$ contained in $\lambda/\mu$. Then the set of $[0,1]$ fillings of $\lambda/\mu$ where $\mu'/\mu$ is filled with $0$'s, $\lambda/\lambda'$ is filled with $1$'s, and fence components are filled with the same value, gives a face of $\O(P_{\lambda/\mu})$.
\end{prop}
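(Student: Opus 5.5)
The plan is to deduce this from Geissinger's classification (\Cref{Geissinger}) applied to $Q=P_{\lambda/\mu}$. By that result, faces of $\O(P_{\lambda/\mu})$ correspond bijectively to connected, compatible partitions $\mathcal{B}$ of $\widehat{P}_{\lambda/\mu}$. So it suffices to give a bijection between such partitions and fence diagrams whose shape is contained in $\lambda/\mu$, and then check that it matches the explicit description in the statement.

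\textbf{From partitions to fence diagrams.} Let $\mathcal{B}$ be a connected, compatible partition. The block of $\hat 0$ meets $P_{\lambda/\mu}$ in a set $B_0$, and the block of $\hat 1$ meets it in a set $B_1$. Compatibility together with the order relation forces $B_0$ to be an order ideal, because every $q\le p\in B_0$ satisfies $\hat 0\le q\le p$ and so must lie in the same block. Similarly $B_1$ is an order filter. Hence $B_0=\mu'/\mu$ and $B_1=\lambda/\lambda'$ for partitions $\mu\subseteq\mu'\subseteq\lambda'\subseteq\lambda$. For each remaining block, draw a segment between every pair of \emph{adjacent} cells in that block. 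Since the block is connected as an induced subposet of a grid poset, its Hasse-diagram connectivity is exactly adjacency connectivity, so the components of the resulting SDSS are the blocks.

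The main step is to show that this SDSS satisfies \eqref{eq:fence}. Suppose $(i,j+1)$ is joined to both $(i,j)$ and $(i+1,j+1)$. Then $(i,j)\le(i+1,j)\le(i+1,j+1)$ with both endpoints in one block $B$. If $(i+1,j)$ lay in a different block $B'$, we would have $B\le B'\le B$ with $B\ne B'$, which contradicts antisymmetry of the induced order. So $(i+1,j)\in B$, and hence it is joined to both neighbours. The symmetric case is handled the same way.

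\textbf{From fence diagrams to partitions.} Conversely, given a fence diagram $F$ of shape $\lambda'/\mu'\subseteq\lambda/\mu$, take as blocks $\{\hat0\}\cup\mu'/\mu$, $\{\hat1\}\cup\lambda/\lambda'$, and the fence components. Connectivity is clear. The real obstacle is compatibility: we must show the block relation has no cycles.

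I would do this by constructing a filling that realizes the partition. It should assign one value per block, be weakly increasing in the poset, take the value $0$ exactly on $\mu'/\mu$ and $1$ exactly on $\lambda/\lambda'$, and take distinct values on distinct components. Such a filling is an interior point of the candidate face, so compatibility follows.

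First I would show each fence component $A$ is convex in $P_{\lambda'/\mu'}$: if $a\le c\le b$ with $a,b\in A$, then $c\in A$. This should follow by repeatedly applying \eqref{eq:fence}, filling in along rows and columns between two cells of a connected component (a staircase argument). Next I would show the contracted quotient relation is acyclic. The natural approach is to order components by a linear functional that is non-decreasing along adjacencies, such as the minimal value of $\row+\col$. I would prove that if $A$ has a cell directly above or left of a cell of $B\ne A$, then $A$ precedes $B$, using convexity and \eqref{eq:fence} to rule out a component that both precedes and follows another.

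I expect this acyclicity argument to be the hardest part. If it gets messy, a fallback is induction: remove a component containing a minimal cell and check that what remains is again a fence diagram on a smaller skew shape.

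\textbf{Mutual inverses and the explicit description.} The two constructions are inverse to each other, because blocks equal fence components and adjacent cells in the same block are exactly the fenced pairs. Finally, Geissinger's bijection sends a face to $\mathcal{B}_F$, which records equalities of coordinates at a generic (interior) point. Reading those equalities off an interior filling gives exactly the recipe in the statement. In the other direction, the face attached to $\mathcal{B}$ is the set of fillings that are constant on blocks, which is the stated description.
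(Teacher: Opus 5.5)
\textbf{Where the proposal stands.} Your reduction to \Cref{Geissinger}, the identification of $\mathcal{B}_F$ with the interior-point recipe, and the direction from partitions to fence diagrams all agree with the paper. For that last direction, one repair is needed: a block that is connected as an induced subposet is adjacency-connected only because compatibility makes it order-convex. Indeed, if $a\le c\le b$ with $a,b$ in a block, the block of $c$ is squeezed between that block and itself. The genuine gap is the converse. You never prove that the fence components of an arbitrary fence diagram form a compatible partition, and that is the real content of the proposition.

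\textbf{Convexity.} This step is only asserted. Condition \eqref{eq:fence} speaks about segments around a single unit square, whereas two comparable cells $a\le b$ of a component may be joined only by a path running far outside $[a,b]$. So a local filling-in argument has nothing to start from until that path is controlled. The paper first proves \Cref{lem:rect_diag}: on a shortest path in the component, a cell where $j-i$ is maximal and exceeds $\col(b)-\row(a)$ is the corner of a [\rotL], which \eqref{eq:fence} lets you trade for an [L] (and symmetrically at minima). Only after the path is pushed into this diagonal band does induction on taxicab distance give $[a,b]\subseteq A$ (\Cref{lem:rect}).

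\textbf{Acyclicity.} More seriously, the mechanism you propose fails.
\begin{itemize}
\item The minimum of $\row+\col$ is not monotone along the relation. Take the $3\times 2$ rectangle with fences joining $(1,2),(2,2),(3,2)$; this is a fence diagram since there are no horizontal fences. The singleton $\{(3,1)\}$ lies directly left of $(3,2)$, so it must precede the second-column component. Yet its minimum of $\row+\col$ is $4$, against $3$ for the column.
\item The maximum fails too. In the $2\times 3$ rectangle with the first row fenced together, that row must precede $\{(2,1)\}$, although its maximum of $\row+\col$ is larger.
\item Long components can reach back arbitrarily far, so no positional potential of this kind works.
\item Your fallback is circular. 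The component of a minimal cell need not be an order ideal: in the shape $(2,2)/(1)$ with the single fence between $(1,2)$ and $(2,2)$, the minimal cell $(1,2)$ lies in a component containing $(2,2)\ge(2,1)$. Proving that \emph{some} component is an order ideal is the acyclicity statement itself.
\end{itemize}

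\textbf{What is missing.} You need an actual proof that two distinct components cannot be related in both directions; you mention this but do not supply it. The paper obtains it from \Cref{lem:taxicab} together with the case analysis of \Cref{lem:compatible}. In each case one of the following happens:
\begin{itemize}
\item a comparability chains through, and convexity pulls a cell of one component into the other;
\item a taxicab path in $A$ is forced to cross one in $B$;
\item the intervals $[a_1,a_2]\subseteq A$ and $[b_2,b_1]\subseteq B$ are forced to share a cell.
\end{itemize}
Note that this pairwise statement is all the paper proves. Compatibility asks that the transitive closure be a partial order, so cycles through three or more components must also be excluded, which needs a further planarity argument. In that sense your instinct to aim for full acyclicity was right; only the tool was wrong.
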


\begin{proof}
 Recall from \Cref{Geissinger} that faces of $\O(P_{\lambda/\mu})$ correspond to partitions of $\widehat{P}_{\lambda/\mu}$ into \emph{connected}, \emph{compatible} blocks. Inspecting this correspondence, it suffices to show that fence diagrams with shape contained in $\lambda/\mu$ correspond to such partitions of $\widehat{P}_{\lambda/\mu}$.

Let $F$ be a fence diagram with shape $\lambda'/\mu'$ contained in $\lambda/\mu$. Define a partition $\Pi_F$ of $\widehat{P}_{\lambda/\mu}$ whose elements are the fence components of $F$, along with the blocks $\hat0\cup P_{\mu'/\mu}$ and $\hat 1\cup P_{\lambda/\lambda'}$. It is clear that the elements of $\Pi_F$ are connected blocks.

    Recall that for $\Pi_F$ to be  compatible, we must show that for $A, B\in \Pi_F$ with $a_1,a_2\in A$ and $b_1, b_2\in B$ such that $a_1\le b_1$ and $a_2\ge b_2$, it follows that $A = B$. When $A$ or $B$ is the block containing $\hat 0$ or $\hat 1$ this is clear from how skew shapes work. In the remaining case where $A$ and $B$ are fence components of $F$, this is proven below in \Cref{lem:compatible}.

    Conversely, suppose $\Pi$ is a partition of $\widehat{P}_{\lambda/\mu}$ into connected, compatible subsets. By compatibility, the block containing $\hat0$ must be $\hat0\cup P_{\mu'/\mu}$ and the block containing $\hat1$ must be $\hat1\cup P_{\lambda/\lambda'}$ for some $\mu\subseteq \mu'\subseteq \lambda'\subseteq \lambda$. Therefore we may restrict our attention to the skew-shape $\lambda'/\mu'$. Drawing in segments between adjacent cells belonging to the same block of $\Pi$ gives an SDSS $F_\Pi$. Invoking compatibility again shows that $F_{\Pi}$ satisfies the fence axiom \Cref{eq:fence}, and is hence a fence diagram of shape $\lambda'/\mu'$ with fence components equal to the blocks of $\Pi$ not containing $\hat 0$ or $\hat 1$. %(by connectedness of the subsets in $\Pi_F$).
\end{proof}

In particular we see that the map sending a fence diagram to the corresponding face of $\mathcal{O}(P_{\lambda/\mu})$ (i.e., which sends $F$ to the order polytope $\mathcal{O}(P_{\lambda/\mu}/\sim_F)$ of the corresponding quotient poset) is a bijection. We have the following corollary. 
\begin{cor}\label[corollary]{cor:standard_fence_tableaux}
    Every $F\in \mathfrak{F}_u^w$ has a standard fence tableau filling. In particular, the reduced fence diagrams of $\mathfrak{F}_u^w$ are precisely the equality patterns given by border strip tableaux coming from projected saturated $k$-Bruhat chains from $u$ to $w$. 
\begin{proof}
    Let $\lambda = \pi_k(w)$ and $\mu = \pi_k(u)$. Faces of order polytopes are order polytopes, and all posets have total extensions. In particular, if $F\in \mathfrak{F}_u^w$, the fences of $F$ correspond to equality conditions of a face of $P_{\lambda/\mu}$. Hence, we can take any total extension of the poset corresponding to $F$ and this gives a standard fence tableau filling of $F$. 

    The second statement follows from the discussion after \Cref{prop: reduced fence faces know}
\end{proof}
\end{cor}

Via \Cref{prop:fence_faces}, we will freely identify fence diagrams with their corresponding face on $\calO(P_{k,n})$. We end this subsection by proving \Cref{lem:compatible}: that fence components of a fence diagram do in fact form a compatible partition of the poset $P_{\lambda/\mu}$. To this end, we will need some preliminary lemmas.

\begin{lemma}\label[lemma]{lem:rect_diag}
    Let $A$ be a fence component in $F$ containing cells $a\le b$ in the poset $P_{\lambda/\mu}$. Then $A$ contains a path $p$ from $a$ to $b$ such that each cell $(i,j)\in p$ satisfies
    \[\col(a)-\row(b) \le j-i \le \col(b)-\row(a).\]
\end{lemma}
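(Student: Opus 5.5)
We argue by strong induction on the length of a shortest path in $A$ from $a$ to $b$ (connectedness of $A$ guarantees such a path exists). Set $c_1 = \col(a)-\row(b)$ and $c_2 = \col(b)-\row(a)$. These are the smallest and largest values of $j-i$ over cells in the rectangle $R$ spanned by $a$ and $b$, that is, cells with $\row(a)\le i\le \row(b)$ and $\col(a)\le j\le \col(b)$. The claim is that a path can be chosen lying in the diagonal strip $S=\{(i,j): c_1\le j-i\le c_2\}$.

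The main device is a local rerouting rule derived from the fence axiom \Cref{eq:fence}. Suppose a path in $A$ turns at a corner, say it passes through $(i,j)\to(i+1,j)\to(i+1,j+1)$; this is an [L] configuration. Then \Cref{eq:fence} forces $(i,j+1)$ to be connected to both $(i,j)$ and $(i+1,j+1)$, so the path may be replaced by $(i,j)\to(i,j+1)\to(i+1,j+1)$. The same holds in the reverse direction for the [\rotL] configuration. Under either replacement the value of $j-i$ at the middle cell changes by $\pm 2$, while the endpoints stay fixed. The same reasoning handles corners of the other orientation, such as $(i,j+1)\to(i,j)\to(i+1,j)$. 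In that case both cells $(i,j+1)$ and $(i+1,j)$ are adjacent to $(i,j)$, and we need the fourth cell $(i+1,j+1)$ to be present and connected. This does not follow from the axiom as stated, because the axiom only relates the two "off-diagonal" cells of a $2\times2$ block and says nothing about completing a box from three cells meeting at a top-left corner. Consequently, only the anti-diagonal flips described above are available.

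With this rule, the plan is as follows. Take any path $p$ from $a$ to $b$ in $A$. Among all paths from $a$ to $b$ in $A$, choose one that minimizes the total excess $\sum_{x\in p}\operatorname{dist}(j_x-i_x,[c_1,c_2])$. Suppose $p$ leaves $S$, say it reaches a maximal value $d>c_2$ of $j-i$ at some cell $x$. Since the endpoints satisfy $j-i\in[c_1,c_2]$, $x$ is a local maximum of $j-i$ along $p$. The neighbors of $x$ on $p$ therefore both have $j-i=d-1$, so they are the cells immediately to the left of $x$ and immediately below $x$, i.e. $(i,j-1)$ and $(i+1,j)$ where $x=(i,j)$. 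This is exactly a [\rotL] configuration. The fence axiom then lets us replace $x$ by $(i+1,j-1)$, which has $j-i=d-2$. This strictly decreases the excess, contradicting minimality. A local minimum below $c_1$ is handled symmetrically: it is an [L] configuration, and we replace its corner by $(i-1,j+1)$. A backtracking step, where the path immediately revisits a cell, can simply be deleted. Finally, we must check that the rerouted cell actually lies in $\lambda/\mu$; this holds because \Cref{eq:fence} asserts that it is connected by fences, so it is a cell of $F$.

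The main obstacle is making sure the local-extremum argument is airtight. At a local maximum of $j-i$, the two path-neighbors of $x$ could coincide, which is the backtracking case and is removed by deletion. They could also be two distinct cells with $j-i=d-1$, and then they are exactly the left and lower neighbors of $x$, so the configuration really is [\rotL] with $x$ as its top-right cell. We also need that replacing $x$ does not create a path revisiting cells in a way that breaks the excess count. This is handled by allowing walks rather than simple paths and shortcutting them at the end. With these details settled, the minimal-excess path lies entirely in $S$, which proves \Cref{lem:rect_diag}.
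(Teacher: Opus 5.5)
Your argument is correct and is essentially the paper's: wherever $j-i$ attains an extreme value outside $[\col(a)-\row(b),\col(b)-\row(a)]$, the path must turn through a [\rotL] (resp.\ [L]) corner, and \Cref{eq:fence} lets you flip that corner to the opposite cell of its $2\times 2$ block, which moves $j-i$ by $2$ back toward the strip. The only difference is bookkeeping: you minimize a total-excess potential over walks and shortcut to a simple path at the end, whereas the paper repeats the flips on a shortest path, which stays shortest and hence simple. Your unused opening about strong induction and the contradictory remark about the other corner orientations do no harm, since those corners are never extrema of $j-i$.
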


\begin{proof}
    Let $p$ be a path of minimal length in $A$ from $a$ to $b$. Suppose $f(i,j) = j-i$ has a maximum at $(i,j)\in p$, and assume $f(i,j)> \col(b)-\row(a)$. By this assumption, $(i,j)$ is neither of the endpoints $a$ or $b$. Then $p$ must be the corner of an $[\rotL]$. By \Cref{eq:fence}, we may replace this segment with the [L].

    This transformation either reduces the number of times $f$ attains a maximum on $p$ or lowers the value of its maxima. Repeating this procedure ensures $f(i,j)\le \col(b)-\row(a)$ for all $(i,j)\in p$.

    By a similar argument, we make local replacements of [L] by [\rotL] at minima of $f$ until $f(i,j)\ge \col(a)-\row(b)$ for all $(i,j)\in p$.
\end{proof}

\begin{lemma}\label[lemma]{lem:rect}
    Let $A$ be a fence component in $F$ containing cells $a\le b$. Then $A$ contains the entire interval $[a, b]\subset P_{\lambda/\mu}$.
\end{lemma}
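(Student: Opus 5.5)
We prove \Cref{lem:rect} by first using \Cref{lem:rect_diag} to confine a connecting path to a diagonal band, and then using the fence axiom \Cref{eq:fence} to fill in the rectangle, inducting on the size of $[a,b]$.

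\textbf{Setup and induction.} Write $a=(r_1,c_1)$ and $b=(r_2,c_2)$ with $r_1\le r_2$, $c_1\le c_2$. We induct on $(r_2-r_1)+(c_2-c_1)$.
\begin{itemize}
\item The base case $a=b$ is trivial.
\item If $a$ and $b$ lie in the same row, \Cref{lem:rect_diag} gives a path $p\subset A$ whose cells $(i,j)$ satisfy $c_1-r_1\le j-i\le c_2-r_1$.
\item In the same-row case, a minimal-length path with these bounds that leaves row $r_1$ should be forced back. More simply, we will show directly that $p$ stays in row $r_1$, using the bounding argument below.
\end{itemize}

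\textbf{Step 1: a monotone path.} The key point is to show that $A$ contains a \emph{monotone} path from $a$ to $b$, i.e.\ one using only steps down or right.
\begin{itemize}
\item Take a minimal-length path $p$ in $A$ from $a$ to $b$, normalized as in \Cref{lem:rect_diag}.
\item Suppose $p$ contains a backward step, up or left. Consider the first place where $p$ turns back. Locally $p$ goes, say, right then up, so the three cells $(i,j),(i,j+1),(i-1,j+1)$ appear consecutively.
\item Such a configuration is either an [L] or a [\rotL] pattern in a $2\times 2$ block. By \Cref{eq:fence}, the missing corner is also in $A$ with both segments, so we may shortcut the path and reduce its length. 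This contradicts minimality unless the configuration is a U-turn.
\item A U-turn, such as down-right-up, reduces length in the same way: the fence axiom applied to the block supplies a direct segment.
\item The diagonal bounds of \Cref{lem:rect_diag} guarantee that the relevant $2\times 2$ block lies inside the skew shape. This is because cells between $a$ and $b$ in the band are in $\lambda/\mu$ by convexity of skew shapes.
\end{itemize}
The conclusion is that a minimal path is monotone.

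\textbf{Step 2: filling the rectangle.} Given a monotone path from $a$ to $b$ inside $A$, we show every cell of $[a,b]$ is in $A$.
\begin{itemize}
\item If the path is not the ``upper-right hull,'' it has a down-then-right corner: cells $(i,j),(i+1,j),(i+1,j+1)$, which form an [L].
\item The axiom \Cref{eq:fence} then puts $(i,j+1)$ in $A$ with both fences, so we can flip the corner to right-then-down and obtain another monotone path in $A$.
\item Similarly, we can flip right-then-down corners to down-then-right.
\item Every cell of the rectangle $[a,b]$ lies on some monotone lattice path from $a$ to $b$. All monotone paths are connected by corner flips, so every cell of $[a,b]$ lies in $A$.
\item The cells of $[a,b]$ all lie in $\lambda/\mu$, since skew shapes are convex in the poset.
\end{itemize}

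\textbf{Main obstacle.} The main obstacle is Step 1: ruling out non-monotone minimal paths, and checking carefully that each local move stays in $\lambda/\mu$ and is genuinely licensed by \Cref{eq:fence}. The plan is to use the band of \Cref{lem:rect_diag} together with an extremal choice. Among minimal paths, take one minimizing $\sum (j-i)^2$ or a similar potential, and show that any backward step yields a local [L]/[\rotL] replacement that decreases length or the potential.
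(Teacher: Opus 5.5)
\textbf{Gap in Step 1.} Your Step 2 is fine: if $A$ contains a monotone path of fences from $a$ to $b$, flipping [L] and [\rotL] corners via \Cref{eq:fence} reaches every monotone lattice path, and hence every cell of $[a,b]$. The gap is Step 1, which carries the whole difficulty.

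A right-then-up turn $(i,j)\to(i,j+1)\to(i-1,j+1)$ is neither an [L] nor a [\rotL]. Its corner $(i,j+1)$ is the bottom-right cell of its $2\times 2$ block and joins the \emph{incomparable} pair $(i,j)$, $(i-1,j+1)$. By contrast, \Cref{eq:fence} only constrains corners joining the top-left and bottom-right cells. Such turns are legal: fill a $2\times 2$ block with $1$ in the top-left cell and $2$ in the other three. The resulting fence diagram contains exactly this turn, and the top-left cell is outside the component. Moreover, even for a genuine [L] or [\rotL], replacing it by the other two sides of the square does not shorten the path, so no contradiction with minimality arises. Only three-sided detours such as down--right--up get shorter.

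The obstruction is structural. Along a path without immediate backtracking, every turning point of $i+j$ is such an unconstrained corner (bottom-right or top-left of its block). A non-monotone path from $a$ to $b$ always has one. And [L]/[\rotL] flips never change the sequence of values of $i+j$ along the path. So backward steps can only disappear if some flip happens to make the path revisit a cell, and your potential-function sketch gives no reason this must happen. Minimizing something like $\sum(j-i)^2$ merely reproduces the diagonal band of \Cref{lem:rect_diag}, and paths inside that band can still step backward. Note also that the induction you set up is never used.

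\textbf{What is missing.} The missing ingredient is a global one, and it is exactly where induction enters. The paper takes the band path $p$ from \Cref{lem:rect_diag}. The band minus the rectangle $[a,b]$ splits into an upper-left piece and a lower-right piece, with no cell of one adjacent to a cell of the other. The corners $(\row(a),\col(b))$ and $(\row(b),\col(a))$ lie on the two edges of the band. Consequently:
\begin{itemize}
\item If $[a,b]$ is a unit square, $p$ must contain an [L] or [\rotL] from $a$ to $b$, and \Cref{eq:fence} finishes.
\item Otherwise $p$ passes through some $c\in(a,b)$ with $\row(a)<\row(c)<\row(b)$ or $\col(a)<\col(c)<\col(b)$; say the former. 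Induction on $[a,c]$ and $[c,b]$ puts $a'=(\row(c),\col(a))$ and $b'=(\row(c),\col(b))$ in $A$. Induction on $[a,b']$ and $[a',b]$ then gives $[a,b]=[a,b']\cup[a',b]\subseteq A$.
\end{itemize}
Alternatively, you can keep your Step 2. Once $p$ is known to pass through some $c\in(a,b)$, induction yields a monotone path $a\to c\to b$ inside $A$, and your flips spread it over the rectangle. For this you must also note that two adjacent cells of the same component are joined by a fence. This holds because for $d(a,b)=1$ the band of \Cref{lem:rect_diag} is a single staircase in which $a$ and $b$ are neighbors, so any walk from $a$ to $b$ inside it uses the fence between them.
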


\begin{proof}
    We induct on the taxicab distance $d(a,b) = \row(b)-\row(a) + \col(b)-\col(a)$. When $d(a,b) = 1$, there is nothing to show. 
    
    Suppose $d(a,b)>1$. We first deal with the case when $[a,b]$ is the unit square, i.e. the diamond poset. By \Cref{lem:rect_diag}, there exists a path $p$ from $a$ to $b$ such that each $(i,j)\in p$ satisfies $\col(a)-\row(b) \le j-i \le \col(b)-\row(a)$. It is elementary to see that $p$ must then contain either an [L] or [\rotL] with endpoints $a$ and $b$. It then follows from \Cref{eq:fence} that $A$ contains all of $[a,b]$.
    
    Now suppose $[a,b]$ is not a unit square, and pick $p$ from $a$ to $b$ as in \Cref{lem:rect_diag}.
    Define the set
    \[S = \{c\in (a,b)\ |\ \row(a)<\row(c)<\row(b)\text{ or }\col(a)<\col(c)<\col(b)\}.\]
    Since  $[a,b]$ is not a unit square, it follows that $S\ne \emptyset$, and by our assumptions on $p$, the path $p$ must intersect $S$ in some cell, say $c\in S$. 
    
    It suffices to consider the case $\row(a) < \row(c)<\row(b)$ by transposing if necessary. Define the cells
    \[a' = \big(\row(c), \col(a)\big),\ b' = \big(\row(c), \col(b)\big).\]

    Then $a< a'\le c\le  b'< b$. By the induction hypothesis, $A$ contains $a'$ and $b'$, and hence also $[a,b']$ and $[a', b]$. Using $[a,b]=[a,b']\cup [a', b]$, we are done.
\end{proof}
\begin{figure}[H]

    \centering
    \def\borderInnerSep{2pt}
\begin{tikzpicture}[scale=.75]
\draw[maroon] (.35, 2.85) -- (6.65, 2.85) -- (6.65, 4.65) -- (.35, 4.65) -- (.35, 2.85);
\draw[blue] (.35,.35) -- (6.65, .35) -- (6.65, 2.85) -- (.35, 2.85) -- (.35, .35);
%\draw[blue] (.5,.5) -- (6.5,.5) -- (6.5,4.5) -- (.5,4.5) -- (.5,.5);
\def\nodescl{0.7}
\node (A) at (0.1,4.6) {$a$};
\node (A) at (6.85,.4) {$b$};
\node (A) at (6.9, 2.7) {$b'$};
\node (A) at (0.1, 2.7) {$a'$};
\node (A) at (2.4, 2.55) {$c$};

\node[filled] at (2.4, 2.85){};

\fill[maroon, fill opacity=0.3] (0.35, 4.65) rectangle (6.65, 2.85);
\fill[blue, fill opacity=0.3] (0.35, 2.85) rectangle (6.65, .35);

\end{tikzpicture}\label{fig:lem 2.3 pic}
\caption{A graphical depiction of the last step in the proof of \Cref{lem:rect}. Induction shows that the fence component $A$ contains all fences in both the purple and blue rectangles, so $A$ contains every cell in the big rectangle.}
\end{figure}

By a \emph{taxicab path} between cells $a$ and $b$, we mean a collection of fences forming a minimum length path from $a$ to $b$. Such a path must necessarily have length equal to the taxicab distance $$d(a,b) =|\row(a)-\row(b)|+|\col(a)-\col(b)|.$$

\begin{lemma}\label[lemma]{lem:taxicab}
    Let $A$ be a fence component in $F$ containing cells $a, b$. Then $A$ contains a taxicab path in $P_{\lambda/\mu}$ from $a$ to $b$.
\end{lemma}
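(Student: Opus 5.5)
The plan is to reduce to the case where $a$ and $b$ are incomparable in $P_{\lambda/\mu}$, and then straighten an arbitrary path in $A$ by local moves that use the fence axiom \Cref{eq:fence}.

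\emph{The comparable case.} If $a\le b$ (or $b\le a$), \Cref{lem:rect} says $A$ contains the whole interval $[a,b]$. Any monotone lattice path from $a$ to $b$ inside this rectangle is then a taxicab path. Its consecutive cells are adjacent cells of the same fence component. Since $A$ contains the whole rectangle, I would use the following observation to make sure they are actually joined by fences: a unit square all of whose cells lie in $A$ is in fact a box. To prove this, note that $A$ contains a path between two opposite corners of the square, and use \Cref{lem:rect_diag} together with \Cref{eq:fence}, exactly as in the unit-square case of \Cref{lem:rect}.

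\emph{The incomparable case.} By transposing if needed, assume $a$ lies strictly to the north-east of $b$, i.e.\ $\row(a)<\row(b)$ and $\col(a)>\col(b)$. Let $R$ be the rectangle of cells with rows in $[\row(a),\row(b)]$ and columns in $[\col(b),\col(a)]$; a taxicab path from $a$ to $b$ moves only down and left inside $R$. I would prove, by strong induction on the length of a shortest path in $A$ from $a$ to $x$, the following statement: for every cell $x\in A$ there is a taxicab path in $A$ from $a$ to $x$. Given $x$, let $c$ be its predecessor on a shortest path. By induction there is a taxicab path $q$ from $a$ to $c$.
\begin{enumerate}
\item If the step $c\to x$ increases the distance from $a$, append it to $q$.
\item Otherwise $x$ lies in the rectangle spanned by $a$ and $c$, and $d(a,x)=d(a,c)-1$.
  \begin{itemize}
  \item If $a$ and $c$ are comparable, \Cref{lem:rect} gives the whole rectangle, and we are done as in the comparable case.
  \item If $a$ and $c$ are incomparable, look at how $q$ enters $c$. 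If it enters through $x$, truncate $q$ at $x$. If it enters through the other in-rectangle neighbour $c'$, then $c$ is joined by fences to both $x$ and $c'$. These three cells form an [L] or [\rotL] (the shape depends on the relative orientation), so \Cref{eq:fence} forces the fourth corner $e$ of that unit square into the fences as well, with $e$ joined to both $x$ and $c'$. Since $d(a,e)=d(a,x)-1=d(a,c')-1$, I now have to build a taxicab path from $a$ to $e$ and then append the single fence $e\to x$.
  \end{itemize}
\end{enumerate}

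\emph{Main obstacle.} The difficulty is making this recursion terminate. The cell $e$ is reached by going around the square, so it is not obviously covered by the induction hypothesis, which is indexed by shortest-path length rather than by distance from $a$. To fix this I would reorganize the induction as a double induction. The outer induction is on $d(a,x)$. The inner step pushes the "bad" corner of the current path strictly closer to $a$ via the square flips above; this is analogous to the [L]/[\rotL] replacement in \Cref{lem:rect_diag}, but run in the anti-diagonal direction. Equivalently, I would choose, among all paths in $A$ from $a$ to $b$, one minimizing the pair (length, number of direction reversals) lexicographically. I would then argue that any reversal, meaning a step that goes back up or right, can be removed by \Cref{eq:fence} or by \Cref{lem:rect} applied to a comparable sub-pair of cells, which shortens the path. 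Checking that every local reversal configuration is covered by one of these two moves is the step I expect to require the most care.
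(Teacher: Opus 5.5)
Your single flip in case (2) is correct, but the proof stops at the step you call the main obstacle, and neither remedy you propose closes it.

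With the outer induction on $d(a,x)$, your case analysis no longer runs. In case (2) the predecessor $c$ has $d(a,c)=d(a,x)+1$, so the induction hypothesis gives no taxicab path $q$ to $c$. Producing a fence-neighbor of $x$ closer to $a$ is essentially the whole content of the lemma.

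The minimization variant fails as described too. A flip via \Cref{eq:fence} just swaps two consecutive steps of a path, so it never changes the number of up/right steps. For any path of length $L$ from $a$ to $b$ that number equals $(L-d(a,b))/2$. So minimizing (length, reversals) is the same as minimizing length, and \Cref{eq:fence} alone never removes a reversal. The real claim is that flips can always be arranged to expose a comparable pair giving a shortcut via \Cref{lem:rect}, and that is exactly what you leave unchecked.

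The missing idea is that the cascade never needs a new path to $e$; it only walks backwards along the path $q$ you already have. Prove the following by induction on $d(a,c)$: if $A$ contains a taxicab path $q$ from $a$ to $c$, and $x$ is joined by a fence to $c$ with $d(a,x)=d(a,c)-1$, then $A$ contains a taxicab path from $a$ to $x$.
\begin{itemize}
\item If $c$ is comparable to $a$, use \Cref{lem:rect}.
\item If $q$ enters $c$ through $x$, truncate $q$.
\item Otherwise $q$ enters through $c'$. Your flip gives $e$ joined by fences to $c'$ and to $x$. Then $q$ truncated at $c'$, together with the pair $(c',e)$, satisfies the same hypotheses with $d(a,c')=d(a,c)-1$. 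Induction gives a taxicab path to $e$, and you append $e\to x$.
\end{itemize}
With this, your outer induction on shortest-path length gives a correct local path-straightening proof.

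The paper avoids path surgery. It inducts on $d(a,b)$ and, with $b$ strictly north-east of $a$, notes that every path from $a$ to $b$ meets either the ray of cells to the right of $a$ in its row or the ray above $a$ in its column. This gives $c\in A$ comparable to $a$ in the direction of $b$. \Cref{lem:rect} then puts the neighbor $a'$ of $a$ toward $b$ into $A$, and induction from $a'$ finishes.

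One small point in your comparable case: a rectangle that is a single row or column contains no unit square. There you need separately that adjacent cells of $A$ are joined by a fence. This follows from \Cref{lem:rect_diag}, since the cells on two adjacent diagonals form a single zigzag path, so a shortest path between adjacent cells confined to them is the direct fence.
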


\begin{proof} 
    We induct on $d(a,b)$. If $d(a,b) = 1$, then the path $p$ from \Cref{lem:rect_diag} must be a fence from $a$ to $b$, and we are done.
    
    Next, if $a$ and $b$ are comparable in 
    $P_{\lambda/\mu}$ with $d(a,b)>1$, we are done by \Cref{lem:rect}. So, suppose that $a$ and $b$ are incomparable in $P_{\lambda/\mu}$. Without loss of generality we may assume $\row(a)>\row(b)$, and hence $\col(a)<\col(b)$. Also by induction, assume the lemma is true for cells $a'$ such that $d(a', b)<d(a, b)$.
    
    Since $a, b$ are in the same component, there is some path $p$ in $A$ from $a$ to $b$. Consider the rays of cells
    $$R(a) = \{c\mid \row(c) = \row(a), \col(c) > \col(a)\}$$ and    
    $$C(a) =  \{c\mid  \row(c)<\row(a), \col(c) = \col(a)\}.$$
    Removing $R(a)\sqcup C(a)$ from $P_{\lambda/\mu}$ disconnects $a$ from $b$, so the path $p$ must intersect this union non-trivially. Assume first that there exists $c\in R(a)\cap p$. By \Cref{lem:rect} applied to $a\le c$, we deduce that $a' = \big(\row(a), \col(a) + 1\big)\in A$. By the induction hypothesis, $A$ contains some taxicab path from $a'$ to $b$, and adding the edge from $a$ to $a'$ gives the desired taxicab path from $a$ to $b$.

    The case $c\in C(a)\cap p$ follows similarly by considering $a' = \big(\row(a) - 1, \col(a)\big)$.
\end{proof}

\begin{figure}[H]
    \centering
    \begin{tikzpicture}[baseline={([yshift=-.7ex]current bounding box.center)},scale=1.25]

    \node[filled,color=maroon] at (3,0){};

  \node at (3.2,0) {$b$};
  \node at (-0.2,-2.25) {$a$};
  \node at (0.5,-2.2) {$a'$};
  \node at (4.2,-2.2) {$c$};
  \node at (1.8,-0.5) {\textcolor{maroon}{$p$}};
  \node[unfilled] at (0,-2){};
  \node[filled] at (0.5,-2){};
  \begin{pgfonlayer}{bg}
    \draw[color=grey] (0,-2) to (5,-2);
    \draw[color=grey] (0,-2) to (0,1);
    \node at (-0.4,-.5) {$C(a)$};
    \node at (2,-2.25) {$R(a)$};

    \draw[color=maroon] (0,-2) to (0,-2.5);
  \draw[color=maroon] (0,-2.5) to (4,-2.5);
  \draw[color=maroon] (4,-1) to (4,-2.5);
  \draw[color=maroon] (4,-1) to (2,-1);
  \draw[color=maroon] (2,0) to (2,-1);
  \draw[color=maroon] (2,0) to (3,0);

  \draw[color=blue] (0.5,-2)--(1.5,-2)--(1.5,-1.5)--(2.5,-1.5)--(2.5,-0.5)--(3,-0.5)--(3,0);
  \draw[color=blue] (0,-2)--(0.5,-2);
  \end{pgfonlayer}
\end{tikzpicture}
    \caption{A depiction of the situation described in the proof of \Cref{lem:taxicab}, when $a$ and $b$ are incomparable. Induction ensures the existence of the blue path from $a'$ to $b$ contained in $A$.} 
    \label{fig:taxicab}
\end{figure}

\begin{lemma}\label[lemma]{lem:compatible}
    Suppose $a_1,a_2\in A$ and $b_1, b_2\in B$ where $a_1, a_2, b_1, b_2$ are cells and $A, B$ are fence components of a fence diagram $F$. If $a_1\le b_1$ and $a_2\ge b_2$ in $P_{\lambda/\mu}$, then $A = B$.
\end{lemma}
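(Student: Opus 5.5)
We argue by contradiction. Suppose $A\neq B$, so that $A$ and $B$ are disjoint sets of cells. Among all such configurations, choose one minimizing $d(a_1,a_2)+d(b_1,b_2)$. The degenerate cases are quick. If $a_1=a_2$, then $b_2\le a_1\le b_1$, and \Cref{lem:rect} applied to $b_2\le b_1$ puts $a_1$ in $B$, contradicting disjointness. The case $b_1=b_2$ is symmetric. By \Cref{lem:rect} we may also assume that no cell of $A$ is comparable with a cell of $B$ in a way that sandwiches it inside an interval of the other component.

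By \Cref{lem:taxicab}, $A$ contains a taxicab path $p_A$ from $a_1$ to $a_2$, and $B$ contains a taxicab path $p_B$ from $b_1$ to $b_2$. A taxicab path is a monotone staircase: both coordinates move monotonically along it. Draw each path as a polygonal curve in $\bbR^2$ through the centers of its cells. Two such curves can meet only at cell centers, because grid segments intersect only at their endpoints. Since $A\cap B=\emptyset$, the curves are therefore disjoint. The heart of the proof is to show that the hypothesis $a_1\le b_1$ and $b_2\le a_2$ forces these two staircases to meet, which gives the contradiction.

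I would obtain the required planar separation by extending $p_A$ to a curve $\gamma$ that cuts the plane into two pieces. From $a_1$, attach rays so that the closed quadrant $\{c:c\ge a_1\}$ lies on one side of $\gamma$ near $a_1$. From $a_2$, attach rays so that $\{c:c\le a_2\}$ lies on the other side near $a_2$. The rays are chosen using the monotone direction of $p_A$, distinguishing whether $a_1$ and $a_2$ are comparable, and if not, which of them lies to the northeast. Then $b_1$ and $b_2$ lie on opposite sides of $\gamma$, so $p_B$ must cross $\gamma$. It cannot cross $p_A$ itself, so it must cross one of the added rays. If $p_B$ crosses the ray at $a_1$, say at a cell $c$ in the same row or column as $a_1$, then $c$ and $a_1$ are comparable. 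Combining this with $a_1\le b_1$ and \Cref{lem:rect}, or with a shorter taxicab path inside $B$ through $c$, produces a configuration with smaller $d(a_1,a_2)+d(b_1,b_2)$. In it, $a_1$ is replaced by an adjacent cell of $A$ obtained from the fence axiom \Cref{eq:fence}, in the manner of the proof of \Cref{lem:taxicab}. This contradicts minimality. The ray at $a_2$ is handled symmetrically.

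The main obstacle is making this separation step airtight. The rays must be chosen so that they lie inside the skew shape, or we must argue that leaving $\lambda/\mu$ causes no harm: skew shapes are convex in the poset sense, so any crossing cell between two cells of the shape is again a cell. In addition, each crossing must genuinely shorten the configuration. If the ray bookkeeping becomes messy, I would instead take the extension to be the boundary of the rectangle spanned by $a_1$ and $a_2$, and carry out a case analysis on the relative positions of $a_1$ and $a_2$ (comparable, or incomparable with $a_1$ to the northeast or to the southwest). In each case I would use \Cref{lem:rect_diag} to confine $p_B$ to the appropriate antidiagonal strip, which forces it to meet $p_A$.
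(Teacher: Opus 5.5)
\textbf{There is a genuine gap, in the separation step.}

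Your preliminary reductions are fine and match the paper's first case. These are the degenerate cases, and discarding any configuration in which $a_2$ is comparable to $b_1$ or $a_1$ is comparable to $b_2$, since each such comparability gives a sandwich that \Cref{lem:rect} rules out.

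The trouble is the separation step, and it is not just bookkeeping. You claim that $b_1,b_2$ lie on opposite sides of a ray-extension $\gamma$ of $p_A$. Your fallback claims that \Cref{lem:rect_diag} forces $p_B$ to meet $p_A$. Both are false in one of the surviving configurations.

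Take $a_1=(1,2)$, $a_2=(2,3)$, $b_2=(2,1)$, $b_1=(3,2)$ inside $(3,3,2)/(1)$, with $p_A=(1,2),(1,3),(2,3)$ and $p_B=(2,1),(3,1),(3,2)$. Then:
\begin{itemize}
\item $a_1\le b_1$ and $b_2\le a_2$, while $a_2$ is incomparable to $b_1$ and $a_1$ is incomparable to $b_2$.
\item Both paths satisfy the strip condition of \Cref{lem:rect_diag}.
\item No cell of either path lies in an interval spanned by cells of the other path.
\item The two paths are disjoint.
\item $p_B$ meets the rows and columns through $a_1,a_2$ only at its own endpoints: $b_1$ is in the column of $a_1$, and $b_2$ is in the row of $a_2$.
\end{itemize}
Running through the possible row/column rays at $a_1$ and $a_2$, every extension $\gamma$ is non-simple, passes through $b_1$ or $b_2$, or leaves $b_1,b_2$ on the same side. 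So there is no crossing, and your ray reduction has nothing to act on.

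As a side remark, that reduction should replace $b_1$ or $b_2$ by the crossing cell $c\in B$; the replacement of $a_1$ by a cell of $A$, as in \Cref{lem:taxicab}, does not apply here. Your local criterion cannot help either. When $a_1\le a_2$, all of $p_A$ lies in both quadrants $\{c\ge a_1\}$ and $\{c\le a_2\}$, so conditions imposed ``near $a_i$'' say nothing about where $b_1,b_2$ sit.

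\textbf{What closes this case is the fence axiom applied to whole rectangles, not a crossing.} In the example, the [\rotL] at $(1,3)$ forces $(2,2)\in A$ by \Cref{eq:fence}, and $(2,2)\in[b_2,b_1]$.

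In general, transpose so that $\row(a_2)<\row(b_1)$ and $\col(a_2)>\col(b_1)$. If $\row(b_2)>\row(a_1)$, then
\[\row(a_1)<\row(b_2)\le\row(a_2)<\row(b_1),\qquad \col(b_2)<\col(a_1)\le\col(b_1)<\col(a_2).\]
In particular $a_1\le a_2$ and $b_2\le b_1$. \Cref{lem:rect} then puts all of $[a_1,a_2]$ in $A$ and all of $[b_2,b_1]$ in $B$, and these two rectangles share the cell $(\row(b_2),\col(a_1))$.

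A crossing argument is needed only in the complementary configuration, where $b_2$ is strictly northeast of $a_1$. There no rays are required. In the box $[\row(b_2),\row(b_1)]\times[\col(a_1),\col(a_2)]$, $p_A$ joins the left edge to the right edge and $p_B$ joins the top edge to the bottom edge, so they meet. By your lattice-segment observation, they meet at a cell center.

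This two-case split is exactly the paper's proof.
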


\begin{proof}
    We leverage two kinds of symmetries that the setup has:
    \begin{itemize}
        \item Transposing the fence diagram, and
        \item Interchanging the roles of $a_i$ and $b_i$.
    \end{itemize}

    Imagining $a_1$ and $b_1$ to be fixed, we analyze the various cases for the positions of $a_2$ and $b_2$ relative to $a_1$ and $b_1$.

    \begin{enumerate}
        \item Suppose $a_2$ is comparable to $b_1$, or $b_2$ is comparable to $a_1$. 

        By potentially swapping the $a_i$ and $b_i$, we may assume $a_2$ is comparable to $b_1$.
        
        If $a_2\le b_1$, then by the assumption in the statement of the lemma, $b_2\le a_2\le b_1$. By \Cref{lem:rect}, $a_2\in B$, showing that $A = B$.

        Instead, if $a_2\ge b_1$, we get $a_1\le b_2\le a_2$, so \Cref{lem:rect} shows that $b_1\in A$, and thus $A=B$.

        \item Suppose $a_2$ is not comparable to $b_1$, and $b_2$ is not comparable to $a_1$. By potentially transposing, we may assume that
        \[\row(a_2)<\row(b_1),\ \col(a_2)>\col(b_1).\]
        Since $b_2\le a_2$, we may divide into two subcases based on whether $b_2$ is above or below $a_1$:

        \begin{enumerate}
            \item  $\row(b_2)<\row(a_1)$:
            
            Since $b_2$ is not comparable to $a_1$, this forces $\col(b_2)>\col(a_1)$ (see \Cref{fig:compatibility}). By \Cref{lem:taxicab}, $A$ contains a taxicab path from $a_1$ to $a_2$, and $B$ contains a taxicab path from $b_1$ to $b_2$. But any two such taxicab paths must intersect for this relative configuration of the cells, from which we deduce that $A=B$.

            \item  $\row(b_2)>\row(a_1)$:

            Since $b_2$ is not comparable to $a_1$, this forces $\col(b_2)<\col(a_1)$.

            Satisfying all three conditions
            \begin{equation*}
                \begin{cases}
                & b_2\le a_2,\\
                & \row(a_2)<\row(b_1),\ \col(a_2)>\col(b_1)\text{, and}\\
                & \row(b_2)> \row(a_1),\ \col(b_2)<\col(a_1)      
            \end{cases}
            \end{equation*}
            
            forces 
            \[\row(a_1)<\row(b_2)\le\row(a_2)<\row(b_1),\]
            \[\col(b_2)<\col(a_1),\ \col(a_2)>\col(b_1).\]
            (see \Cref{fig:compatibility}).

            In particular, $a_1\le a_2$ and $b_2\le b_1$ in $P_{\lambda/\mu}$. By \Cref{lem:rect}, $A$ contains the interval $[a_1, a_2]\subset P_{\lambda/\mu}$, and $B$ contains the interval $[b_2, b_1]\subset P_{\lambda/\mu}$.

            Pick integers $i$ and $j$ such that            
            \[\row(b_2)\le i\le \row (a_2),\]
            \[\col(a_1)\le j\le\col(b_1).\] 
            Then the cell $(i,j)$ is in both the intervals $[a_1,a_2]$ and $[b_2, b_1]$. Therefore, $(i,j)$ is in both $A$ and $B$, demonstrating that $A=B$.
        \end{enumerate}        
    \end{enumerate}

\end{proof}

\begin{figure}[H]
    \centering
    \begin{tikzpicture}[baseline={([yshift=-.7ex]current bounding box.center)}]

  \node at (-0.2,0.2) {$a_1$};
  \node at (2.2,-2.2) {$b_1$};
  \node at (2.5,1.2) {$b_2$};
  \node at (3.7,0.5) {$a_2$};
  \begin{pgfonlayer}{bg}
    \draw[color=grey, dashed] (0,1) to (0,-3);
    \draw[color=grey, dashed] (2,1) to (2,-3);
    \draw[color=grey, dashed] (-1,0) to (3.5,0);
    \draw[color=grey, dashed] (-1,-2) to (3.5,-2);
  \end{pgfonlayer}

  \draw[color=maroon] (0,0) to (1,0);
  \draw[color=maroon] (1,0) to (1,0.25);
  \draw[color=maroon] (1.75,0.25) to (1,0.25);
  \draw[color=maroon] (1.75,0.25) to (1.75,0.5);
  \draw[color=maroon] (1.75,0.5) to (3.5,0.5);

  \draw[color=blue] (2.5,1) to (2.5,0.75);
  \draw[color=blue] (2.25,0.75) to (2.5,0.75);
  \draw[color=blue] (2.25,0.75) to (2.25,-1);
  \draw[color=blue] (2,-1) to (2.25,-1);
  \draw[color=blue] (2,-1) to (2,-2);

  \node at (6.8,0.2) {$a_1$};
  \node at (9.2,-2.2) {$b_1$};
  \node at (5.8,-0.57) {$b_2$};
  \node at (9.7,-1.43) {$a_2$};
  \begin{pgfonlayer}{bg}
    \draw[color=grey, dashed] (7,1) to (7,-3);
    \draw[color=grey, dashed] (9,1) to (9,-3);
    \draw[color=grey, dashed] (6,0) to (10,0);
    \draw[color=grey, dashed] (6,-2) to (10,-2);

    \fill[maroon, fill opacity=0.3] (7, 0) rectangle (9.5, -1.33);
    \fill[blue, fill opacity=0.3] (6, -0.67) rectangle (9, -2);
  \end{pgfonlayer}

  \draw[color=maroon] (7,0) to (9.5,0);
  \draw[color=maroon] (7,-1.33) to (9.5,-1.33);
  \draw[color=maroon] (7,0) to (7,-1.33);
  \draw[color=maroon] (9.5,0) to (9.5,-1.33);

  \draw[color=blue] (6,-0.67) to (9,-0.67);
  \draw[color=blue] (6,-2) to (9,-2);
  \draw[color=blue] (6,-0.67) to (6,-2);
  \draw[color=blue] (9,-2) to (9,-0.67);  
\end{tikzpicture}
    \caption{The situations in cases (2a) and (2b), respectively, in the proof of \Cref{lem:compatible}}
    \label{fig:compatibility}
\end{figure}

\begin{rem}\label[remark]{rem:quotient_poset}
    Given a fence diagram $F$ of shape $\lambda/\mu$, the poset $P_{\lambda/\mu}$ induces a poset structure on $(P_{\lambda/\mu})/\sim_F$, where $\sim_F$ is the equivalence relation of belonging to the same fence component. The compatibility from \Cref{lem:compatible} ensures that this is well defined. Then, one readily sees that standard fence tableaux correspond to total extensions of  $(P_{\lambda/\mu})/\sim_F$. 
\end{rem}

\section{Ehrhart Theory of Fence Complexes and Positroid Geometry}\label[section]{Ehrhart Theory of Fence Complexes and Positroid Geometry} 
In this section, we describe a polytopal model for the positroid stratification of $Gr(k, n)$. Throughout this section, fix $u,w \in S_n$ with $u\leq_k w$. 

\subsection{Fence Complexes and $\pi_k(\Delta([u, w]))$}

\begin{defn}\label[definition]{fence complexes}
    The \emph{fence complex} $\F_u^w$ is the polyhedral subcomplex of $\O(P_{k, n})$ with total space the union of the faces corresponding to the fence diagrams in $\mathfrak{F}_u^w$.
\end{defn}
\begin{rem}\label[remark]{fence complex is pure dimensional}
One can verify from the definitions that each $F\in \mathfrak{F}_u^w$ corresponds to a $\ell(w)-\ell(u)$-dimensional facet of $\calF_u^w$, and hence $\calF_u^w$ is a pure dimensional complex whose facets are precisely given by the reduced fence diagrams.
\end{rem}

This definition makes sense by \Cref{prop:fence_faces}. Since $\mathcal{F}_u^w$ is a polyhedral subcomplex of $\O(P_{k, n})$, we may consider its canonical triangulation $\mathcal{T}_u^w$ as in \Cref{canonical triangle}.

\begin{lemma}\label[lemma]{canonical equals projected}
    The vertices of  $\mathcal{F}_u^w$, and hence of the simplices in $\mathcal{T}_u^w$ are precisely the elements of $\pi_k([u,w])$. 
\end{lemma}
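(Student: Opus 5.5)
The plan is to prove two inclusions between the vertex set of $\calF_u^w$ and $\pi_k([u,w])$. Here vertices of $\O(P_{k,n})$ are identified with Young diagrams in the $k\times(n-k)$ rectangle, equivalently with elements of ${[n]\choose k}$ via \Cref{pik notation}. A vertex $v_I$ is a $0/1$ filling, and its order ideal is the set of cells filled with $0$. The second assertion, about $\mathcal{T}_u^w$, will then be immediate: the canonical triangulation of a face of an order polytope uses only the vertices of that face, and every vertex of $\calF_u^w$ lies on some facet, whose canonical triangulation uses that vertex.

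\textbf{Vertices of $\calF_u^w$ lie in $\pi_k([u,w])$.} By \Cref{fence complex is pure dimensional}, every vertex of $\calF_u^w$ is a vertex of some facet $F\in\mathfrak F_u^w$. By \Cref{cor:standard_fence_tableaux}, $F$ has a standard fence tableau $T$. By \Cref{prop: reduced fence faces know}, $T=T_C$ for a saturated $k$-Bruhat chain $C=\{u=u_0\lessdot_k\cdots\lessdot_k u_m=w\}$, and the components of $F$ are exactly the strips $\pi_k(u_i)/\pi_k(u_{i-1})$.

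Now take a vertex of the face $F$, given as a $0/1$ filling constant on fence components, with $0$ on $\mu=\pi_k(u)$ and $1$ outside $\lambda=\pi_k(w)$. Its zero set is a Young diagram $\nu$ with $\mu\subseteq\nu\subseteq\lambda$, and $\nu/\mu$ is a union of fence components that is closed downward in the quotient poset $(P_{\lambda/\mu})/\sim_F$ of \Cref{rem:quotient_poset}. Choose a total extension of this quotient poset that lists the components of $\nu/\mu$ first. This gives a standard fence tableau $T'$ on $F$ in which $\nu/\mu$ is filled with $1,\dots,j$. Applying \Cref{prop: reduced fence faces know} to $T'$ produces a saturated $k$-Bruhat chain $C'$ whose $j$-th element $u'_j$ satisfies $\pi_k(u'_j)=\nu$. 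Since $u\le u'_j\le w$, the vertex lies in $\pi_k([u,w])$.

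\textbf{Elements of $\pi_k([u,w])$ are vertices of $\calF_u^w$.} Let $x\in[u,w]$. I want a saturated $k$-Bruhat chain from $u$ to $w$ passing through some $y$ with $\pi_k(y)=\pi_k(x)$. Given such a chain $C$, the facet $F_C$ contains the vertex $\pi_k(y)$: in $T_C$ the cells of $\pi_k(y)/\mu$ are unions of components and form a down-closed set. The cleanest route is \Cref{maximal chains bij}, which identifies facets of $\pi_k(\Delta[u,w])$ with saturated $k$-Bruhat chains. The vertex $\pi_k(x)$ of $\pi_k(\Delta[u,w])$ lies in some facet, since the complex is pure by \Cref{shellable}. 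That facet has vertex set $\pi_k(C)$ for a saturated $k$-Bruhat chain $C$, so $\pi_k(x)=\pi_k(y)$ for some $y\in C$, as required.

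The main obstacle is making sure that the facet correspondence of \cite{KLS0} really sends a $k$-chain to the facet whose vertex set is $\pi_k(C)$. I would check this in their Lemma 8.7. If that identification is not available, I would instead refine the Bruhat chain $u\le x\le w$ and use Rietsch's proposition together with the length-additive reindexing of \Cref{non-k-grass fence complex}.
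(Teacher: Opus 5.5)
Your proposal is correct and follows essentially the same route as the paper. Vertices of a facet $F\in\mathfrak F_u^w$ are down-sets of the quotient poset, each such down-set is an initial segment of a total extension (a standard fence tableau), and \Cref{prop: reduced fence faces know} converts that tableau into a saturated $k$-Bruhat chain through the corresponding element; the converse is exactly the paper's appeal to \Cref{maximal chains bij}. The obstacle you flag is not a real one, since the bijection in \Cref{maximal chains bij} is the projection $C\mapsto\pi_k(C)$ itself, and purity is not needed because any vertex of a finite simplicial complex lies in some facet.
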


\begin{proof}
    Since canonical triangulations of polyhedral subcomplexes of order polytopes don't introduce new vertices, it suffices to check that the vertices of $\mathcal{F}_u^w$ are the elements of $\pi_k([u, w])$.

    Given a fence diagram $F\in \mathfrak{F}_u^w$, the vertices correspond to order ideals of the poset $P_{\pi_k(w)/\pi_k(u)}/\sim_F$. Given a poset $Q$, any order ideal of the poset may be written as $f^{-1}([m])$ where $f:Q\to [|Q|]$ is a total extension of $Q$. Total extensions of all $P_{\pi_k(w)/\pi_k(u)}/\sim_F$ correspond to standard fence tableaux of $F$. By \Cref{prop: reduced fence faces know}, each standard fence tableau $T$ corresponds to a saturated $k$-Bruhat chain $w=u_m\gtrdot_k\cdots\gtrdot_ku_0=u$ in $[u,w]_k$, with $\pi_k(u_i)$ corresponding to the region of $T$ filled with numbers from $1$ to $i$. Since every partition in $\pi_k([u,w])$ lifts to a saturated $k$-Bruhat chain by \Cref{maximal chains bij}, the result follows. 
\end{proof}

We may now show the following proposition, which relates the fence complex to the projected order complex studied in \cite{KLS0} and \cite{KLS2}. 

\begin{prop}\label[proposition]{canonical triangulation is the same as pi_k(delta[u,w]))} 
    As a simplicial complex, $\mathcal{T}_u^w$ is isomorphic to $\pi_k(\Delta[u,w])$. 
\end{prop}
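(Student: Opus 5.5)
The plan is to compare the two simplicial complexes on the common vertex set $\pi_k([u,w])$ supplied by \Cref{canonical equals projected}. Both complexes are pure of dimension $\ell(w)-\ell(u)$: $\pi_k(\Delta[u,w])$ by \Cref{shellable}, and $\mathcal{T}_u^w$ because $\calF_u^w$ is pure of that dimension (\Cref{fence complex is pure dimensional}) and the canonical triangulation of each facet consists of full-dimensional simplices. So it suffices to show that the two complexes have the same facets, viewed as sets of vertices in $\pi_k([u,w])\subset {[n]\choose k}$, identified with Young diagrams inside the $k\times(n-k)$ rectangle.

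First I would describe the facets of $\mathcal{T}_u^w$. A facet lies in some facet of $\calF_u^w$, namely the face $\calO(P_{\lambda/\mu}/\sim_F)$ for some $F\in\mathfrak F_u^w$ (with $\lambda=\pi_k(w)$, $\mu=\pi_k(u)$, cells outside $\lambda/\mu$ frozen at $0$ or $1$). In that face it is a simplex $\Delta_\sigma$ for a total extension $\sigma$ of $P_{\lambda/\mu}/\sim_F$. By \Cref{rem:quotient_poset}, such total extensions are exactly the standard fence tableaux $T$ with underlying diagram $F$. The vertices of $\Delta_\sigma$ are the indicator vectors of the order ideals $\sigma^{-1}([m])$, $m=0,\dots,|F|$. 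Translated into Young diagrams, these are the partitions $\mu=\mu_0\subset\mu_1\subset\cdots\subset\mu_{|F|}=\lambda$, where $\mu_i$ is $\mu$ together with the cells of $T$ labelled at most $i$. One point needs care in the translation: an order ideal in the order polytope corresponds to a Young diagram under the convention used in \Cref{canonical equals projected}. I would fix that convention once, as in \Cref{prop:fence_faces}, where cells of $\mu$ are $0$ and cells outside $\lambda$ are $1$, and check that the chain of vertices is read off correctly from $T$ in that convention.

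Next I would identify these chains with facets of $\pi_k(\Delta[u,w])$. By \Cref{maximal chains bij}, the facets of $\pi_k(\Delta[u,w])$ are the sets $\{\pi_k(u_0),\dots,\pi_k(u_m)\}$ for saturated $k$-Bruhat chains $u=u_0\lessdot_k\cdots\lessdot_k u_m=w$, and these sets are pairwise distinct. By \Cref{prop: reduced fence faces know} together with \Cref{cor:standard_fence_tableaux}, such chains are in bijection with standard fence tableaux $T$ whose underlying diagram lies in $\mathfrak F_u^w$, via $C\mapsto T_C$. By the construction of $T_C$, the set of partitions $\pi_k(u_i)$ is exactly the chain $\mu_0\subset\cdots\subset\mu_m$ read off from $T_C$ above. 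Hence both facet sets equal $\{\{\mu_0,\dots,\mu_m\}: T \text{ a standard fence tableau for } \mathfrak F_u^w\}$. Since a simplicial complex is determined by its facets, and the identity on vertices matches the facets, the identity map is the desired isomorphism.

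I expect the main obstacle to be confirming that distinct pairs $(F,\sigma)$ give genuinely distinct simplices, and that every simplex of $\mathcal{T}_u^w$ is a face of one of these top simplices. The second point holds because $\mathcal{T}_u^w$ is the restriction of the triangulation of $\calO(P_{k,n})$ to the pure complex $\calF_u^w$. For the first, two tableaux giving the same chain of partitions give the same tableau $T$, and $T$ determines $F$ as its equality pattern. So the vertex sets determine $(F,\sigma)$, which matches the injectivity in \Cref{maximal chains bij}.
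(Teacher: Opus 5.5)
Your proposal is correct and follows the paper's route. The paper also takes the common vertex set from \Cref{canonical equals projected} and then matches facets of $\mathcal{T}_u^w$ (total extensions of the quotient posets, i.e.\ standard fence tableaux) with projected saturated $k$-Bruhat chains via \Cref{prop: reduced fence faces know} and \Cref{maximal chains bij}. Your added check that this correspondence preserves the actual vertex sets, namely the chain $\mu_0\subset\cdots\subset\mu_m$, and not merely the count of facets, makes explicit a point the paper leaves implicit.
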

\begin{proof}
    Since the two simplicial complexes have the same vertex set, it suffices to show they have the same facets. But by \Cref{maximal chains bij}, the facets of $\pi_k(\Delta[u,w])$ are precisely given by projections of saturated $k$-Bruhat chains from $u$ to $w$, and the facets of $\mathcal{T}_u^w$ are in bijection with standard fence tableaux of the fence diagrams corresponding to $[u,w]$, and hence by \Cref{prop: reduced fence faces know} with saturated $k$-Bruhat chains. Therefore the facets of the two simplicial complexes agree.
\end{proof}

Recall from \Cref{shellable} that $\pi_k(\Delta[u,w])$ is homeomorphic to a closed ball. In particular, we have the following corollary: 

\begin{cor}\label[corollary]{fence complexes are homeomorphic to balls}
    $\calF_u^w$ is homeomorphic to a closed ball. 
\end{cor}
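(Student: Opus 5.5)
The plan is to pass from the abstract simplicial complex to the geometric object $|\calF_u^w|$ through the canonical triangulation $\calT_u^w$. The underlying space of $\calF_u^w$ is the union of its facets, the faces indexed by $\mathfrak F_u^w$. By \Cref{canonical triangle}, the canonical triangulation of $\O(P_{k,n})$ restricts to a triangulation of each of these faces. Hence $|\calF_u^w| = |\calT_u^w|$ as subsets of $\bbR^{k(n-k)}$, where $\calT_u^w$ is viewed as a geometric simplicial complex. This is a genuine geometric simplicial complex, because any two of its simplices are simplices of the canonical triangulation of the whole polytope, so they meet in a common face.

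Next I would note that a geometric simplicial complex is homeomorphic to the geometric realization of its underlying abstract simplicial complex. Combining this with \Cref{canonical triangulation is the same as pi_k(delta[u,w]))} gives
\[
|\calF_u^w| = |\calT_u^w| \cong \|\pi_k(\Delta[u,w])\|.
\]
By \Cref{shellable}, the right-hand side is homeomorphic to a closed ball, which proves the corollary.

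The only point needing care is that the isomorphism in \Cref{canonical triangulation is the same as pi_k(delta[u,w]))} is an isomorphism of abstract simplicial complexes. For this I need the vertex sets to be identified by a bijection under which the facets correspond. \Cref{canonical equals projected} supplies the bijection: it identifies vertices of $\calT_u^w$, which are indicator vectors $v_I$ of order ideals and hence Young diagrams, with elements of $\pi_k([u,w])$. The facet bijection is then \Cref{prop: reduced fence faces know} together with \Cref{maximal chains bij}. Specifically, a standard fence tableau determines a simplex of the canonical triangulation whose vertices are the partitions $\pi_k(u_i)$ along the associated saturated $k$-Bruhat chain, and these are exactly the vertices of the corresponding facet of $\pi_k(\Delta[u,w])$. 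Since both complexes are pure of dimension $\ell(w)-\ell(u)$ (\Cref{fence complex is pure dimensional} and \Cref{shellable}), agreement of facets gives agreement of all faces. I therefore expect no real obstacle; the work is essentially already done in the preceding proposition.
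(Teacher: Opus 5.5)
Your proposal is correct and follows the paper's argument: the paper derives the corollary directly from the isomorphism $\mathcal{T}_u^w \cong \pi_k(\Delta[u,w])$ together with the Knutson--Lam--Speyer result that $\pi_k(\Delta[u,w])$ is homeomorphic to a ball. Your extra remarks make explicit steps the paper leaves implicit, namely that $|\calF_u^w| = |\calT_u^w|$ is a genuine geometric simplicial complex and that matching facets under the vertex bijection determines the whole complex. For that last step purity is not actually needed, since any simplicial complex is determined by its maximal faces.
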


\begin{exmp}
    Let $u = 2143$, $w = 3412$, and $k = 2$. In this case $u_J = u = (s_1, s_1) \in S_2 \times S_2$ and $w_J = e$, so the set $\mathfrak{F}_u^w$ consists of two fence diagrams $F_1$ and $F_2$, the first two depicted in \Cref{fig:example}. The faces of $\calO(P_{k,n})$ corresponding to $F_1$ and $F_2$ are both isomorphic to the order polytope of the total order on $[2]$, which is a $2$-simplex. Further, these faces intersect along the face of $\calO(P_{2,4})$ corresponding to $F_3$, the final diagram in \Cref{fig:example}, which is a $1$-simplex. Therefore we see that $\mathcal{F}_{2143}^{3412}$ is two $2$-simplices glued along an edge, which is certainly homeomorphic to a closed ball.
\end{exmp}

\begin{figure}[H]

    \centering
    \def\borderInnerSep{2pt}
\begin{tikzpicture}

\node[filled, color=maroon] at (1.5,1.5) {};
\node[filled, color=maroon] at (0.5,0.5) {};
\node[filled, color=maroon] at (0.5,1.5) {};
\node[] at (1, -.5) {$F_1$};

\draw (0,0)-- (0,2)--(2,2)--(2,1)--(2,0)--(0,0);
\draw (1,0)--(1,2);
\draw (2,0)--(2,2);
\draw (0,1)--(2,1);
\draw [thick, color=maroon](1.5,1.5)--(.5,1.5);
\draw [thick, color=maroon](.5,1.5)--(.5,.5);

\begin{scope}[shift={(5,0)}]

\node[filled, color=maroon] at (1.5,1.5) {};
\node[filled, color=maroon] at (1.5,0.5) {};
\node[filled, color=maroon] at (0.5,0.5) {};
\node[] at (1, -.5) {$F_2$};

\draw (0,0)-- (0,2)--(2,2)--(2,1)--(2,0)--(0,0);
\draw (1,0)--(1,2);
\draw (2,0)--(2,2);
\draw (0,1)--(2,1);
\draw [thick, color=maroon](1.5,1.5)--(1.5,.5);
\draw [thick, color=maroon](.5,.5)--(1.5,.5);
\end{scope}

\begin{scope}[shift={(10,0)}]

\node[filled, color=maroon] at (0.5,0.5) {};
\node[filled, color=maroon] at (1.5,0.5) {};
\node[filled, color=maroon] at (0.5,1.5) {};
\node[filled, color=maroon] at (1.5,1.5) {};
\node[] at (1, -.5) {$F_3$};

\def\nodescl{0.7}
\draw (0,0)-- (0,2)--(2,2)--(2,1)--(2,0)--(0,0);
\draw (0,1) -- (2,1);
\draw (1,0) -- (1, 2);
\draw[thick, color=maroon] (.5,.5)--(1.5,.5)--(1.5,1.5);
\draw[thick, color=maroon](.5,.5) -- (.5,1.5)--(1.5,1.5);
\end{scope}
\end{tikzpicture}
\caption{The fence diagrams corresponding to the faces of $\calO(P_{2,4})$ contained in $\mathcal{F}_{2143}^{3412}$. The fences correspond to the equality conditions cutting out the faces.}\label{fig:example}
\end{figure}

\subsection{The Hilbert Polynomial of $\Pi_u^w$}

 We can now connect the Ehrhart function of the fence complex to the Hilbert function of the positroid variety. First, recall that if $\Delta$ is a simplicial complex with vertex set $V(\Delta)$, then \emph{the Stanley–Reisner ring} $SR(\Delta)$ is the ring $\bbC[x_i\ |\  i\in V(\Delta)]/I(\Delta)$ where $I(\Delta)$ is generated by $x_{i_1}\cdots x_{i_k}$ for all $i_1,\ldots, i_k \in V(\Delta)$ such that $\{i_1,\ldots, i_k\}$ is not contained in a face of $\Delta$. 

Stanley–Reisner rings enter our story for the following nice geometric reason: 
\begin{thm}\cite[Theorem 9.1.]{KLS0}
There exists a Gröbner degeneration from $\bbC[\Pi_u^w]$ to $SR(\pi_k(\Delta[u,w]))$.    
\end{thm}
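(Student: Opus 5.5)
The plan is the standard monomial approach: compare Hilbert functions and then upgrade an inclusion of ideals to an equality. Work in the Plücker embedding $\Gr(k,n)\subset \bbP(\bigwedge^k\bbC^n)$, with coordinate ring $\bbC[p_I : I\in{[n]\choose k}]$. Fix a total order on ${[n]\choose k}$ refining Gale order, and take the associated reverse lexicographic term order on monomials. A monomial $p_{I_1}\cdots p_{I_m}$ whose indices form a multichain $I_1\le \cdots\le I_m$ in Gale order is \emph{standard}. Call it $\Pi_u^w$-\emph{standard} if moreover $\{I_1,\ldots,I_m\}$ is a face of $\pi_k(\Delta[u,w])$.

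The target is that the initial ideal $\initial(J)$ of the ideal $J$ of $\Pi_u^w$ equals the Stanley–Reisner ideal of $\pi_k(\Delta[u,w])$, after also killing the $p_I$ with $I\notin \pi_k([u,w])$. This splits into two inclusions, which will be combined through Hilbert functions.

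\textbf{Containment.} First I show that every non-face monomial lies in $\initial(J)$.
\begin{enumerate}
\item If $I\notin\pi_k([u,w])$, then $p_I$ vanishes on $\Pi_u^w$, since the $T$-fixed points of the projected Richardson variety are exactly $\pi_k([u,w])$. So $p_I\in J$.
\item If $I,I'$ are Gale-incomparable, the quadratic Plücker relations have $p_Ip_{I'}$ as leading term; this is the classical Hodge straightening with a revlex order.
\item The remaining case is a Gale chain in $\pi_k([u,w])$ that is not a face of $\pi_k(\Delta[u,w])$. This is the step I expect to be the main obstacle.
\end{enumerate}
For item 3 I would first try to show that any minimal such non-face is already a product $p_Ip_{I'}$ of two elements with no common lift, i.e.\ no $x\le y$ in $[u,w]$ with $\pi_k(x)=I$ and $\pi_k(y)=I'$. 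This would reduce item 3 to a quadratic statement. I would then show $p_Ip_{I'}$ is a leading term of an element of $J$ by restricting to the Richardson variety $X_u^w$ and using that $\pi_k$ is birational (Proposition on $k$-Bruhat birationality).

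\textbf{Hilbert function equality.} Containment gives $\dim \bbC[\Pi_u^w]_m\le \#\{\text{standard monomials of degree } m\}$. For the reverse inequality I would compute $\dim\bbC[\Pi_u^w]_m=H^0(\Pi_u^w,\O(m))$ by pulling back to $X_u^w$. Here $R(\pi_k)_*\O_{X_u^w}=\O_{\Pi_u^w}$, because projected Richardsons are normal with rational singularities, being compatibly Frobenius split. Higher cohomology vanishes by Frobenius splitting. This reduces the count to the Richardson standard monomial theory of Lakshmibai–Littelmann, which counts chains in $[u,w]$ with repetitions. Projecting these chains, and using \Cref{maximal chains bij} to identify faces of $\pi_k(\Delta[u,w])$ with projections of chains, matches the count with the number of $\Pi_u^w$-standard monomials of degree $m$.

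\textbf{Conclusion.} We now have an inclusion $SR\text{-ideal}\subseteq \initial(J)$ together with equal Hilbert functions. Hence the two ideals are equal, and the Gröbner (flat) degeneration $\bbC[\Pi_u^w]\rightsquigarrow SR(\pi_k(\Delta[u,w]))$ follows.
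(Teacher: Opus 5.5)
\textbf{The paper does not prove this statement; it quotes it from \cite{KLS0}.} As far as I recall, the cited argument is geometric: Frobenius splitting together with an induction on hyperplane sections by $p_{\pi_k(u)}$. It never exhibits individual Gröbner basis elements, so your outline has to stand on its own. Items 1 and 2 and the Hilbert-function half are fine in outline; the latter uses projective normality of $\Pi_u^w$ plus Lakshmibai--Littelmann standard monomials on $X_u^w$.

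\textbf{The gap is item 3, and the reduction you propose there is false.} Minimal non-faces of $\pi_k(\Delta[u,w])$ that are Gale chains can have more than two elements, so the complex need not be flag.

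Take $k=2$, $n=5$, $u=21345$, $w=45123$, so $\Pi_u^w=\{p_{15}=0\}$. Here $v\ge u$ iff $v(1)\ge 2$.
\begin{enumerate}
\item Each pair from $\{13,24,35\}$ lifts to a chain in $[u,w]$: $31245\le 42135$, $24135\le 35124$, and $31245\le 35124$.
\item The triple does not lift. The conditions $\pi_2(z)=\{3,5\}$ and $z\le w$ force $z(1)=3$. Then $y\le z$ with $\pi_2(y)=\{2,4\}$ forces $y(1)=2$. Then $x\le y$ with $\pi_2(x)=\{1,3\}$ forces $x(1)=1$, contradicting $x\ge u$.
\end{enumerate}
Algebraically, the ideal $J=I_{2,5}+(p_{15})$ contains
\[
\begin{aligned}
p_{13}p_{24}p_{35}-p_{12}p_{34}p_{35}-p_{13}p_{23}p_{45}
&=p_{35}\,(p_{13}p_{24}-p_{12}p_{34}-p_{14}p_{23})\\
&\quad+p_{23}\,(p_{14}p_{35}-p_{13}p_{45}-p_{15}p_{34})+p_{15}p_{23}p_{34},
\end{aligned}
\]
whose revlex leading term is $p_{13}p_{24}p_{35}$. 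On the other hand, $J_2=p_{15}S_1\oplus (I_{2,5})_2$ has dimension $15$. These 15 dimensions are already accounted for by the leading terms $p_{15}p_I$, $p_{14}p_{23}$, $p_{25}p_{34}$, $p_{13}p_{25}$, $p_{14}p_{25}$, and $p_{14}p_{35}$. So none of $p_{13}p_{24}$, $p_{13}p_{35}$, $p_{24}p_{35}$ lies in $\mathrm{in}(J)$. Hence $\mathrm{in}(J)$ has a minimal generator of degree $3$ that no argument about pairs can reach.

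\textbf{What item 3 actually requires.} For every minimal non-face $I_1<\cdots<I_t$, of arbitrary size, you need an element of $J$ whose revlex leading term is $p_{I_1}\cdots p_{I_t}$. This is essentially the whole content of the theorem.

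Your sketch does not supply this even in the quadratic case. Birationality of $\pi_k\colon X_u^w\to\Pi_u^w$ says nothing about which polynomials lie in $J$ or what their leading terms are. In the example above, the quadratic non-faces come from Plücker relations combined with the linear form $p_{15}$, not from birationality.

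Closing the gap needs one of two things. The first is an explicit Gröbner basis with leading terms of unbounded degree. An example is the Almousa--Gao--Huang alternating sums over generalized antidiagonals \cite{AGH}, which the paper itself relies on when it handles minimal non-faces $p_{I_1}\cdots p_{I_t}$ of arbitrary size in its proof of Theorem D. The second is a structural argument, like the one behind the citation, that avoids naming leading terms altogether. Without one of these, the containment does not hold, and your Hilbert-function comparison has nothing to combine with.
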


\begin{thm}\label[theorem]{ehrhart poly equals hilbert poly} 
  The Ehrhart polynomial of $\calF_u^w$ equals the Hilbert polynomial of $\Pi_u^w$. 
\begin{proof}
 Since Gröbner degenerations are flat, and flat degenerations preserve Hilbert functions, we have \[H_{\Pi_u^w}(m) = H_{SR(\pi_k(\Delta[u,w]))}(m).\] Recall, say from \cite[Theorem 6.15]{Francisco2014ASO}, the formula for the Hilbert series of a Stanley–Reisner ring gives us 
    \[\sum_{m=0}^\infty H_{\Pi_u^w}(m)t^m = \sum_{m=0}^\infty H_{SR(\pi_k(\Delta[u,w]))}(m)t^m =   \frac{h(t)}{(1-t)^{\ell(w)-\ell(u)+1}},\] 
    where $h(t)$ is the $h$-polynomial of $\pi_k(\Delta [u,w])$. By \Cref{canonical equals projected}, $\pi_k([u,w])$ gives a unimodular triangulation of $\calF_u^w$. Then, by \Cref{triangle equals complex h-vector} we have that 
    \[\sum_{m=0}^\infty \ehr_{\calF_u^w}(m)t^m =\frac{h(t)}{(1-t)^{\ell(w)-\ell(u)+1}}, \] 
    and so 
    \[\sum_{m=0}^\infty \ehr_{\calF_u^w}(m)t^m = \sum_{m=0}^\infty H_{\Pi_u^w}(m)t^m,\] 
    which gives the result. 
\end{proof}
\end{thm}

By \Cref{fence complexes are homeomorphic to balls}, we may apply \Cref{general reciprocity} to the fence complex. This combined with \Cref{ehrhart poly equals hilbert poly} gives the following analogue of Ehrhart–Macdonald reciprocity.  
\begin{thm}\label[theorem]{mainresultC} 
Let $m\in \bbN$. The Hilbert polynomial of $\Pi_u^w$ evaluated at $-m$ is, up to sign, the number of lattice points in the interior of $m\calF_u^w$. In symbols, \[(-1)^{\dim \calF_u^w} H_{\Pi_u^w}(-m) = \mathrm{intehr}_{\calF_u^w}(m).\] 
\end{thm}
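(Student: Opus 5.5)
The plan is to combine the topology of $\calF_u^w$ with the already-established equality of Ehrhart and Hilbert polynomials; no new combinatorics should be needed.

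First, by \Cref{fence complexes are homeomorphic to balls}, $\calF_u^w$ is a lattice polyhedral complex (a polyhedral subcomplex of $\O(P_{k,n})$, pure of dimension $\ell(w)-\ell(u)$ by \Cref{fence complex is pure dimensional}) whose underlying space is homeomorphic to a closed ball. Hence \Cref{general reciprocity} applies and gives the polynomial identity
\[(-1)^{\dim \calF_u^w}\ehr_{\calF_u^w}(-m) = \intehr_{\calF_u^w}(m)\]
for all $m\in\bbN$. Here $\intehr_{\calF_u^w}(m)$ counts the lattice points of $m\calF_u^w$ lying in the manifold interior of $m|\calF_u^w|$.

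Second, by \Cref{ehrhart poly equals hilbert poly}, $\ehr_{\calF_u^w}$ and $H_{\Pi_u^w}$ agree as polynomials, and not merely at nonnegative integers. That theorem identifies their generating functions, so the two functions agree for all $m\geq 0$. Two polynomials agreeing on infinitely many points are equal, so we may substitute $-m$ into either. Substituting $H_{\Pi_u^w}(-m)$ for $\ehr_{\calF_u^w}(-m)$ in the displayed identity gives the claim.

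The only point needing care is that the proof of \Cref{general reciprocity} uses that every face $F$ of the complex satisfies ordinary Ehrhart--Macdonald reciprocity, and that $\mu(F,\calK)$ is computed via \cite[Proposition 3.8.9]{EC1} for the regular CW structure given by the faces. Both hold here: the faces are lattice polytopes, namely faces of an order polytope, and a polyhedral complex homeomorphic to a ball is a regular CW ball. I expect no real obstacle beyond confirming that $\dim\calF_u^w$ in the statement means the pure dimension $\ell(w)-\ell(u)$, which also matches $\dim \Pi_u^w$.
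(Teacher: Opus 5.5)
Your proposal is correct and follows the paper's argument. Since $\calF_u^w$ is a ball by \Cref{fence complexes are homeomorphic to balls}, \Cref{general reciprocity} applies, and \Cref{ehrhart poly equals hilbert poly} then lets you replace $\ehr_{\calF_u^w}(-m)$ by $H_{\Pi_u^w}(-m)$.
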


\begin{rem}
    In \cite[Conjecture 10]{Alexandersson-Alhajjar}, they consider principal specializations of key polynomials $\kappa_{\lambda, w}$ corresponding to an integral weight $\lambda$ and $w\in S_n$. They then conjecture that $P_{\lambda, w}(d) = \kappa_{d\lambda,w}(1,1,\ldots, 1)$ is a polynomial with positive coefficients. 
    
    One can show that when $\lambda =\omega_k$ is a fundamental weight, $P_{\omega_k, w}(d)= \dim \bbC[\Pi_e^w]_d$, where $e$ is the identity permutation. In particular, by \Cref{ehrhart poly equals hilbert poly}, $P_{\omega_k, w}(d) = \ehr_{\calF_e^w}(d)$. \Cref{mainresultC} then implies that the $\deg(P_{\omega_k,w})-1$ degree coefficient of $P_{\omega_k, w}(d)$ is always positive. This is always true for Ehrhart polynomials of polytopes, but not necessarily for polyhedral complexes. We omit the details. 
\end{rem}

\subsection{Arithmetic Gorensteinness of $\Pi_u^w$} 

As another corollary of our fence complex construction, one can deduce the arithmetic Gorensteinness of certain positroid varieties. 

\begin{defn}\label[definition]{arith gor}
    A projective variety $X$ is called \textit{arithmetically Gorenstein} if its homogeneous coordinate ring is a Gorenstein algebra.
\end{defn}

The homogeneous coordinate ring of a projective variety is naturally graded, and Stanley gave a characterization for when a graded Cohen–Macaulay integral domain is Gorenstein. Recall that a polynomial $f(t)$ is \emph{palindromic} if $f(t)$ equals $f(1/t)t^N$ for some $N\in \bbN$. Then, Stanley showed that such a ring is Gorenstein if and only if the numerator of its Hilbert series is palindromic, see \cite{STANLEY197857}. Since the homogeneous coordinate rings of positroid varieties are always Cohen–Macaulay (see \cite[Corollary 4.9]{KLS0}), we can understand the arithmetic Gorensteinness of positroid varieties by understanding the Hilbert series of their homogeneous coordinate rings. Furthermore, by \Cref{ehrhart poly equals hilbert poly}, we only need to understand the Ehrhart series of the fence complex.

Recall that for a polyhedral complex $\calF$, the \emph{$h^*$-polynomial} of $\calF$ is defined to be the unique polynomial $h^*(\calF,t)$ satisfying
\[\sum_{m=0}^\infty \ehr_\calF (m)t^m = \frac{ h^*(\calF,t)}{(1-t)^{\dim \calF +1}}.\] 
The above discussion implies:
\begin{cor}
    A positroid variety $\Pi_u^w$ is arithmetically Gorenstein (with respect to the usual embedding) if and only if $h^*(\calF_u^w,t)$ is palindromic.
\end{cor}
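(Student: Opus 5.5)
The plan is to chain together three facts already in place: Stanley's Gorenstein criterion, the Cohen–Macaulayness of positroid coordinate rings, and \Cref{ehrhart poly equals hilbert poly}.

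\textbf{Step 1: set up the coordinate ring.} Let $R=\bbC[\Pi_u^w]$ be the homogeneous coordinate ring in the Plücker embedding.

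\begin{itemize}
\item Positroid varieties are irreducible, since they are images of irreducible Richardson varieties. They are also reduced, so $R$ is a graded integral domain.
\item $R$ is standard graded, being a quotient of the Plücker polynomial ring.
\item $R$ is Cohen–Macaulay by \cite[Corollary 4.9]{KLS0}.
\end{itemize}

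Hence Stanley's theorem \cite{STANLEY197857} applies: $R$ is Gorenstein if and only if the numerator of its Hilbert series, written over $(1-t)^{\dim R}$, is palindromic. Note that $\dim R=\ell(w)-\ell(u)+1$.

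\textbf{Step 2: identify the numerator with $h^*$.} The Hilbert series of $R$ is $\sum_m \dim R_m\, t^m$. I need the Hilbert \emph{function} of $R$ to agree with $\ehr_{\calF_u^w}$ for every $m\geq 0$, not just the Hilbert polynomial. This is exactly what the proof of \Cref{ehrhart poly equals hilbert poly} gives: it compares the full series. The Gröbner degeneration of \cite[Theorem 9.1]{KLS0} preserves the Hilbert function, and \Cref{triangle equals complex h-vector} applies to the unimodular canonical triangulation.

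Therefore
\[\sum_m \dim R_m\, t^m=\frac{h^*(\calF_u^w,t)}{(1-t)^{\dim\calF_u^w+1}}.\]
By \Cref{fence complex is pure dimensional}, $\dim \calF_u^w=\ell(w)-\ell(u)$, so the exponent in the denominator matches $\dim R$.

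\textbf{Step 3: conclude.} Since the denominators agree, the numerator in Stanley's criterion is exactly $h^*(\calF_u^w,t)$. So $R$ is Gorenstein if and only if $h^*(\calF_u^w,t)$ is palindromic.

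\textbf{Main obstacle.} The only delicate point is Step 2: making sure we use equality of Hilbert functions in all degrees, including low degrees, rather than just Hilbert polynomials. The series-level statement in the proof of \Cref{ehrhart poly equals hilbert poly} already supplies this, so I expect no real difficulty.
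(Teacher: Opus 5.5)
Your proposal is correct and follows the same route as the paper: Stanley's Gorenstein criterion for Cohen--Macaulay graded domains, Cohen--Macaulayness from \cite[Corollary 4.9]{KLS0}, and the series-level identity established in the proof of \Cref{ehrhart poly equals hilbert poly}. You also check two details that the paper's one-paragraph justification leaves implicit: that the Hilbert functions agree in every degree (not merely the Hilbert polynomials), and that $\dim R=\ell(w)-\ell(u)+1=\dim\calF_u^w+1$.
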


We give some examples now where $\calF_u^w$ is a single polytope and the above condition is readily verifiable. 

\begin{lemma}
    Fix $u \leq w$, and assume that either $u$ and $w$ are both $k$-Grassmannian, or that $\pi_k(w)/\pi_k(u)$ is a border-strip. Then $\calF_u^w$ is an order polytope.
\end{lemma}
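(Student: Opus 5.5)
The approach is to show that in both cases $\mathfrak F_u^w$ consists of a single reduced fence diagram $F$. Then $\calF_u^w$ is the single face of $\calO(P_{k,n})$ attached to $F$ by \Cref{prop:fence_faces}, and that face is the order polytope $\calO(P_{\lambda/\mu}/\sim_F)$, where $\lambda=\pi_k(w)$ and $\mu=\pi_k(u)$. By \Cref{non-k-grass fence complex} we may first replace $[u,w]$ by the representative of $\langle u,w\rangle$ with $w$ $k$-Grassmannian. This keeps $\lambda$, $\mu$ and the target word $u_Jw_J^{-1}$ unchanged. So throughout we take $w_J=\id$, and $\mathfrak F_u^w$ is the set of reduced fence diagrams of shape $\lambda/\mu$ with $\w_F = u_J$.

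\emph{Case 1: $u$ and $w$ both $k$-Grassmannian.} Here $u_J=\id$. A reduced fence diagram whose word is the identity must have empty word, which means no fences at all. Hence $\mathfrak F_u^w=\{F_\emptyset\}$, where $F_\emptyset$ is the fenceless diagram of shape $\lambda/\mu$. Its face is $\calO(P_{\lambda/\mu})$ itself, which is an order polytope.

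\emph{Case 2: $\lambda/\mu$ a border strip.} We must show that at most one fence pattern $F$ on $\lambda/\mu$ gives a reduced word equal to $u_J$. The plan is to recover the fences of $F$ from $u_J=\col_F\cdot\row_F\{k\}$. Since $\col_F\in S_k$ and $\row_F\{k\}$ lies in the commuting factor $S_{n-k}$, the word determines $\sigma:=\col_F$ and $\psi:=\row_F$ as elements. We then argue column by column. In a border strip, column $j$ is a contiguous vertical segment, and distinct columns occupy row intervals that overlap in at most one row. So each $\col_F^{(j)}$ is a reduced product of distinct generators $s_i$ with increasing indices, drawn from an interval $[\alpha_j,\beta_j]$. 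Consecutive intervals overlap in at most one index, and the overlap point is where neighbouring vertical runs meet. A commuting-product-of-intervals argument should then show that $\sigma$ determines the set of $s_i$ used in each column. One concrete route is to note that the support and the left and right descent data of such a product are rigid, or to compute $\sigma$ and read off which adjacent transpositions appear: $s_i$ appears in $\sigma$ with multiplicity one, and it appears in at most two candidate columns. The same argument applies to $\psi$ and the horizontal fences. Because the columns of a border strip overlap in at most one row, each vertical adjacency $(r,j)$–$(r+1,j)$ is the unique adjacency in the strip carrying its generator index, up to the overlap ambiguity, which we must rule out.

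The main obstacle is this uniqueness step: one letter $s_i$ could a priori be realized by a fence in either of two adjacent columns that share the boundary row. I expect to resolve it with the fence axiom \eqref{eq:fence} and the border-strip geometry. At a corner of the strip, only one of the two vertical positions with index $i$ actually lies inside $\lambda/\mu$. More precisely, in a border strip row $r$ and row $r+1$ share exactly one column, so the vertical adjacency between rows $r$ and $r+1$ exists in exactly one column. That gives uniqueness immediately for columns, and the transpose argument via \Cref{transpose word comparison} gives it for rows. Each generator then corresponds to a unique possible fence location, and being reduced means each generator appears at most once. Therefore $F$ is determined by the support of $\sigma$ and $\psi$. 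Existence of $F$ follows from $u\le_k w$ having a saturated chain together with \Cref{prop: reduced fence faces know}. Hence $|\mathfrak F_u^w|=1$ and $\calF_u^w=\calO(P_{\lambda/\mu}/\sim_F)$ is an order polytope.
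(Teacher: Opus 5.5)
Your proposal is correct and is essentially the paper's argument. In both cases you show $\mathfrak{F}_u^w$ is a singleton. In the $k$-Grassmannian case, $u_Jw_J^{-1}=\id$ forces the fenceless diagram. In the border-strip case, two adjacent rows (resp.\ columns) share at most one cell, so each $s_i$ has at most one possible fence location and the letters of $\col_F$ and $\row_F$ pin down $F$; the single resulting face is then an order polytope.

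Your use of the fact that every reduced word of an element has the same support makes precise the paper's terse step that $\col_F$ and $\row_F$ determine $F$. Two small corrections: adjacent rows share \emph{at most} one column, not exactly one, since the strip may be disconnected. And each letter occurring at most once follows from the uniqueness of its fence location, not from reducedness.
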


\begin{proof}
    In the former case, the elements of $\mathfrak{F}_u^w$ correspond to fence diagrams of shape $\pi_k(w)/\pi_k(u)$ with no fences, of which there is only one.

    In the latter case, assume given $F \in \mathfrak{F}_u^w$. The assumptions on $\pi_k(w)/\pi_k(u)$ ensure that, given a letter $s_i \in \col_F$, there is a unique fence one could draw in $\pi_k(w)/\pi_k(u)$ and contribute $s_i$ to said word. The same is true for $\row_F$. Therefore in this case $\col_F$ and $\row_F$ uniquely determine $F$, and so $\mathfrak{F}_u^w$ consists of a single element.

    Finally, since every $F$ we consider is a face of $\mathcal{O}(P_{k,n})$, and faces of an order polytope are themselves order polytopes, the result follows.
\end{proof}

The following proposition connects the symmetry of the $h^*$-polynomial of an order polytope to the combinatorics of the poset. 
\begin{prop}\label[proposition]{prop: palindromicity}
    Let $\calO(P)$ be an order polytope corresponding to a poset $P$. Then, $h^*(\calO(P),t)$ is palindromic if and only if $P$ is graded. 

\end{prop}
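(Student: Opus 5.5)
The plan is to reduce the statement to Stanley's classical results on order polytopes and $P$-partitions, via the canonical triangulation and the Hilbert-series characterization of Gorensteinness.

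First, by \Cref{triangle equals complex h-vector} applied to the canonical triangulation, $h^*(\calO(P),t)$ equals the $h$-polynomial of the canonical triangulation of $\calO(P)$. This triangulation has a shelling indexed by linear extensions, so
\[h^*(\calO(P),t)=\sum_{\sigma} t^{\mathrm{des}(\sigma)},\]
where $\sigma$ ranges over linear extensions of $P$, with descents taken relative to a fixed natural labeling. In other words, $h^*(\calO(P),t)$ is the $W$-polynomial of $P$. Equivalently, the Ehrhart series of $\calO(P)$ is the generating function of $P$-partitions (order-preserving maps $\hat{P}\to\{0,\dots,m\}$ with $\hat 0\mapsto 0$, $\hat 1\mapsto m$).

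Second, for the key step I would invoke the Cohen–Macaulay/Gorenstein dictionary. The Ehrhart ring $A_P=\bigoplus_m \bbC\langle m\calO(P)\cap\bbZ^{P}\rangle$ is a normal toric ring, hence a Cohen–Macaulay domain by Hochster's theorem. It is in fact the Hibi ring, i.e. the ASL on the distributive lattice $J(P)$. Stanley's theorem \cite{STANLEY197857} then says $h^*$ is palindromic if and only if $A_P$ is Gorenstein. Hibi's theorem states that the Hibi ring of $J(P)$ is Gorenstein if and only if $P$ is graded, meaning all maximal chains have the same length. If citing Hibi is acceptable, this completes the argument.

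For a self-contained version, I would instead use Ehrhart reciprocity together with the Danilov–Stanley criterion: $h^*$ is palindromic exactly when some translate of the interior lattice points of $m\calO(P)$ equals the lattice points of $(m-r)\calO(P)$ for a fixed shift $r$. Interior points of $m\calO(P)$ are strict $P$-partitions, i.e. strictly order-preserving maps $\hat P\to[0,m]$. The two directions then go as follows.

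\textbf{Graded implies palindromic.} If $\hat P$ is graded with rank function $\rho$ of rank $r$, the map $f\mapsto f-\rho$ is a bijection from strict maps with top value $m$ onto weak maps with top value $m-r$. Its inverse is $g\mapsto g+\rho$, which is strict because $\rho$ increases by exactly $1$ along covers. This yields $\intehr(m)=\ehr(m-r)$, hence palindromicity.

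\textbf{Palindromic implies graded.} Palindromicity gives a degree shift $r$ with $\intehr_{\calO(P)}(m)=\ehr_{\calO(P)}(m-r)$ for all $m$. Comparing the smallest $m$ with $\intehr(m)\neq 0$ shows that $r$ equals the length of the longest chain of $\hat P$. Comparing the next coefficients, or counting strict maps with $m=r+1$ (the minimal strict maps correspond to $\rho_{\max}$), forces every element's longest-chain-from-$\hat0$ plus longest-chain-to-$\hat1$ to equal $r$, which is gradedness.

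I expect this converse, extracting gradedness from palindromicity, to be the main obstacle. I would first try the following argument. If $\hat P$ is not graded, there is an element $x$ with $\rho^-(x)+\rho^+(x)<r$, where $\rho^-$ and $\rho^+$ are the longest-chain lengths below and above. This gives at least two distinct strict maps with top value $r+1$ but only $|J(P)|$-many weak maps with top value $1$. I would then check that the count mismatch contradicts $\intehr(r+1)=\ehr(1)$; this needs care, so falling back on Hibi's theorem is the safe route.
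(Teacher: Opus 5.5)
Your main argument is correct but takes a different route from the paper, which only mentions Hibi in passing. You argue that the Ehrhart ring of $\calO(P)$ is normal and hence Cohen--Macaulay, is the Hibi ring of $J(P)$, that Stanley's criterion equates palindromicity with Gorensteinness, and that Hibi's theorem says this holds iff $P$ is pure.

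The paper stays enumerative. It quotes Stanley's characterization \cite[Corollary 3.15.18]{EC1}: $P$ is graded iff $\Omega_P(-\ell-m)=(-1)^{|P|}\Omega_P(m)$. It rewrites this via the reciprocity generating function as $t^{|P|-\ell+1}h^*(1/t)=h^*(t)$. It then uses the descent description of $h^*$ to compute $\deg h^*$ by exhibiting a linear extension with the maximal number of descents, so that palindromicity becomes equivalent to this particular functional equation.

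The trade-off is this. Your route needs Hochster, Stanley's Gorenstein theorem and Hibi, but no degree computation. The paper's route needs a single enumerative citation.

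Your bijection $f\mapsto f-\rho$ for ``graded $\Rightarrow$ palindromic'' is correct and more elementary than either citation. It only uses the numerical identity $\intehr(m)=\ehr(m-r)$, and the equivalence of that identity with palindromicity is just Ehrhart--Macdonald reciprocity; no Danilov--Stanley-type translate statement is needed.

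The self-contained converse, however, rests on a false claim. A non-graded $\hat P$ need not contain an $x$ with $\rho^-(x)+\rho^+(x)<r$; ``every element lies on a longest chain'' is strictly weaker than graded. Take $P=\{a_1<a_2<a_3\}\cup\{b_1<b_2<b_3\}$ with the extra relation $a_1<b_3$. Every element of $\hat P$ lies on a chain of length $r=4$, yet $\hat 0<a_1<b_3<\hat 1$ is a maximal chain of length $3$. Here $\intehr(4)=1=\ehr(0)$, so your test at $m=r$ detects nothing, even though $h^*=1+8t+9t^2+t^3$ is not palindromic.

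The repair uses the fact that $\hat P$ is graded iff $\rho^-(q)=\rho^-(p)+1$ for every cover $p\lessdot q$. There are two cases.
\begin{itemize}
\item If some element lies on no longest chain, your $m=r$ argument works: $\rho^-$ and $r-\rho^+$ are distinct strict maps with top value $r$, but $\ehr(0)=1$.
\item Otherwise choose a cover $p\lessdot q$ with $\rho^-(q)\ge\rho^-(p)+2$. Then $p,q\in P$, since such a jump into $\hat 1$ would leave $p$ on no longest chain. The map $g\mapsto\rho^-+g$ injects the $\ehr(1)=|J(P)|$ order ideals into strict maps with top value $r+1$. Now set $f(y)=\max\bigl(\rho^-(y),\rho^-(p)+1+d(p,y)\bigr)$ for $y\ge p$, where $d(p,y)$ is the length of a longest chain from $p$ to $y$, and $f=\rho^-$ elsewhere. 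This $f$ is strict with $f(\hat 1)=r+1$, and $f-\rho^-$ equals $1$ at $p$ and $0$ at $q$, so $f$ is not in the image.
\end{itemize}
In the second case $\intehr(r+1)>\ehr(1)$, contradicting $\intehr(m)=\ehr(m-r)$. With this fix your elementary route is complete; as written it is not, and falling back on Hibi was the right call.
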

For lack of an explicit reference in our language, we give a proof, but we note that this essentially follows from Stanley's work. This result also follows from the algebra with straightening law structure placed on the coordinate ring of the order polytope in \cite{Hibi}. All undefined terminology can be found in \cite[Chapter 3]{EC1}. 

First, recall that $\ehr_{\calO(P)}(m) = \Omega_P(m)$, where $\Omega_P(m)$ equals the number of order preserving maps from $P\to [m]$. In particular, by \cite[Theorem 3.15.8]{EC1}, 
\[h^*(\calO(P),t) = \sum_{w\in \calL_P(w)} t^{1+\textrm{des}(w)},\]
where $\calL_P(w)$ is the set of all total extensions of $P$, and $\textrm{des}(w)$ is the number of descents of the total extension with respect to a fixed natural labeling of $P$. We then have the following lemma: 
\begin{lemma}
    $h^*(\calO(P),t)$ has degree equal to $|P|-l$, where $l$ is the maximum length of a chain in $P$. 
\begin{proof}
    Fix a natural labeling of $P$, i.e. assume that $P$ equals $[|P|]$ as a set, and the partial order on $P$ respects the total order on $[|P|]$.
    
    If $p_1<\cdots <p_{l+1}$ is a chain of maximum length, note that for any total extension $\sigma$, we cannot have all descents between $\sigma(p_i)$ and $\sigma(p_{i+1})$, as $\sigma(p_i)<\sigma(p_{i+1})$. Hence, any total extension has at most $|P|-l-1$ descents. We can construct a total extension with exactly $|P|-l-1$ descents as follows: order all the minimal elements in $P$ in decreasing order with respect to the natural labeling, and then remove them from $P$, producing a new poset, $Q$. Repeat the process with the minimal elements of $Q$, and continue. By construction, the permutation one gets is a total extension of $P$ with $|P|-l-1$ descents.  
\end{proof}
\end{lemma}

\begin{proof}[Proof of \Cref{prop: palindromicity}]  \cite[Corollary 3.15.18]{EC1} implies that $P$ is graded if and only if \[\Omega_P(-l-m) = (-1)^{|P|}\Omega_P(m)\] 
for all $m\in \bbN$. This second statement is equivalent to 
\[\sum_{m=1}^\infty \Omega_P(m) t^m = \sum_{m=1}^\infty (-1)^{|P|}\Omega_P(-l-m)t^m.\] 

On the other hand, \cite[3.15.11]{EC1} implies that for any poset $P$, we have 
\[\sum_{m=1}^\infty \Omega_P(-m)t^m =- \frac{h^*(\calO(P),1/t)}{(1-1/t)^{|P|+1}} = (-1)^{|P|} \frac{t^{|P|+1} \cdot h^*(\calO(P),1/t)}{(1-t)^{|P|+1}}.\] 
Since for $m\in \bbN$, $|\Omega_P(-m)|$ equals the number of strict order preserving maps to $[m]$ (by \cite[Corollary 3.15.12]{EC1}), it follows that 
\[\sum_{m=1}^\infty \Omega_P(-m)t^m  = \sum_{m={l+1}}^\infty \Omega_P(-m)t^m, \] 
and so 
\[\sum_{m=0}^\infty (-1)^{|P|}\Omega_P(-l-m)t^m  =  \frac{1}{t^l} \cdot \left(\sum_{m=l}^\infty (-1)^{|P|}\Omega_P(-m)t^m\right) = \frac{t^{|P|-l+1} \cdot h^*(\calO(P), 1/t)}{(1-t)^|P|+1}.\]

It follows that $\Omega_P(-l-m) = (-1)^{|P|}\Omega_P(m)$ 
for all $m\in \bbN$ if and only if $t^{|P|-l+1} \cdot h^*(\calO(P), 1/t) = h^*(\calO(P),t)$. Since $h^*(\calO(P),t)$ has degree $|P|-l$ and its minimum nonzero degree term has degree $1$ (by the descent characterization), this holds if and only if $h^*(\calO(P),t)$ is palindromic.  Thus, $P$ is graded if and only if $h^*(\calO(P),t)$ is palindromic. 
\end{proof}
When we discuss corners of a skew-shape $\lambda/\mu$, we say left and right corners to refer to the \textit{outer} corners along the left and right side of $\lambda/\mu$, respectively. Equivalently, the left and right corners of $\lambda/\mu$ are the cells corresponding to the minimal and maximal elements of $P_{\lambda/\mu}$ (resp.).

We may now give some applications. Arithmetic Gorensteinness of Schubert varieties in $\Gr(k,n)$ was originally studied by Stanley in \cite{STANLEY197857}, and arithmetic Gorensteinness of Richardson varieties in $\Gr(k,n)$ has been studied in the upcoming preprint of Darayon (\cite{Darayon}). The latter provides a combinatorial criterion for detecting arithmetic Gorenstein of Grassmannian Richardson varieties. Using the formalism of fence complexes along with the above discussion, one can recover her result, but we omit the proof.

\begin{prop}\label[proposition]{Darayon}
    Assume that $u \leq w$ and both $u$ and $w$ are $k$-Grassmannian and that $\pi_k(w)/\pi_k(u)$ is connected. Then, $\Pi_u^w$ is arithmetically Gorenstein if and only if both all left corners of $\pi_k(w)/\pi_k(u)$ lie along the same antidiagonal and all right corners of $\pi_k(w)/\pi_k(u)$ lie along the same antidiagonal.
\end{prop}

%\begin{proof}
%By \Cref{prop: palindromicity} it suffices to determine when the quotient poset $P_{k,n}/\sim_F$ corresponding to the unique $F \in \mathfrak{F}_u^w$ is graded. In the case at hand this poset is $P_{\pi_k(w)/\pi_k(u)}$. Then, if the assumptions on the corners are satisfied one sees that the taxicab distance between any pair of corners in $\pi_k(w)/\pi_k(u)$ is the same, which means that every maximal chain in $P_{\pi_k(w)/\pi_k(u)}$ has the same length.
    
    %Conversely, say without loss of generality that not all of the right corners of $\pi_k(w)/\pi_k(u)$ are along the same antidiagonal. Ordering the right corners of $\pi_k(w)/\pi_k(u)$ by $(i_1, j_1) < (i_2, j_2)$ if and only if $i_1 < i_2$, one sees that the meet of two adjacent corners in this order must exist in $P_{\pi_k(w)/\pi_k(u)}$ by connectedness of $\pi_k(w)/\pi_k(u)$. Further, there must be a pair of right corners $x$ and $y$ of $\pi_k(w)/\pi_k(u)$ which are adjacent in this order but such that the antidiagonal $x$ is not equal to the antidiagonal of $y$.
    
    %Thus there exist right corners $x$ and $y$ in $\pi_k(w)/\pi_k(u)$ lying on different antidiagonals, but greater in $P_{\pi_k(w)/\pi_k(u)}$ than a common minimal element $l$. This means that the lengths of maximal chains from $l$ to $x$ and $l$ to $y$ are not the same, but that such chains exist in $P_{\pi_k(w) / \pi_k(u)}$. Therefore $P_{\pi_k(w) / \pi_k(u)}$ is not graded.
%\end{proof}

Now consider the case where $\pi_k(w)$ is a  \textit{hook}, by which we mean a partition $\lambda$  such that $\lambda_i\leq 1$ for all $i>1$. We define $a(\lambda):= \lambda_1$ and $l(\lambda)$ to equal the number of parts of $\lambda$. If  $\mu\leq \lambda$, then $\mu$ is also a hook. we define $a(\lambda/\mu) = a(\lambda)-a(\mu)$, and $l(\lambda/\mu) = l(\lambda)-l(\mu)$. We call the first row of $\lambda/\mu$ \emph{the arm} and the first column of $\lambda/\mu$ \emph{the leg}.

\begin{prop}\label[proposition]{hook gorenstein}
Assume that $u \leq w$ such that $\pi_k(w)$ is a hook. Then $\Pi_u^w$ is arithmetically Gorenstein if and only if $$a(\pi_k(w)/\pi_k(u)) - |\{s_i\:|\:s_i \in u_J, i \geq k\}| = l(\pi_k(w)/\pi_k(u)) - |\{s_i\:|\:s_i \in u_J, i < k\}|.$$

%%the number of fence components contained in the unique row of $F \in \mathfrak{F}_u^w$ containing more than one cell is equal to the number of fence components contained in the unique column of $F$ containing more than one cell.

\end{prop}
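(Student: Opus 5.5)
The plan is to reduce to \Cref{prop: palindromicity} by showing that $\calF_u^w$ is a single order polytope whose poset is a union of two chains.

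\textbf{Setup.} By \Cref{non-k-grass fence complex} we may take the representative of $\langle u,w\rangle$ with $w$ $k$-Grassmannian, so $w_J=\id$ and every $F\in\mathfrak F_u^w$ has $\w_F=u_J$. Put $\lambda=\pi_k(w)$ and $\mu=\pi_k(u)$. Since $\lambda$ is a hook and $\mu\subseteq\lambda$, the skew shape $\lambda/\mu$ is contained in the union of row $1$ and column $1$, so it has no $2\times 2$ subshape. Hence it is a (possibly disconnected) border strip.

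\textbf{Step 1: a single order polytope.} The preceding lemma then says $\mathfrak F_u^w=\{F\}$ is a singleton and $\calF_u^w=\calO(P_{\lambda/\mu}/\sim_F)$. By the corollary following \Cref{arith gor} and \Cref{prop: palindromicity}, $\Pi_u^w$ is arithmetically Gorenstein if and only if the quotient poset $Q:=P_{\lambda/\mu}/\sim_F$ is graded, meaning all maximal chains have the same length.

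\textbf{Step 2: describe $F$.} Every horizontal fence of $F$ lies in row $1$ (the arm) and every vertical fence lies in column $1$ (the leg), since no other row or column has two cells. By \Cref{row column words}, horizontal fences contribute letters $s_j$ of $\row_F$, which become $s_{j+k}$ in $\w_F=u_J$, while vertical fences contribute the letters of $\col_F$, all with index $<k$. Because $\w_F$ is reduced and each letter determines a unique fence here, we get
\[\#\{\text{horizontal fences}\}=r:=|\{s_i\in u_J : i\ge k\}|, \qquad \#\{\text{vertical fences}\}=c:=|\{s_i\in u_J: i<k\}|.\]
Here "$s_i\in u_J$" means occurrences in the reduced word, which is well defined since each fence contributes a distinct letter.

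\textbf{Step 3: compute $Q$ and check gradedness.} Contracting fences inside a chain of cells shortens the chain by one per fence. So the arm becomes a chain with $a(\lambda/\mu)-r$ elements and the leg a chain with $l(\lambda/\mu)-c$ elements.
\begin{itemize}
\item If $\mu\neq\emptyset$, then $Q$ is a disjoint union of these two chains. It is graded exactly when the two chains have the same number of elements, which is the stated equation.
\item If $\mu=\emptyset$, the arm and leg share the cell $(1,1)$, which is counted in both $a$ and $l$. Then $Q$ is two chains glued at a common minimum, and again the maximal chains have equal length if and only if $a-r=l-c$.
\end{itemize}

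\textbf{Main obstacle.} The step needing most care is the degenerate case where one of the two pieces of $\lambda/\mu$ is empty, for example $a(\lambda/\mu)=0$. Then $Q$ is a single chain, which is always graded, while the equation reads $0=l-c$. I would check whether the conventions force this: $\mu$ itself must be a hook compatible with $u\le w$. If they do not, I would treat this case separately, by stating the equation under the implicit convention that both the arm and the leg of $\lambda/\mu$ are nonempty (or that $\mu=\emptyset$). A secondary point is verifying the fence/letter bookkeeping in Step 2 against the reversed reading orders in \Cref{row column words}. Since only counts matter, the reading order should be harmless.
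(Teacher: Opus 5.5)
Your argument follows the paper's proof. You reduce $\calF_u^w$ to the single order polytope of $Q=P_{\lambda/\mu}/\sim_F$, use \Cref{prop: palindromicity} to turn arithmetic Gorensteinness into gradedness of $Q$, and count cells minus fences along the arm and the leg. Your split into $\mu\neq\emptyset$ (two disjoint chains) and $\mu=\emptyset$ (two chains glued at a common minimum) is more careful than the paper, whose proof speaks of two maximal chains both starting at ``the top-left cell'' of $\lambda/\mu$. Your worry about an empty arm or leg is also justified. For example, $k=2$, $n=4$, $u=1423$, $w=2413$ gives $\lambda/\mu=(2,1)/(2)$ and a linear $\bbP^1$, which is Gorenstein, yet the equation reads $0=1$.

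The gap is that your $\mu=\emptyset$ bullet has exactly the same defect. For two chains glued at a common minimum, all maximal chains have equal length if and only if $a-r=l-c$ \emph{only} when both chains have at least two elements. If, say, $l-c=1$, the leg chain is just the minimum and is not a maximal chain. Then $Q$ is a chain, hence graded, while $a-r=l-c$ fails whenever $a-r\geq 2$.

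This already happens for $\Gr(1,3)=\bbP^2$ (take $k=1$, $\lambda=(2)$, $u=e$), where $a-r=2$ and $l-c=1$. It also happens with arm and leg both of length two, once the whole leg is fenced onto $(1,1)$. Take $k=2$, $n=4$, $w=2413$, $u=2134$, so $u_J=s_1$. Then $\calF_u^w$ is a unimodular triangle and $\Pi_u^w=\Gr(2,4)\cap\{p_{13}=p_{14}=p_{34}=0\}$ is a linear $\bbP^2$, hence arithmetically Gorenstein, yet $a-r=2\neq 1=l-c$.

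So your fallback convention ``(or that $\mu=\emptyset$)'' does not rescue the statement. The statement is false as printed, and the paper's proof overlooks the same cases. What your argument actually proves is the following corrected statement. Put $\epsilon=1$ if $\pi_k(u)=\emptyset$ and $\epsilon=0$ otherwise.
\begin{itemize}
\item If $\min(a-r,l-c)\le\epsilon$, then $Q$ is a chain and $\Pi_u^w$ is always arithmetically Gorenstein.
\item If $\min(a-r,l-c)>\epsilon$, then $\Pi_u^w$ is arithmetically Gorenstein if and only if $a-r=l-c$.
\end{itemize}
You should state this hypothesis explicitly rather than an ad hoc convention.

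A minor point: passing to the $k$-Grassmannian representative is not free, since it replaces $u_J$ by $u_Jw_J^{-1}$ in the formula. Read the hypothesis $u\le w$, as the paper does, as already presupposing that $w$ is $k$-Grassmannian.
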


\begin{proof}
    By \Cref{prop: palindromicity} it suffices to determine when the quotient poset $P_{k,n}/\sim_F$ corresponding to the unique $F \in \mathfrak{F}_u^w$ is graded. In the case at hand this poset is $P_{\pi_k(w)/\pi_k(u)}$. and the assumptions ensure that there are only two maximal chains in $(P_{\pi_k(w)/\pi_k(u)})/\sim_F:$ the one from the top-left cell of $\pi_k(w)/\pi_k(u)$ to the end of the arm, and the one from the same cell to the end of the leg.
    
    The length of these chains is the number of boxes in $\pi_k(w)/\pi_k(u)$ contained in the arm/leg (resp.) minus the number which have been identified by $\sim_F$. The latter count is the number of fences drawn between cells in the arm/leg, which agrees with the length of the row/column word of $F \in \mathfrak{F}_u^w$. That said length equals $|\{s_i\:|\:s_i \in u_J, i \geq k\}|$ or $|\{s_i\:|\:s_i \in u_J, i < k\}|$ is definitional. Therefore we see that $(P_{\pi_k(w)/\pi_k(u)})/\sim_F$ is graded if and only if the claimed equality holds.
\end{proof}

In fact the previous proof essentially shows the following proposition, which is quite indexically cumbersome to express only through Bruhat combinatorics but natural to state in terms of fences.

\begin{cor}\label[corollary]{border strip arith}
    Assume that $u \leq w$ and $\pi_k(w) / \pi_k(u)$ is a connected border strip. Then $\Pi_u^w$ is arithmetically Gorenstein if and only if the quantity $$|X| - |\{\text{fences of $F$ contained in X}\}|,\:F \in \mathfrak{F}_u^w$$ \noindent is the same for every row or column $X \subset \pi_k(w)/\pi_k(u)$ containing more than one cell.
\end{cor}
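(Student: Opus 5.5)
We reduce, exactly as in the proof of \Cref{hook gorenstein}, to a statement about gradedness of a quotient poset. Since $\pi_k(w)/\pi_k(u)$ is a border strip, the lemma preceding \Cref{prop: palindromicity} shows that $\mathfrak{F}_u^w$ consists of a single fence diagram $F$. Hence $\calF_u^w$ is the single face $\calO(Q)$ with $Q := P_{\pi_k(w)/\pi_k(u)}/\sim_F$, using the quotient poset of \Cref{rem:quotient_poset}. Combining the corollary that $\Pi_u^w$ is arithmetically Gorenstein iff $h^*(\calF_u^w,t)$ is palindromic with \Cref{prop: palindromicity}, we see that $\Pi_u^w$ is arithmetically Gorenstein iff $Q$ is graded. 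The remaining work is to translate gradedness of $Q$ into the stated row/column condition.

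Next we describe $Q$ concretely. A connected border strip, viewed as a poset, is a zigzag. Its maximal runs of cells are the rows and columns $X$ with $|X|\ge 2$, and consecutive runs share exactly one corner cell. Since $F$ is reduced, \Cref{lem:no_box} says $F$ has no boxes. A border strip has no $2\times 2$ squares anyway, so every fence of $F$ lies inside a single row or column $X$. Within $X$, collapsing the fences merges consecutive cells. Thus $X$ becomes a chain with $|X| - \#\{\text{fences in } X\}$ elements, and it still shares its end element (the class of the corner cell) with the neighbouring run. So $Q$ is a zigzag of chains $C_X$, glued alternately at common minima (left corners) and common maxima (right corners). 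Every minimal element of $Q$ is a left corner class, every maximal element is a right corner class, and every maximal chain of $Q$ is exactly one $C_X$.

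It follows that $Q$ is graded iff all $C_X$ have the same length, i.e.\ iff $|X| - \#\{\text{fences of } F \text{ in } X\}$ is independent of $X$. One subtlety needs checking: a single-cell row or column that is not part of a longer run. This occurs only at the two ends of the strip, where the end cell is still contained in a run of length $\ge 2$ unless the strip is a single cell. If the strip is a single cell, the condition is vacuous and $Q$ is a point, which is trivially graded. Both directions of the equivalence are then immediate: equal chain lengths give gradedness, and if $Q$ is graded then all maximal chains, which are the $C_X$, have equal length.

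The main obstacle is the second step. We must verify that identifying cells via fences really yields this zigzag-of-chains structure on $Q$, and that no new comparabilities arise across runs. In particular, a fence incident to a corner cell must not make a maximal chain pass through two runs. Since a corner cell is a minimum or maximum of both of its runs, any chain through it turns only at a minimum or maximum. So chains cannot continue across runs, and the fences, which lie inside runs, only shorten each $C_X$.
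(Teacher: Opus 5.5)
Your reduction to gradedness of $Q=P_{\pi_k(w)/\pi_k(u)}/\sim_F$ is the paper's route; the paper simply reuses the hook argument. Your ``if'' direction is fine. The step that fails is the claim that the chains $C_X$ are exactly the maximal chains of $Q$, i.e.\ that chains cannot continue across runs. Write $f_X$ for the number of fences of $F$ inside $X$. Suppose some row or column $X$ with $|X|\ge 2$ is completely fenced, so $|X|-f_X=1$. Then $C_X$ is a single element.
\begin{itemize}
\item If $X$ is an end run, that element lies inside the neighbouring chain, so $C_X$ is not a maximal chain.
\item If $X$ is interior, its two end cells fall into the same class. One of them is a common maximum with the previous run and the other a common minimum with the next run. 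That class is then neither minimal nor maximal in $Q$, and the chains of the two runs adjacent to $X$ concatenate through it.
\end{itemize}
Your closing justification, that a corner cell is a minimum or maximum of both its runs, is true in $P_{\pi_k(w)/\pi_k(u)}$ but not for this merged class.

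This cannot be patched: the ``only if'' direction of the statement is false in exactly this situation. The paper's argument has the same oversight, since the hook proof asserts there are only two maximal chains.
\begin{itemize}
\item End-run collapse: take $k=2$, $n=4$, $w=2413$, $u=1243$. Then $\pi_k(w)/\pi_k(u)=(2,1)$ is a connected border strip, and also a hook, so this contradicts the hook proposition too. Here $u_J=s_3$, and the unique $F\in\mathfrak{F}_u^w$ has one fence, joining $(1,1)$ and $(1,2)$. So $Q$ is a $2$-element chain and $\pi_k([u,w])=\{12,14,24\}$. Thus $\Pi_u^w=\{V: V\subset\langle e_1,e_2,e_4\rangle\}$ is a linearly embedded $\mathbb{P}^2$ with polynomial coordinate ring, hence arithmetically Gorenstein. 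Yet row $1$ gives $|X|-f_X=1$ and column $1$ gives $2$.
\item Interior collapse: take $k=2$, $n=5$, $w=35124$, $u=31245$. Here $u_J=s_1$ and the only fence joins $(1,2)$ and $(2,2)$. So $Q$ is the chain $(2,1)<\{(1,2),(2,2)\}<(1,3)$, and $\Pi_u^w=\{V: e_3\in V\}\cong\mathbb{P}^3$ is linear. The quantities are $2$ for both rows and $1$ for column $2$.
\end{itemize}

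What your argument does prove is the ``if'' direction. A common value $1$ makes $Q$ a point; a common value at least $2$ means nothing collapses, and then your description of $Q$ is correct. It also proves the full equivalence under the extra hypothesis that no row or column with at least two cells is completely fenced. In general, the Hasse diagram of $Q$ is the strip's path with the fenced edges contracted, and maximal chains are its maximal monotone stretches. So $Q$ is graded iff all those stretches have equal length, where a fully fenced run disappears and its two neighbours merge, their lengths $|X|-1-f_X$ adding.
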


We leave the reader to deduce analogues of these examples when $\pi_k(w)/\pi_k(u)$ is disconnected. The statements are essentially the same, save with the additional hypothesis that lengths of maximal chains are constant across connected components of $P_{\pi_k(w)/\pi_k(u)}/\sim_F$.

\section{Further Topology of Fence Complexes}\label[section]{Further Topology of the Fence Complex}
Recall that the Demazure product $\star$ is the unique associative product $S_n\times S_n\to S_n$ such that 
\[w\star s_i = \begin{cases}
    w \text{ if } \ell(ws_i)<\ell(w) \\ ws_i \text{ if } \ell(ws_i)>\ell(w). 
\end{cases}\] 

This definition extends to words in $S_n$.

\begin{defn}\label[definition]{Demazure algorithm}
    Given a word $\alpha= s_{i_1}s_{i_2}\cdots s_{i_m}$, its \textit{Demazure product} $\delta(\alpha)$ is defined by
    \[\delta(\alpha) := s_{i_1}\star s_{i_2} \star \cdots \star s_{i_m}.\]
\end{defn}

\begin{rem}\label[remark]{rem:greedy_demazure}
    Given a word $\alpha = s_{i_1}s_{i_2}\cdots s_{i_m}$, the associativity of $\star$ implies that we may evaluate $\delta(\alpha)$ by picking a left justified parenthesizing $(\cdots((s_{i_1}\star s_{i_2})\star s_{i_3})\star\cdots)\star s_{i_m}$. By the inductive definition of $\star$, this picks out the ``leftmost'' reduced subword $\beta$ of $\alpha$ whose product is $\delta(\alpha)$. We will refer to $\beta$ as the \textit{greedy Demazure subword}. 
\end{rem}

 Using \Cref{prop:fence_faces}, we will denote by $F$ both a fence diagram and the corresponding face of $\O(P_{k,n})$. Given a face $F$ of a polyhedral subcomplex of a polytope, we let $\int\F$ denote the interior of $F$, viewing it as a closed ball. This is the same as $F$ with all the proper faces of $F$ removed. Similarly, we denote by $\int \calF_u^w$ the interior of $\calF_u^w$, which is well defined as it is homeomorphic to a closed ball. 

In this section, we will prove the following two theorems:

\begin{thm}\label[theorem]{dem prod characterization} 
    Assume a face $F$ is of shape $\pi_k(w)/\pi_k(u)$. Then $F\subset \calF_u^w$ if and only if $\delta(\w_F) \geq u_Jw_J^{-1}$. 
\end{thm}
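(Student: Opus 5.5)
The plan is to compare fences of a common shape and reduce the statement to the subword property of the Demazure product. Throughout, fix $\lambda=\pi_k(w)$, $\mu=\pi_k(u)$ and $v:=u_Jw_J^{-1}$.

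By \Cref{prop:fence_faces}, a face $F$ of shape $\lambda/\mu$ lies in another face $G$ of the same shape exactly when the fence set of $G$ is contained in that of $F$. This holds because passing to a face imposes more equalities, and equalities with $0$ or $1$ are excluded since the shape is fixed.

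A second observation concerns words. Each fence contributes exactly one letter to $\col_F$ or to $\row_F$, in a position determined only by the location of the fence. Hence deleting a set of fences from $F$ deletes exactly the corresponding letters of $\w_F$. In particular $\w_G$ is the subword of $\w_F$ indexed by the fences of $G$.

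For the forward direction, suppose $F\subset\calF_u^w$. Then $F$ is a face of some facet $G\in\mathfrak F_u^w$ (\Cref{fence complex is pure dimensional}). Both have shape $\lambda/\mu$, since $F$ has that shape by hypothesis and $G$ does by definition. So $\w_G$ is a reduced subword of $\w_F$ with product $v$. The standard property of the Demazure product, that $\delta(\alpha)\ge x$ for every $x$ admitting a reduced subword of $\alpha$ (see \Cref{rem:greedy_demazure}), then gives $\delta(\w_F)\ge v$.

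For the converse, suppose $\delta(\w_F)\ge v$. By the same subword property, $\w_F$ contains a reduced subword $\beta$ with product $v$. Let $F'$ be the SDSS of shape $\lambda/\mu$ keeping only the fences corresponding to letters of $\beta$. Then $\w_{F'}=\beta$ is reduced and equals $v$. Once we know $F'$ is a fence diagram, we get $F'\in\mathfrak F_u^w$, and $F$ is a face of $F'$ by the first observation, as required.

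The main obstacle is that $F$ need not be reduced and may contain boxes, so \Cref{rem:deleting fences from reduced fence diagrams} does not apply. A careless choice of $\beta$ could leave a half-box, that is, an [L] or [\rotL] without the full box, which is a bad configuration. A full box cannot survive in $F'$, because $\beta$ is reduced and \Cref{lem:no_box} forbids it. So the only danger is a half-box.

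To handle this I would exploit the freedom in choosing $\beta$. Inside a box of $F$, the two vertical fences in columns $j,j+1$ contribute the same letter $s_a$ to $\col_F$. By the commutation argument in the proof of \Cref{lem:no_box}, these two occurrences can be brought adjacent modulo letters that commute with them, so keeping either one gives the same product. The same holds for the two horizontal fences, which contribute equal letters to $\row_F$. The plan is therefore:
\begin{enumerate}
\item Among all reduced subwords of $\w_F$ with product $v$, choose $\beta$ extremal for a suitable statistic, for instance lexicographically pushing kept vertical fences rightward and kept horizontal fences upward (roughly toward the $(1,n-k)$ corner).
\item Show that if $F'$ contained a bad configuration of type (a) or (b), then swapping a kept letter for its partner in the box yields another reduced subword with product $v$ that is strictly better for the statistic. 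This contradicts extremality.
\end{enumerate}
The delicate point is checking that such a swap does not destroy reducedness and does not create a new bad configuration elsewhere. I would try to control this by inducting over boxes in the order in which they are read in $\col_F$ and $\row_F$. Failing that, an alternative is to argue directly with the greedy Demazure subword of \Cref{rem:greedy_demazure}, followed by a descending exchange toward $v$.
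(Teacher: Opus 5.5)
Your forward direction matches the paper's argument. Your reduction of the converse to ruling out half-boxes in the SDSS cut out by a reduced subword $\beta$ is also correct. But ruling out those half-boxes \emph{is} the converse, and the mechanism you sketch for it fails.

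First, your statistic breaks already in the paper's example. Take the full box $F_3$ of \Cref{fig:example}, where $u_Jw_J^{-1}=s_1s_3$ and $\w_{F_3}=s_1s_1s_3s_3$. Keeping the right vertical fence and the top horizontal fence produces exactly the [\rotL] with corner at $(1,2)$, which is a bad configuration of type (b). In general, for a bad [\rotL] both partner swaps move a fence down or left. So type (b) can never be improved in your order. An order that works must make both types improvable, as the reading order of $\col_F$ and $\row_F$ does (horizontal fences up, vertical fences left).

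Second, a single-fence swap need not preserve the product. The two copies of $s_a$ from the vertical sides of a box are separated in $\col_F$ by the letters $s_{a+1}$ (fences higher in column $j$) and $s_{a-1}$ (fences lower in column $j+1$). If $\beta$ keeps one of these, moving the kept $s_a$ across it changes the element; for example $s_as_{a+1}$ becomes $s_{a+1}s_a$. The commutation argument of \Cref{lem:no_box} works only because such letters are excluded there, by the minimality of $i$ and the maximality of the vertical run. Similarly, pushing a horizontal fence up can create a new bad [L] one row higher.

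The paper's \Cref{AAL} resolves this with a move on a whole run, applied to the greedy subword $G$ for $\delta(\w_F)$.
\begin{enumerate}
\item Take a top-left-minimal bad [L] on $(i,j),(i+1,j),(i+1,j+1)$; type (b) follows by transposing.
\item Let $s_{j+r}\cdots s_j$ be the maximal kept run in $\row_G^{(i+1)}$. Because $F$ is a fence diagram, its component contains the whole rectangle $[i,i+1]\times[j,j+r+1]$.
\item Minimality and reducedness force the kept letters $s_{m_1}\cdots s_{m_\ell}$ of $\row_G^{(i)}$ in this window to have indices in $[j,j+r-1]$.
\item The identity $(s_{m_1}\cdots s_{m_\ell})(s_{j+r}\cdots s_j)\equiv(s_{j+r}\cdots s_j)(s_{m_1+1}\cdots s_{m_\ell+1})$ then moves the run up to row $i$, and the row-$i$ letters down to row $i+1$ shifted one step right.
\item This gives another reduced subword of $\w_F$ for the same element. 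It agrees with $G$ up to the letter $s_{j+r}$ of row $i$, which it uses and $G$ skips, contradicting greediness.
\end{enumerate}
The contradiction only uses that $G$ is lexicographically first in reading order among reduced subwords for its element. So an extremal $\beta$ for $u_Jw_J^{-1}$ itself could be treated the same way, but only with this order and this run move.

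The resulting $G$ is a reduced fence diagram, hence box-free. Descending from $\delta(\w_F)$ to $u_Jw_J^{-1}$ by deleting fences is then automatic by \Cref{rem:deleting fences from reduced fence diagrams}. Your last sentence names this two-step route, but without the exchange argument the converse is not proved.
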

 
\begin{thm}\label[theorem]{interior of the fence complex} 
    Assume $F\subset \calF_u^w$ is of shape $\lambda/\mu$. Then $\int F\subset \int \calF_u^w$ if and only if $\lambda = \pi_k(w), \mu = \pi_k(u),$ and $\delta(\w_F) = u_Jw_J^{-1}$.
\end{thm}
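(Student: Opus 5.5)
The plan is to reduce interiority of $\int F$ to a statement about the link of $F$ in $\calF_u^w$. That link can then be identified, for faces of full shape, with a Knutson–Miller subword complex. Write $d=\ell(w)-\ell(u)$ and $x:=u_Jw_J^{-1}$. By \Cref{fence complexes are homeomorphic to balls}, $\calF_u^w$ is a $d$-ball, and it carries the regular CW structure given by its faces. So a point in the relative interior of a face $F$ lies in $\int\calF_u^w$ if and only if the link of $F$ in $\calF_u^w$ is a sphere of dimension $d-\dim F-1$, rather than a ball. I will prove the theorem in two steps.

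\textbf{Step 1: faces of non-full shape lie on the boundary.} Suppose $F$ has shape $\lambda/\mu$ with $\mu\neq\pi_k(u)$; the case $\lambda\neq\pi_k(w)$ is symmetric. Then every vertex of $F$ is a Young diagram containing $\mu\supsetneq\pi_k(u)$, so $\pi_k(u)$ is not a vertex of $F$. I pass to the canonical triangulation $\calT_u^w\cong\pi_k(\Delta[u,w])$ of \Cref{canonical triangulation is the same as pi_k(delta[u,w]))}. Every facet of this triangulation comes from a saturated $k$-Bruhat chain from $u$ to $w$, so every facet contains the vertex $\pi_k(u)$. Hence $\calT_u^w$ is a cone with apex $\pi_k(u)$. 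In a triangulated ball that is a cone with apex $v$, every simplex not containing $v$ lies in a codimension-one simplex not containing $v$, and such a simplex lies in exactly one facet. By subthinness (\Cref{shellable}) it is therefore a boundary simplex. Since $F$ is a union of simplices of $\calT_u^w$ avoiding $\pi_k(u)$, we get $F\subset\partial\calF_u^w$.

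\textbf{Step 2: faces of full shape and the subword complex.} Now let $F\subset\calF_u^w$ have full shape $\pi_k(w)/\pi_k(u)$. Then every face $G\supseteq F$ of $\calO(P_{k,n})$ also has full shape, and its fences form a subset of the fences of $F$. Conversely, deleting any subset of fences of $F$ gives a fence diagram by \Cref{rem:deleting fences from reduced fence diagrams}; this applies because $F$ lies in a reduced facet and so has no box. Each fence contributes exactly one letter of $\w_F$, so subsets $D$ of fences correspond to subsets of letter positions of $\w_F$. By \Cref{dem prod characterization}, the face $F\setminus D$ lies in $\calF_u^w$ if and only if $\delta(\w_{F\setminus D})\ge x$. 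This says exactly that the complementary subword contains a reduced word for $x$. Consequently the link of $F$ in $\calF_u^w$, ordered by reverse inclusion, is isomorphic as a face poset to the Knutson–Miller subword complex $\Delta(\w_F,x)$.

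By Knutson–Miller, $\Delta(\w_F,x)$ is a sphere when $\delta(\w_F)=x$ and a ball otherwise. Combining this with Step 1 gives the theorem: $\int F\subset\int\calF_u^w$ exactly when $F$ has full shape and $\delta(\w_F)=u_Jw_J^{-1}$.

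The main obstacle is making Step 2 rigorous. I must check that the link of a face in the polyhedral complex $\calF_u^w$ is a regular CW complex whose face poset is precisely the poset described above. I must also confirm that the dimensions match, namely $\dim\Delta(\w_F,x)=d-\dim F-1$. For the dimension count I would use the fact that each deleted fence raises the dimension by one, and that facets correspond to reduced subwords for $x$ of length $\ell(x)$. One further point needs care: Step 2 relies on \Cref{dem prod characterization}. If that theorem's proof itself goes through the present result, I would instead prove it directly from \Cref{prop: reduced fence faces know}, using greedy Demazure subwords (\Cref{rem:greedy_demazure}).
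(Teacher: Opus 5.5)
Your Step 1 is correct, and it is a cleaner route than the paper's use of \Cref{lem:codim_1}(2). Every facet of $\calT_u^w\cong\pi_k(\Delta[u,w])$ contains both $\pi_k(u)$ and $\pi_k(w)$. So a simplex missing one of these vertices lies in a codimension-one simplex that belongs to a unique facet, and hence lies on the boundary.

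The gap is in Step 2. You justify deleting arbitrary subsets of fences of $F$ by saying that $F$ lies in a reduced facet and so has no box. The containment goes the other way. $F\subseteq G$ means $F$ satisfies \emph{more} equalities than $G$, so $F$ has more fences than the reduced facet and can contain boxes.

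The paper's own example shows this. Take $k=2$, $u=2143$, $w=3412$. The face $F_3$ is the full $2\times 2$ box and is the common edge of the two triangles $F_1,F_2$.
\begin{itemize}
\item Its link in $\calF_u^w$ is $S^0$.
\item But $\w_{F_3}=s_1s_1s_3s_3$, and $\Delta(s_1s_1s_3s_3,s_1s_3)$ is a $4$-cycle, i.e.\ $S^1$.
\item Deleting any single fence from the box leaves a bad configuration, so subsets of fences do not correspond to faces containing $F$.
\item The dimension of a full-shape face is its number of fence components, not (cells) $-$ (fences). So your required equality $\dim\Delta(\w_F,x)=d-\dim F-1$ fails exactly when the fence graph of $F$ has a cycle.
\end{itemize}
Your link/subword-complex identification is therefore valid only for box-free faces.

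Box-free faces suffice for the ``only if'' direction if you first replace $F$ by the box-free face $G\supseteq F$ from \Cref{AAL}. This $G$ has the same shape and the same Demazure product as $F$, lies in $\calF_u^w$ by \Cref{dem prod characterization}, and satisfies $G\not\subset\partial\calF_u^w$ whenever $F\not\subset\partial\calF_u^w$. (\Cref{dem prod characterization} is proved independently of this theorem, so there is no circularity.)

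The ``if'' direction needs a genuinely new ingredient. Suppose $F$ has full shape, $\delta(\w_F)=u_Jw_J^{-1}$, and $F\subset\partial\calF_u^w$. Then $F$ lies in some codimension-one boundary face $G$, and $G$ may itself contain a box. Knutson--Miller then says nothing about $G$. You must show that a full-shape codimension-one face containing a box always lies in two facets. This is the track argument in Case 2 of \Cref{lem:mario}, which constructs a second facet $F'\supset G$ with $\w_{F'}\equiv\w_F$. The paper works only with codimension-one faces so that it never needs the link of a boxed face of higher codimension, and your proposal currently contains nothing that replaces this step.
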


We will then characterize the boundary of $\calF_u^w$, and finally conclude that the fence complexes on $\calO(P_{k,n})$ give a presentation of the totally nonnegative Grassmannian as a regular CW complex.

\subsection{Technical Lemmas}

In this subsection, we record two lemmas to be used in the proofs of \Cref{dem prod characterization} and \Cref{interior of the fence complex}.

\begin{lemma}\label[lemma]{AAL} 
Let $F$ be a face of $\mathcal{O}(P_{k,n})$ of shape $\pi_k(w)/\pi_k(u)$ with $\delta(\w_F) = v$. Then, there exists a facet $G$ of $\calF_{\pi_k(u)v}^{\pi_k(w)}$ containing $F$.
\end{lemma}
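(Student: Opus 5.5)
The plan is to build the facet $G$ by \emph{deleting fences} from $F$ according to the greedy Demazure subword of $\w_F$. Write $\w_F = \col_F\row_F\{k\}$. Since $\col_F$ involves only $s_1,\dots,s_{k-1}$ and $\row_F\{k\}$ only $s_{k+1},\dots,s_{n-1}$, and these commute, we have $\delta(\w_F)=\delta(\col_F)\,\delta(\row_F)\{k\}$. Hence it suffices to treat the column word and the row word separately.

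By \Cref{rem:greedy_demazure}, there is a reduced subword $\beta_c$ of $\col_F$ with $\beta_c\equiv\delta(\col_F)$, and likewise $\beta_r$ of $\row_F$. Each letter of $\col_F$ or $\row_F$ corresponds to a specific fence of $F$. Let $G$ be the SDSS on the same shape $\pi_k(w)/\pi_k(u)$ obtained by keeping exactly the fences whose letters appear in $\beta_c$ and $\beta_r$.

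The main obstacle is that $G$ must again be a fence diagram. \Cref{rem:deleting fences from reduced fence diagrams} only covers deletions from a \emph{reduced} diagram, and $F$ need not be reduced, so it may contain boxes. I would handle this by showing that the greedy subword never deletes a fence in a way that leaves a bad configuration. Suppose $F$ contains an [L] or [\rotL] whose completion to a box is forced by \eqref{eq:fence}. Then the reading orders place the four letters of the box as $s_a s_{a}$-type repetitions, via the computation in the proof of \Cref{lem:no_box}: after commutation moves the column word contains $s_{k-i-t+1}\cdots s_{k-i}\,s_{k-i-t+1}\cdots s_{k-i}$. In that string the greedy Demazure algorithm drops the second copy entirely. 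So within any box, the two parallel fences read later in the word are precisely the ones deleted, and those deletions leave either both of the other fences or an [L]/[\rotL] that is not completed. I would check that this produces no bad configuration by a local case analysis on the $2\times2$ square, using that row and column words are read in fixed directions (bottom-to-top within columns, right-to-left within rows). If this local argument gets messy, a fallback is to argue by induction on the number of boxes: delete one fence of an extreme (say lowest, leftmost) box, which strictly decreases the length of the word while preserving its Demazure product, and check that the result is still a fence diagram. Iterating this should reach a reduced diagram.

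Granting that $G$ is a fence diagram, it is reduced by construction, and $\w_G\equiv\beta_c\beta_r\{k\}\equiv v$. Since $G$ is obtained from $F$ by deleting equalities, the face of $G$ contains the face of $F$. This follows from \Cref{prop:fence_faces}: the face of $F$ is cut out by the equalities of $G$ plus further ones, and the shape is unchanged.

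Finally, I would verify that $G\in\mathfrak{F}_{\pi_k(u)v}^{\pi_k(w)}$. Take $u'=\pi_k(u)v$ and $w'=\pi_k(w)$. As $v\in S_k\times S_{n-k}$, we get $\pi_k(u')=\pi_k(u)$ and $u'_J=v$, while $w'_J=\id$; so $u'_Jw'^{-1}_J=v=\w_G$, as the definition requires. One point to check is that $\pi_k(u)v$ is length-additive and that $u'\le_k w'$, so that the fence complex is defined. Since $w'$ is $k$-Grassmannian, $u'\le w'$ suffices. I would obtain this from \Cref{prop: reduced fence faces know} run in reverse: a standard fence tableau filling of $G$, which exists because $G$ is a face by \Cref{cor:standard_fence_tableaux}, produces the chain from $w'$ down via the construction preceding \Cref{prop: reduced fence faces know}. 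The inductive proof there shows its bottom element has $J$-part $\w_G=v$, so that element is $u'$ and the chain is a saturated $k$-Bruhat chain. Thus $G$ is a facet of $\calF_{\pi_k(u)v}^{\pi_k(w)}$ containing $F$.
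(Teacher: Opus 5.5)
Your construction of $G$ (keep exactly the fences whose letters lie in the greedy Demazure subword) is the same as the paper's. The surrounding bookkeeping is fine: the splitting $\delta(\w_F)=\delta(\col_F)\,\delta(\row_F)\{k\}$, the face containment, and the check that $G\in\mathfrak F_{\pi_k(u)v}^{\pi_k(w)}$, which you do more carefully than the paper.

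The gap is the step you yourself flag as the main obstacle: that $G$ has no bad configurations. It is not established, and your sketch for it is incorrect.

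\begin{enumerate}
\item The greedy algorithm does not discard the whole second copy of $s_a\cdots s_b$ in $s_a\cdots s_b\,s_a\cdots s_b$ when $b>a$. For $s_1s_2s_1s_2$ the greedy subword is $s_1s_2s_1$.
\item More fundamentally, which letters greedy discards depends on the entire prefix of the word, so no local analysis of a $2\times 2$ square can decide which fences of a box survive. For example, let $F$ be a single $2\times 3$ rectangle of cells in rows $i,i+1$ and columns $j-1,j,j+1$ with all seven fences. Then $\row_F=s_js_{j-1}s_js_{j-1}$ has greedy subword $s_js_{j-1}s_j$, and $\col_F=s_{k-i}s_{k-i}s_{k-i}$ has greedy subword $s_{k-i}$. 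So in the right-hand box, $G$ keeps the bottom fence (read later) and drops the left vertical fence (read earlier), contrary to your prediction.
\item The outcome you allow, ``an [L]/[\rotL] that is not completed'', is by definition a bad configuration, so the sketch does not even claim the needed conclusion.
\item The fallback fails outright. Deleting any single fence from a box leaves three fences containing an uncompleted [L] or [\rotL], so the result is never a fence diagram. You would also need each deletion to preserve the Demazure product.
\end{enumerate}

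What is missing is a global argument using the defining property of the greedy subword: if greedy omits a letter, then no reduced subword for $v$ agrees with it before that letter and also uses that letter. The paper argues by contradiction as follows.
\begin{enumerate}
\item Take a bad configuration of $G$ minimal toward the top left. By transposing, assume it is an [L] at $(i,j),(i+1,j),(i+1,j+1)$.
\item Let $r\ge 0$ be largest with $s_{j+r}\cdots s_j$ contained in $\row_G^{(i+1)}$. The fence axiom in $F$ forces the rectangle $[i,i+1]\times[j,j+r+1]$ into one component of $F$, so $\row_F^{(i)}$ also contains $s_{j+r}\cdots s_j$.
\item Minimality of the configuration and reducedness of $\row_G$ show that the letters of $\row_G^{(i)}$ in this window lie in $[j,j+r-1]$.
\item Shift those row-$i$ fences one step to the right and swap rows $i$ and $i+1$ on the window. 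This gives $G'$ with $\row_{G'}\equiv\row_G$ and $\col_{G'}=\col_G$, using only fences of $F$.
\item Hence $\w_{G'}$ is a reduced subword of $\w_F$ for $v$. It agrees with $\w_G$ up to the letter $s_{j+r}$ of $\row_F^{(i)}$, which $\w_{G'}$ uses and $\w_G$ does not. This contradicts greediness.
\end{enumerate}
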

\begin{proof}
    Per \Cref{rem:greedy_demazure}, let $\beta$ denote the greedy Demazure subword of $word_F$. Only drawing the fences of the fence diagram of $F$ which correspond to the letters of $\beta$ gives an SDSS which we will call $G$. We claim that $G$ has no bad configurations, and hence is a fence diagram. By definition it is reduced and hence corresponds to an element of $\mathfrak{F}_{\pi_k(u)v}^{\pi_k(w)}$, so this will imply that $G$ corresponds to a facet of $\calF_{\pi_k(u)v}^{\pi_k(w)}$ by \Cref{fence complex is pure dimensional}. 

    Suppose for the sake of contradiction that $G$ contains a bad configuration. Let $\alpha$ denote an order-minimal such configuration, i.e. one such that there is no bad configuration weakly top left of $\alpha$. By transposing if necessary, we may assume that $\alpha$ is an [L], joining $(i, j)$ to $(i+1, j)$ to $(i+1, j+1)$.

    Define $\row^{(t)}_{G}$ for $1\le t\le k$ in the same way as one does for a fence diagram, and let $r\ge 0$ be the largest integer such that $\row^{(i+1)}_{G}$ contains $s_{j+r}s_{j+r-1}\cdots s_{j}$. Then $(i+1, j), (i+1, j+1),\dots, (i+1, j+r+1)$ are in the same component of $G$ as $(i,j)$, and hence the same is true for components of $F$. Since $F$ is a fence diagram, this forces the component of $(i,j)$ to contain the whole rectangle $[i, i+1]\times [j, j+r+1]$. In particular, $\row_F^{(i)}$ and $\row_F^{(i+1)}$ contain $s_{j+r}\cdots s_{j}$.
    
    Write $\row_G^{(i)} = x s_{m_1}\cdots s_{m_\ell} y$, a decreasing word with $j+r\ge m_1\ge\cdots \ge m_l\geq  j-1$ and $x,y$ decreasing subwords containing no $s_i$ for $i\in [j-1,j+r]$. Similarly, write $\row_G^{(i+1)} = x's_{j+r}\cdots s_jy'$ for some decreasing subwords $x', y'$. Note that $x'$ does not contain $s_{j+r+1}$ by the choice of $r$. We show that $m_1\le j+r-1$ and $m_\ell\ge j$. To see the latter, observe that if $\row_G^{(i)}$ contains $s_{j-1}$, then $G$ has a component connecting $(i,j-1)$ to $(i,j)$ to $(i,j+1)$. This contradicts the choice of $\alpha$.
    
    That $m_1\le j+r-1$ is slightly more involved. If $\row_G^{(i)}$ contains $s_{j+r}$, find the smallest $t\le r$ such that $\row_G^{(i)}$ contains $s_{j+r}s_{j+r-1}\cdots s_{j+t}$. Then this subword can be commuted to the right so that $\row_G^{(i)}$ is commutation braid move equivalent to $xys_{m_{\ell'}}\cdots s_{m_\ell}s_{j+r}\cdots s_{j+t}$ for some $\ell'\le \ell$. Similarly, $\row_G^{(i+1)}$ is commutation braid move equivalent to $s_{j+r}\cdots s_jx'y'$. This forces $\row_G$ to contain the non-reduced subword $(s_{j+r}s_{j+r-1}\cdots s_{j+t})(s_{j+r}s_{j+r-1}\cdots s_{j+t})$. This contradicts the fact that $\row_G$ is the reduced subword of $\row_F$ coming from the greedy Demazure product algorithm.

    Now we define a new diagram $G'$ by the following transformation (pictured in \Cref{fig:AAL_transformation_row}) from $G$: in the region between columns $j$ and $j+r$, shift all the fences in row $i$ to the right by $1$ unit, and then swap rows $i$ and $i+1$.
    \begin{figure}
        \centering
        $\begin{array}{ccccc}

% Row 1
\vcenter{\hbox{
\begin{tikzpicture}[baseline={([yshift=-.7ex]current bounding box.center)}]
  \node [filled] at (0,0) {};
  \node [filled] at (1,0) {};
  \node [filled] at (2,0) {};
  \node [filled] at (3,0) {};
  \node [filled] at (4,0) {};
  \node [filled] at (5,0) {};
  \node [filled] at (6,0) {};
  \node [filled] at (7,0) {};
  
  \node [filled] at (0,1) {};
  \node [filled] at (1,1) {};
  \node [filled] at (2,1) {};
  \node [filled] at (3,1) {};
  \node [filled] at (4,1) {};
  \node [filled] at (5,1) {};
  \node [filled] at (6,1) {};
  \node [filled] at (7,1) {};

  \node [blank] at (1.5,0.5) {$\alpha$};

\begin{pgfonlayer}{bg}
  \node [blank] at (1,1.5) {$(i,j)$};
  \node [blank] at (1,-0.5) {$(i+1,j)$};
  \node [blank] at (6,-0.5) {$(i+1,j+r)$};
\end{pgfonlayer}

  \draw [thick] (1,0) to (1,1);
  \draw [thick] (1,0) to (2,0);
  \draw [thick] (2,0) to (3,0);
  \draw [thick] (3,0) to (4,0);
  \draw [thick] (4,0) to (5,0);
  \draw [thick] (5,0) to (6,0);

  \draw [thick] (2,1) to (3,1);
  \draw [thick] (4,1) to (5,1);

\begin{pgfonlayer}{bg}
  \draw [thick, color=red, dashed] (0,1) to (1,1);
  \draw [thick, color=red, dashed] (6,0) to (7,0);

  \draw [thick, color=grey] (1,1) to (2,1);
  \draw [thick, color=grey] (3,1) to (4,1);
  \draw [thick, color=grey] (5,1) to (6,1);
  \draw [thick, color=grey] (2,1) to (2,0);
  \draw [thick, color=grey] (3,1) to (3,0);
  \draw [thick, color=grey] (4,1) to (4,0);
  \draw [thick, color=grey] (5,1) to (5,0);
  \draw [thick, color=grey] (6,1) to (6,0);
\end{pgfonlayer}
  
\end{tikzpicture}
}}\\

\vcenter{\hbox{
$\Big\downarrow$
}}
\\
\vcenter{\hbox{
\begin{tikzpicture}[baseline={([yshift=-.7ex]current bounding box.center)}]
  \node [filled] at (0,0) {};
  \node [filled] at (1,0) {};
  \node [filled] at (2,0) {};
  \node [filled] at (3,0) {};
  \node [filled] at (4,0) {};
  \node [filled] at (5,0) {};
  \node [filled] at (6,0) {};
  \node [filled] at (7,0) {};
  
  \node [filled] at (0,1) {};
  \node [filled] at (1,1) {};
  \node [filled] at (2,1) {};
  \node [filled] at (3,1) {};
  \node [filled] at (4,1) {};
  \node [filled] at (5,1) {};
  \node [filled] at (6,1) {};
  \node [filled] at (7,1) {};

  \node [blank] at (1.5,0.5) {$\alpha$};

\begin{pgfonlayer}{bg}
  \node [blank] at (1,1.5) {$(i,j)$};
  \node [blank] at (1,-0.5) {$(i+1,j)$};
  \node [blank] at (6,-0.5) {$(i+1,j+r)$};
\end{pgfonlayer}

\begin{pgfonlayer}{bg}
  \draw [thick] (1,0) to (1,1);
  \draw [thick, color=grey] (1,0) to (2,0);
  \draw [thick, color=grey] (2,0) to (3,0);
  \draw [thick] (3,0) to (4,0);
  \draw [thick, color=grey] (4,0) to (5,0);
  \draw [thick] (5,0) to (6,0);

  \draw [thick] (2,1) to (3,1);
  \draw [thick] (4,1) to (5,1);

  \draw [thick, color=red, dashed] (0,1) to (1,1);
  \draw [thick, color=red, dashed] (6,0) to (7,0);

  \draw [thick] (1,1) to (2,1);
  \draw [thick] (3,1) to (4,1);
  \draw [thick] (5,1) to (6,1);
  \draw [thick, color=grey] (2,1) to (2,0);
  \draw [thick, color=grey] (3,1) to (3,0);
  \draw [thick, color=grey] (4,1) to (4,0);
  \draw [thick, color=grey] (5,1) to (5,0);
  \draw [thick, color=grey] (6,1) to (6,0);
\end{pgfonlayer}
  
\end{tikzpicture}
}}
\end{array}$
        \caption{Local transformation to go from $G$ (top) to $G'$ (bottom). Fences of $G$ and $G'$ are drawn in black, whereas fences of $F$ are drawn in gray. Red dashed edges indicate absence of fences in $G$ and $G'$. If there is no edge, it does not matter to the argument whether a fence is there or not.}
        \label{fig:AAL_transformation_row}
    \end{figure}

    Writing $\row_G^{(i)} = xs_{m_1}\cdots s_{m_\ell}y$ and $\row_G^{(i+1)} = x' s_{j+r}\cdots s_jy'$ as before, we set
    \[\row_{G'}^{(i)} := xs_{j+r}\cdots s_{j},\]
    \[\row_{G'}^{(i+1)} := x' s_{m_1+1}\cdots s_{m_\ell+1}y'.\]

    Then we have the following chain of Coxeter move equivalences:
    \begin{align*}
        \row_G^{(i)}\row_G^{(i+1)} &= (xs_{m_1}\cdots s_{m_\ell}y)(x' s_{j+r}\cdots s_jy') \\
        &\equiv xx'(s_{m_1}\cdots s_{m_\ell})(s_{j+r}\cdots s_j)yy'\\
        &\equiv xx'(s_{j+r}\cdots s_j)(s_{m_1+1}\cdots s_{m_\ell+1})yy'\\
        &\equiv (xs_{j+r}\cdots s_jy)(x's_{m_1+1}\cdots s_{m_\ell+1}y')\\
        &= \row_{G'}^{(i)}\row_{G'}^{(i+1)}.
    \end{align*}

    Since $G'$ differs from $G$ only in rows $i$ and $i+1$, this shows that $\row_G \equiv \row_{G'}$ and trivially, $\col_G \equiv \col_{G'}$ (as we don't change the column word at all). Therefore $\w_{G'}$ is another reduced word for $v$, and since it only uses fences that belong to $F$ it is a subword of $\w_F$. However, $\w_{G'}$ contains the letters $s_{j+r}$ in $\row_F^{(i)}$, whereas $\w_G$ does not. Moreover, these words coincide up until that point, contradicting the choice of $\w_G$ as the greedy reduced subword for $v$. This contradicts the existence of $\alpha$, and so $G$ corresponds to a facet of $\F_{\pi_k(u)v}^{\pi_k(w)}$.
    
\end{proof}

    \begin{lemma}\label[lemma]{lem:mario}
        Let $F\subset \F_u^w$ be a facet, and let $G\subset F$ be a codimension one face of $F$ with the same skew shape as $F$. If $\delta(\w_{G}) = \delta(\w_F)$, then $G \subset \int\F_u^w$.
    \end{lemma}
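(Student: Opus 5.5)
The plan is to reduce the statement to a purely combinatorial one: $G$ lies in a \emph{second} facet $F'\neq F$ of $\calF_u^w$. Granting this, the topology is handled by the canonical triangulation. By \Cref{canonical equals projected} and \Cref{canonical equals projected}'s companion \Cref{canonical triangulation is the same as pi_k(delta[u,w]))}, $\mathcal{T}_u^w\cong\pi_k(\Delta[u,w])$, which is a subthin ball of dimension $d=\ell(w)-\ell(u)$ by \Cref{shellable}. Hence each codimension one simplex of $\mathcal{T}_u^w$ lies in one or two facets, and it lies in the interior exactly when it lies in two.

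Since $G$ is a facet of $F$, the canonical triangulation of $G$ is the restriction of that of $F$. So every $(d-1)$-simplex $\sigma\subset G$ lies in exactly one $d$-simplex of $F$. If $G$ is also a face of $F'\neq F$, then $\sigma$ lies in a $d$-simplex of $F'$. This simplex is different from the one in $F$, because the relative interiors of distinct faces of $\calO(P_{k,n})$ are disjoint. Thus every $(d-1)$-simplex of $G$ is interior, and since these simplices cover $G$, we get $\int G\subset\int\calF_u^w$.

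To produce $F'$, I will describe $G$ combinatorially. Because $G$ has the same shape as $F$ and has codimension one, \Cref{prop:fence_faces} and \Cref{rem:quotient_poset} show that $G$ is obtained from $F$ by merging two fence components $A<B$. These components are adjacent in some standard fence tableau, i.e.\ they carry labels $i$ and $i+1$. So $\w_G$ is $\w_F$ with some extra letters inserted, namely the new fences between $A$ and $B$. By \Cref{lem:word_fence_component} we may write
\[\w_F\equiv \w_1\cdots\w_A\w_B\cdots, \qquad \w_G\equiv \w_1\cdots \w_{A\cup B}\cdots .\]
The hypothesis $\delta(\w_G)=\delta(\w_F)=u_Jw_J^{-1}=:v$ means that $\w_G$ is non-reduced with Demazure product $v$.

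The first option is to apply \Cref{AAL} to $G$. This gives a facet of $\calF^{\pi_k(w)}_{\pi_k(u)v}$ containing $G$, and by \Cref{non-k-grass fence complex} that complex is $\calF_u^w$, since both are determined by the shape $\pi_k(w)/\pi_k(u)$ and the word $v$. If the greedy subword used in \Cref{AAL} yields a diagram different from $F$, we are done. If it yields $F$ itself, I will instead run the same argument with the \emph{right}-greedy subword, i.e.\ evaluate the Demazure product of the reversed parenthesization. Equivalently, I can pass through \Cref{transpose word comparison} and apply \Cref{AAL} to $G^\top$.

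The key claim is that the left- and right-greedy reduced subwords of $\w_G$ cannot both equal $\w_F$. Let $\beta$ be the block of new letters coming from the fences between $A$ and $B$. Since $\w_F$ is reduced with the same Demazure product, the exchange property lets me delete from $\w_G$ a letter of $\w_A$ or $\w_B$ in place of a letter of $\beta$ and still obtain a reduced word for $v$. Choosing the extreme such deletion on one side gives a subword that uses a fence of $G$ not present in $F$, and hence a facet $F'\neq F$ containing $G$.

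I expect the main obstacle to be this last step: showing that the alternative reduced subword actually comes from a fence diagram, i.e.\ has no bad configurations. The plan there is to copy the minimal-bad-configuration argument of \Cref{AAL}, using the row-shifting transformation of \Cref{fig:AAL_transformation_row}, adapted to the one-sided greedy choice. In the small case where $A\cup B$ is a border strip of length two, I would check the two facets directly against the length-two Bruhat diamond $[u_{i-1},u_{i+1}]$. The $k$-Bruhat analysis there (\Cref{lem:k-reflection}) should identify $F'$ as the facet coming from the other middle element of that diamond.
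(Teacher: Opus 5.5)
Your reduction to finding a second facet through $G$ matches the paper's, and the plan of obtaining it from a greedy reduced subword of $\w_G$ is sound and genuinely different from the paper's argument. However, two steps are wrong or unjustified as written.

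\textbf{The symmetry.} Transposition does not exchange left- and right-greedy subwords. By \Cref{transpose word comparison}, $\col_{G^\top}$ reads the same fences as $\row_G$ \emph{in the same order}, only relabelled by conjugation with $w_0$. So \Cref{AAL} applied to $G^\top$ just returns the transpose of the left-greedy diagram.

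The symmetry you want is the $180^\circ$ rotation $(i,j)\mapsto(k+1-i,\,n-k+1-j)$ of the $k\times(n-k)$ rectangle. It preserves the fence axiom and sends skew shapes to skew shapes. It also reverses both $\col_G$ and $\row_G$, up to the relabellings $s_i\mapsto s_{k-i}$ and $s_j\mapsto s_{n-k-j}$. Reversing a word inverts its product and $\ell(x)=\ell(x^{-1})$, so the right-greedy subword of $\w_G$ corresponds to the left-greedy subword of the rotated diagram. Hence \Cref{AAL} applied to the rotated $G$ already shows that the right-greedy subword is a fence diagram. Your ``main obstacle'' disappears: no new bad-configuration argument and no Bruhat-diamond check is needed.

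\textbf{The key claim.} Your exchange-property argument does not produce the greedy subwords. It yields \emph{some} reduced subword for $v$ using a new fence. When $G$ has boxes, such a subword need not be a fence diagram, since \Cref{rem:deleting fences from reduced fence diagrams} only covers box-free diagrams.

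The claim has a two-line proof instead. Suppose both greedy subwords consisted exactly of the fences of $F$. Take any new fence, with letter $c$, and split $\w_F=ab$ at its position. The left-greedy running product there is $a$, so skipping $c$ means $\ell(ac)<\ell(a)$. Likewise, right-greedy skipping it means $\ell(cb)<\ell(b)$. Hence $\ell(ab)\le \ell(ac)+\ell(cb)=\ell(a)+\ell(b)-2$, contradicting reducedness of $\w_F$.

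So one of the greedy subwords differs from $F$. It is a reduced fence diagram of shape $\pi_k(w)/\pi_k(u)$, with word $\delta(\w_G)=u_Jw_J^{-1}$ and fences contained in those of $G$. That is exactly a facet $F'\neq F$ of $\calF_u^w$ containing $G$.

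\textbf{Comparison with the paper.} With these repairs your proof is uniform, whereas the paper splits into two cases.
\begin{itemize}
\item When $G$ is box-free there is a single new letter, $\w_G=xs_iy$. The paper deletes a letter of $x$ or of $y$ according to whether $s_i$ is a right descent of $x$. This is essentially your left/right dichotomy, but fence-ness is certified by the box-free deletion remark rather than \Cref{AAL}.
\item When $G$ has boxes, the paper analyzes the track between the merged components. It rules out vertical--vertical and horizontal--horizontal tracks, then performs explicit surgery at the two ends with a direct braid computation.
\end{itemize}
Your route is shorter and avoids the track combinatorics, but leans on \Cref{AAL} (and its rotated form) for non-border-strip faces. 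The paper's second case is self-contained and exhibits $F'$ explicitly.
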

    \begin{proof}
        We know that $\F_u^w$ triangulates a ball, so the codimension $1$ face $G\subset F$ belongs to the interior of $\F_u^w$ if and only if it belongs to two facets of $\F_u^w$. Thus it suffices to find a facet $F'\in \calF_u^w$ different from $F$ that also contains $G$.
        
        $G$ has codimension $1$ in $F$ while sharing the same skew shape, so $G$ is obtained from $F$ by adding a single fence and performing the fence closure operation to obtain a genuine fence diagram.\\[-5pt]
\begin{itemize}
        \item \textbf{Case 1}: Suppose we are in the easier case where $G$ is also border strip, in the sense of \Cref{border strip SDSS}.

        Let $\w_F = xy$ and $\w_{G} = xs_i y$ for some words $x, y$ such that $xy$ is reduced. We analyze subcases according to whether $\dem(xs_i) = \dem(x)$ or $\dem(xs_i)>\dem (x)$.

        \begin{enumerate}
            \item If $\dem(xs_i) = \dem(x)$, then $s_i$ is a right descent of $x$. By deletion, there is a letter $s_j$ in $x$ such that $x's_i$ is a reduced word equivalent to $x$ (where $x'$ is obtained by deleting $s_j$ from $x$).  Observe deleting the fence corresponding to this $s_j$ from $G$ gives an SDSS $F'$ with $\w_{F'}$ reduced. By \Cref{lem:no_box}, it follows that $F'$ is a reduced fence diagram and hence corresponds to a facet of $\F_u^w$ different from $F$ containing $G$.

            \item If $\dem(xs_i) > \dem(x)$, then $xs_i$ is already reduced. Since $\dem (xs_iy) = \dem(xy)$, the greedy algorithm for computing the Demazure product of $\dem(xs_i y)$ ensures that there must be some letter $s_j$ of $y$ that we omit when computing $\delta(xs_iy)$. Deleting the fence corresponding to this $s_j$ from $G$ gives the required facet.\\[.5pt] 
        \end{enumerate}

       \item  \textbf{Case 2}: Now, suppose $G$ is not border strip, so the fence components of $G$ contain some boxes. In particular, we get $G$ by setting two adjacent components (i.e., ones which has segments passing through adjacent cells) of $F$, $A$ and $B$, equal to each other. Since $F$ is reduced, $A$ and $B$ are in fact border strip components.
        
        By the \emph{track} $T$ between $A$ and $B$, we mean the union of the fences in $G$ bounding boxes. Then $T$ is a union of boxes forming a path from the bottom left to top right with rightward or upward steps (since both $A$ and $B$ are border strips). Considering $A$ and $B$ as fence diagrams, we can define $\w_A$ and $\w_B$. For convenience of notation, let $\w_T:=\w_A\w_B$, and similarly for $\col_T$ and $\row_T$. \Cref{lem:word_fence_component} then shows that $\w_F\equiv m\ \w_T\ m'$ for some reduced words $m,m'$. In particular $\w_T$ is a reduced word (equivalently, both $\col_T$ and $\row_T$ are reduced).

        \begin{figure}
            \centering
            \begin{tikzpicture}[baseline={([yshift=-.7ex]current bounding box.center)}]
  \node [filled] at (0,0) {};
  \node [filled] at (1,0) {};
  \node [filled] at (2,0) {};
  \node [filled] at (3,0) {};
  \node [filled] at (4,0) {};
  \node [filled] at (5,0) {};
  \node [filled] at (0,1) {};
  \node [filled] at (1,1) {};
  \node [filled] at (2,1) {};
  \node [filled] at (3,1) {};
  \node [filled] at (4,1) {};
  \node [filled] at (5,1) {};
  \node [filled] at (0,2) {};
  \node [filled] at (1,2) {};
  \node [filled] at (2,2) {};
  \node [filled] at (3,2) {};
  \node [filled] at (4,2) {};
  \node [filled] at (5,2) {};
  \node [filled] at (0,3) {};
  \node [filled] at (1,3) {};
  \node [filled] at (2,3) {};
  \node [filled] at (3,3) {};
  \node [filled] at (4,3) {};
  \node [filled] at (5,3) {};
  \node [filled] at (0,4) {};
  \node [filled] at (1,4) {};
  \node [filled] at (2,4) {};
  \node [filled] at (3,4) {};
  \node [filled] at (4,4) {};
  \node [filled] at (5,4) {};
  \node [filled] at (0,5) {};
  \node [filled] at (1,5) {};
  \node [filled] at (2,5) {};
  \node [filled] at (3,5) {};
  \node [filled] at (4,5) {};
  \node [filled] at (5,5) {};
  \node [filled] at (0,6) {};
  \node [filled] at (1,6) {};
  \node [filled] at (2,6) {};
  \node [filled] at (3,6) {};
  \node [filled] at (4,6) {};
  \node [filled] at (5,6) {};
  \node [blank] at (0.5, 1.5) {{$P$}};
  \node [blank] at (3.5, 4.5) {{$Q$}};
  \draw [thick] (0,0) to (0,1);
  \draw [thick] (0,1) to (1,1);
  \draw [thick] (1,1) to (2,1);
  \draw [thick] (2,1) to (3,1);
  \draw [thick] (3,1) to (3,2);
  \draw [thick] (3,2) to (3,3);
  \draw [thick] (3,3) to (3,4);
  \draw [thick] (3,4) to (4,4);

  \draw [thick] (0,2) to (1,2);
  \draw [thick] (1,2) to (2,2);
  \draw [thick] (2,2) to (2,3);
  \draw [thick] (2,3) to (2,4);
  \draw [thick] (2,4) to (2,5);
  \draw [thick] (2,5) to (3,5);
  \draw [thick] (3,5) to (4,5);
  \draw [thick] (4,5) to (5,5);
  \draw [thick] (5,5) to (5,6);

  \begin{pgfonlayer}{bg}
  
  \draw [color=grey] (0,1) to (0,2);
  \draw [color=grey] (1,1) to (1,2);
  \draw [color=grey] (2,1) to (2,2);
  \draw [color=grey] (2,2) to (3,2);
  \draw [color=grey] (2,3) to (3,3);
  \draw [color=grey] (2,4) to (3,4);
  \draw [color=grey] (3,4) to (3,5);
  \draw [color=grey] (4,4) to (4,5);
  
  \end{pgfonlayer}

\end{tikzpicture}
            \caption{Example of a track, with horizontal start and horizontal end. Fences in black belong to both $F$ and $G$, whereas fences in gray belong only to $G$.}
            \label{fig:track}
        \end{figure}        
        Denote by $P$ and $Q$ the bottom leftmost and top rightmost boxes enclosed by $T$, respectively, as in \Cref{fig:track}. Being codimension $1$ forces additional conditions on $P$ and $Q$. Every box in $T$ has vertices in $A\cup B$, or else the codimension of $G\subset F$ would be higher than $1$. Moreover, not all four vertices of a box can belong to $A$ (or $B$), since $F$ is border strip.
        
        Say that the start (resp. end) of $T$ is \emph{vertical} or \emph{horizontal} based on whether the configuration of fences in $F$ bounding $P$ (resp. $Q$) looks like \Cref{fig:vertical_track} or \Cref{fig:horizontal_track}.

        \begin{figure}
            \centering
            $\begin{array}{ccccc}

% Row 1
\vcenter{\hbox{
\begin{tikzpicture}[baseline={([yshift=-.7ex]current bounding box.center)}]
  \node [filled] (a) at (0,0) {};
  \node [filled] (b) at (1,0) {};
  \node [filled] (c) at (0,1) {};
  \node [filled] (d) at (1,1) {};
  \node [blank] at (0.5, 0.5) {$P$};
\begin{pgfonlayer}{bg}
  \draw [thick] (0,0) to (0,1);
  \draw [thick] (0,1) to (1,1);
  \draw [color=grey] (0,0) to (1,0);
  \draw [color=grey] (1,0) to (1,1);    
\end{pgfonlayer}

\end{tikzpicture}
}}

&\quad

\vcenter{\hbox{
\begin{tikzpicture}[baseline={([yshift=-.7ex]current bounding box.center)}]
  \node [filled] (a) at (0,0) {};
  \node [filled] (b) at (1,0) {};
  \node [filled] (c) at (0,1) {};
  \node [filled] (d) at (1,1) {};
  \node [blank] at (0.5, 0.5) {$P$};
\begin{pgfonlayer}{bg}
  \draw [thick] (0,0) to (0,1);
  \draw [thick] (1,0) to (1,1);
  \draw [color=grey] (0,0) to (1,0);
  \draw [color=grey] (0,1) to (1,1);  
\end{pgfonlayer}

\end{tikzpicture}
}}
&\quad
\vcenter{\hbox{
\begin{tikzpicture}[baseline={([yshift=-.7ex]current bounding box.center)}]
  \node [filled] (a) at (0,0) {};
  \node [filled] (b) at (1,0) {};
  \node [filled] (c) at (0,1) {};
  \node [filled] (d) at (1,1) {};
  \node [blank] at (0.5, 0.5) {$Q$};
\begin{pgfonlayer}{bg}
  \draw [color=grey] (0,0) to (0,1);
  \draw [color=grey] (0,1) to (1,1);
  \draw [thick] (0,0) to (1,0);
  \draw [thick] (1,0) to (1,1);    
\end{pgfonlayer}

\end{tikzpicture}
}}

&\quad

\vcenter{\hbox{
\begin{tikzpicture}[baseline={([yshift=-.7ex]current bounding box.center)}]
  \node [filled] (a) at (0,0) {};
  \node [filled] (b) at (1,0) {};
  \node [filled] (c) at (0,1) {};
  \node [filled] (d) at (1,1) {};
  \node [blank] at (0.5, 0.5) {$Q$};
\begin{pgfonlayer}{bg}
  \draw [thick] (0,0) to (0,1);
  \draw [thick] (1,0) to (1,1);
  \draw [color=grey] (0,0) to (1,0);
  \draw [color=grey] (0,1) to (1,1);    
\end{pgfonlayer}

\end{tikzpicture}
}}\\[-5pt]

\end{array}$
            \caption{Vertical tracks. The first two are vertical starts, and the last two are vertical ends.}
            \label{fig:vertical_track}
        \end{figure}

        \begin{figure}
            \centering
            $\begin{array}{ccccc}

% Row 1
\vcenter{\hbox{
\begin{tikzpicture}[baseline={([yshift=-.7ex]current bounding box.center)}]
  \node [filled] (a) at (0,0) {};
  \node [filled] (b) at (1,0) {};
  \node [filled] (c) at (0,1) {};
  \node [filled] (d) at (1,1) {};
  \node [blank] at (0.5, 0.5) {$P$};
\begin{pgfonlayer}{bg}
  \draw [color=grey] (0,0) to (0,1);
  \draw [color=grey] (0,1) to (1,1);
  \draw [thick] (0,0) to (1,0);
  \draw [thick] (1,0) to (1,1);    
\end{pgfonlayer}

\end{tikzpicture}
}}

&\quad

\vcenter{\hbox{
\begin{tikzpicture}[baseline={([yshift=-.7ex]current bounding box.center)}]
  \node [filled] (a) at (0,0) {};
  \node [filled] (b) at (1,0) {};
  \node [filled] (c) at (0,1) {};
  \node [filled] (d) at (1,1) {};
  \node [blank] at (0.5, 0.5) {$P$};
\begin{pgfonlayer}{bg}
  \draw [color=grey] (0,0) to (0,1);
  \draw [color=grey] (1,0) to (1,1);
  \draw [thick] (0,0) to (1,0);
  \draw [thick] (0,1) to (1,1);  
\end{pgfonlayer}

\end{tikzpicture}
}}
&\quad
\vcenter{\hbox{
\begin{tikzpicture}[baseline={([yshift=-.7ex]current bounding box.center)}]
  \node [filled] (a) at (0,0) {};
  \node [filled] (b) at (1,0) {};
  \node [filled] (c) at (0,1) {};
  \node [filled] (d) at (1,1) {};
  \node [blank] at (0.5, 0.5) {$Q$};
\begin{pgfonlayer}{bg}
  \draw [thick] (0,0) to (0,1);
  \draw [thick] (0,1) to (1,1);
  \draw [color=grey] (0,0) to (1,0);
  \draw [color=grey] (1,0) to (1,1);    
\end{pgfonlayer}

\end{tikzpicture}
}}

&\quad

\vcenter{\hbox{
\begin{tikzpicture}[baseline={([yshift=-.7ex]current bounding box.center)}]
  \node [filled] (a) at (0,0) {};
  \node [filled] (b) at (1,0) {};
  \node [filled] (c) at (0,1) {};
  \node [filled] (d) at (1,1) {};
  \node [blank] at (0.5, 0.5) {$Q$};
\begin{pgfonlayer}{bg}
  \draw [color=grey] (0,0) to (0,1);
  \draw [color=grey] (1,0) to (1,1);
  \draw [thick] (0,0) to (1,0);
  \draw [thick] (0,1) to (1,1);    
\end{pgfonlayer}

\end{tikzpicture}
}}\\

\end{array}$
            \caption{Horizontal tracks, First two are horizontal starts, last two are horizontal ends.}
            \label{fig:horizontal_track}
        \end{figure}
        
        If both the start and end of $T$ are vertical, then $\col_T$ is move equivalent to the non-reduced word $s_is_{i+1}\cdots s_js_is_{i+1}\cdots s_j$ for some $i\le j$, which is a contradiction. Transposing and using \Cref{transpose word comparison} shows that both the start and end of $T$ cannot be horizontal either.
        
        This leaves us with the case where the start of $T$ is vertical and the end of $T$ is horizontal, or vice versa. By transposing if necessary, it suffices to consider the former case. We make the local transformations at $P$ and $Q$ shown in \Cref{fig:transformation} (reading left to right), and claim that this gives the required facet $F'$ containing $G$.

        \begin{figure}[H]
            \centering
            $\begin{array}{ccccc}

% Row 1
\vcenter{\hbox{
\begin{tikzpicture}[baseline={([yshift=-.7ex]current bounding box.center)}]
  \node [filled] (a) at (0,0) {};
  \node [filled] (b) at (1,0) {};
  \node [filled] (c) at (0,1) {};
  \node [blank] at (0.5, 0.5) {$P$};
\begin{pgfonlayer}{bg}
  \draw [thick] (0,0) to (0,1);
  \draw [color=grey] (0,0) to (1,0);
\end{pgfonlayer}

\end{tikzpicture}
}}
&
\vcenter{\hbox{
$\longrightarrow$
}}
&
\vcenter{\hbox{
\begin{tikzpicture}[baseline={([yshift=-.7ex]current bounding box.center)}]
  \node [filled] (a) at (0,0) {};
  \node [filled] (b) at (1,0) {};
  \node [filled] (c) at (0,1) {};
  \node [blank] at (0.5, 0.5) {$P$};
\begin{pgfonlayer}{bg}
  \draw [color=grey] (0,0) to (0,1);
  \draw [thick] (0,0) to (1,0);    
\end{pgfonlayer}

\end{tikzpicture}
}}
\end{array}$

\vspace{5mm}
$\begin{array}{ccccc}

% Row 1
\vcenter{\hbox{
\begin{tikzpicture}[baseline={([yshift=-.7ex]current bounding box.center)}]
  \node [filled] (b) at (1,0) {};
  \node [filled] (c) at (0,1) {};
  \node [filled] (d) at (1,1) {};
  \node [blank] at (0.5, 0.5) {$Q$};
\begin{pgfonlayer}{bg}
  \draw [thick] (1,1) to (0,1);
  \draw [color=grey] (1,1) to (1,0);
\end{pgfonlayer}

\end{tikzpicture}
}}
&
\vcenter{\hbox{
$\longrightarrow$
}}
&
\vcenter{\hbox{
\begin{tikzpicture}[baseline={([yshift=-.7ex]current bounding box.center)}]
  \node [filled] (d) at (1,1) {};
  \node [filled] (b) at (1,0) {};
  \node [filled] (c) at (0,1) {};
  \node [blank] at (0.5, 0.5) {$Q$};
\begin{pgfonlayer}{bg}
  \draw [color=grey] (1,1) to (0,1);
  \draw [thick] (1,1) to (1,0);    
\end{pgfonlayer}

\end{tikzpicture}
}}
\end{array}$
            \caption{Local transformations converting $F$ to $F'$. The left column shows a track with vertical start and horizontal end, and the right column shows a track with horizontal start and vertical end.}
            \label{fig:transformation}
        \end{figure}
        
        Clearly $F'$ is a face of $\F_u^w$ containing $G$, so it suffices to show that $\w_{F'}\equiv \w_F$. Note applying the above transformation only changes the subwords $\col_T$ and $\row_T$ of $\w_F$, say to $\col_{T}'$ and $\row_{T}'$. $\col_T$ is commutation move equivalent to $s_{i_1}s_{i_1+1}\cdots s_{j_1}s_{i_1}{s_{i_1+1}}\cdots s_{j_1-1}$ for some $i_1\le j_1$. Then $\col_T'$ is commutation move equivalent to $s_{i_1+1}\cdots s_{j_1}s_{i_1}{s_{i_1+1}}\cdots s_{j_1}$, which by an elementary computation, is move equivalent to $\col_T$. Similarly, $\row_T$ is move equivalent to $s_{i_2}s_{i_2+1}\cdots s_{j_2}s_{i_2}{s_{i_2+1}}\cdots s_{j_2-1}$ for some $i_2\le j_2$, and $\row'_T$ is commutation move equivalent to $s_{i_2+1}\cdots s_{j_2}s_{i_2}{s_{i_2+1}}\cdots s_{j_2}$, which is move equivalent to $\row_T$. Thus $\w_{F'}$ is equivalent to $\w_F$, showing that $F'\in \calF_u^w$ is a facet different from $F$ containing $G$.
    \end{itemize} 
    \end{proof}

\subsection{Proofs of \Cref{dem prod characterization} and \Cref{interior of the fence complex}}
\begin{proof}[Proof of \Cref{dem prod characterization}]
    Let $F$ be of shape $\pi_k(w)/\pi_k(u)$, and assume $F\subset \calF_u^w$. Pick a facet $G$ of $\calF_u^w$ containing $F$. By \Cref{eq:gt_faces}, every fence in $G$ is also a fence in $F$. Hence $\w_G$ is a reduced subword of $\w_F$. Since $\w_G$ equals $u_Jw_J^{-1}$ as a permutation by definition, it follows that $\delta(\w_F)\ge u_Jw_J^{-1}$ as required.

    Conversely, suppose $\delta(\w_F)=v\ge u_Jw_J^{-1}$. By \Cref{AAL}, we obtain a facet $G\subset \calF_{\pi_k(u)v}^w$ containing $F$. Since $u_Jw_J^{-1}\le v=\w_G$, there exists some reduced subword of $\w_G$ that evaluates to $u_Jw_J^{-1}$. \Cref{rem:deleting fences from reduced fence diagrams} shows that drawing in only the fences of $G$ corresponding to this subword gives a fence diagram $G'$. Then $G'$ is a facet of $\calF_u^w$ and $F\subset G\subset G'$, so $F\subset \calF_u^w$.
\end{proof}

We prove a couple more easy lemmas before moving on to the proof of \Cref{interior of the fence complex}.

\begin{lemma}\label[lemma]{lem:bruhatcoversubwords} 
Let $x,y\in S_n$ such that $x\lessdot y$ in Bruhat order. If $m$ is a reduced word for $y$, then $x$ appears exactly once as a subword of $m$. 
\end{lemma}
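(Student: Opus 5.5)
The plan is to use the standard fact that a Bruhat cover is obtained from a reduced word by deleting a single letter, and to count the subwords via reflections. Write $m = s_{i_1}\cdots s_{i_\ell}$ with $\ell=\ell(y)$. Since $\ell(x)=\ell-1$, any subword of $m$ whose product is $x$ must have at least $\ell-1$ letters. A subword with all $\ell$ letters has product $y\neq x$. So every such subword is obtained by deleting exactly one letter, say the $p$-th. This reduces the claim to counting the positions $p$ with $m\setminus\{p\}$ evaluating to $x$. Existence of at least one such position is the subword property of Bruhat order (\cite[Theorem 2.2.2]{Bjorner-Brenti}): since $x\le y$, some reduced subword of $m$ evaluates to $x$, and by the length count it omits exactly one letter.

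For uniqueness, I will use the reflection sequence. For $1\le p\le \ell$, set
\[
t_p = s_{i_1}\cdots s_{i_{p-1}}\, s_{i_p}\, s_{i_{p-1}}\cdots s_{i_1}.
\]
Then deleting the $p$-th letter gives $s_{i_1}\cdots \widehat{s_{i_p}}\cdots s_{i_\ell} = t_p\, y$. So $m\setminus\{p\}$ evaluates to $x$ if and only if $t_p = x y^{-1}$. The key input is that, because $m$ is reduced, the reflections $t_1,\dots,t_\ell$ are pairwise distinct; indeed they enumerate the left inversion set of $y$ (\cite[Corollary 1.4.4]{Bjorner-Brenti}). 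Hence at most one $p$ satisfies $t_p = xy^{-1}$.

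Combining the two halves, exactly one position works, so $x$ appears exactly once as a subword of $m$.

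The only step needing care is the distinctness of the $t_p$. If it has to be proved directly, I would argue as follows. Suppose $t_p=t_q$ with $p<q$. Then $s_{i_p}\cdots s_{i_{q-1}} = s_{i_{p+1}}\cdots s_{i_q}$, and deleting both letters $p$ and $q$ from $m$ would give a word for $y$ of length $\ell-2$. That contradicts reducedness.
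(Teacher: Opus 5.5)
Your proof is correct and is essentially the paper's argument. Existence comes from the subword property \cite[Theorem 2.2.2]{Bjorner-Brenti}. Uniqueness comes from the observation that if deleting the $p$-th letter and deleting the $q$-th letter both yield $x$, then deleting both letters yields a word for $y$ of length $\ell(y)-2$, contradicting reducedness; the paper writes this as $m=as_ibs_jc$ with $abs_jc=as_ibc$, forcing $s_ibs_j=b$. Your framing via the reflections $t_p$ is just a restatement of this, and your explicit length count (every subword evaluating to $x$ omits exactly one letter) spells out a step the paper leaves implicit.
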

\begin{proof}
    The fact that $x$ appears at least once is standard, see for instance \cite[Theorem 2.2.2]{Bjorner-Brenti}. Suppose $x$ appears twice. Then $m= as_ibs_jc$  where $a,b,c$ are words, and \[abs_jc = as_ibc = x.\]
    But then, $s_ibs_j = b$, and hence $y = abc$, contradicting the fact that $m$ is reduced. 
\end{proof}

\begin{lemma}\label[lemma]{lem:codim_1}
    Suppose $F\subset \calF_u^w$ is a codimension one face such that either \begin{enumerate}
        \item $F$ has shape $\pi_k(w)/\pi_k(u)$ and $\delta(\w_F)>u_J$, or
        \item $F$ has a smaller shape than $\pi_k(w)/\pi_k(u)$. 
    \end{enumerate} 
 Then, $F\subset\d\calF_u^w$. 
 \end{lemma}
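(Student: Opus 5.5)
Since $\calF_u^w$ is a pure polyhedral complex homeomorphic to a closed ball (\Cref{fence complexes are homeomorphic to balls}), a codimension one face $F$ lies on the boundary exactly when it is contained in exactly one facet of $\calF_u^w$. This uses the fact that a codimension one cell of a triangulated ball lies in one or two top cells, and we pass to the canonical triangulation $\calT_u^w$, which refines the complex. The plan is therefore to show in each case that $F$ is contained in a unique facet $G\in\mathfrak F_u^w$. By \Cref{dem prod characterization} at least one such facet exists, and the work is in uniqueness. Here I read $\delta(\w_F)>u_J$ as $\delta(\w_F)>u_Jw_J^{-1}$, the representative-independent quantity.

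\textbf{Case (1).} $F$ has full shape $\pi_k(w)/\pi_k(u)$ and codimension one, so $\w_F$ has one more fence than a facet. Any facet $G\supset F$ has the same shape and its fences are a subset of those of $F$ (\Cref{eq:gt_faces}). So $\w_G$ is a reduced subword of $\w_F$ evaluating to $u_Jw_J^{-1}$, obtained by deleting exactly one letter (after passing to word level; compare the proof of \Cref{lem:mario}). Since $\delta(\w_F)\ge u_Jw_J^{-1}$ with strict inequality, and $\w_F$ has length $\ell(u_Jw_J^{-1})+1$, the word $\w_F$ must itself be reduced, and $\delta(\w_F)$ covers $u_Jw_J^{-1}$.

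By \Cref{lem:bruhatcoversubwords}, $u_Jw_J^{-1}$ appears exactly once as a subword of $\w_F$. Distinct facets $G\supset F$ correspond to distinct deleted fences, hence distinct subwords, so there is at most one such $G$. The subtle point, which I expect to be the main obstacle, is this: when $F$ is not border strip (fences forming a box, so the fence closure adds fences), I must check that $\w_F$ has length exactly one more. Since $F$ contains a reduced word, $F$ is border strip by \Cref{lem:no_box}, so this reduces to the border strip situation.

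\textbf{Case (2).} $F$ has shape $\lambda'/\mu'$ strictly smaller than $\lambda/\mu=\pi_k(w)/\pi_k(u)$, with codimension one. Every facet has full shape, so $F$ arises from a facet $G$ by setting one fence component of $G$ to $0$ or $1$. Codimension one forces a single component $A$ to be removed, and $A$ must be a minimal or maximal element of the quotient poset. In the first case $\mu'/\mu$ is the component $A$, so $A$ is determined by $F$, and likewise for $\lambda/\lambda'$. The remaining fences of $G$ are exactly the fences of $F$ on $\lambda'/\mu'$, since the face is the intersection with $x_A=0$. Hence $G$ is determined uniquely by $F$, and $F$ lies in exactly one facet.

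The only check needed is that the fences internal to $A$ are forced. Here $A$ is the whole skew shape $\mu'/\mu$ (or $\lambda/\lambda'$), and it is a border strip by \Cref{lem:border_strip}. Its internal fences are determined because $A$ must be a single connected border strip component, and a connected border strip carries all its adjacencies. Thus $F$ lies in a unique facet and $F\subset\d\calF_u^w$.
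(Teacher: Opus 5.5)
Your Case (2) matches the paper's argument. In Case (1), your uniqueness step is sound once you know that $\w_F$ is a reduced word of length $\ell(u_Jw_J^{-1})+1$: then $\delta(\w_F)$ covers $u_Jw_J^{-1}$, and \Cref{lem:bruhatcoversubwords} applied to all of $\w_F$ pins down the deleted fence. This is a slightly cleaner version of the paper's step, which applies the same lemma to $\col_F$ or $\row_F$.

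The gap is in establishing that premise. Codimension one only says that $F$ has one fewer fence component than a facet $G\supset F$. That gives exactly one extra fence only if the merged component contains no box. Boxed codimension-one faces of full shape really occur: for instance the face $F_3$ in the example $\calF_{2143}^{3412}$, and Case 2 of \Cref{lem:mario}. Your justification, that $F$ contains a reduced word and is therefore border strip by \Cref{lem:no_box}, does not work. \Cref{lem:no_box} requires $\col_F$ or $\row_F$ itself to be reduced, and the fact that $\w_F$ contains the reduced subword $\w_G$ says nothing about that. If you meant that $\w_F$ itself is reduced, you derived that from the length count, which is exactly what is in question, so the argument is circular. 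What must be shown is that a boxed codimension-one face cannot have $\delta(\w_F)>u_Jw_J^{-1}$.

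The missing ingredient is \Cref{AAL}:
\begin{itemize}
\item The greedy Demazure subword of $\w_F$ gives a reduced fence diagram $F'$ of the same shape. Its fences are a subset of those of $F$, so $F\subseteq F'$, and $\delta(\w_{F'})=\delta(\w_F)=v>u_Jw_J^{-1}$.
\item By \Cref{dem prod characterization}, $F'\subset\calF_u^w$.
\item $F'$ and the facets of $\calF_u^w$ are all border strip of the same shape, so in each the number of components is the number of cells minus the number of fences.
\item $F'$ has $\ell(v)>\ell(u_Jw_J^{-1})$ fences, hence $\dim F'<\dim\calF_u^w$, i.e.\ $\dim F'\le \dim F$.
\item Together with $F\subseteq F'$ this forces $F=F'$. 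So $F$ is reduced with $\ell(\w_F)=\ell(v)=\ell(u_Jw_J^{-1})+1$, and your counting argument then completes Case (1).
\end{itemize}
A direct track analysis as in \Cref{lem:mario}, showing that merging two components into a boxed one leaves the Demazure product unchanged, would also close the gap. \Cref{AAL} is the short route the paper takes.
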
 
 \begin{proof}
    Since $\calF_u^w$ is homeomorphic to a closed ball, a codimension one face is contained in the boundary if and only if it is contained in exactly $1$ facet.
    
    In case (1), by \Cref{AAL}, $F$ is contained in a reduced face $F'$ with $\delta(\w_{F'})= \delta(\w_F)>u_J$. By \Cref{dem prod characterization}, we have $F'\subset \calF_u^w$. Moreover, $F'$ is not a facet of $\calF_u^w$, so since $F$ is codimension one, we must have $F=F'$, and hence $F$ is reduced. Let $G$ be a facet of $\calF_u^w$ containing $F$. Since $F$ is reduced, it is obtained from $G$ by adding a single fence. Thus, either $\col_G = \col_F$ or $\row_G=\row_F$.
    
    Without loss of generality, suppose $\row_G=\row_F$, i.e. that $\col_G$ is a proper subword of $\col_F$. This implies that for \textit{any} facet $G'$ of $\calF_u^w$ containing $F$, we have $\col_G = \col_{G'}$, and that both $\col_G$ and $\col_{G'}$ are given to us as reduced subwords of $\col_F$. But by \Cref{lem:bruhatcoversubwords}, this means that $\col_{G'}$ and $\col_G$ must be equal as subwords of $\col_F$. Therefore $G=G'$, and hence $F$ is on the boundary, as desired.
    
    In case (2), since $F$ is codimension $1$, we have that $F$ is contained in a facet $G$ of $\calF_u^w$ if and only if $F$ comes from setting a single fence component of $G$ equal to $0$ or 1 (corresponding to a minimal or maximal element of the quotient poset of $G$, respectively). But then one can recover the fence diagram of $G$ from the fence diagram of $F$, so $F$ is only contained in one facet. 
 \end{proof}

We may now prove \Cref{interior of the fence complex}. 
\begin{proof}[Proof of \Cref{interior of the fence complex}]
    Let $F\subset \calF_u^w$ be a face of shape $\lambda/\mu$. Necessarily, $\lambda/\mu$ is contained in the skew shape $\pi_k(w)/\pi_k(u)$. Note that $\int F\subset \int \calF_u^w$ if and only if $F\not\subset\d\calF_u^w$.

    Suppose first that $\int F\subset \int \calF_u^w$, i.e. that $F\not\subset\d\calF_u^w$. If $F$ is a facet of $\calF_u^w$ then $F$ has shape $\pi_k(w)/\pi_k(u)$ and $\delta(\w_F)=u_J$ by definition, and we are done. So, suppose instead that $F$ is a lower dimensional face, and pick a facet $G$ of $\calF_u^w$ containing it. If $\lambda/\mu$ is strictly smaller than $\pi_k(w)/\pi_k(u)$, then we construct a codimension $1$ face $G'\subset G$ containing $F$ such that the shape of $G'$ is also strictly smaller than $\pi_k(w)/\pi_k(u)$: if $\mu>\pi_k(u)$ then delete some minimal fence component of $G$, else if $\lambda<\pi_k(w)$ then delete some maximal fence component of $G$. By \Cref{lem:codim_1}, $F\subset G'\subset \d\calF_u^w$, contradicting the assumption on $F$. Thus, $\lambda = \pi_k(w)$ and $\mu = \pi_k(u)$.

    Assume now for the sake of contradiction that $\delta(\w_F)=v\ne u_Jw_J^{-1}$. \Cref{dem prod characterization} implies then that $v>u_Jw_J^{-1}$, so pick $v'$ such that $u_Jw_J^{-1}\lessdot v'\le v$. By \Cref{AAL}, $F$ is contained in a facet $H\subset \calF_{\pi_k(u)v}^{\pi_k(w)}$. Since $\w_H = v\ge v'$, $\w_H$ contains a reduced subword that evaluates to $v'$. Per \Cref{rem:deleting fences from reduced fence diagrams}, drawing in only the fences corresponding to this subword gives a fence diagram $H'$, which is a codimension $1$ face of $\calF_u^w$ containing $H$. Since $\delta(\w_{H'}) = v'>u_J$, it follows from \Cref{lem:codim_1} that $F\subset H'\subset \d\calF_u^w$, contradicting the assumption on $F$. Hence, $\delta(\w_F) = u_Jw_J^{-1}$.

    Conversely, suppose $\lambda=\pi_k(w), \mu=\pi_k(u)$, and $\delta(\w_F) = u_Jw_J^{-1}$. It suffices to show that $F\not\subset \d\calF_u^w$. Assume for the sake of contradiction that $F\subset \d\calF_u^w$, and pick a codimension $1$ face $G$ of $\calF_u^w$ such that $F\subset G\subset \d\calF_u^w$. By \Cref{lem:mario}, since $G$ also has shape $\pi_k(w)/\pi_k(u)$, we must have $\delta(\w_G)>u_Jw_J^{-1}$. However, every fence in $G$ is a fence in $F$, and hence $\w_G$ is a subword of $\w_F$. This implies that $\delta(\w_F)\ge \delta(\w_G)>u_Jw_J^{-1}$, a contradiction.
\end{proof}

\subsection{Fence Complexes as a Cell Decomposition of $Gr(k,n)_{\ge0}$}  
We now use the results from the previous section to give a presentation of $Gr(k,n)_{\geq 0}$ as a regular CW complex whose cells and attachments are parametrized by the poset of positroid strata, and where each cell is a polyhedral complex. We first prove some more results about the boundary of $\calF_u^w$.

\begin{prop}\label[proposition]{sidea: boundary containment}
    Fix $u'\leq_k w'$ such that $\Pi^{w'}_{u'} \subset \Pi^w_u$, i.e. $\langle u', w' \rangle \leq \langle u, w \rangle$ in $\mathcal{Q}(k,n)$. Then $\mathcal{F}^{w'}_{u'} \subset \mathcal{F}^w_u.$
\end{prop}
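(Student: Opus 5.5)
We reduce to vertices. A face $F$ of $\calO(P_{k,n})$ lies in $\calF_u^w$ exactly when it lies in some facet, so it suffices to show that every facet of $\calF_{u'}^{w'}$ is contained in a facet of $\calF_u^w$. Rather than compare fence diagrams directly, we use the canonical triangulation. By \Cref{canonical triangulation is the same as pi_k(delta[u,w])) }, $\calT_u^w\cong \pi_k(\Delta[u,w])$. By \Cref{canonical equals projected}, its vertices are the lattice points $\pi_k([u,w])$ of $\calO(P_{k,n})$, i.e. Young diagrams. The isomorphism is compatible with the embedding: a simplex of $\calT_u^w$ is the convex hull of its vertex diagrams, which form a chain in Young's lattice. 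Concretely, a facet of $\calT_u^w$ is the simplex spanned by the chain $\pi_k(u_0)\subset\cdots\subset\pi_k(u_m)$ of a saturated $k$-Bruhat chain, sitting inside the face of the corresponding standard fence tableau (\Cref{prop: reduced fence faces know}). So $|\calF_u^w|$ is the union of the simplices $\mathrm{conv}\{\pi_k(x_0),\dots,\pi_k(x_r)\}$ over all chains $x_0<\cdots<x_r$ in $[u,w]$, where we use $\pi_k(\Delta[u,w])=\pi_k(\Delta[u,w]_k)$ from \Cref{maximal chains bij}.

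With this description the containment is nearly formal. By Rietsch's proposition, since $\langle u',w'\rangle\le\langle u,w\rangle$, we may choose a representative $[u'',w'']$ of $\langle u',w'\rangle$ with $u\le u''\le_k w''\le w$. By \Cref{non-k-grass fence complex}, $\calF_{u'}^{w'}=\calF_{u''}^{w''}$. By \Cref{maximal chains bij}, $\pi_k(\Delta[u',w'])=\pi_k(\Delta[u'',w''])$. Since $[u'',w'']\subset[u,w]$, every chain of $[u'',w'']$ is a chain of $[u,w]$. Hence every simplex of $\calT_{u''}^{w''}$, viewed as a geometric simplex with vertices the projected partitions, is a simplex of $\calT_u^w$. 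Taking unions gives $|\calF_{u'}^{w'}|\subset|\calF_u^w|$. Finally, $\calF_{u'}^{w'}$ is a union of faces of $\calO(P_{k,n})$, and each such face is contained in $|\calF_u^w|$, which is a polyhedral subcomplex of the same polytope. Therefore each such face is a face of some face in $\calF_u^w$, because relative interiors of faces of $\calO(P_{k,n})$ are disjoint. This gives the containment as subcomplexes.

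The main obstacle is the first step: making sure the abstract isomorphism $\calT_u^w\cong\pi_k(\Delta[u,w])$ is realized geometrically by sending a face $\{\pi_k(x_0),\dots,\pi_k(x_r)\}$ to the convex hull of those vertices in $\calO(P_{k,n})$. This must hold for all faces, not only facets. I expect to justify it as follows. Simplices of the canonical triangulation are exactly the convex hulls of chains of order ideals, i.e. chains of diagrams. A face of $\pi_k(\Delta[u,w])$ is a subset of a facet chain, so its convex hull is a face of the corresponding facet simplex. If that turns out to be delicate, the fallback is a direct argument. Take a facet $G$ of $\calF_{u''}^{w''}$ and a standard fence tableau on it, which gives a saturated $k$-Bruhat chain from $u''$ to $w''$. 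Extend it by saturated chains $u\le\cdots\le u''$ and $w''\le\cdots\le w$ in Bruhat order, and project the result to a facet of $\pi_k(\Delta[u,w])$. The associated face of $\calF_u^w$ then contains all vertices of that simplex of $G$, and hence contains $G$, since faces of $G$'s triangulation cover $G$.
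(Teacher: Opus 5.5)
Your proof is correct, and it takes a genuinely different route from the paper's.

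\textbf{The paper's route.} It first reduces to codimension-one relations in $\mathcal{Q}(k,n)$, after taking $w$ to be $k$-Grassmannian and using the cover classification from the proof of \cite[Theorem 3.16]{KLS1}. It then treats three cases separately at the level of fence diagrams:
\begin{enumerate}
\item For $w'\lessdot_k w$ or $u\lessdot_k u'$, it extends a saturated $k$-Bruhat chain by one step. It then notes that $F_{C'}$ arises from $F_C$ by setting a corner fence component to $1$ or $0$.
\item For a cover $u\lessdot u'$ with $\pi_k(u)=\pi_k(u')$, it invokes the Demazure-product criterion \Cref{dem prod characterization}. That criterion rests on the technical \Cref{AAL}.
\end{enumerate}

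\textbf{Your route.} You use Rietsch's proposition to place a representative $[u'',w'']$ of $\langle u',w'\rangle$ inside the Bruhat interval $[u,w]$ in one step. Everything then reduces to the monotonicity $\Delta[u'',w'']\subseteq\Delta[u,w]$, transported into $\calO(P_{k,n})$ through the canonical triangulation.

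\textbf{The step you flagged is sound.} The proof of \Cref{canonical equals projected} shows that the simplex attached to the standard fence tableau $T_C$ has vertex set exactly $\{\pi_k(u_0),\dots,\pi_k(u_m)\}$. So the isomorphism $\calT_u^w\cong\pi_k(\Delta[u,w])$ is the identity on vertices. By \Cref{maximal chains bij}, any projected Bruhat chain of $[u,w]$ lies in the projection of a saturated $k$-Bruhat chain from $u$ to $w$. Your relative-interior argument then correctly upgrades $|\calF_{u'}^{w'}|\subseteq|\calF_u^w|$ to containment of subcomplexes.

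\textbf{Comparison.} Your argument is shorter and uniform. It needs neither the codimension-one reduction nor the Demazure-product theorem. It also shows the statement is essentially formal once the triangulation is identified with the KLS complex. The paper's argument instead gives explicit fence-level information: which facet of $\calF_u^w$ contains a given facet of $\calF_{u'}^{w'}$, and via which $0/1$ or equality condition. That is the same language used in the boundary analysis of \Cref{sideb: boundary containment}.

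\textbf{A small correction to your fallback.} The projection of a saturated Bruhat chain from $u$ to $w$ need not itself be a facet of $\pi_k(\Delta[u,w])$, since vertices can collapse. It is only a face contained in one. Your main argument does not depend on this.
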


\begin{proof}
    First, observe that since fence complexes are independent of choice of representative of equivalence class in $\mathcal{Q}(k,n)$ (see \Cref{non-k-grass fence complex}), we may take $w$ to be $k$-Grassmannian.  It suffices to show the result when $\Pi^{w'}_{u'}$ is codimension one in $\Pi^w_u$. By the proof of \cite[Theorem 3.16]{KLS1} and \Cref{non-k-grass fence complex}, this means that we have three cases: either ($u = u'$ and $w' \lessdot_k w$), ($u \lessdot_k u'$ and $w' = w$), or ($u \lessdot u'$ and $w' = w$ with $\pi_k(u)= \pi_k(u')$). Observe that since $w$ is $k$-Grassmannian, $w'\leq w$ implies $w'\leq_k w$.  We now treat these cases separately.
    
    \begin{itemize} 
    \item \textbf{Case 1}: %%We recall that the faces of the Hibi polytope of $P_{k,n}$ which are contained in a given fence complex $\mathcal{F}^w_u$ are exactly those which arise from a collection of equality conditions in a fence diagram for $\mathcal{F}^w_u$ (including the implicit filling of all boxes outside of the underlying Young diagram $\pi_k(w) / \pi_k(u)$ of the fence diagram with $n$'s in the bottom right/$0$'s in the top left). Further, a face $F'$ of the Hibi polytope is contained in another face $F$ precisely when $F$'s equality conditions are a subset of $F'$'s.\\[.5pt]
    Assume we are given a $k$-Bruhat chain $C' := u = u_1 <_k ... <_k u_j = w'$ in $[u, w']$. Since $w' \leq_k w$, this extends to a $k$-Bruhat chain $C := u = u_1 <_k ... <_k u_j <_k w$ in $[u, w]$. Consider the fence diagrams $F_{C'}$ and $F_C$ associated to $C'$ and $C$, which live in $\mathcal{F}^{w'}_u$ and $\mathcal{F}^w_u$, respectively. Then $F_{C'}$ is just obtained from $F_C$ by deleting some bottom right fence component (i.e., imposing the condition  that the corresponding face is filled with $1$'s), so we have that the face corresponding to $F_{C'}$ is contained in the face corresponding to $F_C$. But this is true for all chains $C' \subset [u, w']$, and so $\mathcal{F}^{w'}_u \subset \mathcal{F}^w_u$.

    \item \textbf{Case 2}: Since $u \lessdot_k u'$, any chain $C' := u' = u_1 <_k ... <_k u_j = w$ extends to one of the form $C := u <_k u_1 <_k ... <_k u_j = w$. This implies again that the equalities in $F_{C'}$ are a subset of the equalities in $F_C$: more precisely, $F_{C'}$ is obtained from $F_C$ by deleting some top left fence component (i.e., imposing the condition that it is filled with $0$'s). Therefore $\mathcal{F}^w_{u'} \subset \mathcal{F}^w_u$ by the same argument as above.

    \item \textbf{Case 3}: Finally, if $u \leq u'$ and $u \nless_k u'$ (i.e. $\pi_k(u) = \pi_k(u')$), then the facets of $\mathcal{F}^w_{u'}$ have underlying skew-shape $\pi_k(w)/\pi_k(u)$ and Demazure product at least $u'_J$, which is greater than $u_J$ by assumption. Therefore \Cref{dem prod characterization} implies that every such facet is contained in $\mathcal{F}^w_u$, and so $\mathcal{F}^w_{u'}\subset \calF_u^w$.  
    \end{itemize} 
    \end{proof}

    \begin{prop}\label[proposition]{sideb: boundary containment}
        Let $F \subset \partial \mathcal{F}^w_u$ be a face in the boundary of a fence complex. Then, there exists $\langle u', w'\rangle < \langle u, w\rangle$ in $\mathcal{Q}(k,n)$ such that $F \subset \mathcal{F}^{w'}_{u'}$.
    \end{prop}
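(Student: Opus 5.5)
We reduce to codimension-one boundary faces and sort them with the two interior/boundary characterizations. Since $\calF_u^w$ is homeomorphic to a closed ball (\Cref{fence complexes are homeomorphic to balls}), its boundary is a pure complex of codimension one. So every face $F\subset\d\calF_u^w$ lies in some codimension one face $G\subset \d\calF_u^w$. It then suffices to prove the claim for such $G$: if $G\subset \calF_{u'}^{w'}$, then $F\subset G\subset\calF_{u'}^{w'}$.

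We take $w$ to be $k$-Grassmannian, which is allowed by \Cref{non-k-grass fence complex}. Let $G\subset\d\calF_u^w$ be codimension one, with shape $\lambda/\mu$. By \Cref{interior of the fence complex}, since $\int G\not\subset\int\calF_u^w$, one of two things holds: either $\lambda/\mu\neq\pi_k(w)/\pi_k(u)$, or the shape is full and $\delta(\w_G)>u_Jw_J^{-1}=u_J$. (The strict inequality uses \Cref{dem prod characterization}.)

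\textbf{Full shape case.} Put $v=\delta(\w_G)>u_J$ and $u'=\pi_k(u)v$. Then $\pi_k(u')=\pi_k(u)$ and $u'_J=v$. We need $u'\le w$ and $\langle u',w\rangle<\langle u,w\rangle$.
\begin{itemize}
\item By \Cref{AAL}, $G$ lies in a facet of $\calF_{u'}^{w}$; that is, $\mathfrak F_{u'}^w\neq\emptyset$. A reduced fence diagram there gives, via \Cref{prop: reduced fence faces know} (run in reverse, as in its proof), a saturated $k$-Bruhat chain from $u'$ to $w$. Hence $u'\le_k w$.
\item The parabolic factorization $u=\pi_k(u)u_J$ is length additive, and so is $u'=\pi_k(u)v$. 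Since $v>u_J$, we get $u<u'$. With $w$ Grassmannian, the Rietsch criterion gives $\langle u',w\rangle\le\langle u,w\rangle$.
\item The inequality is strict because the dimensions differ.
\end{itemize}
Finally, $G\subset\calF_{u'}^w$ by \Cref{dem prod characterization}, since $\delta(\w_G)=u'_J$. This is exactly the reverse of Case 3 of \Cref{sidea: boundary containment}.

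\textbf{Smaller shape case.} As in the proof of \Cref{lem:codim_1}, $G$ is obtained from a facet $H$ of $\calF_u^w$ by setting one minimal or maximal fence component of $H$ to $0$ or $1$. Take a standard fence tableau of $H$ whose entries $1$ fill that minimal component (or whose entry $|H|$ fills that maximal one). This is possible because the component is minimal (maximal) in the quotient poset of \Cref{rem:quotient_poset}. By \Cref{prop: reduced fence faces know}, this tableau is a saturated $k$-Bruhat chain $u=u_0\lessdot_k u_1\lessdot_k\cdots\lessdot_k u_m=w$.
\begin{itemize}
\item In the minimal case, deleting the first step gives a chain from $u_1$ to $w$. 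Its fence diagram is $G$ (cf. Case 2 of \Cref{sidea: boundary containment}), so $G\in\calF_{u_1}^w$ with $\langle u_1,w\rangle<\langle u,w\rangle$.
\item In the maximal case, we similarly get $G\in\calF_u^{u_{m-1}}$.
\end{itemize}

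\textbf{Main obstacle.} I expect the delicate step to be the full shape case: verifying that $u'=\pi_k(u)v$ really satisfies $u'\le_k w$ and gives a strictly smaller class in $\mathcal{Q}(k,n)$. If the direct route via nonemptiness of $\mathfrak F_{u'}^w$ and \Cref{prop: reduced fence faces know} proves awkward, I would instead pick $u_J\lessdot v'\le v$ as in the proof of \Cref{interior of the fence complex}. I would then use the codimension-one face $H'$ constructed there, together with the codimension-one description from \cite[Theorem 3.16]{KLS1}.
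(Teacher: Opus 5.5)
Your proof is correct and follows the paper's argument. In the full-shape case you set $u'=\pi_k(u)\delta(\w_G)$ and invoke \Cref{dem prod characterization}; in the smaller-shape case you peel off a minimal or maximal fence component of a facet using a suitably chosen standard fence tableau and \Cref{prop: reduced fence faces know}. The paper does the same. There are two minor differences: you reduce every boundary face to a codimension-one boundary face at the outset (the paper does this only for smaller shapes), and you justify $u'\le_k w$ and the strict inequality $\langle u',w\rangle<\langle u,w\rangle$, which the paper simply asserts holds ``by definition.''
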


    \begin{proof}
     Assume first that $F$ is of shape $\pi_k(w)/\pi_k(u)$. Then, 
     \Cref{dem prod characterization} implies that $\delta(\text{word}_F) > u_Jw_J^{-1}$. In particular, setting $u' = \pi_k(u)\delta(\w_F)$, this means that $F$ is contained in the fence complex $\mathcal{F}^w_{u'}$. Then $\langle u', w\rangle < \langle u, w\rangle$ by definition.

     When $F$ is not of shape $\pi_k(w)/\pi_k(u)$, observe that it suffices  to check the case where $F$ is a codimension $1$ face. Therefore we may assume that $F$ is either of shape $\pi_k(w)/\mu$ with $\mu > \pi_k(u)$ or shape $\lambda/\pi_k(u)$ with $\lambda < \pi_k(w)$. Let $G$ be a facet of $\calF_u^w$ containing $F$.
     
     Say first that $F$ is of shape $\pi_k(w)/\mu$. Then, $F$ is cut out by the same inequalities and equalities as $G$, except some top left fence component $A$ of $G$ is set to $0$. Take a fence tableau of $G$ with cells in $A$ filled with $1$'s. This corresponds to a $k$-Bruhat chain $u=u_0\lessdot_k u_1\lessdot_k \cdots \lessdot_k u_{m-1} \lessdot_k u_m= w$. Note that the $k$-Bruhat chain $u_1\lessdot_k \cdots \lessdot_k u_{m-1} \lessdot_k u_m= w$ projects to a fence tableau with associated fence diagram $F$. Thus, $F\subset \mathcal{F}_{u_1}^w$. Since $[u_1,w]\subset [u,w]$, we have $\Pi_{u_1}^w \subset \Pi_u^w$. 

    If instead $F$ is of shape $\lambda/\pi_k(u)$ for some $\lambda<\pi_k(w)$, then $F$ is cut out by the same inequalities and equalities as $G$, except some bottom right fence component $B$ of $G$ is set to $1$. Take a fence tableau of $G$ with cells in $B$ filled with the value $|G|$. This corresponds to a $k$-Bruhat chain $u=u_0\lessdot_k u_1\lessdot_k \cdots \lessdot_k u_{m-1} \lessdot_k w$. Deleting the component $B$ gives a fence tableau for $F$, which corresponds to a $k$-Bruhat chain $u=u_0\lessdot_k u_1'\lessdot_k\cdots \lessdot_k u_{m-1}' = w'$. It follows that $F\subset \mathcal{F}_{u}^{w'}$. Since $[u,w']\subset [u,w]$ we have $\Pi_u^{w'}\subset \Pi_u^w$. 
    \end{proof}

    \begin{prop}\label[proposition]{boundary of the fence complex} 
Let $\mathcal{F}_u^w$ be a fence complex, considered as a polyhedral subcomplex of $\mathcal{O}(P_{k,n})$. Then $\partial \mathcal{F}_u^w$ is a union of lower dimensional fence complexes. In particular, $\partial \mathcal{F}_u^w$ is the union of all $\calF_{u'}^{w'}$ such that $\Pi_{u'}^{w'}\subset \Pi_u^w$.  
\end{prop}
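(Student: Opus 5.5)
The plan is to prove the two inclusions separately; both are essentially packaged in Propositions \ref{sidea: boundary containment} and \ref{sideb: boundary containment}. Write $\mathcal{B}$ for the union of all $\calF_{u'}^{w'}$ with $\langle u',w'\rangle<\langle u,w\rangle$ in $\mathcal{Q}(k,n)$, i.e. with $\Pi_{u'}^{w'}\subsetneq \Pi_u^w$.

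For $\partial\calF_u^w\subset\mathcal{B}$, I would first note that $\partial \calF_u^w$ is a union of faces of the polyhedral complex $\calF_u^w$. This holds because $\calF_u^w$ is homeomorphic to a closed ball (Corollary \ref{fence complexes are homeomorphic to balls}), and by Theorem \ref{interior of the fence complex} a face's relative interior lies either entirely in the interior or entirely in the boundary. Proposition \ref{sideb: boundary containment} then places each boundary face $F$ inside some $\calF_{u'}^{w'}$ with $\langle u',w'\rangle<\langle u,w\rangle$.

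For $\mathcal{B}\subset\partial\calF_u^w$, fix $\langle u',w'\rangle<\langle u,w\rangle$. Proposition \ref{sidea: boundary containment} gives $\calF_{u'}^{w'}\subset\calF_u^w$, so it remains to show that $\calF_{u'}^{w'}$ avoids $\int \calF_u^w$. Since $\calF_{u'}^{w'}$ is the union of its facets, it suffices to treat a facet $G$, i.e. a reduced fence diagram of shape $\pi_k(w')/\pi_k(u')$ with $\w_G=u'_J{w'_J}^{-1}$ (working with suitable representatives). By Theorem \ref{interior of the fence complex}, $\int G\subset\int\calF_u^w$ would force two conditions: the shape of $G$ equals $\pi_k(w)/\pi_k(u)$, and $\delta(\w_G)=u_Jw_J^{-1}$. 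Because $G$ is reduced, $\delta(\w_G)=\w_G$, so these would give $\pi_k(u')=\pi_k(u)$, $\pi_k(w')=\pi_k(w)$ and $u'_J{w'_J}^{-1}=u_Jw_J^{-1}$. With $w,w'$ chosen $k$-Grassmannian (allowed by Remark \ref{non-k-grass fence complex}), this yields $w=w'$ and $u'_J=u_J$, hence $u=u'$, contradicting $\langle u',w'\rangle\neq\langle u,w\rangle$. Thus $\int G\subset\partial\calF_u^w$, and since the boundary is closed, $G\subset\partial\calF_u^w$.

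The main obstacle is this last injectivity step, namely that the pair (shape, word) determines the class in $\mathcal{Q}(k,n)$. I expect it to follow from the unique $k$-Grassmannian representative together with $u=\pi_k(u)u_J$, but the identification of $\w_G$ with $u'_J{w'_J}^{-1}$ across representatives must be checked carefully using Remark \ref{non-k-grass fence complex}. The two inclusions together give the statement.
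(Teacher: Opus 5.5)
Your proposal is correct and follows the paper's route: the paper's proof just combines \Cref{sidea: boundary containment} and \Cref{sideb: boundary containment}. Your extra step, which uses \Cref{interior of the fence complex} together with the fact that $\pi_k(u)$, $\pi_k(w)$ and $u_Jw_J^{-1}$ determine the class in $\mathcal{Q}(k,n)$ (as in \Cref{one face left}) to show that each proper $\calF_{u'}^{w'}$ avoids $\int\calF_u^w$, makes explicit a point the paper's one-line proof leaves implicit, since \Cref{sidea: boundary containment} alone only gives $\calF_{u'}^{w'}\subset\calF_u^w$ rather than containment in $\partial\calF_u^w$.
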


\begin{proof}
    This follows from  \Cref{sidea: boundary containment} and \Cref{sideb: boundary containment}. 
\end{proof}

\begin{prop}\label[proposition]{one face left}
    Let $F$ be a face of the order polytope of $\mathcal{O}(P_{k,n})$. Then $\int F\subset \int\calF_u^w$ for precisely one fence complex $\calF_u^w$. 
\end{prop}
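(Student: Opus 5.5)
Given a face $F$ of $\calO(P_{k,n})$, the plan is to write down the candidate fence complex explicitly and then use \Cref{interior of the fence complex} for both existence and uniqueness. By \Cref{prop:fence_faces}, $F$ corresponds to a fence diagram of some skew shape $\lambda/\mu$ inside the $k\times(n-k)$ rectangle. Set $v := \delta(\w_F)$. Since $\w_F = \col_F\row_F\{k\}$ with $\col_F$ a word in $s_1,\dots,s_{k-1}$ and $\row_F\{k\}$ a word in $s_{k+1},\dots,s_{n-1}$, the permutation $v$ lies in $S_k\times S_{n-k}$. Let $w$ be the $k$-Grassmannian permutation with $\pi_k(w)=\lambda$, and put $u := \pi_k(\mu)\,v$, where $\pi_k(\mu)$ denotes the $k$-Grassmannian permutation of $\mu$. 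Because $\pi_k(\mu)$ is a minimal coset representative and $v\in S_k\times S_{n-k}$, this factorization is length additive, so $\pi_k(u)=\mu$ and $u_J = v$. Also $w_J=\id$, since $w$ is $k$-Grassmannian.

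For existence, I first need $u\le w$. The relation $\mu\subseteq\lambda$ gives $\pi_k(u)\le w$ in the quotient, but it does not by itself give $u\le w$. To get this, I would apply \Cref{AAL} to $F$: it produces a facet $G$ of $\calF^{\pi_k(w)}_{\pi_k(u)v}=\calF_u^w$ containing $F$. By \Cref{prop: reduced fence faces know} and \Cref{cor:standard_fence_tableaux}, a standard fence tableau on $G$ yields a saturated $k$-Bruhat chain from $u$ to $w$. In particular $u\le_k w$, so $\langle u,w\rangle$ is a legitimate element of $\mathcal{Q}(k,n)$ and $F\subset\calF_u^w$. Now $F$ has shape $\pi_k(w)/\pi_k(u)$ and $\delta(\w_F)=v=u_Jw_J^{-1}$, so \Cref{interior of the fence complex} gives $\int F\subset\int\calF_u^w$.

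For uniqueness, suppose $\int F\subset \int\calF_{u'}^{w'}$, and choose the representative with $w'$ $k$-Grassmannian, using \Cref{non-k-grass fence complex}. Since $\int F$ is nonempty, it meets $\calF_{u'}^{w'}$, and because $\calF_{u'}^{w'}$ is a union of closed faces of the polytope, $F$ itself is a face of $\calF_{u'}^{w'}$. \Cref{interior of the fence complex} then forces $\pi_k(w')=\lambda$, $\pi_k(u')=\mu$ and $u'_J = u'_J(w'_J)^{-1} = \delta(\w_F) = v$. Hence $w'=w$ and $u'=\pi_k(u')u'_J=u$. Since $\mathcal{Q}(k,n)$ elements are determined by their $k$-Grassmannian representative (\Cref{projected richardsons}), the class $\langle u',w'\rangle$ equals $\langle u,w\rangle$.

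The step I expect to need the most care is the existence step: checking that $u=\pi_k(\mu)v$ satisfies $u\le_k w$ at all, which I plan to extract from the facet supplied by \Cref{AAL} together with the chain bijection. A secondary point is the reduction in the uniqueness step from ``$\int F\subset\int\calF$'' to ``$F$ is a face of $\calF$''. This holds because $\calF$ is a subcomplex of the polytope's face complex and relative interiors of distinct faces are disjoint.
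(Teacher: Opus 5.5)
Your proof is correct and takes the same route as the paper: apply \Cref{interior of the fence complex}, then observe that $\pi_k(u)$, $\pi_k(w)$ and $u_Jw_J^{-1}$ determine the class in $\mathcal{Q}(k,n)$. The paper's one-line proof leaves existence implicit, namely that $\pi_k(\mu)\delta(\w_F)\le_k w$ and that $F\subset\calF_u^w$; you supply this via \Cref{AAL} and the tableau-to-chain construction, which is a useful addition, though there you are really relying on the proof of \Cref{prop: reduced fence faces know}, since its statement already presupposes $u\le_k w$.
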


\begin{proof}
    By \Cref{prop:fence_faces}, $F$ corresponds to a fence diagram with shape $\lambda/\mu$ consisting of all of the cells $(i,j)$ such that $F$ contains some point with $x_{i,j}\notin\{0,1\}$. By \Cref{interior of the fence complex}, we have $\int F \subset \int\mathcal{F}^w_u$ if and only if the corresponding fence diagram satisfies $\pi_k(w) = \lambda$, $\pi_k(u) = \mu$, and $\delta(\w_F) = u_Jw_J^{-1}$. But $\pi_k(u),\pi_k(w)$ and $u_Jw_J^{-1}$ uniquely determine the class of $[u,w]_k$ in $\mathcal{Q}(k,n)$. 
\end{proof}

Since fence complexes are homeomorphic to closed balls by \Cref{fence complexes are homeomorphic to balls}, it follows from the above discussion that the fence complexes give $\calO(P_{k,n})$ the structure of a regular CW complex. Furthermore, the cell closure poset of this regular CW complex is the same as the stratification poset of positroid varieties. Galashin, Karp, and Lam showed in \cite{Galashin-karp-Lam} that the decomposition of $Gr(k,n)_{\geq 0}$ into positroid cells forms a regular CW complex. Since regular CW complexes are determined by their cell closure posets, we have shown the following theorem.

\begin{thm}\label[theorem]{regular CW complex structure} 
Each $\calF_u^w$ is homeomorphic to a closed ball, and the fence complexes $\calF_u^w$ give $P_{k,n}$ the structure of a regular CW complex with cell poset the same as $\Gr(k,n)_{\geq 0}$. In particular, $\Gr(k,n)_{\geq 0}$ and $P_{k,n}$ with the fence complex structure are isomorphic as regular CW-complexes.   
\end{thm}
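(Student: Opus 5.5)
The plan is to assemble the theorem from results already established, in three steps: verify the axioms of a regular CW complex on $\calO(P_{k,n})$ with closed cells $\calF_u^w$ and open cells $\int\calF_u^w$, identify the closure poset with $\mathcal{Q}(k,n)$, and then invoke Galashin--Karp--Lam.

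\textbf{Step 1: the cells are closed balls.} By \Cref{fence complexes are homeomorphic to balls}, each $\calF_u^w$ is homeomorphic to a closed ball of dimension $\ell(w)-\ell(u)$. The interiors $\int\calF_u^w$ are therefore open cells, and the pair $(\calF_u^w,\partial\calF_u^w)$ is a ball with its boundary sphere. This gives regularity, since the characteristic maps are homeomorphisms onto their images.

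\textbf{Step 2: the open cells partition $\calO(P_{k,n})$.} Every point lies in the relative interior of exactly one face $F$ of $\calO(P_{k,n})$. By \Cref{one face left}, $\int F$ lies in $\int\calF_u^w$ for exactly one class $\langle u,w\rangle$. Each $\int\calF_u^w$ is the disjoint union of the $\int F$ over faces $F\subset\calF_u^w$ not contained in the boundary. Hence the open cells are pairwise disjoint and cover the polytope.

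\textbf{Step 3: the CW axioms and the closure poset.} The boundary of each cell must be a union of lower-dimensional cells. By \Cref{boundary of the fence complex}, $\partial\calF_u^w$ is the union of the $\calF_{u'}^{w'}$ with $\Pi_{u'}^{w'}\subsetneq\Pi_u^w$. Each such complex is a union of open cells of smaller dimension, by Step 2 applied inside it. So the boundary is a subcomplex and the structure is a regular CW complex; finiteness makes closure finiteness and the weak topology automatic.

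The closure relation must match positroid inclusion. Suppose $\langle u',w'\rangle\le\langle u,w\rangle$. Then \Cref{sidea: boundary containment} gives $\calF_{u'}^{w'}\subset\calF_u^w$.

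Conversely, suppose $\int\calF_{u'}^{w'}$ meets $\calF_u^w$. Take a face $F$ of $\calF_{u'}^{w'}$ with $\int F\subset\int\calF_{u'}^{w'}$ and $\int F\cap\calF_u^w\neq\emptyset$. Then $F\subset\calF_u^w$, since $\calF_u^w$ is a polyhedral subcomplex. Now either $\int F\subset\int\calF_u^w$, in which case uniqueness in \Cref{one face left} forces $\langle u',w'\rangle=\langle u,w\rangle$. Or $F\subset\partial\calF_u^w$, in which case \Cref{boundary of the fence complex} places $F$ in some $\calF_{u''}^{w''}$ with $\langle u'',w''\rangle<\langle u,w\rangle$. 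We then induct on dimension: $\int F$ lies in the interior of a unique fence complex, which must be $\calF_{u'}^{w'}$. The inductive hypothesis, applied within $\calF_{u''}^{w''}$, gives $\langle u',w'\rangle\le\langle u'',w''\rangle<\langle u,w\rangle$. Therefore the face poset of this CW structure is $\mathcal{Q}(k,n)$, which by \Cref{calQ poset} is the positroid inclusion poset. That poset is also the closure poset of $\Gr(k,n)_{\ge0}$ by \cite{postnikovpositroids,Speyer-Williams-Postnikov}.

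\textbf{Conclusion.} By \cite{Galashin-karp-Lam}, $\Gr(k,n)_{\ge0}$ is a regular CW complex with this same face poset. A regular CW complex is homeomorphic, cell by cell, to the order complex of its face poset, so the two complexes are isomorphic.

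The main obstacle is the careful bookkeeping in Step 3, specifically the converse direction of the closure-poset identification. It requires the inductive argument above rather than a direct citation, to rule out a cell boundary meeting an incomparable cell. Everything else in the plan is assembly of earlier results.
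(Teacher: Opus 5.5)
Your proposal is correct and is essentially the paper's argument. The paper also combines \Cref{fence complexes are homeomorphic to balls}, the boundary description of \Cref{boundary of the fence complex}, \Cref{one face left}, the Galashin--Karp--Lam theorem, and the fact that regular CW complexes are determined by their face posets; it leaves implicit the inductive closure-poset argument you spell out. One small ordering point: your Step 3 claim that each boundary complex $\calF_{u'}^{w'}$ is a union of open cells does not follow from Step 2 alone, but it does follow once your subsequent induction is combined with \Cref{sidea: boundary containment}.
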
 

Our connection between the Ehrhart functions of $\calF_u^w$ and the Hilbert functions of $\Pi_u^w$ implies the following curious identity for positroid Hilbert functions. 
\begin{cor}
\[H_{\Pi_u^w}(n) =  \sum_{\Pi\subseteq\Pi_u^w} (-1)^{\dim \Pi} H_{\Pi}(-n),\] 
where the sum is over all positroid varieties $\Pi\subseteq\Pi_u^w$, including $\Pi_u^w$ itself. 
\end{cor}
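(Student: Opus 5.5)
The identity comes from decomposing $|\calF_u^w|$ into the relative interiors of the closed cells of the regular CW structure of \Cref{regular CW complex structure}, and then converting each interior lattice-point count into a Hilbert polynomial value via \Cref{mainresultC}.

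\emph{Step 1: cell decomposition.} By \Cref{boundary of the fence complex} and \Cref{one face left}, every face $F$ of $\calO(P_{k,n})$ lying in $\calF_u^w$ has $\int F$ contained in $\int\calF_{u'}^{w'}$ for exactly one class $\langle u',w'\rangle$. That class must satisfy $\Pi_{u'}^{w'}\subseteq \Pi_u^w$. Indeed, if $F\subset \d\calF_u^w$, then by \Cref{boundary of the fence complex} the face $F$ lies in some smaller fence complex, and we can induct. If $F\not\subset\d\calF_u^w$, the class is $\langle u,w\rangle$ itself. Since every point of $|\calF_u^w|$ lies in the relative interior of exactly one face, we get a disjoint decomposition
\[|\calF_u^w| = \bigsqcup_{\Pi_{u'}^{w'}\subseteq \Pi_u^w} \int \calF_{u'}^{w'}.\]
Scaling by $m$ preserves this decomposition, so counting lattice points gives, for all $m\in\bbN$,
\[\ehr_{\calF_u^w}(m)=\sum_{\Pi_{u'}^{w'}\subseteq\Pi_u^w}\intehr_{\calF_{u'}^{w'}}(m).\]

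\emph{Step 2: substitution.} By \Cref{ehrhart poly equals hilbert poly}, the left side equals $H_{\Pi_u^w}(m)$. By \Cref{mainresultC}, each summand equals $(-1)^{\dim\calF_{u'}^{w'}}H_{\Pi_{u'}^{w'}}(-m)$. The dimension of $\calF_{u'}^{w'}$ is $\ell(w')-\ell(u')=\dim\Pi_{u'}^{w'}$ by \Cref{fence complex is pure dimensional} and birationality of the projection. Hence the identity holds for every $m\in\bbN$. Both sides are polynomials in $m$, so it holds as a polynomial identity, and in particular at $n$.

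\emph{Main obstacle.} The only delicate point is the disjointness and exhaustiveness in Step 1. This requires that the interior of each fence complex is exactly the union of $\int F$ over the faces $F$ characterized in \Cref{interior of the fence complex}. That holds because $\calF_u^w$ is a polyhedral subcomplex homeomorphic to a ball, so its manifold interior is the union of the relative interiors of its non-boundary faces. With that in hand, \Cref{one face left} gives the disjointness directly.
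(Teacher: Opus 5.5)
Your proposal is correct and follows the paper's argument. You decompose $|\calF_u^w|$ into the disjoint interiors $\int\calF_{u'}^{w'}$ over all $\Pi_{u'}^{w'}\subseteq\Pi_u^w$, count lattice points to get $\ehr_{\calF_u^w}=\sum\intehr_{\calF_{u'}^{w'}}$, and convert each side using \Cref{ehrhart poly equals hilbert poly} and \Cref{mainresultC}; the paper does exactly this, and your Step 1 spells out the exhaustiveness and disjointness that it asserts in a single line.
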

\begin{proof}
 Since each point of $\calF_u^w$ is contained in the interior of precisely one fence complex, we have the following statement for Ehrhart polynomials, namely 
 \[\ehr_{\calF_u^w}(n) = \sum_{\calF} \intehr_\calF(n),\] where the sum is over all fence complexes $\calF\subset \calF_u^w$. Then, applying \Cref{mainresultC} gives the result. 
\end{proof}

\section{Background on Gr{\"o}bner Degenerations}\label[section]{Background on Grobner Degenerations}
In this section, we review some background on previously constructed degenerations of $\Gr(k,n)$ and $\Pi_u^w$. Let $S=\bbC[p_I]_{I\in {[n]\choose k}}$ be the polynomial ring with variables indexed by subsets $I\in {[n]\choose k}$. 

\subsection{Generalities on Gr{\"o}bner Degenerations and Standard Monomials}

Recall that via the Plücker embedding, we can view $\Gr(k,n)\subset \bbP^{{[n]\choose k}-1}$  as a projective variety with homogeneous coordinate ring $R=S/I_{k,n}$. The ideal $I_{k,n}$ is given by straightening relations of the form: 
\[p_Ip_J = p_{I\wedge J} p_{I\vee J} +\sum_{A<I\wedge J, B>I\vee J}c_{A,B} p_Ap_B,\] 
where $c_{A,B}\in \bbC$. Here, $I\wedge J$ and $I\vee J$ represent meet and join in Bruhat order on ${[n]\choose k}$.  For a full account of these relations, originally due to Hodge, see \cite{deConcini-Eisenbud-Procesi}, and see \cite[Lemma 7.15]{Gonciulea-Lakshmibai} for the fact that you can restrict the terms to $A<I\wedge J, B>I\vee J$.

It is known (from \cite{Sturmfels-degen} in type A, and in all Lie types from \cite{Gonciulea-Lakshmibai}) that there exists a weight degeneration of $R$  to the homogeneous coordinate ring of a toric variety. In particular, to each Plücker coordinate $p_I$, for $I=\{i_1<\cdots< i_k\}$, we assign the integral weight $\lambda(I) = i_1N^{k-1}+i_2N^{k-2}+\cdots +i_{k-1}N+i_k,$ where $N$ is some very large number. This induces a flat degeneration of $S/I_{k,n}$. 

In particular, recall (following \cite[Chapter 15.8]{EisenbudCAbook}) that for any integral weight function $\lambda$ on the polynomial ring $S$, and $g\in S$, we may define $\tilde{g}$ and $\initial_\lambda(g)$ as follows. Set $m=\max_{I} \lambda(I)$ where the maximum ranges over all $I\in {[n]\choose k}$, and write 
\[\tilde{g} := t^m g\left(t^{-\lambda(I)}p_I\biggm\lvert I\in {[n]\choose k}\right).\]
We can then write $\initial_\lambda(g) := t^m g\left(t^{-\lambda(I)}p_I\biggm\lvert I\in {[n]\choose k}\right)$ as the image of $\tilde{g}\otimes 1$ in $\tilde{I}\otimes \bbC[t]/(t)$ under the natural map to $S$. In other words, $\tilde g$ is obtained from $g$ by replacing each variable $p_I$ with $t^{-\lambda(I)}p_I$, and then multiplying by $t^m$, and $\initial_\lambda(g)$ can be thought of as setting $t=0$ in $\tilde{g}$.

We then define 
\[\tilde{I}_{k,n}%%\initial_\lambda(I_{k,n})
= \left\langle \tilde{g}\mid  g\in I_{k,n}\right\rangle \subset S[t]. \] 

Then, we have a flat family $S[t]/\tilde{I}$, where $S[t]/\tilde{I}_{k,n} \otimes_{\bbC[t]} \bbC[t,t^{-1}]\cong S/I_{k,n} [t,t^{-1}]$, and $S[t]/\tilde{I}_{k,n}\otimes_{\bbC[t]} \bbC[t]/(t)\cong S/\initial_\lambda(I_{k,n})$. Here,  
$\initial_\lambda(I_{k,n})$ is the image of $\tilde I_{k,n}\otimes_{\bbC[t]}\bbC[t]/(t)$ in $S$. Taking Proj, this corresponds to a flat degeneration of the Grassmannian. 
\begin{defn}
    Let $\psi: \mathscr{X}\to \bbC$ be the flat family defined by $\textrm{Proj} (S[t]/\tilde{I}_{k,n})$. 
\end{defn}
Observe that $\psi^{-1}(\bbC-\{0\})$ is simply the trivial family $\Gr(k,n)\times \bbC-\{0\}$, and $\psi^{-1}(0)$ is a toric variety: 
\begin{thm}[Sturmfels]
\label[theorem]{Gr degeneration} 
    For any integral weight function $\lambda$ on $S$ and $N >> 0$, we have $\initial_\lambda(I_{k,n})$ is the homogeneous ideal of $X(P_{k,n})\subset \bbP_\bbC^{{n\choose k}-1}$. 
\end{thm}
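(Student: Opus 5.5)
The plan is to compare $\initial_\lambda(I_{k,n})$ with the toric ideal of $X(P_{k,n})$, using an inclusion together with a Hilbert function count. Identify the vertices of $\calO(P_{k,n})$, i.e.\ order ideals of $[k]\times[n-k]$, with Young diagrams in the $k\times(n-k)$ box and hence, by \Cref{pik notation}, with subsets $I\in{[n]\choose k}$. Under this identification, union and intersection of order ideals correspond to join and meet in Gale order. Explicitly, if $I=\{i_1<\cdots<i_k\}$ and $J=\{j_1<\cdots<j_k\}$, then the $m$-th elements of $I\wedge J$ and $I\vee J$ are $\min(i_m,j_m)$ and $\max(i_m,j_m)$.

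Let $J_{k,n}\subset S$ be the kernel of the monomial map $p_I\mapsto t\,x^{v_I}$. By \Cref{Order polytopes are normal} and \Cref{normal polytope and coordinate ring}, $S/J_{k,n}$ is the homogeneous coordinate ring of $X(P_{k,n})$, so $J_{k,n}$ is the homogeneous ideal we want. Since $v_I+v_J=v_{I\wedge J}+v_{I\vee J}$ (indicator functions of ideals), every Hibi binomial $p_Ip_J-p_{I\wedge J}p_{I\vee J}$ lies in $J_{k,n}$. By Hibi's theorem these binomials generate $J_{k,n}$, although we will only need that they lie in it.

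\textbf{Step 1: leading forms of the straightening relations.} For incomparable $I,J$, I will show that
\[\initial_\lambda\Big(p_Ip_J-p_{I\wedge J}p_{I\vee J}-\sum c_{A,B}p_Ap_B\Big)=p_Ip_J-p_{I\wedge J}p_{I\vee J}.\]
Because $\lambda$ is linear in the sorted entries and meet/join are coordinatewise min/max, we have $\lambda(I)+\lambda(J)=\lambda(I\wedge J)+\lambda(I\vee J)$. It then remains to show that every tail term has strictly smaller weight (in the convention of the definition, a larger power of $t$).

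For this I will use torus homogeneity: the multiset $A\sqcup B$ equals $I\sqcup J$, together with $A<I\wedge J$ and $B>I\vee J$. Since $N\gg0$, comparing $\lambda(A)+\lambda(B)$ with $\lambda(I)+\lambda(J)$ amounts to a lexicographic comparison of the vectors $(a_m+b_m)_m$ and $(i_m+j_m)_m$. Look at the first index where $a_m\neq\min(i_m,j_m)$. There $a_m$ is smaller, and the multiset constraint should force $b_m$ not to compensate, giving a strict lexicographic decrease. I expect this combinatorial check to be the main obstacle. If it fails for a particular sign convention, I would first try swapping to the opposite term order, i.e.\ reversing which end is "initial". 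Granting it, $\initial_\lambda(I_{k,n})$ contains all Hibi binomials, and hence contains $J_{k,n}$ whenever Hibi generation holds.

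\textbf{Step 2: equality via Hilbert functions.} The degeneration is flat, so $S/\initial_\lambda(I_{k,n})$ has the Hilbert function of $\Gr(k,n)$. Taking $u=e$ and $w$ the longest $k$-Grassmannian permutation, $\calF_u^w=\calO(P_{k,n})$, so \Cref{ehrhart poly equals hilbert poly} shows this Hilbert function is $\ehr_{\calO(P_{k,n})}(m)$. The Hilbert function is actually equal for every $m$, not just polynomially: both sides are computed by the same $h$-vector via \Cref{triangle equals complex h-vector}, and $\Gr(k,n)$ has no higher cohomology. By \Cref{global sections of polyhedral complex} and normality, $\ehr_{\calO(P_{k,n})}(m)$ is also the Hilbert function of $S/J_{k,n}$. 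Thus $J_{k,n}\subseteq\initial_\lambda(I_{k,n})$ is an inclusion of homogeneous ideals with equal Hilbert functions in every degree, and so the two ideals are equal.

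In Step 2 I will avoid relying on Hibi generation. The argument only needs $\initial_\lambda(I_{k,n})\supseteq J_{k,n}$. I will obtain this from Step 1 together with the standard fact that the Hibi binomials generate the toric ideal of a distributive lattice, citing Hibi.
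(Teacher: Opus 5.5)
Your architecture (the Hibi binomials are initial forms of the straightening relations, then a Hilbert function comparison) is the standard Gonciulea--Lakshmibai argument, which the paper only cites. Your Step 2 is fine. The gap is Step 1, which is the whole content of the theorem. You claim every tail term $p_Ap_B$ has strictly smaller weight, i.e.\ a strict lexicographic decrease of row sums. This is false in every instance. Already in $\Gr(2,4)$, the relation for the incomparable pair $\{1,4\},\{2,3\}$ is $p_{14}p_{23}=p_{13}p_{24}-p_{12}p_{34}$. Its row-sum vectors are $(3,7)$, $(3,7)$ and $(4,6)$, so the tail is lexicographically \emph{larger}, with weight $4N+6>3N+7$. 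Your heuristic inspects the first index where $a_m\neq\min(i_m,j_m)$; here that is $m=2$, where indeed $a_2=2<3$. But the comparison was already decided at $m=1$ by $b_1=3>2=\max(i_1,j_1)$.

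The correct argument runs the other way.
\begin{itemize}
\item Put $\alpha_m=\min(i_m,j_m)$ and $\beta_m=\max(i_m,j_m)$. Let $m$ be the first index with $(a_m,b_m)\neq(\alpha_m,\beta_m)$; it exists since $A\neq I\wedge J$.
\item Torus homogeneity gives $\{a_m,\dots,a_k\}\sqcup\{b_m,\dots,b_k\}=\{\alpha_m,\dots,\alpha_k\}\sqcup\{\beta_m,\dots,\beta_k\}$ as multisets.
\item The right side has minimum $\alpha_m$, because $\beta_j\ge\alpha_j\ge\alpha_m$ for $j\ge m$. So $a_m\ge\alpha_m$, and together with $A\le I\wedge J$ this forces $a_m=\alpha_m$.
\item Hence $b_m>\beta_m$. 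Every tail term is strictly larger in row-sum lex, and the Hibi binomial is the \emph{minimal}-weight part of the relation.
\end{itemize}
Consequently, read $\initial_\lambda$ literally as in the definition of $\tilde g$, which keeps the maximal-weight terms. Then $\initial_\lambda$ of the relation above is the monomial $p_{12}p_{34}$. That monomial cannot lie in the toric ideal $J_{k,n}$, which is prime and contains no monomials. So the statement holds only for $-\lambda$, or equivalently under the min-weight convention that the paper's row-sum-lex remark implicitly uses.

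Your hedge about swapping the term order is therefore forced, not optional. Step 1 has to be proved by the argument above rather than the one you sketched. With that repair, the Hibi binomials lie in the initial ideal and your Hilbert function comparison completes the proof.

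You also say you will avoid Hibi generation and then cite it; you can genuinely avoid it. Let $H$ be the ideal generated by the Hibi binomials. Modulo $H$, repeated use of these binomials sorts every monomial into a standard monomial. So $\dim(S/H)_m$ is at most the number of standard monomials of degree $m$. That number equals both $\dim(S/J_{k,n})_m$ and the $m$-th Hilbert function value of the initial ideal. This forces $H$, $J_{k,n}$ and the initial ideal to coincide.
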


Here, $X(P_{k,n})$ is the toric variety of the order polytope $P_{k,n}$ (embedded into projective space via the line bundle corresponding to $P_{k,n})$. We have the following explicit description of its homogeneous ideal,  $I(P_{k,n})$. 

\begin{prop}
    $I(P_{k,n})$ is generated by relations of the form $p_Ip_J - p_{I\wedge J} p_{I\vee J}$ where $I$ and $J$ are incomparable in Bruhat order. 
\begin{proof}
Recall the relations of $I(P_{k,n})$ are generated by $p_Ip_J = p_Kp_L$ where $v_I+v_J = v_K+v_L$. This follows, for instance, from the fact that $P_{k,n}$ is a normal polytope and \cite[Proposition 4.8]{Eisenbud-Sturmfels}.  It is easy to check that $v_I+v_J = v_K+v_L$ if and only if $I\wedge J = K\wedge L$ and $I\vee J= K\vee L$. The result follows. 
\end{proof}
\end{prop}

In the upcoming sections, we will use the following alternative characterization of $\initial_\lambda$ for large enough $N$. 
\begin{defn}
    Let $I_1,\ldots, I_t, J_1,\ldots, J_t \in {[n]\choose k}$, with $I_r = \{i^r_1<\cdots i^r_k\}$ and $J_r =\{j_1^r<\cdots <j^r_k\}$. We say $p_{I_1}\cdots p_{I_t}$ is less than $p_{J_1}\cdots p_{J_t}$ in \emph{row sum lex} if we have  
    \[\left(\sum_{r=1}^t i_1^r, \sum_{r=1}^t i_2^r,\ldots, \sum_{r=1}^t i_k^r\right) < \left(\sum_{r=1}^t j_1^r, \sum_{r=1}^t j_2^r,\ldots, \sum_{r=1}^t j_k^r\right)  \] 
in lexicographical order. In other words, viewing each monomial as a $k\times t$ standard Young tableau, when we sum up the rows of the tableaux and compare using lexicographic order prioritizing the top row, $p_{I_1}\cdots p_{I_t}$ has a smaller row sum vector than $p_{J_1}\cdots p_{J_t}$. 
\end{defn}

\begin{rem}\label[remark]{suffices to consider finitely many} 
    Let $g= \sum a_{I_1,\ldots, I_t}p_{I_1}\cdots p_{I_t}$ be a homogeneous polynomial in $S$ of degree $d$. Similar to a monomial ordering, we may define the initial term: 
    \[\initial_{\rsl} (g) = \sum_{p_{I_1}\cdots p_{I_t} \textrm{ the smallest in } \textrm{row sum lex}} a_{I_1,\ldots, I_t}p_{I_1}\cdots p_{I_t}.\] 

    If $N>nd$ then observe that $\initial_{\rsl}(g) = \initial_\lambda (g)$ for any $\lambda$. In particular, for any finite set of polynomials $f_1,\ldots, f_m\in S$, we may take $N>>0$ such that $\initial_\lambda(f_i)=\initial_\rsl(f_i)$. This will suffice for our purposes. 
\end{rem}

$X(P_{k,n})$ degenerates further under the Gr{\"o}bner degeneration given by reverse lexicographic monomial order. To describe the latter Gr{\"o}bner degeneration explicitly, recall that a monomial in $S$ is called a \textit{standard monomial} if it is of the form $p_{I_1}\cdots p_{I _t}$ where $I_1\leq \cdots \leq I_t$ is a chain in Bruhat order. It was shown in \cite{Hodge_1943} that the standard monomials form a basis for the homogeneous coordinate ring $\bbC[\Gr(k,n)]$ of $\Gr(k,n)$ (for a more modern treatment, see \cite[Lemmas 7.2.2 and 7.2.3]{Bruns_Herzog_1998}). Moreover, fixing some total extension of Bruhat order on ${[n]\choose k}$ and letting $<_{\revlex}$ be the induced reverse lexicographic monomial order, the standard monomials form a basis for $S/\initial_\revlex (I_{k,n})$. This is due to Sturmfels and White in \cite{Sturmfels-White}. Hence, we have that \[\initial_{\revlex}(I_{k,n}) = \langle p_Ip_J: \textrm{$I$ and $J$ are not comparable in Bruhat order} \rangle. \] 

\begin{prop}
For $N>>0$, we have
\[\mathrm{in}_{\mathrm{revlex}}(I(P_{k,n})) = \mathrm{in}_{\mathrm{revlex}}(I_{k,n}).\] 
\begin{proof}
Since the two ideals have the same Hilbert function (as $I(P_{k,n})$ and $\initial_{\textrm{revlex}}(I_{k,n})$ are both weight degenerations of $I_{k,n}$), it suffices to show that $\initial_{\textrm{revlex}}(I_{k,n})$ is contained in $\initial_{\textrm{revlex}}(I(P_{k,n}))$. Recall that $I(P_{k,n})$ is generated by equations $p_Ip_J - p_{I\wedge J} p_{I\vee J}$ for $I,J$ incomparable in Bruhat order, and $\initial_{\textrm{revlex}}(I_{k,n})$ is generated by $p_Ip_J$ for $I,J$ incomparable in Bruhat order. Now, observe that $\initial_{\textrm{revlex}}(p_Ip_J-p_{I\wedge J} p_{I\vee J}) = p_Ip_J$, so it follows that $\initial_{\textrm{revlex}}(I_{k,n})\subset \initial_{\textrm{revlex}}(I(P_{k,n}))$, as desired. 
\end{proof}
\end{prop}

We now discuss the ideals of positroid varieties and their known Gr{\"o}bner degeneration under reverse lexicographic order. We begin with the following lemma, whose proof we include for lack of a better reference. 
\begin{lemma}
    Let $M$ be a positroid corresponding to an interval $[u,w]$ with $w$ $k$-Grassmannian. The bases of $M$ are precisely the sets in $\pi_k([u,w])\subset {[n]\choose k}$. 
\begin{proof}
   Recall from \cite[Corollary 5.12]{KLS1} that $\Pi_u^w$ is the closure of the GGMS matroid strata of $M$, and hence that the $T$-fixed points of $\Pi_u^w$ correspond exactly to bases of $M$. By \Cref{projected richardsons}, we have that $\Pi^w_u = \pi_k(X_u^w)$. Further, by Borel's fixed point theorem, if $p\in \pi_k(X_u^w)$ is a $T$-fixed point then $\pi^{-1}(p)$ must contain a $T$-fixed point of $X_u^w$. But the $T$-fixed points of $X_u^w$ are precisely $[u,w]$, so the result follows.
\end{proof}
\end{lemma}

We will write $I_u^w$ for the ideal of $\Pi_u^w$ in $Gr(k,n)$. Then, \cite[Theorem 5.15]{KLS1} shows that
\[\ I_u^w = I_{k,n}+ \langle p_I\mid  I\textrm{ is not a base of } M \rangle.\] 
The preceding lemma then implies that $I_u^w$ is given by \[I_{k,n}+ \langle p_I\mid I\notin \pi_k([u,w])\rangle \] 
Gr{\"o}bner degenerations of positroid varieties under reverse lexicographic monomial order were worked out by Knutson, Lam and Speyer in \cite{KLS1}. 

\begin{thm}[KLS]\label[theorem]{initial ideal of I_u^w}
  The ideal $\mathrm{in}_{\mathrm{revlex}}(I_u^w)$ is generated by all monomials $p_{I_1}\cdots p_{I_t}$ such that $\{I_1,\ldots, I_t\}$ is not a face of $\pi_k(\Delta[u,w])$. 
\end{thm}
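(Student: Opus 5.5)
The plan is to identify the face monomials of $\pi_k(\Delta[u,w])$ as exactly the standard monomials (in the Gr\"obner sense) of $I_u^w$ with respect to $<_{\revlex}$. Let $J$ be the monomial ideal generated by all $p_{I_1}\cdots p_{I_t}$ with $\{I_1,\ldots,I_t\}$ not a face of $\pi_k(\Delta[u,w])$. Since faces of $\pi_k(\Delta[u,w])$ are chains in Bruhat order with vertices in $\pi_k([u,w])$, the monomials outside $J$ are exactly the standard monomials $p_{I_1}\cdots p_{I_t}$ ($I_1\le\cdots\le I_t$) whose support is a face. Because $\initial_{\revlex}(I_u^w)$ is a monomial ideal, the monomials outside it form a $\bbC$-basis of $\bbC[\Pi_u^w]=S/I_u^w$. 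It therefore suffices to show two things. First, the face monomials span $S/I_u^w$. Second, in each degree $d$ the number of face monomials equals $\dim \bbC[\Pi_u^w]_d$. Given both, a routine argument finishes: if a face monomial $m$ were a leading term, reduce the other terms of the corresponding element of $I_u^w$. This shows the face monomials are exactly the monomials outside $\initial_{\revlex}(I_u^w)$, and hence $\initial_{\revlex}(I_u^w)=J$.

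For the spanning statement I would use the presentation $I_u^w=I_{k,n}+\langle p_I\mid I\notin\pi_k([u,w])\rangle$. By Hodge's straightening, every monomial is congruent modulo $I_{k,n}$ to a combination of standard monomials. Any standard monomial involving some $p_I$ with $I\notin \pi_k([u,w])$ already lies in $I_u^w$. What remains are chains $I_1\le\cdots\le I_t$ of elements of $\pi_k([u,w])$ that are \emph{not} faces of $\pi_k(\Delta[u,w])$. For these I need to show that the product either vanishes on $\Pi_u^w$ or straightens, modulo $I_u^w$, into face monomials. The idea is to pull back along $\pi_k:X_u^w\to\Pi_u^w$ and use standard monomial theory on Richardson varieties (Lakshmibai--Littelmann, or Kreiman--Lakshmibai). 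On $X_u^w$ the products of extremal weight vectors that survive are indexed by chains $u\le v_1\le\cdots\le v_t\le w$, and a chain in $\pi_k([u,w])$ that lifts to such a Bruhat chain is by definition a face.

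For the dimension count, let $N_d$ be the number of degree-$d$ face monomials, i.e.\ multichains of length $d$ supported on faces of $\pi_k(\Delta[u,w])$. I would compare $\dim\bbC[\Pi_u^w]_d$ with $N_d$. On the geometric side, $\Pi_u^w$ is projectively normal and has vanishing higher cohomology of $\calO(d)$ by compatible Frobenius splitting of positroid varieties, as in \Cref{ehr = cohom}. Using the birational map from $X_u^w$ when $u\le_k w$, this gives $\dim\bbC[\Pi_u^w]_d=\dim H^0(X_u^w,\pi_k^*\calO(d))$. That dimension is counted by standard monomial theory on the Richardson variety, which produces exactly the monomials on $k$-Bruhat chains modulo projection. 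By \Cref{maximal chains bij}, these monomials are in bijection with the multichains supported on faces of $\pi_k(\Delta[u,w])$.

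The main obstacle is the spanning step: proving that a Bruhat chain in $\pi_k([u,w])$ which does not lift to a chain in $[u,w]$ gives a standard monomial that vanishes, or is straightened away, modulo $I_u^w$. This is where the non-order-complex nature of $\pi_k(\Delta[u,w])$ enters. My first attempt would be to induct on $\ell(w)-\ell(u)$ using the codimension-one boundary divisors of $\Pi_u^w$ and the shellability in \Cref{shellable}, checking vanishing on each component $\Pi_{u'}^{w'}$ separately. Failing that, I would fall back on the Frobenius-splitting argument of KLS.
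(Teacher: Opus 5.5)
Your reduction to a basis statement plus a dimension count has a real gap, and it is in the step you call routine. The paper itself does not prove this statement; it quotes it from KLS, noting that their argument is inductive. Knowing that the face monomials form a $\bbC$-basis of $S/I_u^w$ does not make them the monomials outside $\initial_{\revlex}(I_u^w)$. A quotient can have many monomial bases, and which one is standard depends on the order. Already for $\Gr(2,4)$, the monomials not divisible by $p_{12}p_{34}$ form a basis of $S/I_{2,4}$ (they are the standard monomials for lex with $p_{12}$ largest), while the revlex-standard ones are those not divisible by $p_{14}p_{23}$. Your reduction of $f=m+\sum_{m'<m}c_{m'}m'\in I_u^w$ gives a contradiction only if every $m'<m$ is congruent modulo $I_u^w$ to a combination of face monomials that are all $\le m'$ in revlex. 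Otherwise, rewriting the $m'$ can reintroduce $m$. That triangularity is the entire order-dependent content of the theorem, and you never establish it. Hodge straightening has it, but it only brings you down to chain monomials supported on $\pi_k([u,w])$. For a non-face chain $I_1\le\cdots\le I_t$ there, standard monomial theory on $X_u^w$ (Lakshmibai--Littelmann) writes it in face monomials with no control over where those sit in revlex. What you actually need is that every minimal non-face chain monomial is the revlex-leading term of some element of $I_u^w$. That gives $J\subseteq\initial_{\revlex}(I_u^w)$, and then your dimension count finishes.

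The obstacle you single out is not the real one. The first branch of your dichotomy ``vanishes or straightens'' is empty. $S/I_u^w$ is a domain, and $p_I\notin I_u^w$ for every $I\in\pi_k([u,w])$, since each such $I$ is a basis of the positroid. So no chain monomial supported on $\pi_k([u,w])$ vanishes on $\Pi_u^w$, and your fallback of checking vanishing on boundary components is aimed at a false statement. Conversely, once you cite standard monomial theory for $X_u^w$, spanning and the count come together. You also need normality of $\Pi_u^w$ so that $\pi_{k*}\calO_{X_u^w}=\calO_{\Pi_u^w}$; birationality alone does not identify the sections. The missing order-specific input has to come from somewhere else. One standard way to supply it is the behaviour of revlex with respect to its last variable $x$: $\initial_{\revlex}(I+(x))=\initial_{\revlex}(I)+(x)$ and $\initial_{\revlex}(I:x)=\initial_{\revlex}(I):x$. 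Apply this with $x=p_{\pi_k(u)}$, together with the Frobenius-splitting fact that $\Pi_u^w\cap\{p_{\pi_k(u)}=0\}$ is a reduced union of smaller positroid varieties. Alternatively, the Almousa--Gao--Huang alternating sums $\sum_{\sigma\in S_D}(-1)^{\ell(\sigma)}\sigma\cdot T$, whose revlex-leading term is the non-face tableau $T$, are exactly the witnesses your argument lacks.
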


\subsection{Toric Degenerations of Positroid Varieties}\label[section]{Toric Degenerations of Positroid Varieties}
Recall the notation $S:= \bbC[p_I]_{I\in {[n]\choose k}}$, and fix an integral weight function $\lambda$ on $S$ throughout. In this section we show the following theorem: 
\begin{thm}\label[theorem]{mainresultD}
The scheme-theoretic closure of $\Pi_u^w\times (\bbC-\{0\})$ in $\mathscr{X}$ is a flat family $\mathscr{X}_u^w \to \bbC$  with generic fiber $\Pi_u^w$ and special fiber $X(\calF_u^w)$. Furthermore, this degeneration is $T$-equivariant with respect to the action of the standard torus on $\Pi_u^w$. 
\end{thm}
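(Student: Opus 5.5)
The plan is to compute the special fiber algebraically. Since $\mathscr{X}$ is $\mathrm{Proj}(S[t]/\tilde I_{k,n})$, the scheme-theoretic closure of $\Pi_u^w\times(\bbC-\{0\})$ is $\mathrm{Proj}(S[t]/\tilde J)$ with
\[\tilde J=\left\langle \tilde g \mid g\in I_u^w\right\rangle\subset S[t].\]
Taking the closure removes $t$-torsion, so this family is automatically flat over $\bbC[t]$. Its generic fiber is $\Pi_u^w$, and its special fiber is $\mathrm{Proj}(S/\initial_\lambda(I_u^w))$. $T$-equivariance should be formal: the ideals $I_u^w$ and $I_{k,n}$ are $T$-homogeneous, since the Plücker variables are $T$-weight vectors, and the weight $\lambda$ is a function of the variables only. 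So the whole theorem reduces to the ideal equality
\[\initial_\lambda(I_u^w)=I(\calF_u^w).\]

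\textbf{Step 1: Hilbert functions agree.} Flatness gives $H_{S/\initial_\lambda(I_u^w)}=H_{\Pi_u^w}$, and I would check that this equality holds for all degrees, not just for $m\gg0$. This uses the KLS Stanley--Reisner degeneration: the Hilbert function of $\bbC[\Pi_u^w]$ equals that of $SR(\pi_k(\Delta[u,w]))$ in every degree. By \Cref{canonical equals projected}, the $T$-$h$-vector argument of \Cref{triangle equals complex h-vector}, and \Cref{global sections of polyhedral complex} with \Cref{normal polytope and coordinate ring}, that common value equals $\#(m\calF_u^w\cap\bbZ^n)$, the Hilbert function of $S/I(\calF_u^w)$. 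Once the two ideals have the same Hilbert function, it suffices to prove the single inclusion $I(\calF_u^w)\subseteq \initial_\lambda(I_u^w)$.

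\textbf{Step 2: generators of $I(\calF_u^w)$.} The ideal $I(\calF_u^w)$ is the kernel of $S\to\bigoplus_m\bbC\langle m\calF_u^w\cap\bbZ^n\rangle$. I would show it is generated by two families:
\begin{itemize}
\item the binomials $p_Ip_J-p_{I\wedge J}p_{I\vee J}$ generating $I(P_{k,n})$;
\item monomials $p_{I_1}\cdots p_{I_t}$ whose lattice-point sum $v_{I_1}+\cdots+v_{I_t}$ does not lie in $t\calF_u^w$.
\end{itemize}
A monomial in $X(P_{k,n})$ vanishes on $X(\calF_u^w)$ exactly when the minimal face of $P_{k,n}$ containing $\tfrac1t\sum v_{I_r}$ is not a face of $\calF_u^w$. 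Modulo the binomials, every monomial straightens to a standard monomial $p_{J_1}\cdots p_{J_t}$ with $J_1\le\cdots\le J_t$, giving the same lattice point. Its minimal face is then spanned by a chain. So the monomial generators can be taken to be standard monomials on chains $\{J_1<\cdots<J_t\}$ that are not faces of $\calT_u^w=\pi_k(\Delta[u,w])$ (\Cref{canonical equals projected}). There is also a care point with degree-$1$ generators $p_I$, $I\notin\pi_k([u,w])$: these lie in both ideals.

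\textbf{Step 3: each generator lies in $\initial_\lambda(I_u^w)$.} The binomials are handled by \Cref{Gr degeneration}: $\initial_\lambda(I_{k,n})\subseteq\initial_\lambda(I_u^w)$. For a standard monomial $p_{J_1}\cdots p_{J_t}$ on a non-face chain, I need some $g\in I_u^w$ with $\initial_\lambda(g)$ equal to it modulo the toric binomials. By \Cref{suffices to consider finitely many}, only finitely many $g$ are needed, and $\initial_\lambda$ may be replaced by row-sum-lex.

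This is where I expect the main obstacle. \Cref{initial ideal of I_u^w} gives a revlex Gröbner basis, but revlex and row-sum-lex initial terms need not agree. I would use the explicit Gröbner basis of Almousa--Gao--Huang for $I_u^w$. Its elements have the form $p_{J_1}\cdots p_{J_t}$ (a minimal non-face chain) plus lower terms, where the lower terms involve chains that are larger in Bruhat order, hence larger in row-sum-lex. Writing such a polynomial in straightened form, I would try to show that the standard monomial $p_{J_1}\cdots p_{J_t}$ is the unique row-sum-lex minimal term. Among standard monomials with a given row-sum vector, the chain one is the lexicographic minimum, and this is the precise technical input I would isolate as a lemma.

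With that lemma in hand, $I(\calF_u^w)\subseteq\initial_\lambda(I_u^w)$. Equality then follows from the Hilbert function comparison in Step 1, and \Cref{semi-toric from complex} identifies the special fiber as the reduced scheme $X(\calF_u^w)$.
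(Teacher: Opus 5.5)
\textbf{Where your proposal matches the paper.} Your reduction is the paper's. You set up the family $\mathrm{Proj}(S[t]/\tilde I_u^w)$ and reduce to $\initial_\lambda(I_u^w)=I(\calF_u^w)$. You take generators of $I(\calF_u^w)$ to be the Hibi binomials plus chain monomials that are not faces of $\pi_k(\Delta[u,w])$. You handle the binomials via $\initial_\lambda(I_{k,n})\subseteq\initial_\lambda(I_u^w)$. Comparing Hilbert functions in every degree, instead of Hilbert polynomials, is a legitimate small variant. It relies on degree-one generation of $\bigoplus_m\bbC\langle m\calF_u^w\cap\bbZ^n\rangle$ and lets you drop the paper's radicality lemma.

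\textbf{The gap.} The unproved step is showing that each non-face chain monomial lies in $\initial_\lambda(I_u^w)$. This is the real content of the theorem: it is the whole appendix, \Cref{so we are done}. Your sketch of it rests on an incorrect description of the Almousa--Gao--Huang elements, so the lemma you propose to isolate would not prove it.

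\textbf{What the AGH elements actually are.} They are $\sum_{\sigma\in S_D}(-1)^{\ell(\sigma)}\sigma\cdot T$ and the duals $\bigl(\sum_{\sigma}(-1)^{\ell(\sigma)}\sigma\cdot T\bigr)^\vee$, where $\sigma$ permutes the entries of a generalized antidiagonal $D$ of an SSYT $T$.
\begin{itemize}
\item Every term $\sigma\cdot T$ has the same content (torus weight) as $T$, and is usually not a chain. So the other terms are not ``chains larger in Bruhat order'': equal content is incompatible with strict columnwise Gale domination.
\item A statement about standard monomials with a fixed row-sum vector therefore compares the wrong objects.
\item Straightening first does not help, since it only produces terms of weakly larger row-sum.
\end{itemize}

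\textbf{What must be proved.} For every $\sigma$ not fixing $T$ up to column order, the column-sorted $\sigma\cdot T$ must have strictly larger row-sum vector than $T$.
\begin{itemize}
\item For the first family, the paper takes the smallest moved entry $a_i$, in cell $(r,c)$. Every changed column lies weakly left of $c$, so its entries in rows $\le r$ (other than $a_i$ itself) are $\le a_i$ and untouched. Meanwhile the $(r,c)$ entry strictly increases.
\item For the dual family, which your proposal does not mention, one must show $(\sigma T)^\vee$ exceeds $T^\vee$ in row-sum lex. This is substantially harder. It needs the normalization of $\sigma$ in \Cref{simplify permutation}, the function $f$ locating the first changed row of each column of the complement tableau, and the cycle argument of \Cref{cycle lemma}.
\end{itemize}

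\textbf{A smaller missing step.} You also need to say why a given non-face chain monomial is divisible by the revlex-leading term of some $f_i$. It lies in $\initial_{\revlex}(I_u^w)$, and no straightening relation's leading term (a product $p_Ip_J$ with $I,J$ incomparable) divides a chain monomial. Only then does the agreement of row-sum-lex and revlex initial terms for the $f_i$ put it in $\initial_\lambda(I_u^w)$.
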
 

The following lemma follows from \cite[Proposition 4.8]{Eisenbud-Sturmfels}. Recall that we always denote the homogeneous ideal of a polarized toric variety $X(\mathcal{K})$ with respect to the projective embedding induced by $\mathcal{K}$ by $I(\mathcal{K})$. 

%\begin{lemma}\label{fence ideal} 
    %The homogeneous ideal of $X(\calF_u^w)$ in $S$ is generated by Hibi relations of the form $p_Ip_J-p_{I\vee J} p_{I\wedge J}$ and $p_{I_1}\cdots p_{I_t}$ where $I_1,\ldots, I_t$ are not contained in a facet of $\calF_u^w$.  
%\end{lemma}

\begin{lemma}\label[lemma]{fence ideal} 
    $I(\calF_u^w)\subset S$ is generated by binomials of the form $p_Ip_J - p_{I\wedge J}p_{I\vee J}$ and monomials of the form $p_{I_1}\cdots p_{I_t}$ where $I_1,\ldots, I_t \in {[n]\choose k}$ are minimal collections of $k$-element subsets such that $I_1,\ldots, I_t$ are not contained in any face of $\calF_u^w$.
\end{lemma}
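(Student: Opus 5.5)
The plan is to obtain the description of $I(\calF_u^w)$ from the description of $I(P_{k,n})$ by adding Stanley–Reisner type monomials, and then to check equality by a dimension count in each degree. Let $J\subset S$ be the ideal generated by the Hibi binomials $p_Ip_J-p_{I\wedge J}p_{I\vee J}$ together with the monomials $p_{I_1}\cdots p_{I_t}$ for those $\{I_1,\ldots,I_t\}$ that lie in no face of $\calF_u^w$. The monomial generators can be taken to be the minimal such collections; every non-minimal one is a multiple of a minimal one. The aim is to prove $J=I(\calF_u^w)$.

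The containment $J\subseteq I(\calF_u^w)$ comes first. The binomials lie in $I(P_{k,n})\subseteq I(\calF_u^w)$ by the description of $I(P_{k,n})$ proved above. For the monomials, \Cref{global sections of polyhedral complex} and \Cref{normal polytope and coordinate ring} identify the homogeneous coordinate ring of $X(\calF_u^w)$ with $\bigoplus_m \bbC\langle m\calF_u^w\cap\bbZ^n\rangle$, using that order polytopes are normal (\Cref{Order polytopes are normal}). Under this identification, $p_{I_1}\cdots p_{I_t}$ maps to the lattice point $v_{I_1}+\cdots+v_{I_t}\in tP_{k,n}$. This point lies in $t\calF_u^w$ if and only if it lies in $tF$ for some face $F$ of $\calF_u^w$. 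Since $F$ is a face of $P_{k,n}$ and the $v_I$ are vertices of $P_{k,n}$, the point $\frac1t\sum v_{I_r}$ lies in $F$ exactly when every $v_{I_r}$ lies in $F$. So if the collection is contained in no face of $\calF_u^w$, the monomial restricts to zero, and $J\subseteq I(\calF_u^w)$.

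The reverse containment is a dimension count. Using the Hibi binomials, any monomial is congruent modulo $J$ to a standard monomial $p_{I_1}\cdots p_{I_t}$ with $I_1\le\cdots\le I_t$, by repeatedly replacing an incomparable pair with its meet and join; this terminates, as in Hibi's theory of the toric ring of the distributive lattice $\binom{[n]}{k}$. A chain $I_1\le\cdots\le I_t$ corresponds to a simplex of the canonical triangulation of $P_{k,n}$. If this chain is not contained in a face of $\calF_u^w$, the monomial lies in $J$. Hence $(S/J)_t$ is spanned by standard monomials whose chains lie in faces of $\calF_u^w$, that is, in simplices of $\calT_u^w$ (\Cref{canonical triangle}, \Cref{canonical equals projected}).

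Such multichains of length $t$ biject with lattice points of $t\calF_u^w$, by unimodularity of the canonical triangulation (as in the proof of \Cref{triangle equals complex h-vector}). So $\dim (S/J)_t\le \ehr_{\calF_u^w}(t)=\dim (S/I(\calF_u^w))_t$. Combined with $J\subseteq I(\calF_u^w)$, this forces $J=I(\calF_u^w)$.

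The main obstacle is the straightening step. One must make sure that reducing an arbitrary monomial to a standard one via the Hibi binomials never leaves the span of monomials, which it does not since each relation is a binomial. One must also make sure the face-containment condition is tested on the resulting chain and not on the original multiset. Both checks are harmless: if the original collection lies in no face, the monomial is already in $J$, and if it does lie in a face $F$, then meets and joins stay in $F$ because faces of order polytopes are sublattice-closed.
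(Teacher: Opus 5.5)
Your proof is correct, but it takes a different route from the paper. The paper gives no argument of its own: it says only that the lemma follows from \cite[Proposition 4.8]{Eisenbud-Sturmfels}, so it defers to the general theory of binomial ideals.

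You instead give a self-contained Hilbert function sandwich that uses only results the paper has already established.
\begin{enumerate}
\item You obtain $J\subseteq I(\calF_u^w)$ from the lattice-point description of the coordinate ring (\Cref{global sections of polyhedral complex}, \Cref{normal polytope and coordinate ring}). The key input is the face property: $\frac1t\sum_r v_{I_r}\in F$ forces every $v_{I_r}\in F$.
\item You obtain $\dim(S/J)_t\le\ehr_{\calF_u^w}(t)=\dim(S/I(\calF_u^w))_t$ from Hibi straightening together with the unimodular canonical triangulation $\calT_u^w$.
\end{enumerate}

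The citation buys brevity, but it leaves the reader to check that the cited general statement applies in this setting. Your route is transparent, and it is in the same spirit as the containment-plus-Hilbert-function argument in the proof of \Cref{mainresultD}.

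It also proves more than the lemma. Consider the monomial ideal generated by the $p_Ip_J$ with $I,J$ incomparable together with the non-face monomials. The monomials outside it are exactly the multichains supported in faces of $\calF_u^w$, which are what you count. So your dimension count shows that the stated generators form a Gr\"obner basis of $I(\calF_u^w)$ for the reverse lexicographic order induced by a linear extension of Bruhat order. The initial ideal is the Stanley--Reisner ideal of $\calT_u^w$, together with the variables $p_I$ for $v_I\notin\calF_u^w$.

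One minor point: the straightening step only needs each replacement to be a congruence modulo $J$. So the appeal in your last paragraph to closure of faces under meets and joins (\Cref{Hibi preserves faces}) is unnecessary.
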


Recall that given an order ideal $I\subset P$, its indicator vector gives a vertex $v_I$ of $\calO(P)$. We have the following lemma on containment of these vertices in faces. 

\begin{lemma}\label[lemma]{Hibi preserves faces}
Let $P$ be a poset, $\calO(P)$ the associated order polytope and $F$ a face of $\calO(P)$. Suppose $I,J$ are order ideals of $\calO(P)$, and $I\wedge J$ and $I\vee J$ their meets and joins. Then, we have that $v_I,v_J\in F$ if and only if $v_{I\wedge J}$ and $v_{I\vee J}$ are in $F$. 
\begin{proof}
    Let $H$ be a supporting linear functional for $F$, i.e. $\calO(P) \subset  \{x\in \bbR^{P}\mid H(x)\geq a\}$ with $F=\calO(P)\cap \{x\in \bbR^{|P|}\mid H(x)= a\}$. Note that $v_I+v_J = v_{I\wedge J} +v_{I\vee J}$. Then, $v_I,v_J \in F$ if and only if \[H(v_I)+H(v_J) = H(v_{I\wedge J}) + H(v_{I\vee J}) = 2a\]  which holds if and only if $v_{I\wedge J}$ and $v_{I\vee J}$ are in $F$. 
\end{proof}
\end{lemma}

Now, we show that under the assumption that a "nice" Gröbner basis with respect to revlex exists for $I_u^w$, we have $I(\calF_u^w)\subset \initial_\lambda(I_u^w)$. 

\begin{prop}\label[proposition]{nice Gr{\"o}bner basis implies containment}
    Suppose that $I_u^w\subset S$ has a Gr{\"o}bner basis with respect to reverse lexicographical order consisting of the straightening relations for $\Gr(k,n)$ along with polynomials $f_1,\ldots, f_m$ such that $\textrm{in}_\lambda(f_1),\ldots, \initial_\lambda(f_m)$ are monomials, and $\initial_\revlex(\initial_\lambda(f_i)) = \initial_\revlex(f_i)$. Then, $ I(\calF_u^w)\subset \initial_\lambda(I_u^w)$. 
\begin{proof}

     By applying $\initial_\lambda$ to the straightening relations, Gonciulea–Lakshmibai show in \cite[Theorem 5.2]{Gonciulea-Lakshmibai} that $p_Ip_J-p_{I\wedge J}p_{I\vee J}$ is in $\initial_\lambda(I_u^w)$ for all $I,J$. By \Cref{fence ideal}, it suffices to show that for every set $v_{I_1},\ldots, v_{I_t}$ of vertices not contained in a facet of $\F_u^w$, $p_{I_1}\cdots p_{I_t}\in \initial_\lambda(I_u^w)$.
    
    First of all, by the relations $p_Ip_J= p_{I\wedge J}p_{I\vee J}$ applied inductively, we have that \[p_{I_1}\cdots p_{I_t}\equiv  p_{J_1}\cdots p_{J_t} \pmod{\initial_\lambda(I_u^w)}\] for some chain $J_1\leq \cdots \leq J_t$ in Bruhat order. It suffices to show that $p_{J_1}\cdots p_{J_t}\in \initial_\lambda(I_u^w)$. By \Cref{Hibi preserves faces}, it follows that $J_1,\ldots, J_t$ is also not contained in a facet of $\F_u^w$. Furthermore, it suffices to show $p_{J_1}\cdots p_{J_t}\in \initial_\lambda(I_u^w)$ when $J_1,\ldots, J_t$ are minimally not contained in a facet, i.e. we may assume that $J_i\neq J_j$ for $i\neq j$ and any proper subset of the $J_i$'s is contained in a facet of $\F_u^w$.

    Now, observe that $p_{J_1}\cdots p_{J_t}$ is a minimal generator of $\initial_{\textrm{revlex}}(I_u^w)$. To see this, recall from \Cref{initial ideal of I_u^w} that $\initial_{\textrm{revlex}}(I_u^w)$ consists of monomials $p_{S_1}\cdots p_{S_r}$ such that $S_1,\ldots, S_r \notin \pi_k(\Delta[u,w])$. Since $J_1,\ldots, J_t$ is not contained in any facet of $\F_u^w$ and $\pi_k(\Delta[u,w])$ is the canonical triangulation of the fence complex, so respects the polyhedral complex structure, it follows that $\{J_1,\ldots, J_t\}\notin \pi_k(\Delta[u,w])$, so $p_{J_1}\cdots p_{J_t}\in \initial_{\textrm{revlex}}(I_u^w)$.
    
    To see why no other monomial in $\initial_{\textrm{revlex}}(I_u^w)$ non-trivially divides $p_{J_1}\cdots p_{J_t}$, observe that if $J_{i_1}<\cdots <J_{i_l}$ is a proper subset of $\{J_1,\ldots, J_t\}$, then there is some facet $F$ of $\F_u^w$ containing $J_{i_1},\ldots, J_{i_t}$. In particular, $J_{i_1},\ldots, J_{i_l}$ correspond to a chain of order ideals of the quotient poset $Q_F:= P_{\pi_k(w)/\pi_k(u)}/\sim_F$  corresponding to $F$. Refining this to a maximal chain $\mathcal{C}$ of order ideals of $Q_F$, we get a standard fence tableau by \Cref{cor:standard_fence_tableaux}. It follows that $\mathcal{C}$ is a facet of $\pi_k(\Delta[u,w])$, and so $J_{i_1}<\cdots <J_{i_l}$ is in $\pi_k(\Delta[u,w])$. Thus, $p_{J_{i_1}}\cdots p_{J_{i_l}}\notin \initial_{\textrm{revlex}}(I_u^w)$.

    In particular, there must exist some element of the Gr{\"o}bner basis of $I_u^w$ such that a scalar multiple of $p_{J_1}\cdots p_{J_t}$ is its initial term with respect to reverse lexicographic order. Since $J_1\leq \cdots \leq J_t$, this cannot be a straightening relation, as these all have initial term $p_Ip_J$ for $I$ and $J$ incomparable. Hence, for some $f_i$, we must have $\initial_{\textrm{revlex}}(f_i) = p_{J_1}\cdots p_{J_t}$. Since $\initial_\textrm{revlex}(\initial_\lambda(f_i)) =  \initial_\textrm{revlex}(f_i)$ by assumption, and $\initial_\lambda(f_i)$ is a monomial, it follows that, up to scalars, $p_{J_1}\cdots p_{J_t}$ equals $\initial_\lambda(f_i)$. Hence, $p_{J_1}\cdots p_{J_t}\in \initial_\lambda(I_u^w)$, as desired.  
\end{proof}
\end{prop}
In \cite{AGH}, Almousa, Gao and Huang construct a Gröbner basis for $I_u^w$ with respect to reverse lexicographic order. In \Cref{Weight Order Verifications for the AGH Gr{\"o}bner Basis}, we prove the following: 

\begin{thm}\label[theorem]{so we are done}
    The Gr{\"o}bner basis constructed by \cite{AGH} satisfies the conditions of \Cref{nice Gr{\"o}bner basis implies containment}. 
\end{thm}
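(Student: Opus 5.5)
We need to check that the AGH Gröbner basis satisfies three things: it consists of the straightening relations for $\Gr(k,n)$ together with extra polynomials $f_1,\ldots,f_m$; each $\initial_\lambda(f_i)$ is a monomial; and $\initial_\revlex(\initial_\lambda(f_i))=\initial_\revlex(f_i)$. The plan is to recall the explicit shape of the AGH generators and then verify these conditions directly, using the row-sum-lex description of $\initial_\lambda$ from \Cref{suffices to consider finitely many}. Since the Gröbner basis is finite, we may choose $N\gg 0$ so that $\initial_\lambda(f_i)=\initial_\rsl(f_i)$ for all $i$. This reduces the problem to a purely combinatorial comparison of row sum vectors of standard monomials.

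First, recall from \cite{AGH} that their Gröbner basis for $I_u^w$ consists of the Plücker (straightening) relations, the variables $p_I$ with $I\notin\pi_k([u,w])$, and a family of polynomials $f$ indexed by certain Bruhat chains $J_1\le\cdots\le J_t$ in $\pi_k([u,w])$ that are minimal non-faces of $\pi_k(\Delta[u,w])$. Each such $f$ should have the form $p_{J_1}\cdots p_{J_t}$ plus a linear combination of standard monomials $p_{K_1}\cdots p_{K_t}$ in which some $K_r$ lies outside $\pi_k([u,w])$, or in which the multiset is otherwise revlex-larger. For the variables $p_I$ both conditions are trivial. For the straightening relations nothing needs to be checked, since \Cref{nice Gr{\"o}bner basis implies containment} only imposes conditions on the $f_i$.

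The key step is to show that for each nontrivial $f$, the monomial $p_{J_1}\cdots p_{J_t}$ is the unique row-sum-lex smallest term. This immediately gives both required conditions: $\initial_\lambda(f)$ is that monomial, and it coincides with $\initial_\revlex(f)$. To prove it, I would read the row sum vector of a standard monomial as the column sums of its semistandard tableau. I would then exploit how the AGH polynomials are constructed, namely by multiplying or expanding out an element that vanishes on $\Pi_u^w$ (coming from a matrix Schubert or Kazhdan–Lusztig type determinant). The aim is to show that every other term has a row sum vector that is lexicographically strictly larger, via a comparison in which the $J$'s are entrywise minimal among the tableaux appearing. Here I would use that straightening relations replace $p_Ip_J$ by $p_{I\wedge J}p_{I\vee J}$ plus terms $p_Ap_B$ with $A<I\wedge J$. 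This needs care, because the row sum of $A\cup B$ need not be lex-comparable in an obvious way, so the bound must come from the specific AGH form rather than from general straightening.

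I expect this lex-minimality argument to be the main obstacle: it requires unpacking the AGH construction explicitly and controlling the first row index where a term can differ. The first approach I would try is to show that every non-leading term $p_{K_1}\cdots p_{K_t}$ arises from $p_{J_1}\cdots p_{J_t}$ by an exchange that increases some entry in the first differing position while keeping the earlier row sums fixed. If that fails in general, the fallback is to use a Hilbert-function argument. Since $\initial_\rsl(f)$ agrees with $\initial_\lambda(f)$, it would suffice to show that $\initial_\revlex(\initial_\lambda f)=p_{J_1}\cdots p_{J_t}$, i.e. that $p_{J_1}\cdots p_{J_t}$ appears with nonzero coefficient in the row-sum-lex minimal part and is revlex-leading there. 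Monomiality of $\initial_\lambda(f)$ would then follow by comparing dimensions with \Cref{ehrhart poly equals hilbert poly}.
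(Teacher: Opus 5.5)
You correctly reduce the theorem to one claim: for each non-straightening AGH generator, its revlex-leading monomial is also its unique row-sum-lex minimal term. But that claim is the whole content of the theorem, and your proposal does not prove it.

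Part of the problem is that you do not have the actual form of the generators. Besides the straightening relations, they are alternating sums $\sum_{w\in S_D}(-1)^{\ell(w)}w\cdot T$, where $T$ is a semistandard tableau and $D$ is a generalized antidiagonal whose entries are permuted. There are also duals $\bigl(\sum_{w\in S_r}(-1)^{\ell(w)}w\cdot T\bigr)^\vee$ built on the $\Gr(n-k,n)$ side. The revlex-leading terms are $T$ and $T^\vee$ respectively. The other terms $w\cdot T$ are typically non-standard monomials, not standard monomials involving some $K_r\notin\pi_k([u,w])$.

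For the first family, your exchange idea is essentially what the paper does in \Cref{non dual case}. After normalizing $w$ as in \Cref{simplify permutation}, let $a_i$ in cell $(r,c)$ be the smallest moved entry. Then rows above $r$ are untouched and the $r$th row sum strictly increases.

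For the dual family this argument breaks down. After complementing, a column of $(wT)^\vee$ whose topmost moved entry $y$ satisfies $w(y)>y$ has its first changed entry \emph{decrease} (\Cref{f behaves well}). The paper needs a separate argument here. It records the first changed row $f(y)$ of each column. It then uses the cycle structure of $w$ (\Cref{cycle lemma} together with \Cref{rel min implies f not minimum} and the companion comparison lemma for $f$). This shows that every such decreasing column first changes strictly below the topmost row where $(wT)^\vee$ and $T^\vee$ differ, so that row's sum strictly increases. Nothing in your proposal controls these decreases.

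Your fallback does not repair this, for three reasons.
\begin{enumerate}
\item Whether $\initial_\lambda(f)$ is a monomial is a property of one fixed polynomial, and no Hilbert-function comparison can force it. A priori, several terms of $f$ can share the row-sum vector of the leading one; already for $k=1$, $p_1p_3$ and $p_2^2$ have equal weight. Such terms are not identified modulo the binomials $p_Ip_J-p_{I\wedge J}p_{I\vee J}$.
\item The only available Hilbert-function argument is the one in the proof of \Cref{mainresultD}. It presupposes $I(\calF_u^w)\subset\initial_\lambda(I_u^w)$, which is exactly what this theorem is used to obtain, so invoking it here is circular.
\item Knowing only that $p_{J_1}\cdots p_{J_t}$ is revlex-leading inside $\initial_\lambda(f)$ is not enough. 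The containment argument needs $\initial_\lambda(f)$ itself to be a scalar multiple of that monomial, in order to place the monomial in $\initial_\lambda(I_u^w)$.
\end{enumerate}
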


We can now prove \Cref{mainresultD}.

\begin{proof}[Proof of \Cref{mainresultD}]
    As in the case of the Grassmannian,  \cite[Chapter 15.8]{EisenbudCAbook} implies the existence of a flat family $S[t]/\tilde I_u^w$, where $S[t]/\tilde I_u^w \otimes_{\bbC[t]} \bbC[t,t^{-1}]\cong S/I_u^w [t,t^{-1}]$, and $S[t]/\tilde I_u^w\otimes_{\bbC[t]} \bbC[t]/(t)\cong S/\initial_\lambda(I_u^w)$. In particular, taking Proj gives a flat family $\psi_u^w: \mathscr{X}_u^w\to \bbC$ such that $(\psi_u^w)^{-1}(\bbC-\{0\})$ is $\Pi_u^w\times (\bbC-\{0\})$, and the fiber of the map over $0$ is $\textrm{Proj } S/\initial_\lambda(I_u^w)$. By definition, $\mathscr{X}_u^w$ is a subscheme of $\mathscr{X}$, and $\mathscr{X}_u^w$ is the scheme-theoretic closure of $\Pi_u^w\times (\bbC-\{0\})$ by the uniqueness of flat families (cf. \cite[Proposition 9.8]{Hartshorne}). 

    It remains to show that $\textrm{Proj } S/\initial_\lambda(I_u^w)$ is indeed $X(\calF_u^w)$. It suffices to show that $\initial_\lambda(I_u^w)=I(\calF_u^w)$. By \Cref{ehr = cohom}, we have that $\ehr_{\calF_u^w} (n)$ equals the Hilbert polynomial of $X(\calF_u^w)$. On the other hand, we have that the Hilbert polynomial of $\Pi_u^w$ equals  $\ehr_{\calF_u^w} (n)$ from \Cref{ehrhart poly equals hilbert poly}. By \Cref{nice Gr{\"o}bner basis implies containment}, we have that $I(\calF_u^w)\subset \initial_\lambda(I_u^w)$. By assumption, $I(\calF_u^w)$ is radical. 

    The result now follows from the following easy lemma applied to $I(\calF_u^w), \initial_\lambda(I_u^w)$ and $S$. 

\begin{lemma}
    Let $I,J\subset R= \bbC[x_1,\ldots, x_n]$ be homogeneous ideals, neither of which equals $R$, such that $I\subset J$, $I$ is radical, and $R/I$, $R/J$ have the same Hilbert polynomial. Then, $I=J$. 
\begin{proof}
    Since $R/I$ and $R/J$ have the same Hilbert polynomial, there exists some $N>0$ such that $(R/I)_d = (R/J)_d$ for $d>N$. In particular, $I_d = J_d$ for $d>N$. Given some homogeneous element $y\in J$, consider $y^{N+1}$. Since $J\neq R$, we may assume that $y$ has positive degree. Then, $y^{N+1} \in I$. Since $I$ is radical, $y\in I$. Thus, $I=J$. 
\end{proof}
\end{lemma}
\end{proof}

\section{Cyclic Demazure Character Recurrences via Fence Complexes}\label[section]{Cyclic Demazure Character Recurrences via Fence Complexes}

In \cite{Lam}, Lam introduces the notion of a cyclic Demazure module. Geometrically, this is the $T$-representation given by $V_{\;u}^w(d\omega_k) := (\bbC[\Pi_u^w]_d)^*$, where $\bbC[\Pi_u^w]_d$ is the degree $d$ part of the homogeneous coordinate ring of $\Pi_u^w$. For alternative characterizations of this $T$-representation see \cite[Section 7.1]{AGH}. In \cite{AGH}, the authors prove a recursive formula for the character of cyclic Demazure modules $V_{\: u}^w(d\om_k)$ in terms of Demazure modules $V_{\;u'}^{w'}(d\omega_k)$ of positroids $\Pi_{u'}^{w'}$ contained in $\Pi_u^w$, as well as $V_{\;u}^w((d-1)\om_k)$. In this section, we prove a similar recursive formula by considering the polyhedral geometry of $\calF_u^w$. Importantly, throughout this section, we assume that $w$ is $k$-Grassmannian.

\subsection{Relationship to Fence Complexes}

First, we clarify the relationship of $d\calF_u^w$ to the character of $\ch (V_{\;u}^w(d\om_k))$. Recall that $T=(\bbC^*)^n$ acts on $S$, where $(t_1,\ldots, t_n)\cdot p_I = t_{i_1}^{-1}\cdots t_{i_k}^{-1} p_I$ for $(t_1,\ldots, t_n) \in T$. In particular, since the degeneration of $I_u^w$ to $I(\calF_u^w)$ respects the multigrading induced by this $T$-action, it follows that the characters of the $T$-representations on $(\bbC[\Pi_u^w]_d)^*$ and $(\bbC[X(\calF_u^w)]_d)^*$ are equal: 
\[\ch(S/I_u^w)_d = \ch(S/I(\calF_u^w))_d.\]

Let $M$ denote the lattice $\bbZ^{{[n]\choose k}}$. By \Cref{Order polytopes are normal}, any lattice point $p\in M\cap d\calO(P_{k,n})$ can be written as the sum of $d$ lattice points in $\calO(P_{k,n})$ whose corresponding partitions form a chain in Young's lattice. Moreover, it is easy to check from the definitions that this representation is unique. Converting these partitions into $k$-element sets gives a chain in ${[n]\choose k}$, and this gives a SSYT whose columns are these $k$-element subsets. We denote this SSYT by $\mathrm{SSYT}_d(p)$. 
\begin{rem}
    The above discussion is a special case of the algebra with straightening law structure on the coordinate ring of $\calO(P)$ for any order polytope, as discussed in \cite{Hibi}. However, his language is sufficiently different from ours that we will not pursue this perspective. 
\end{rem}

Recall from \Cref{normal polytope and coordinate ring} that the lattice points in $M\cap d\calF_u^w$ form a basis for $\bbC[X(\calF_u^w)]|_d$. Moreover, observe $\bbC[X(\calO(P_{k,n}))]_1 \cong S_1$, and that $v_I$ maps to the variable $p_I$ under this isomorphism. Since the variable $p_I$ has weight $t_{i_1}^{-1}\cdots t_{i_k}^{-1}$, where $I=\{i_1,\ldots, i_k\}$, it follows that $v_I$ does as well. In particular, for $p\in M\cap d|\calF_u^w|$, it follows that the weight of $p$ is given by the product of the weights of $v_{I_1},\ldots, v_{I_d}$, where $I_1,\ldots, I_d$ are the columns of $\mathrm{SSYT}_d(p)$.

For $(i,j)\in [k]\times [n-k]$, recall that $x_{i,j}$ denotes the entry in the $(i,j)$ cell of $p$. For convenience, let $x_{i,j} = d$ for $j>n-k$, and $x_{i,j} = 0$ for $j<0$. One then sees that
\[x_{i,j} - x_{i,{j-1}} = \#\text{ of times }(k+1-i+j)\text{ appears in row }(k+1-i)\text{ of }\mathrm{SSYT}_d(p).\]
Therefore if we define 
    \[y_m := \sum_{i=1}^k x_{i, i+m-k-1},\]
then $y_m - y_{m-1}$ is the number of times $m$ appears in $\mathrm{SSYT}_d(p)$. Since if $I=\{i_1,\ldots, i_k\}$ then $p_I$ has weight $(t_{i_1}\cdots t_{i_k})^{-1}$, it follows that the weight of $p$ is $(t_1^{y_1} t_2^{y_2-y_1}\cdots t_n^{y_n-y_{n-1}})^{-1}$. 
\begin{defn}

    We define $\psi_d:M\to \bbZ^n$ for each $p\in M$ to be the affine transformation given by 
    \[\psi_d(p) := (y_1,y_2-y_1,\ldots, y_n-y_{n-1}).\] 
\end{defn}
\begin{rem}\label[remark]{rem: psi_d and psi_d-1}
In words, $\psi_d(p)$ is defined so that its $i$th coordinate equals the number of times that $i$ appears in $\textrm{SSYT}_d(p)$. Observe that $\psi_d$ depends on $d$, since we set $x_{i,j}$ to $d$ for $j>n-k$. 
\end{rem}
It follows from the above discussion that
 \[\ch(V_u^w(d\om_k)) = \ch\left((S/I_u^w)_d ^* \right)   = \sum_{p\in d\calF_u^w} t^{\psi_d(p)}. \] 

We now give an identity utilizing only the polyhedral geometry of $\calF_u^w$ that will, under $\psi_d$, specialize to a recursive character formula for $\ch(V_{\;u}^w(d\om_k))$. In fact this will be the same as the AGH formula, though this will not be immediately obvious.

\begin{defn}
    For $u<w$, define
\[\calD_u^w := \{(x_{i,j})\in \calF_u^w\ |\ x_{i,j} = 0 \text{ for some }(i,j)\in \pi_k(w)/\pi_k(u)\}.\]
\end{defn}

\begin{rem}
    Observe that $\calD_u^w$ is the union of faces of $\calF_u^w$. More precisely,
\[\calD_u^w = \{F\subset \calF_u^w\mid F \text{ has shape }\lambda/\mu\text{ with }\mu\gneq \pi_k(u)\}.\]
\end{rem}

We work in the group algebra $\bbC [M] $. In this setting, observe that $\psi_d$ induces a vector space map between the group algebras $\bbC[M] \to \bbC[\bbZ^n]$, but not an algebra homomorphism. For a lattice point $p\in M$, we denote its corresponding element in $\bbC [M]$ by $z^p$. Define a lattice point $q \in M$ by $$q_{i,j} = \begin{cases}
    0 & \text{if }(i,j)\in \pi_k(u)\\
    1 & \text{else}.
\end{cases}$$ Then, observe that the lattice points of $d(\calF_u^w -q)$ that are not contained in $(d-1)(\calF_u^w-q)$ are precisely those in $d(\calD_u^w-q)$. By $X-q$, we mean the set $X$ translated down by $q$, i.e. $\{x-q\mid x\in X\}.$

Hence, we have the following identity in $\bbC[M]$: 

\[ \sum_{p\in d\calF_u^w} z^{p-dq} = \sum_{p\in (d-1)\calF_u^w} z^{p-(d-1)q} + \sum_{p\in d\calD_u^w} z^{p-dq}. \] 
Multiplying both sides by $z^{dq}$ gives 
\[\sum_{p\in d\calF_u^w} z^p = \sum_{p\in (d-1)\calF_u^w} z^{p+q} + \sum_{p\in d\calD_u^w} z^p.\] 
Applying $\psi_d$ to both sides gives 
\[ \ch((S/I_u^w)_d) = \sum_{p\in (d-1)\calF_u^w} t^{\psi_d(p+q)} + \sum_{p\in d\calD_u^w} t^{\psi_d(p)}. \] 
Observe that by \Cref{rem: psi_d and psi_d-1}, we have $\psi_d(p+q) = \psi_{d-1}(p)t^{u[k]}$. This follows as $\textrm{SSYT}_d(p+q)$ is given by appending $u[k]$ as a column to the left of $\textrm{SSYT}_{d-1}(p)$. Thus, this expression simplifies, which we record in the following proposition. 
\begin{prop}\label[proposition]{prop:starter_character_formula} 
    \[ \ch((S/I_u^w)_d) = t^{u[k]}\ch((S/I_u^w)_{d-1}) + \sum_{p\in d\calD_u^w} t^{\psi_d(p)}. \] 
\end{prop}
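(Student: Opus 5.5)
The plan is to formalize the lattice-point decomposition already sketched just before the statement. It rests on two ingredients. The first is the identification
\[\ch\bigl((S/I_u^w)_d\bigr) = \sum_{p\in d\calF_u^w\cap M} t^{\psi_d(p)}.\]
This uses \Cref{mainresultD}: the degeneration is $T$-equivariant, so the characters of $(S/I_u^w)_d$ and $(S/I(\calF_u^w))_d$ agree. It also uses \Cref{global sections of polyhedral complex} and \Cref{normal polytope and coordinate ring}, which say that the lattice points of $d\calF_u^w$ form a $T$-weight basis of the degree $d$ piece, with the weight of $p$ computed through $\mathrm{SSYT}_d(p)$. The second ingredient is a bijection between $\bigl((d-1)\calF_u^w\cap M\bigr)$ and $\bigl(d\calF_u^w\cap M\bigr)\setminus d\calD_u^w$, given by $p\mapsto p+q$. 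Combining these, the sum over $d\calF_u^w$ splits into the sum over $d\calD_u^w$ plus the sum over the image of the shift, and it only remains to compare the weights on the shifted part.

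For the bijection, I use the description of $\calF_u^w$ through fence diagrams. A lattice point $p\in d\calF_u^w$ is an integer filling in $[0,d]$ that is $0$ on $\pi_k(u)$ and $d$ outside $\pi_k(w)$. It lies in some face $dF$ with $F\subset\calF_u^w$, and that face is recorded by its equality pattern. Suppose $p\notin d\calD_u^w$. Then every cell of $\pi_k(w)/\pi_k(u)$ has $x_{i,j}\ge 1$. Subtracting $q$ decreases these entries by $1$ and leaves the cells of $\pi_k(u)$ at $0$. Cells outside $\pi_k(w)$ go from $d$ to $d-1$, and $q$ is $1$ there as well, so they stay at the maximum.

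The equality pattern among cells in the skew shape is unchanged by this shift, and no skew cell becomes $0$ or $d-1$ unless it was $1$ or $d$ before. So the face of $(d-1)\calO(P_{k,n})$ containing $p-q$ in its relative interior has a fence diagram obtained from that of $p$ by possibly declaring some boundary cells to be $d-1$, that is, by enlarging the top-right part. I want to conclude that this face still lies in $\calF_u^w$, and I will argue it cleanly with standard fence tableaux. Choose a facet $G\supset F$ with $p\in dG$, and read $p$ as a weakly increasing filling of the components of the quotient poset of $G$ with values in $[1,d]$. Subtracting $1$ gives a weakly increasing filling with values in $[0,d-1]$ that is not $0$ on any skew cell, so it is a point of $(d-1)G\subset(d-1)\calF_u^w$. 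The converse runs the same way. A point of $(d-1)G$ has values in $[0,d-1]$ on skew cells; adding $1$ gives values in $[1,d]$, hence a point of $dG$ that is not in $d\calD_u^w$. For the identity $d\calF\setminus d\calD = (d-1)\calF+q$, I also need that every $p\in d\calF_u^w$ with no zero on the skew shape lies in some $dG$. This holds because $\calF_u^w$ is the union of its facets, by \Cref{fence complex is pure dimensional}.

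For the weight comparison, I check $\psi_d(p+q)=\psi_{d-1}(p)+\mathbf{e}_{u[k]}$, in the multiplicative notation $t^{u[k]}$. Here $\mathrm{SSYT}_d(p+q)$ is obtained from $\mathrm{SSYT}_{d-1}(p)$ by prepending the column $\pi_k(u)$. This follows from the chain decomposition: the vertex $q$ corresponds to the partition $\pi_k(u)$, which is the smallest partition occurring in the chain for any point of $\calF_u^w$. Since $p+q$ is the sum of $q$ and the $d-1$ chain elements of $p$, and $q\le$ all of them, uniqueness of the chain decomposition gives the claim.

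The main obstacle is the careful bookkeeping in the bijection, specifically the claim that subtracting $q$ from a point in $dG\setminus d\calD_u^w$ lands in $(d-1)G$. The rest is immediate from the equivariance in \Cref{mainresultD}.
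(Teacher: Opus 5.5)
Your proposal is correct and follows essentially the paper's argument. The paper also splits the lattice points of $d\calF_u^w$ into those of $d\calD_u^w$ and the translate $q+(d-1)\calF_u^w$ (phrased as comparing $d(\calF_u^w-q)$ with $(d-1)(\calF_u^w-q)$), applies $\psi_d$, and uses that $\mathrm{SSYT}_d(p+q)$ is $\mathrm{SSYT}_{d-1}(p)$ with the column $u[k]$ prepended; your facet-by-facet check of the translate spells out what the paper asserts in one line. One heuristic aside is off: after subtracting $q$, the newly extremal skew cells are those of value $1$, which drop to $0$ and enlarge the zero region, rather than new cells at $d-1$, but your argument via a facet $G$ does not rely on it.
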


Here, we use $t^{u[k]}$ to denote $t_{u(1)}\cdots t_{u(k)}$. We now investigate $\calD_u^w$ in order to further simplify the rightmost term. The following is obvious from the definitions: 

\begin{lemma}
    $\calD_u^w$ is the union of all fence complexes $\calF_{u'}^{w'}\subset \calF_u^w$ where $\pi_k(u')\gneq \pi_k(u)$. 
\begin{proof}
    Clearly all such $\calF_{u'}^{w'}$ are contained in $\calD_u^w$. Conversely, if $F$ is some face of $\calD_u^w$, then $F$ has shape $\lambda/\mu$ for some $\mu\gneq \pi_k(u)$. Now $\int F\subset \int \calF_{u'}^{w'}$ for a unique fence complex $\calF_{u'}^{w'}\subset\calF_u^w$. By \Cref{interior of the fence complex}, $\pi_k(u') = \mu\gneq \pi_k(u)$. 
\end{proof}
\end{lemma}

\begin{lemma}\label[lemma]{lem:calD_u^w as a simplicial complex}
Identifying $\calF_u^w$ with the geometric realization of its canonical triangulation, $\calD_u^w$ is the geometric realization of the subcomplex of $\pi_k(\Delta[u,w])$ given by the union of all simplices that do not contain $\pi_k(u)$ as a vertex. 
\begin{proof}
Certainly if $S$ is some simplex contained in $\calD_u^w$, then $S$ does not contain $\pi_k(u)$ as a vertex. Thus, it suffices to show that any simplex $S$ not containing $\pi_k(u)$ as a vertex is contained in $\calD_u^w$. Recall that $S$ corresponds to some chain $u_1<_k u_2<_k\cdots <_k u_m$, where $u_m<_k w$ in $k$-Bruhat order, and $\pi_k(u_1)>\pi_k(u)$. In particular, saturating this chain to a saturated $k$-Bruhat chain from $u_1$ to $w$ gives a facet of $\calF_{u_1}^w\subset \calD_u^w$ containing $S$, so the result follows.
\end{proof}
\end{lemma} 

In view of the previous lemma, we fix the simplicial complex structure on $\calD_u^w$ to be the one induced by $\pi_k(\Delta[u,w])$. 

\begin{lemma}
    $\calD_u^w$ is homeomorphic to a closed ball. 
\begin{proof}
    By \Cref{lem:calD_u^w as a simplicial complex}, it follows that $\pi_k(\Delta[u,w]) = \Cone_{\pi_k(u)} (\calD_u^w)$, where $\Cone_{\pi_k(u)} (\calD_u^w)$ denotes the cone of the simplicial complex corresponding to $\calD_u^w$ over the vertex $\pi_k(u)$. Recall from \cite{KLS0} that $\pi_k(\Delta[u,w])$ is shellable. Then, since all facets of $\pi_k(\Delta[u,w])$ must contain $\pi_k(u)$, this descends to a shelling order on $\calD_u^w$. Furthermore, if $F$ is a codimension $1$ face of $\calD_u^w$, and $F$ is contained inside facets $F_1,\ldots, F_m$ of $\calD_u^w$, observe that $\Cone_{\pi_k(u)}(F)$ is a codimension $1$ face of $\pi_k(\Delta[u,w])$ and $\Cone_{\pi_k(u)}(F_1),\ldots, \Cone_{\pi_k(u)}(F_m)$ are facets containing $\Cone_{\pi_k(u)}(F)$. Since \cite[Lemma 10.1]{KLS0} proves that every codimension $1$ face in $\pi_k(\Delta[u,w])$ is contained in at most two facets, it follows that $m\leq 2$ and hence $\calD_u^w$ is either subthin or thin. 
    
    Moreover, take $S$ to be a maximal simplex of $\pi_k(\Delta[u,w])$, corresponding to the saturated $k$-Bruhat chain $u\lessdot_ku_1\lessdot_k\cdots\lessdot_k u_{m-1}\lessdot_k w$. Then $T=S\setminus\{\pi_k(u),\pi_k(w)\}$ is a codimension $1$ face of $\calD_u^w$, and $\Cone_{\pi_k(u)}(T)$ corresponds to the saturated $k$-Bruhat chain $u\lessdot_ku_1\lessdot_k\cdots \lessdot_k u_{m-1} = w'$ with $w'\lessdot_k w$. Hence, by \Cref{boundary of the fence complex}, $\Cone_{\pi_k(u)}T$ is contained in $|\pi_k(\Delta([u, w']))| = \calF_u^{w'}\subset \d\calF_u^w = |\bdy \pi_k(\Delta[u,w])|$. But if $T$ is contained in two maximal faces of $\calD_u^w$, it follows that $\Cone_{\pi_k(u)}(T)$ is contained in two maximal faces of $\pi_k(\Delta[u,w])$, a contradiction. Therefore $\calD_u^w$ must be subthin, so it is homeomorphic to a closed ball by \cite[Fact A.2.4.3]{Bjorner-Brenti}. 
\end{proof}
\end{lemma}

\begin{lemma}\label[lemma]{lem:bdy of S related to Cone(S)} 
    Let $\mathcal{S}$ be a simplicial complex homeomorphic to a closed ball. Then, $\d\calS$ consists of the union of all simplices $F$ of $\calS$ such that $|\Cone(F)|\subset |\d\Cone(\calS)|$. 
\begin{proof}
    Observe the boundary subcomplex is pure dimensional. It thus suffices to show that $\d\calS$ is the union of all \emph{codimension one} simplices $F$ of $\calS$ such that $|\Cone(F)|\subset|\d\Cone(\calS)|$. Recall, however, that if $F$ is a codimension $1$ simplex of $\calS$, then $F\in \d\calS$ if and only if it is contained in exactly one facet of $F$. However, this is equivalent to $\Cone(F)$ being contained in exactly one facet of $\Cone(\calS)$, i.e., $|\Cone(F)|\subset |\d\Cone(\calS)|$. The lemma follows.  
\end{proof} 
\end{lemma}

\begin{lemma}\label[lemma]{lemma: boundary of calD_u^w} 
The boundary of $\calD_u^w$ is given by the union of the fence complexes $\calF_{u'}^{w}$, where $\pi_k(u')> \pi_k(u)$ and $w>u'>\bar{u}>u$ for some $\bar{u}$ with $\pi_k(\bar{u})=\pi_k(u)$, together with the union of the fence complexes $\calF_{u'}^{w'}\subset \calF_u^w$ where $w'$ is a $k$-Grassmannian permutation strictly less than $w$ and $\pi_k(u')>\pi_k(u)$.
\end{lemma}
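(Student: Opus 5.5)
We will work with the simplicial structure: $\calD_u^w$ is the subcomplex of $\pi_k(\Delta[u,w])$ of simplices avoiding $\pi_k(u)$, and $\pi_k(\Delta[u,w]) = \Cone_{\pi_k(u)}(\calD_u^w)$. Since $\calD_u^w$ is a closed ball, \Cref{lem:bdy of S related to Cone(S)} applies. It says a simplex $S$ of $\calD_u^w$ lies in $\d\calD_u^w$ exactly when $|\Cone_{\pi_k(u)}(S)| \subset \d\calF_u^w$. By \Cref{boundary of the fence complex}, the latter means that $\Cone_{\pi_k(u)}(S)$ lies in some $\calF_{u''}^{w''}$ with $\Pi_{u''}^{w''}\subsetneq \Pi_u^w$. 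Every vertex of $\calF_{u''}^{w''}$ lies in $\pi_k([u'',w''])$, and $\pi_k(u)$ is a vertex of the cone. Hence $\pi_k(u'')\le \pi_k(u)$; since $\langle u'',w''\rangle<\langle u,w\rangle$ also gives $\pi_k(u'')\ge \pi_k(u)$, we get $\pi_k(u'')=\pi_k(u)$.

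We then split by the codimension-one boundary strata from the proof of \Cref{sidea: boundary containment}, taking $w$ $k$-Grassmannian. It suffices to treat codimension-one faces, i.e. $S$ with $\Cone(S)$ in a maximal boundary complex $\calF_{u''}^{w''}$ whose $u''$-shape equals $\pi_k(u)$. Case 2 ($u\lessdot_k u''$) changes $\pi_k$ and is excluded. This leaves two cases.

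\textbf{(A)} $u''=u$ and $w''=w'\lessdot_k w$. Here $w'$ may be taken $k$-Grassmannian after passing to the representative in $\mathcal Q(k,n)$, via \Cref{non-k-grass fence complex}. Then $S\subset \calF_u^{w'}\cap \calD_u^w$, which is $\calD_u^{w'}$. By the lemma identifying $\calD$ with a union of fence complexes, this is the union of $\calF_{u'}^{w'}$ with $\pi_k(u')>\pi_k(u)$. That is the second family.

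\textbf{(B)} $u\lessdot \bar u$, $w''=w$, $\pi_k(\bar u)=\pi_k(u)$. Then $S\subset \calD_{\bar u}^w$, which is the union of $\calF_{u'}^{w}$ with $\pi_k(u')>\pi_k(\bar u)=\pi_k(u)$ and $\langle u',w\rangle\le\langle\bar u,w\rangle$. We must then show this matches the first family, i.e. that we may take $u'>\bar u$ in Bruhat order with $u'\le w$. By Rietsch's description of $\mathcal Q(k,n)$ we can choose a representative $[u',w_1]$ with $\bar u\le u'\le_k w_1\le w$. Moving to the $k$-Grassmannian representative of the top does not change $\calF$. We also need to justify using the representative with top $w$ as written; this is the fussy indexing step.

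For the converse, take $\calF_{u'}^{w}$ with $w>u'>\bar u>u$, $\pi_k(\bar u)=\pi_k(u)$, $\pi_k(u')>\pi_k(u)$. It lies in $\calF_{\bar u}^w\subset\d\calF_u^w$ by \Cref{sidea: boundary containment}, and in $\calD_u^w$. So every simplex $S$ of it satisfies $\Cone_{\pi_k(u)}(S)\subset\calF_{\bar u}^w\subset \d\calF_u^w$, since $\pi_k(u)$ is a vertex of $\calF_{\bar u}^w$ and faces are convex. Hence $S\subset\d\calD_u^w$.

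Similarly, take $\calF_{u'}^{w'}$ with $w'<w$ $k$-Grassmannian and $\pi_k(u')>\pi_k(u)$. It sits inside $\calF_u^{w'}$, provided $\langle u',w'\rangle\le \langle u,w'\rangle$. So cones land in $\calF_u^{w'}\subset\d\calF_u^w$. The main obstacle is making the cone step rigorous: showing $\Cone_{\pi_k(u)}(S)$ is a simplex of the canonical triangulation of $\calF_{\bar u}^w$, resp. $\calF_u^{w'}$. I would try extending the chain of $S$ downward by a saturated $k$-Bruhat chain from $\bar u$ (resp. $u$) and invoking \Cref{maximal chains bij}.
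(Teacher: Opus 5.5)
Your inclusion $\d\calD_u^w\subseteq(\text{both families})$ is sound, and it takes a different route from the paper. The paper splits on whether $\pi_k(w)\in S$ and works with lifted chains. You instead reduce to codimension-one strata with bottom shape $\pi_k(u)$ and apply the description of $\calD$ as a union of fence complexes to $\calD_u^{w'}$ and $\calD_{\bar u}^w$, which is clean. Your ``fussy indexing step'' closes at once. Every representative of $\langle u',w\rangle$ has top in the coset $w(S_k\times S_{n-k})$, hence Bruhat-above the $k$-Grassmannian $w$. So the Rietsch representative with top $\le w$ is $[u',w]$ itself, and $\bar u\le u'$. Pieces of $\calD_{\bar u}^w$ with smaller top simply land in the second family. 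Keep $u'\le w$ rather than forcing $u'<w$. For $k=1$, $n=3$, $u=123$, $\bar u=132$, $w=312$, the vertex $\pi_k(w)=\{3\}$ lies in $\d\calD_u^w$, yet among the listed complexes only $\calF_w^w$ contains it. So the strict inequality $w>u'$ in the statement should really read $w\ge u'$.

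The genuine gap is the reverse inclusion, which you leave open, and your plan for it fails as stated.

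For the first family, ``faces are convex'' does not give $|\Cone_{\pi_k(u)}(S)|\subset\calF_{\bar u}^w$, because a fence complex is a union of faces, not a convex set. What you need is that $S\cup\{\pi_k(u)\}$ is a simplex of $\pi_k(\Delta[\bar u,w])$. This follows by lifting $S$ to a Bruhat chain in $[u',w]$ and prepending the single element $\bar u$; a saturated $k$-Bruhat chain from $\bar u$ to $u'$ need not exist.

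For the second family, your intermediate complex $\calF_u^{w'}$, with $w'$ the $k$-Grassmannian top, need not exist. Take $k=2$, $n=4$, $u=2143$, $w=3412$, $u'=w'=1324$.
\begin{itemize}
\item $\calD_u^w$ is the path from $\{2,4\}$ through $\{3,4\}$ to $\{1,3\}$.
\item The vertex $\{1,3\}$ lies in the second family and in $\d\calD_u^w$.
\item Yet $u\not\le w'$, since $\ell(u)=2>1=\ell(w')$.
\item The boundary stratum containing the edge joining $\{1,2\}$ and $\{1,3\}$ is $\Pi_{2143}^{3142}=\Pi_{1234}^{1324}$. Its representative with bottom $u$ has the non-Grassmannian top $3142$.
\end{itemize}

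The repair is the paper's argument. Since $S\in\pi_k(\Delta[u,w])$, lift it to a chain $u_1<\cdots<u_m$ in $[u,w]$. Every vertex of $S$ is $\le\pi_k(w')<\pi_k(w)$. Prepending $u$ shows $\Cone(S)\in\pi_k(\Delta[u,u_m])$, which triangulates the fence complex of $\pi_k(X_u^{u_m})$. That positroid misses the base $\pi_k(w)$, so it is a proper positroid subvariety of $\Pi_u^w$. Hence $|\Cone(S)|\subset\d\calF_u^w$ by \Cref{boundary of the fence complex}.

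Equivalently, you can salvage your idea by replacing $[u,w']$ with $[u,w'z]$, where $[u'z,w'z]$ is Rietsch's representative with $u\le u'z$. In the example above this gives $[2143,3142]$.
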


\begin{proof}
For a simplex $S$ not containing the vertex $\pi_k(u)$, we will let $\Cone(S)$ denote $\Cone_{\pi_k(u)}(S)$ throughout. By \Cref{lem:bdy of S related to Cone(S)}, we have that 
\[\bdy\calD_u^w = \{S\in \pi_k(\Delta[u,w])\mid \pi_k(u)\notin S,\ |\Cone(S)|\subset \d\calF_u^w\}.\] 

If $S$ is a simplex contained in $\calF_{u'}^w$ where $\pi_k(u')> \pi_k(u)$, and if $w>u'>\bar{u}>u$ for some $\bar{u}$ with $\pi_k(\bar{u})=\pi_k(u)$, then we claim that $|\Cone(S)|\subset \calF_{\bar{u}}^w$. One way to see this is to lift $S$ to a Bruhat chain contained in the interval $[u',w]$ and then observe we may extend this chain by adding $\bar{u}$ to get a lift of $\Cone(S)$. Hence, by \Cref{boundary of the fence complex}, we have  $|\Cone(S)|\subset \calF_{\bar{u}}^w\subset  \bdy \calF_u^w$. 

If instead, $S$ is contained in $\calF_{u'}^{w'}\subset \calF_u^w$, where $w'$ is a $k$-Grassmannian permutation less than $w$, then since $S\in \pi_k(\Delta[u,w])$, $S$ lifts to a $k$-Bruhat chain $u_1<\cdots <u_m$ contained in $[u,w]$ with $u_m<_k w$. In particular, $\Cone(S) \in \pi_k(\Delta([u,u_m]))$. This corresponds to some positroid variety $\Pi_{v}^x\subset \Pi_u^w$ where $x<w$ is $k$-Grassmannian. In particular, it follows that $|\Cone(S)|\subset \calF_v^x\subset\bdy \calF_u^w$. This shows that the boundary of $\calD_u^w$ contains all of the fence complexes in our asserted boundary set.

We now show the converse. Let $S$ be a simplex in $\bdy \calD_u^w$. We have two cases.  
\begin{itemize}
    \item \textbf{Case 1}: Suppose $\pi_k(w)\in S$. Since $\Cone(S)$ is in the boundary of $\calF_u^w$ but contains $\pi_k(u)$ and $\pi_k(w)$, it follows from \Cref{boundary of the fence complex} that $|\Cone(S)|\subset\calF_{\bar{u}}^w$ for some $\bar{u}>u$ with $\pi_k(\bar{u}) = \pi_k(u)$. This then implies that $|S|\subset \calF_{u'}^w$ with $w>u'>\bar u$. 

    \item \textbf{Case 2}: Suppose $\pi_k(w)\notin S$. Then, $\Cone(S)$ lifts to some $k$-Bruhat chain $u<_ku_1<_k\cdots <_ku_m$ in $[u,w]$ and $u_m<_k w$. We have $\Delta(\pi_k([u,u_m])_ = \Delta(\pi_k([u'',w'']))$ for a unique $u''<w''$ with $w''$ a $k$-Grassmannian permutation by \Cref{maximal chains bij}. In particular, since $[u,u_m]\subset [u,w]$ we have $\calF_{u''}^{w''}\subset \calF_u^w$. By choice of $u'',w''$ we then have $\pi_k(u)=\pi_k(u'')<\pi_k(w'')<\pi_k(w)$ in Bruhat order. It follows by \Cref{boundary of the fence complex} that  $|\Cone(S)|\subset \calF_{u''}^{w''}\subset \bdy\calF_u^w$. In this case, $S$ is contained in the fence complex $\calF_{u'}^{w'}$ corresponding to the positroid $\pi_k([u_1,u_m])$. Since $\pi_k([u_1,u_m])\subset \pi_k([u,w])$ it follows that $\calF_{u'}^{w'}\subset \calF_u^w$. 
\end{itemize}

In both cases, $S$ is contained in our asserted boundary set, so the claim follows. 
\end{proof}

In particular, now that we have a handle on the topology of $\calD_u^w$, we can apply Möbius inversion to get a recursion on cyclic Demazure characters. 

\begin{defn}
Let $\calL_k'(u,w)$ consist of all $x\in (u,w]$ with the following property: for all $\bar{u}$ with $u\lessdot \bar{u}\leq x$, we have $u<_k \bar{u}$. 
\end{defn}

In particular, $\int \calD_u^w$ is the union of $\int \calF_x^w$ for $x\in \calL_k'(u,w)$. Now, recall we had the expression: 
\[\sum_{p\in d\calD_u^w} z^p,\] 
and observe that 
\[\sum_{p\in d\calD_u^w} z^p = \sum_{\calF_{x}^{w'} \subset \calD_u^w} \sum_{p\in d\left(\int \calF_{x}^{w'}\right)\cap M} z^p. \]  

Since $\calD_u^w$ is a ball, we may apply \cite[Proposition 3.8.9]{EC1} to calculate the Möbius function of the poset of fence complexes contained in $\calD_u^w$. Then, using Möbius inversion along with our calculation of the boundary of $\calD_u^w$ implies that
\[\sum_{p\in d\calD_u^w} z^p = \sum_{x\in \calL_k'(u,w)} \sum_{p\in d\calF_x^w} (-1)^{\ell(x)-\ell(u)+1} z^p. \] 
In particular, applying  $\psi_d$  to the above expression and plugging this into \Cref{prop:starter_character_formula}, we get 
\begin{prop}\label[proposition]{character formula} 
\[ \ch(V_u^w(d\om_k)) = t^{u[k]}\ch(V_u^w((d-1)\om_k)) + \sum_{x\in \calL_k'(u,w)} (-1)^{\ell(x)-\ell(u)+1} \ch(V_x^w(d\om_k)).\] 
\end{prop}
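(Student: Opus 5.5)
Starting from \Cref{prop:starter_character_formula}, the identity $\ch(V_u^w(d\om_k)) = \ch((S/I_u^w)_d)$ reduces everything to evaluating $\sum_{p\in d\calD_u^w} t^{\psi_d(p)}$. The plan is to decompose $d\calD_u^w$ into relative interiors of fence complexes and use the ball structure of $\calD_u^w$ to turn this into a signed sum of closed fence complexes. By \Cref{one face left} and \Cref{interior of the fence complex}, every point of $\calD_u^w$ lies in $\int\calF_{x'}^{w'}$ for exactly one fence complex $\calF_{x'}^{w'}\subset\calD_u^w$. Hence, in $\bbC[M]$,
\[\sum_{p\in d\calD_u^w} z^p = \sum_{\calF\subseteq \calD_u^w}\ \sum_{p\in \int(d\calF)\cap M} z^p .\]
The fence complexes in $\calD_u^w$ form the closed cells of a regular CW structure on the ball $\calD_u^w$. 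Let $Q$ be this cell poset with $\hat 1=\calD_u^w$ adjoined. \cite[Proposition 3.8.9]{EC1} gives $\mu_Q(\calF,\hat1)=(-1)^{\dim\calD_u^w-\dim\calF+1}$ for cells not on $\bdy\calD_u^w$, and $0$ for cells on the boundary.

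Next I invert. Since $\sum_{p\in d\calF}z^p=\sum_{\calF'\le\calF}\sum_{p\in\int(d\calF')}z^p$ holds for every cell and also for $\hat1$, Möbius inversion applied at the top element gives
\[\sum_{p\in d\calD_u^w}z^p=-\sum_{\calF<\hat1}\mu_Q(\calF,\hat1)\sum_{p\in d\calF}z^p .\]
This is the same manipulation as in the proof of \Cref{general reciprocity}, but carried out with lattice-point generating functions instead of counts. Only interior cells survive. I then use \Cref{lemma: boundary of calD_u^w}, which identifies $\bdy\calD_u^w$, to show that the cells not contained in $\bdy\calD_u^w$ are exactly the $\calF_x^w$ with $x\in\calL_k'(u,w)$. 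Two facts are needed for this.
\begin{itemize}
\item Any $\calF_{x'}^{w'}\subset\calD_u^w$ with $w'<w$ is contained in the boundary.
\item For $w'=w$, the complex $\calF_{x}^{w}$ lies in the boundary iff some $\bar u$ with $u\lessdot\bar u\le x$ has $\pi_k(\bar u)=\pi_k(u)$.
\end{itemize}
The second point uses that a Bruhat cover $u\lessdot\bar u$ either changes $\pi_k$, and so is a $k$-cover, or does not. One also has to check that ``some $\bar u$ with $\pi_k(\bar u)=\pi_k(u)$ and $u<\bar u<x$'' can be reduced to a covering $\bar u$; I expect this to follow by taking a saturated chain from $u$ up to such a $\bar u$, since $\pi_k$ is monotone in Bruhat order, so its first step already fixes $\pi_k$.

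Finally I compute the signs and apply $\psi_d$. Since $\dim\calD_u^w=\ell(w)-\ell(u)-1$ and $\dim\calF_x^w=\ell(w)-\ell(x)$, the coefficient is $-(-1)^{\ell(x)-\ell(u)}=(-1)^{\ell(x)-\ell(u)+1}$, as claimed. The map $\psi_d$ is linear on $\bbC[M]$, and $\sum_{p\in d\calF_x^w}t^{\psi_d(p)}=\ch(V_x^w(d\om_k))$ by the earlier discussion applied to $\Pi_x^w$. The main obstacle is the middle step: matching the interior cells of $\calD_u^w$ precisely with $\calL_k'(u,w)$ via \Cref{lemma: boundary of calD_u^w}. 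In particular I must verify that each $x$ indexes a distinct cell (every class in $\mathcal{Q}(k,n)$ has a unique representative with $w$ $k$-Grassmannian), and that $\calF_x^w\subset\calD_u^w$ requires $\pi_k(x)>\pi_k(u)$, which follows from the $k$-cover condition on every $\bar u$ below $x$.
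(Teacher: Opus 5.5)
Your proposal is correct and follows essentially the same route as the paper: split $d\calD_u^w$ into relative interiors of fence complexes, apply \cite[Proposition 3.8.9]{EC1} to the ball $\calD_u^w$, keep only the interior cells via \Cref{lemma: boundary of calD_u^w}, and apply $\psi_d$. The paper only asserts that the interior cells are the $\calF_x^w$ with $x\in\calL_k'(u,w)$, so your reduction to a covering $\bar u$ using monotonicity of $\pi_k$ supplies the step it leaves implicit. One caution for the converse direction when $x=w$: the first family in \Cref{lemma: boundary of calD_u^w} must be read with $w\ge u'$ rather than $w>u'$ (for $k=1$, $u=123$, $w=312$, the complex $\calD_u^w$ is the edge $\calF_{213}^{312}$, whose endpoint $\pi_k(w)$ lies on its boundary but in neither family of the lemma as literally stated), and with that reading your characterization goes through.
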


\subsection{Recovering AGH's Recurrence.} 
Almousa, Gao and Huang prove in \cite{AGH} a character formula that looks very similar to the one we prove. They show this by relating the multigraded Hilbert series of positroids to the Hilbert series of matrix Schubert varieties, and then applying the $K$-theoretic Monk's rule, due to \cite{Lenart}. Despite our different proof strategy, we show that our formula is actually equivalent to theirs.

We first describe the AGH character formula.

\begin{defn}[\cite{AGH}]\label[definition]{AGH set} 
Let the set $\calL_k(u,w)$ consist of all $x\in (u,w]$ such that there exist transpositions $t_i = (a_i\ b_i)$ with $a_i\leq k<b_i$, and a chain 
\[u\lessdot_k ut_1\lessdot_k\cdots \lessdot_k ut_1t_2\cdots t_m = x,\] 
where either $b_i>b_{i+1}$ or ($b_i=b_{i+1}$ and $a_i<a_{i+1}$). 
\end{defn}

\begin{thm}[\cite{AGH}] 
The characters of cyclic Demazure modules satisfy the following recurrence:
\[ \ch (V_{\;u}^w(d\om_k)) = t^{u[k]}\ch(V_{\;u}^w((d-1)\om_k)) +\sum_{x\in \calL_k(u,w)} (-1)^{\ell(x)-\ell(u)+1}\ch (V^w_{\;x}(d\om_k)).\] 
\end{thm}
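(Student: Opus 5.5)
Since the recurrence of \Cref{character formula} has already been established, with exactly the same form except that it is indexed by $\calL_k'(u,w)$, it suffices to show that $\calL_k(u,w)=\calL_k'(u,w)$. Once this holds, the two recurrences coincide term by term, and the AGH theorem follows from \Cref{character formula}. The plan is therefore a purely combinatorial comparison of the two indexing sets inside $(u,w]$, with $w$ $k$-Grassmannian throughout.

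First, the inclusion $\calL_k(u,w)\subset\calL_k'(u,w)$. Take $x$ reached by a $k$-Bruhat chain $u\lessdot_k ut_1\lessdot_k\cdots\lessdot_k x$ with the stated ordering on the transpositions. Suppose some $\bar u$ satisfies $u\lessdot\bar u\le x$ and $\pi_k(\bar u)=\pi_k(u)$, so that $\bar u=us_i$ with $s_i\in S_k\times S_{n-k}$ an ascent of $u$. I would use \Cref{thm: bergeron-sottile}: since $u\le_k x$, condition (2) says that any inversion created between $u$ and $x$ involves one position $\le k$ and one position $>k$. However, $\bar u\le x$ with $\bar u=us_i$ forces $x$ to invert the pair of positions $i,i+1$, which lie on the same side of $k$, while $u$ does not. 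I would check that this is incompatible with $u\le_k x$ by comparing $u$ and $x$ restricted to the positions $i,i+1$ via the tableau criterion for Bruhat order. The point is that $u\le_k x$ alone should already force every such $\bar u$ to fail. So in fact $\calL_k'(u,w)$ should equal $\{x\in(u,w]: u\le_k x\}$. The ordering condition on the $t_i$ is then irrelevant to membership, because Bergeron--Sottile's theory (or the Lenart / $k$-Pieri chain lexicographic factorization) shows every $k$-Bruhat interval $[u,x]_k$ admits a unique chain of this monotone form.

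Next, the reverse inclusion. Given $x\in\calL_k'(u,w)$, I would show $u\le_k x$. I expect this to be the main obstacle. Suppose $u\not\le_k x$. By \Cref{thm: bergeron-sottile}, either condition (1) fails or some inversion of $x$ not in $u$ lies within one side of $k$. In either case I would try to produce a simple transposition $s_i$ with $i\neq k$ such that $us_i>u$ and $us_i\le x$, contradicting membership in $\calL_k'$. For a same-side new inversion $a<b\le k$ (or $k<a<b$), I would choose such a pair with $b-a$ minimal. Minimality should force $b=a+1$ up to pairs whose values lie outside the relevant window, in the spirit of \Cref{lem:k-reflection}. Then $\bar u=us_a$ covers $u$ and lies below $x$ by the subword property, using a reduced word for $x$ through $u$. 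The failure of condition (1) would be handled by reducing to the second condition via the same-side relabeling. This step requires care, and I would fall back on induction on $\ell(x)-\ell(u)$ along a maximal chain if the direct argument is messy.

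Finally, once both sets are identified with $\{x\in(u,w]: u\le_k x\}$, I would conclude. As in the other case this uses Bergeron–Sottile/Lenart to verify that every such $x$ admits a chain ordered as in \Cref{AGH set}, giving $\calL_k=\calL_k'$. Substituting into \Cref{character formula} yields the AGH recurrence.
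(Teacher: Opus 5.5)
Your top-level reduction is the same as the paper's: since \Cref{character formula} is already proved with index set $\calL_k'(u,w)$, the statement follows once $\calL_k(u,w)=\calL_k'(u,w)$. The gap is in how you propose to prove that equality.

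\textbf{Your central claim is false.} You claim that both sets coincide with $\{x\in(u,w]: u\le_k x\}$ and that the ordering condition on the $t_i$ is irrelevant. Take $n=3$, $k=1$, $u=123$, $w=x=312$; this $w$ is $1$-Grassmannian.
\begin{itemize}
\item $123\lessdot_1 213\lessdot_1 312$, so $u\le_1 x$.
\item $\bar u=132$ covers $u$ with $\pi_1(\bar u)=\pi_1(u)$, and $132\le 312$ because $312=s_2s_1$ contains $s_2$ as a subword. Hence $x\notin\calL_1'(u,w)$.
\item The only $k$-Bruhat chain from $u$ to $x$ uses $t_1=(1\ 2)$ and $t_2=(1\ 3)$, with $b_1<b_2$. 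Hence $x\notin\calL_1(u,w)$ either.
\end{itemize}
So $\calL_1=\calL_1'=\{213\}$, while your set is $\{213,312\}$. With your set the recurrence would read $h_d(t_1,t_2,t_3)=t_1h_{d-1}(t_1,t_2,t_3)+h_d(t_2,t_3)-t_3^d$, which is false ($h_d$ the complete homogeneous symmetric polynomial). The correct identity has no $-t_3^d$ term.

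\textbf{Where your argument breaks.}
\begin{itemize}
\item A same-side cover of $u$ is $u\cdot(i\ j)$ for an arbitrary transposition with $i,j$ on the same side of $k$, not necessarily a simple reflection.
\item $\bar u\le x$ in Bruhat order does not force $x$ to invert positions $i,j$; that would be weak order. In the example, $x(2)<x(3)$, yet $us_2\le x$.
\item It is not true that every $k$-Bruhat interval has a chain of the AGH monotone form. The interval $[123,312]_1$ is a single chain whose labels go the wrong way.
\end{itemize}
So the ordering condition carries real content in both directions. Your plan for $\calL_k'\subseteq\calL_k$ only aims at $u\le_k x$, which is true but is not what is needed. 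It never constructs the ordered chain.

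\textbf{What is needed is what the paper does.}
\begin{itemize}
\item For $\calL_k\subseteq\calL_k'$, the monotonicity of the chain is used, through the rank counts $x[i,j]$, to show that $ut_1\cdots t_m$ cannot lie above a same-side cover $u(i\ j)$.
\begin{itemize}
\item When $i,j\le k$, the comparison $b_g\ge b_f$ and the tie-break $a_g<a_f$ are used.
\item When $i,j>k$, weakly decreasing $b$'s guarantee that a position $>k$ already passed is never moved again.
\end{itemize}
\item For $\calL_k'\subseteq\calL_k$, one inducts on length.
\begin{itemize}
\item Choose $t_1=(a\ b)$ with $b$ maximal, then $a$ minimal, among covers $u\lessdot ut_1\le x$.
\item Show the $\calL'$ property passes to $ut_1$ (\Cref{lem:goodchain_induction_1}).
\item Show $t_1,t_2$ are correctly ordered; otherwise they commute, contradicting the choice of $t_1$, or there is a same-side cover of $u$ below $x$ (\Cref{lem:goodchain_induction_2}).
\end{itemize}
\end{itemize}
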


To show that the above recurrence is equivalent to ours in \Cref{character formula}, it suffices to show the following:

\begin{prop}\label[proposition]{the two sets are equal} 
    We have $\calL_k'(u,w) = \calL_k(u,w)$. 
\end{prop}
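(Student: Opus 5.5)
The plan is to prove both inclusions by working with a single combinatorial characterization of $\calL_k'(u,w)$, namely: $x\in(u,w]$ lies in $\calL_k'(u,w)$ exactly when every Bruhat cover $u\lessdot ut$ with $ut\le x$ has $t=(a\ b)$, $a\le k<b$ (a cover by a transposition with both positions on one side of $k$ leaves $\pi_k$ unchanged, and this is exactly the failure of $u<_k\bar u$). I would also use that $x\in(u,w]$ with $w$ $k$-Grassmannian automatically satisfies $x\le w$, so only the interval $[u,x]$ matters, and that by the Bergeron--Sottile criterion (\Cref{thm: bergeron-sottile}) a $k$-Bruhat chain from $u$ to $x$ exists iff $u\le_k x$.

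\emph{Inclusion $\calL_k(u,w)\subseteq\calL_k'(u,w)$.} Take $x=ut_1\cdots t_m$ with the ordered chain of \Cref{AGH set}; in particular $u\le_k x$. Suppose $u\lessdot us\le x$ with $s=(a\ b)$ and $a<b\le k$ (the case $k<a<b$ is symmetric). Since $u\le_k x$, condition (2) of \Cref{thm: bergeron-sottile} forbids any pair $a<b$ on the same side of $k$ with $u(a)<u(b)$ and $x(a)>x(b)$. Because $us$ covers $u$ we have $u(a)<u(b)$, so $x(a)<x(b)$. I would then show that $us\le x$ together with $x(a)<x(b)$ and the fact that $x$ agrees with $u$ "within each block up to the $k$-Bruhat moves" gives a contradiction, using the tableau criterion for Bruhat order applied to the prefixes of length $a,\dots,b-1$. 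This direction only needs $u\le_k x$, and so does not use the ordering condition in \Cref{AGH set}.

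\emph{Inclusion $\calL_k'(u,w)\subseteq\calL_k(u,w)$.} This is the main obstacle. There are two steps. First, show that $x\in\calL_k'$ forces $u\le_k x$. Failure of condition (1) or (2) in \Cref{thm: bergeron-sottile} should produce a same-side inversion pair, and from it, by a standard descent argument, a cover $u\lessdot us\le x$ with $s$ a same-side transposition. Second, given $u\le_k x$, construct a $k$-Bruhat chain whose transpositions are ordered as required: $b_i$ weakly decreasing, and $a_i$ increasing when $b_i$ ties. I would attempt this as a Lenart/Monk-type greedy procedure: at each step, choose the largest available $b$ (with $b>k$) for which some cover $u\lessdot_k ut$ with $ut\le_k x$ exists, then the smallest admissible $a$, and induct on $\ell(x)-\ell(u)$. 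Showing that this greedy sequence remains ordered is the delicate part. I expect it to mirror the uniqueness of "decreasing-label" chains in the $k$-Bruhat order (the EL-labeling of Bergeron--Sottile intervals), and I would try to cite or reprove that every $k$-Bruhat interval has exactly one chain increasing in this label order. If that holds, existence of the AGH chain is exactly $u\le_k x$.

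Combining the two steps, both sets equal $\{x\in(u,w]: u\le_k x\}$.
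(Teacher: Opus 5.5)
Your proposal rests on the claim that both sets equal $\{x\in(u,w]: u\le_k x\}$. That claim is false, so neither inclusion can be proved along the lines you outline. Take $n=3$, $k=2$, $u=123$ and $x=w=231$ (which is $2$-Grassmannian). The interval $[u,x]_2$ is the single chain $123\lessdot_2 132\lessdot_2 231$, with labels $t_1=(2\ 3)$ and $t_2=(1\ 3)$. Since $b_1=b_2=3$ but $a_1=2>a_2=1$, this chain violates the AGH ordering, so $x\notin\calL_2(u,w)$ even though $u\le_2 x$. Also $123\lessdot 213\le 231$ with $\pi_2(213)=\pi_2(123)$, so $x\notin\calL_2'(u,w)$. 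Here $\calL_2(u,w)=\calL_2'(u,w)=\{132\}$, as the proposition predicts, while $\{x\in(u,w]: u\le_2 x\}=\{132,231\}$.

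This breaks both halves of your argument.

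In the inclusion $\calL_k\subseteq\calL_k'$, you say only $u\le_k x$ is needed, and that is wrong. With $s=(1\ 2)$ in the example, $u(1)<u(2)$ and $x(1)<x(2)$, yet $us\le x$, so the tableau-criterion contradiction you intend to extract does not exist. The ordering condition ($b_i$ weakly decreasing, $a_i$ increasing on ties) has to enter. The paper supposes a same-side cover $\bar u=u\cdot(i\ j)\le x$ and looks at the rank count $x[i,u(j)]$ (or $x[j-1,u(i)+1]$ when $i,j>k$). It takes the first step of the chain at which this count could exceed $u$'s, and shows the transposition responsible must be preceded by one that is either not a cover or out of AGH order.

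In the inclusion $\calL_k'\subseteq\calL_k$, the statement you hope to cite, that every $k$-Bruhat interval has exactly one chain increasing in the AGH label order, is false. The interval above has exactly one chain and it is not increasing. So the greedy construction cannot be justified from $u\le_k x$ alone; the hypothesis $x\in\calL_k'(u,w)$ must be used inside the induction.

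Concretely, choose $t_1=(a\ b)$ with $b$ maximal, then $a$ minimal, among covers $u\lessdot ut_1\le x$.
\begin{itemize}
\item A non-$k$ cover $ut_1\lessdot ut_1t_2\le x$ would give either a non-$k$ cover $u\lessdot ut_2\le x$, or a cover $u\lessdot u\,t_1t_2t_1\le x$ with larger $b$ or smaller $a$. Both contradict the hypotheses, so $x\in\calL_k'(ut_1,w)$ and induction applies.
\item If the next label $t_2$ were out of order relative to $t_1$, then either $t_1,t_2$ commute, and $u\lessdot ut_2$ contradicts the greedy choice, or $u\lessdot u\,t_1t_2t_1\le x$ is a non-$k$ cover, contradicting $x\in\calL_k'(u,w)$.
\end{itemize}
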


The rest of the section is dedicated to proving this proposition.

\begin{defn}
    For $x\in S_n$ and $i,j \in [n]$,  define \[ x[i,j]: = |\{ (a,x(a))\:|\: a\leq i, x(a)\geq j\}|.  \] 
\end{defn}
Recall the following characterization of Bruhat order (see \cite[Theorem 2.1.5]{Bjorner-Brenti}): 
\begin{thm}
    Let $x,y \in S_n$. Then, $x\leq y$ if and only if $x[i,j]\leq y[i,j]$ for all $i,j\in [n]$. 
\end{thm}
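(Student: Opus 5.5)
This is the classical rank-matrix (tableau) criterion for Bruhat order; I would prove it directly from the definition of Bruhat order on $S_n$. That definition is the transitive closure of the relations $x < x\,(a\ b)$ for $a<b$ with $x(a)<x(b)$. Note that $x(a)<x(b)$ is exactly the condition that right multiplication by the transposition increases length. Throughout I view $x[i,j]$ as counting the points of the permutation diagram $\{(a,x(a))\}$ lying in the quadrant $\{a\le i,\ x(a)\ge j\}$.

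For the forward direction it suffices, by transitivity, to treat a single step $y = x\,(a\ b)$ with $a<b$ and $x(a)<x(b)$. The diagram of $y$ is obtained from that of $x$ by moving the point in column $a$ from height $x(a)$ up to height $x(b)$, and the point in column $b$ from height $x(b)$ down to height $x(a)$.
\begin{itemize}
\item If $i<a$, neither moved point lies in the quadrant, so $y[i,j]=x[i,j]$.
\item If $i\ge b$, both columns lie in the quadrant and the multiset of heights is unchanged, so again $y[i,j]=x[i,j]$.
\item If $a\le i<b$, only column $a$ matters. Then $y[i,j]-x[i,j]$ equals $1$ when $x(a)<j\le x(b)$ and $0$ otherwise.
\end{itemize}
Hence $x[i,j]\le y[i,j]$ everywhere, and the inequality is strict exactly on the rectangle $[a,b-1]\times[x(a)+1,x(b)]$.

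For the converse I would induct on $D(x,y)=\sum_{i,j}\big(y[i,j]-x[i,j]\big)\ge 0$. If $D=0$ then the rank functions agree, so $x=y$, since a permutation is recovered from its rank function. Otherwise, let $a$ be the smallest position with $x(a)\ne y(a)$. Comparing $x[a,j]$ with $y[a,j]$ for $j=\max(x(a),y(a))$, using that the first $a-1$ values agree, shows $x(a)<y(a)$. Among positions $c>a$ with $x(a)<x(c)\le y(a)$, pick $b$ so that $x(b)$ is minimal. Such a $c$ exists because the value $y(a)$ occurs in $x$ at some position after $a$, as $x$ and $y$ agree before $a$. Minimality of $x(b)$ means no position strictly between $a$ and $b$ has value in $(x(a),x(b))$, so $x\lessdot x\,(a\ b)=:x'$ is in fact a cover. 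By the computation in the forward direction, $x'[i,j]=x[i,j]+1$ on the rectangle $R=[a,b-1]\times[x(a)+1,x(b)]$ and is unchanged elsewhere. Thus $D(x',y)<D(x,y)$, and induction gives $x'\le y$, hence $x<x'\le y$, provided we know $x'[i,j]\le y[i,j]$.

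The main obstacle is exactly this last verification: we need $x[i,j]<y[i,j]$ strictly for every $(i,j)\in R$. I would argue as follows. For $i\ge a$ and $x(a)<j\le y(a)$, the point $(a,y(a))$ of $y$ lies in the quadrant while $(a,x(a))$ of $x$ does not, and the columns before $a$ contribute equally. So the excess must be cancelled by some column $a<c\le i$ with $x(c)\ge j$ but $y(c)<j$ in order to have $x[i,j]\ge y[i,j]$. Pushing $j$ down to $x(a)+1$ and using the choice of $b$ (no value of $x$ in $(x(a),x(b))$ among columns $a<c<b$), I expect to derive a contradiction with the hypothesis $x[i,j']\le y[i,j']$ at a suitable neighbouring $j'$. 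If this direct count becomes messy, my fallback is the alternative choice from Björner–Brenti: choose $b$ minimal with $b>a$ and $x(a)<x(b)\le y(a)$, then redo the count, which localizes the comparison to a single row.
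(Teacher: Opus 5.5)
You correctly handle the forward direction and the skeleton of the converse: the induction on $D$, the deduction $x(a)<y(a)$, and the formula for $x'[i,j]-x[i,j]$ on the rectangle $R$. For reference, the paper gives no proof of this theorem and simply cites \cite[Theorem 2.1.5]{Bjorner-Brenti}.

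The gap is the step you flag, and it is worse than unproved. With your primary choice of $b$ (minimal \emph{value} $x(b)$), the required strict inequality $x[i,j]<y[i,j]$ on $R$ is false. Take $x=132$ and $y=312=x\,(1\ 2)$, so $x\lessdot y$. Then $a=1$, the candidates are $c=2,3$ with $x(c)=3,2$, and your rule picks $b=3$. This gives $x'=x\,(1\ 3)=231$ and $R=\{1,2\}\times\{2\}$. But $x[2,2]=y[2,2]=1$, so $x'[2,2]=2>y[2,2]$. Indeed $231$ and $312$ have the same length and are incomparable.

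The culprit is a column $a<c<b$ with $x(c)\in(x(b),y(a)]$ (here $c=2$). It is counted in $x[i,j]$ for $j\le x(b)$, and the hypothesis cannot compensate. At every threshold $j'\le y(a)$, the point $(a,y(a))$ of $y$ is itself counted, which leaves a $+1$ slack. At thresholds $j'>y(a)$, that column of $x$ is invisible. So no choice of neighbouring $j'$ can close the argument.

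Your fallback, taking $b>a$ minimal in \emph{position} with $x(a)<x(b)\le y(a)$, is the right choice, and the check is short. Minimality gives $x(c)\notin(x(a),y(a)]$ for all $a<c<b$, which in particular still makes $x\lessdot x\,(a\ b)$ a cover. Set $N_j=|\{c<a: x(c)\ge j\}|$. Then for $(i,j)\in R$:
\[ x[i,j]=N_j+|\{a<c\le i: x(c)>y(a)\}|, \qquad y[i,j]=N_j+1+|\{a<c\le i: y(c)\ge j\}|. \]
Now apply the hypothesis at $(i,y(a)+1)$, where neither $x(a)$ nor $y(a)$ is counted; the case $y(a)=n$ is trivial. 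It yields
\[ |\{a<c\le i: x(c)>y(a)\}|\le|\{a<c\le i: y(c)>y(a)\}|\le|\{a<c\le i: y(c)\ge j\}|, \]
so $x[i,j]\le y[i,j]-1$ on $R$. With this, your induction on $D$ closes as written.

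As submitted, however, the proposal commits to a choice for which the decisive estimate fails and leaves the working alternative unverified. The converse is therefore not yet proved.
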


\begin{lemma}
    We have $\calL_k(u,w)\subset \calL_k'(u,w)$. 
\end{lemma}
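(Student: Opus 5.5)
Fix $x\in \calL_k(u,w)$, with a chain $u\lessdot_k ut_1\lessdot_k\cdots\lessdot_k ut_1\cdots t_m=x$ and $t_i=(a_i\ b_i)$ as in \Cref{AGH set}. We must show that every $\bar u$ with $u\lessdot \bar u\le x$ satisfies $\pi_k(\bar u)\neq\pi_k(u)$; then $u\lessdot_k\bar u$ automatically. We argue by contradiction. Suppose $\bar u=u\,(c\ d)$ with $c<d$ is a Bruhat cover of $u$ with $\bar u\le x$ and $\pi_k(\bar u)=\pi_k(u)$. Then either $c<d\le k$ or $k<c<d$; these are swapped by the symmetry $w\mapsto w_0ww_0$ together with $k\mapsto n-k$, up to care with the ordering conventions. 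In each case $u(c)<u(d)$, since $\bar u$ covers $u$.

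The plan is to show that $x[i,j]<\bar u[i,j]$ for a suitable pair $(i,j)$, contradicting $\bar u\le x$ via the rank-matrix criterion. Take first the case $c<d\le k$. Then $\bar u[i,j]=u[i,j]+1$ exactly for $c\le i<d$ and $u(c)<j\le u(d)$. So we need some $(i,j)$ in this rectangle at which the whole AGH chain never increases the rank count. Each step $y\mapsto yt$ with $t=(a\ b)$, $a\le k<b$, raises $y[i,j]$ exactly on the rectangle $a\le i<b$, $y(a)<j\le y(b)$. Since each step is a $k$-cover, $y(a)<y(b)$.

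For the candidate point I would take $i=d-1$ and $j=u(d)$, or the analogous corner, and track what happens to position $d$ and the values near $u(d)$ along the chain. The ordering condition ($b_i$ weakly decreasing, and $a_i$ increasing when $b_i$ ties) is the key input. It should force the values placed into positions in $[c,d]$ among the first $k$ positions to increase monotonically in a controlled way. The intended conclusion is that a rectangle of the form $a\le d-1<b$, $y(a)<u(d)\le y(b)$ can only be hit if position $d$ itself has already been swapped upward. If that happens, the monotonicity of the $a_i$ under ties, or the decrease of the $b_i$, would then produce a contradiction with one of the $t_i$ being a cover.

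Concretely, I would use Bergeron–Sottile (\Cref{thm: bergeron-sottile}) applied to $u\le_k x$. Condition (2) says that any pair $a<b$ with $u(a)<u(b)$ and $x(a)>x(b)$ satisfies $a\le k<b$. Hence $x$ preserves the relative order of $u(c)<u(d)$ at positions $c<d\le k$, so $x(c)<x(d)$. Condition (1) says $x(a)\ge u(a)$ for $a\le k$. With this, the goal reduces to a counting inequality. Relative order alone does not give $x\not\ge\bar u$, so the AGH ordering is still needed to control the values strictly between $u(c)$ and $u(d)$ that may enter positions in $(c,d)$. Since $\bar u$ is a cover, no position $\ell\in(c,d)$ has $u(\ell)\in(u(c),u(d))$.

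I expect the main obstacle to be exactly this step: showing that the ordering of the transpositions prevents $x$ from acquiring a value in $(u(c),u(d))$ at some position in $(c,d)$ while keeping $x(c)<x(d)$. I would first try induction on $m$. Peel off the last transposition $t_m$, which has the smallest $b$. If $\bar u\le xt_m$ the inductive hypothesis applies. Otherwise, the cover $xt_m\lessdot x$ must create the relevant rank increase, and Lemma \ref{lem:k-reflection} together with the ordering constraint on $t_m$ relative to $t_{m-1}$ should rule this out.
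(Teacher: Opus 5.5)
Your framework is the paper's: test $\bar u\le x$ at one corner of the rectangle on which $\bar u[i,j]=u[i,j]+1$, and show that no step of the AGH chain ever raises the count there. Your corner $(d-1,u(d))$ is fine; the paper uses $(c,u(d))$, and its argument works for any position $r<d$. However, the decisive argument is never given, and two of the reductions you rely on are wrong. Write $y_s=ut_1\cdots t_s$.

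\textbf{The claimed symmetry does not exchange the two cases.} The map $y\mapsto w_0yw_0$, $k\mapsto n-k$, sends $(a_i\ b_i)$ to $(n+1-b_i\ \ n+1-a_i)$. So the AGH condition ($b_i$ weakly decreasing, ties broken by increasing $a_i$) becomes ``$a_i$ weakly increasing, ties broken by decreasing $b_i$'', which is not the definition of $\mathcal{L}_{n-k}$. The cases $c<d\le k$ and $k<c<d$ are genuinely asymmetric, and the paper treats them separately.

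The case $k<c<d$ is the easy one. At the first step $h$ raising the count at $(d-1,u(c)+1)$, one needs $b_h\ge d>c$. Since the $b$'s weakly decrease, position $c$ was never moved before step $h$. Then $y_{h-1}(a_h)<u(c)=y_{h-1}(c)<y_{h-1}(b_h)$ with $a_h<c<b_h$, contradicting that $t_h$ is a cover.

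\textbf{The step you call the main obstacle is false.} In the case $c<d\le k$ it fails even for one-step chains. Take $n=5$, $k=3$, $u=31542$ and $(c,d)=(1,3)$, so $\bar u=51342$ is a non-$k$ cover. Take $x=u\,(2\ 4)=34512\in\mathcal{L}_3(u,x)$. Then $x(2)=4\in(u(1),u(3))$ and $x(1)<x(3)$, yet $\bar u\not\le x$ since $x[1,4]=0$.

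What must be ruled out instead is that some position $r<d$ with $u(r)<u(d)$ ever receives a value $\ge u(d)$. The missing idea is a double minimality argument.
\begin{enumerate}
\item Let $f$ be the first step at which this happens. Then $a_f=r$.
\item Since $y_{f-1}(r)<u(d)\le y_{f-1}(d)$ and $y_{f-1}(b_f)\ge u(d)$, the cover condition with $r<d<b_f$ forces $y_{f-1}(d)>y_{f-1}(b_f)$. Your remark that position $d$ must already have moved up is exactly this half.
\item Bergeron--Sottile gives $u(b_f)\ge y_{f-1}(b_f)\ge u(d)$, hence $u(b_f)>u(d)$. So there is a first step $g$ with $1\le g<f$ at which the value in position $d$ overtakes the value in position $b_f$, and one checks $a_g=d$.
\item The AGH order gives $b_g\ge b_f$. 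If $b_g=b_f$, the tie-break would force $d=a_g<a_f=r$, which is false. If $b_g>b_f$, position $b_f$ lies inside $(d,b_g)$ with value strictly between $y_{g-1}(d)$ and $y_{g-1}(b_g)$, so $t_g$ is not a cover.
\end{enumerate}
Since $t_g$ may lie arbitrarily far back in the chain, your induction that peels off $t_m$ and compares it only with $t_{m-1}$ does not produce this contradiction.
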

\begin{proof}
    Take $x\in \calL_k(u,w)$. Then, by definition, there exists some chain \[u\lessdot_k ut_1\lessdot_k \cdots \lessdot_k ut_1\cdots t_m =x\] with $t_i = (a_i \  b_i)$ satisfying the conditions of \Cref{AGH set}.  It suffices to show that for each $\bar{u}\gtrdot u$ with $\bar{u}\not >_k  u$, we have $\bar{u}\not\leq ut_1\cdots t_m$ in Bruhat order. Note since $\bar{u}\gtrdot u$ but $\bar{u}\not >_k u$, we have $\bar{u} = u\cdot (i j) $ for $i<j$, where either $i, j \leq k$ or $i,j >k$. 
\begin{itemize} 
   \item \textbf{Case 1}: $i,j \leq k$. Then, $\bar{u}[i, u(j)]>u[i,u(j)]$. We show that actually $$ut_1\cdots t_m [i,u(j)]\leq u[i,u(j)],$$ \noindent and hence that  $ut_1\cdots t_m \not>\bar{u}$ in Bruhat order. %(in fact, since $ut_1\cdots t_m\geq u$, this actually shows $ut_1\cdots t_m [i,u(j)]= u[i,u(j)]$, but we will not use this).
    Suppose this were not the case. Then, there exists some $r'\leq i$ such that $ut_1\cdots t_m (r') \geq u(j)$ but $u(r')<u(j)$. Let $f$ be the smallest element of $[m]$ such that there exists some $r\leq i$ such that $ut_1\cdots t_f (r) \geq u(j)$, $u(r)<u(j)$. In particular, it follows that $a_f = r$ and \[\tag{$*$} u(b_f)\geq ut_1\cdots t_{f-1}(b_f)\geq u(j),\] where the first inequality follows from \Cref{thm: bergeron-sottile}. By the minimality of $f$, we also have that $$ut_1\cdots t_{f-1}(r)<u(j)\leq ut_1\cdots t_{f-1}(j),$$ \noindent where the second inequality follows again from \Cref{thm: bergeron-sottile}. If $ut_1\cdots t_{f-1}(j)<ut_1\cdots t_{f-1}(b_f)$, then since $r<j<b_f$, and  it follows that $ut_1\cdots t_f = ut_1\cdots t_{f-1} \cdot (r \ b_f)$ is not a $k$-Bruhat cover of $ut_1\cdots t_{f-1}$, and hence we have a contradiction. Thus, we may assume that $ut_1\cdots t_{f-1}(j)>ut_1\cdots t_{f-1}(b_f)$.

    Now, let $g$ be the first index in $[0,f-1]$ such that $ut_1\cdots t_g(j)>ut_1\cdots t_g(b_f)$. Observe that $g$ exists by our assumption that $ut_1\cdots t_{f-1}(j)>ut_1\cdots t_{f-1}(b_f)$. If $g=0$, then $u(j)>u(b_f)$, contradicting ($*$). Thus, $g>0$. It then follows by definition of $g$ that $a_g= j$. 
    
    We now have two cases, either $b_g>b_f$ or $b_g=b_f$. We cannot have $b_g<b_f$ since $g<f$, by the definition of $\calL_k(u,w)$. If we have $b_g>b_f$, then by our definition of $g$, observe that 
    \[ut_1\cdots t_{g-1}(j)<ut_1\cdots t_{g-1}(b_f).\] 

    Thus, we must have $ut_1\cdots t_{g-1}(b_f)>ut_1\cdots t_{g-1}(b_g)$, since otherwise \[ut_1\cdots t_{g-1}(j)< ut_1\cdots t_{g-1}(b_f)< ut_1\cdots t_{g-1}(b_g) ,\] 
    and $j<b_f<b_g$, which contradicts the fact that $ut_1\cdots t_g = ut_1\cdots t_{g-1}\cdot (j\ b_g)$ covers $ut_1\cdots t_{g-1}$ in Bruhat order. 

    The case where $b_g = b_f$ is even simpler, as in this case, we have $a_g = j$ and $a_f = r$. Since $r<j$, but $f>g$, this is a contradiction. Hence, no such $r'$ can exist and it follows that $ut_1\cdots t_m [i,u(j)]\leq u[i,u(j)], $ as desired. 
    
   \item \textbf{Case 2}: $i,j>k$. We show that \[ut_1\cdots t_m[j-1, u(i)+1]\leq u[j-1,u(i)+1],\] 
    which will imply that $ut_1\cdots t_m \not\geq \bar{u}$, since $\bar{u}[j-1, u(i)+1]>u[j-1, u(i)+1].$ Since $i<j$ and $u\cdot (i,j) > u$, we must have $u(i)<n$, so these quanitites are defined. 
    
    If $ut_1\cdots t_m[j-1, u(i)+1]> u[j-1,u(i)+1]$, let $h$ be the smallest index such that $ut_1\cdots t_h[j-1, u(i)+1]>u[j-1, u(i)+1]$. Then, we must have $$ut_1\cdots t_{h-1}(a_h)\leq u(i)<u(j) \:\text{and} \:ut_1\cdots t_{h-1}(b_h)>u(i).$$
    
    If $b_h>i$, this gives a contradiction to the fact that $ut_1\cdots t_h\gtrdot_k ut_1\cdots t_{h-1}$ is a $k$-Bruhat cover, since $a_h<i<b_h$, but \[ut_1\cdots t_{h-1}(a_h)\leq u(i) = ut_1\cdots t_{h-1}(i) < ut_1\cdots t_{h-1}(b_h),\] 
    where $u(i)=ut_1\cdots t_{h-1}(i)$ follows from the definition of $\calL_k(u,w)$, along with the fact that $b_h>i$ and $i\in [k+1,n]$. 

    If instead, $b_h\leq i$, then observe we cannot have $ut_1\cdots t_h [j-1, u(i)+1] > ut_1\cdots t_{h-1}[j-1, u(i)+1]$, since then we have \[b_h\in \{a\in [n]: a\leq j-1, ut_1\cdots t_{h-1}(a)\geq u(i)+1\}\] and 
    \[\{a\in [n]: a\leq j-1, ut_1\cdots t_{h-1}(a)\geq u(i)+1\} -\{b_h\} \cup \{a_h\} = \{a\in [n]: a\leq j-1, ut_1\cdots t_{h}(a)\geq u(i)+1\}.\] 
    Thus, $ut_1\cdots t_h [j-1, u(i)+1]\leq ut_1\cdots t_{h-1}[j-1,u(i)+1]$, a contradiction to our choice of $h$. 
\end{itemize} 
    Thus, in either case, we have a contradiction, so it follows that \[ut_1\cdots t_m[j-1, u(i)+1]\leq u[j-1,u(i)+1],\] as desired. 
\end{proof}

\begin{lemma}\label[lemma]{lem:goodchain_induction_1}
    Let $u\le x$ such that all Bruhat covers of $u$ in $[u,x]$ are $k$-Bruhat covers. Pick $a\le k<b$ with $b$ maximal, and then $a$ minimal with respect to the chosen $b$, such that $u\lessdot u\cdot t_1$ where $t_1 = (a\ b)$. Then, all Bruhat covers of $ut_1$ in $[ut_1,x]$ are $k$-Bruhat covers.
\end{lemma}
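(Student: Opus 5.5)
We argue by contradiction. Write $v = ut_1$ with $t_1=(a\ b)$, $a\le k<b$, where $b$ is maximal among all $k$-Bruhat covers $u\lessdot u\cdot(a'\ b')$ lying below $x$, and $a$ is minimal for this $b$. (Implicitly the covers considered are those in $[u,x]$, so $v\le x$.) Suppose some cover $v\lessdot v\cdot(i\ j)\le x$ with $i<j$ is not a $k$-Bruhat cover. Then either $i,j\le k$ or $i,j>k$. In each case we aim to produce a Bruhat cover of $u$ inside $[u,x]$ that is not a $k$-cover, or else a $k$-cover $u\lessdot u\cdot(a'\ b')\le x$ with $b'>b$, or with $b'=b$ and $a'<a$. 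Any of these contradicts the hypothesis or the choice of $t_1$. The tools are \Cref{lem:k-reflection}, which gives the cover criterion in terms of values in intermediate positions, and the rank-matrix criterion $x[i,j]$ for Bruhat order.

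\textbf{Case $i,j>k$.} Here $v(i)<v(j)$, and no $v(l)$ with $i<l<j$ lies strictly between them. Since $v$ agrees with $u$ outside positions $a,b$, we split by the location of $b$.

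If $b\notin\{i,j\}$, then $u(i)=v(i)$ and $u(j)=v(j)$. We check whether $u\cdot(i\ j)$ is a cover of $u$; it can fail only if $b\in(i,j)$ and $u(b)$ lies in the relevant window. If it is a cover, we show $u\cdot(i\ j)\le x$. To do this, compare $u\cdot(i\ j)$ with $v\cdot(i\ j)$ via the rank function: the two swaps act on disjoint positions, so $u(i\ j)\le v(i\ j)\le x$ in the commuting case. That produces a non-$k$ cover of $u$ below $x$, a contradiction. If instead $b\in(i,j)$ obstructs the cover, we expect to find a cover $u\cdot(a\ b')$ with $b'\in\{i,j\}$ and $b'>b$ lying below $x$, which contradicts the maximality of $b$.

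If $b\in\{i,j\}$, the new value $v(b)=u(a)$ sits in a large-index position. We then exhibit $u\cdot(a\ j)$, with $j>b$, as a $k$-cover below $x$, again contradicting the maximality of $b$.

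\textbf{Case $i,j\le k$.} The argument is analogous, with the roles of positions and values swapped. Now $a$ may equal $i$ or $j$, and the minimality of $a$ (for fixed $b$) does the work. If $a=j$, consider $u\cdot(i\ b)$: either it is a $k$-cover below $x$ with $i<a$, contradicting minimality, or $u\cdot(i\ j)$ is itself a cover of $u$ below $x$, contradicting the hypothesis. If $a\notin\{i,j\}$, the transpositions commute and we push the non-$k$ cover down to $u$ as before.

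In all subcases the condition "$\le x$" is verified through the inequality $y[p,q]\le x[p,q]$. We use that $v(i\ j)\le x$ and that the candidate permutation differs from it only by moving one value across a small region, so its rank entries are dominated entrywise by those of $v(i\ j)$ or of $v$.

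The main obstacle is this Bruhat-comparison step: certifying that the candidate cover of $u$ really lies below $x$ when its transposition overlaps $t_1$. I would handle it by explicitly listing the positions $\{a,b,i,j\}$ and their values, checking the at most six relative orderings allowed by the cover conditions, and in each one writing the candidate as a product of $v(i\ j)$ with a transposition that decreases length. A transposition that decreases length gives a permutation below $v(i\ j)$, hence below $x$.
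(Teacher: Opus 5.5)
\textbf{There is a genuine gap in the case where $t_2=(i\ j)$ shares an index with $t_1$.}

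For $i,j>k$ and $b\in\{i,j\}$, your only witness is $u\cdot(a\ j)$ with $j>b$. It fails in two of the three possible configurations.
\begin{itemize}
\item If $b=j$, no such $j$ exists. The cover $v\lessdot v\cdot(i\ b)$ forces $u(i)<u(a)<u(b)$. The contradiction then comes from the hypothesis, through the non-$k$ cover $u\lessdot u\cdot(i\ b)$.
\item If $b=i$, the cover only gives $u(a)<u(j)$. When $u(j)<u(b)$, your $u\cdot(a\ j)$ is indeed a $k$-cover contradicting maximality of $b$. When $u(j)>u(b)$, however, $u\cdot(a\ j)$ is not a cover of $u$ at all, since position $b\in(a,j)$ carries $u(b)\in(u(a),u(j))$. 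The right witness there is the non-$k$ cover $u\lessdot u\cdot(b\ j)$.
\end{itemize}
For instance, take $n=3$, $k=1$, $u=123$, $t_1=(1\ 2)$ and $v=213\lessdot 231=v\cdot(2\ 3)$. Then $u\cdot(1\ 3)=321$ has length $3$, and the actual contradiction is $132=u\cdot(2\ 3)\le 231$.

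The same omission occurs for $i,j\le k$: you treat only $a=j$. If $a=i$, then $u(a)<u(b)<u(j)$ is forced and $u\lessdot u\cdot(a\ j)$ is a non-$k$ cover. So the contradiction there is with the hypothesis, not with minimality of $a$.

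\textbf{The missing idea is a length sandwich, and it also removes what you call the main obstacle.} Set $z=ut_1t_2\le x$, so $\ell(z)=\ell(u)+2$. Your last paragraph already has half of the argument: write a candidate $y$ as $z$ times a transposition. If in addition $u<y<z$, then $\ell(y)=\ell(u)+1$. Each of these two inequalities is a single comparison of two values, because each step is right multiplication by a transposition. Hence $y$ is automatically a cover of $u$ lying below $x$, and no rank-matrix computation is needed.

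There are only two candidates: $ut_2$, which is a non-$k$ cover, and $ut_1t_2t_1$, which is a $k$-cover via $t_1t_2t_1$. When $t_2=(a\ c)$ or $(b\ c)$, the position of $u(c)$ relative to $u(a)<u(b)$ decides which one lies in the sandwich.
\begin{itemize}
\item If $u(c)$ lies outside $[u(a),u(b)]$, the witness is $ut_2$, contradicting the hypothesis.
\item If $u(c)$ lies between $u(a)$ and $u(b)$, the witness is $ut_1t_2t_1$. This equals $u\cdot(c\ b)$ with $c<a$, contradicting minimality of $a$, or $u\cdot(a\ c)$ with $c>b$, contradicting maximality of $b$.
\end{itemize}
This is exactly the paper's proof.

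The same sandwich $u<u\cdot(i\ j)<u\cdot(i\ j)\,t_1=v\cdot(i\ j)$ shows that in the disjoint case $u\lessdot u\cdot(i\ j)$ always holds. So your subcase ``$b\in(i,j)$ obstructs the cover'' cannot occur, and the unproved ``we expect'' claim there is never needed.
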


\begin{proof}
    Note that $u\lessdot ut_1\implies u(a)<u(b)$. Suppose for the sake of contradiction that $ut_1\lessdot z$ is a non $k$-Bruhat cover in $[ut_1, x]$, and write $t_2 = (ut_1)^{-1} z$. Necessarily, $t_1\ne t_2$ and $t_2\in S_k\times S_{n-k}$. If $t_1$ and $t_2$ commute, then $u\lessdot ut_2\lessdot ut_2t_1$. But $u\lessdot ut_2$ is a non $k$-Bruhat cover, contradicting the assumption on $u$. Thus, $t_1$ and $t_2$ do not commute. It follows that if $t_2\in S_k$, then $t_2 = (a\ c)$ for some $c\le k$, and if $t_2\in S_{n-k}$, then $t_2 = (b\ c)$ for some $c>k$.

    Let us consider the former case $t_2 = (a\ c)$. 

    \begin{itemize}
        \item \textbf{Case 1}: $u(a) < u(b)<u(c)$ or $u(c)<u(a)<u(b)$. Then $u\lessdot ut_2\lessdot ut_2(t_2t_1t_2)= ut_1t_2$. Now $u\lessdot ut_2$ is a non $k$-Bruhat cover, contradicting the assumption on $u$.

        \item \textbf{Case 2}: $u(a) < u(c) < u(b)$. Note that $ut_1\lessdot ut_1t_2$ along with $u(c)<u(b)$ implies that $c<a$. Then $u\lessdot ut_1t_2t_1\lessdot u(t_1t_2t_1)t_1 = ut_1t_2$. Now $t_1t_2t_1 = (c\ b)$ with $c<a$, contradicting the minimality of $a$.
    \end{itemize}

    When $t_2 = (b\ c)$, the casework is similar.
    \begin{itemize}
        \item \textbf{Case 1}: $u(a) < u(b)<u(c)$ or $u(c)<u(a)<u(b)$. Then $u\lessdot ut_2\lessdot ut_2(t_2t_1t_2)= ut_1t_2$. $u\lessdot ut_2$ is a non $k$-Bruhat cover, contradicting the assumption on $u$.

        \item \textbf{Case 2}: $u(a) < u(c) < u(b)$. $ut_1\lessdot ut_1t_2$ along with $u(a)<u(c)$ implies that $b<c$. Then $u\lessdot ut_1t_2t_1\lessdot u(t_1t_2t_1)t_1 = ut_1t_2$. Now $t_1t_2t_1 = (a\ c)$ with $b<c$, contradicting the maximality of $b$.
    \end{itemize}    
\end{proof}

\begin{lemma}\label[lemma]{lem:goodchain_induction_2}
    Let $u\lessdot_k ut_1\lessdot_k ut_1t_2\le x$, where $t_i = (a_i\ b_i)$ with $a_i\le k<b_i$ for $i = 1, 2$. Assume further that either $b_2>b_1$ or ($b_2 = b_1$ and $a_2<a_1$). Then, either $t_1$ and $t_2$ commute, or there exists $\bar u$ with $u\lessdot \bar u\le x$ but $u\nless_k \bar u$.
\end{lemma}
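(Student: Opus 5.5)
We argue by case analysis on how the supports of $t_1=(a_1\ b_1)$ and $t_2=(a_2\ b_2)$ interact, in the same spirit as the proof of \Cref{lem:goodchain_induction_1}. Assume $t_1,t_2$ do not commute. Since $a_i\le k<b_i$, they share exactly one letter. The hypothesis ($b_2>b_1$, or $b_2=b_1$ with $a_2<a_1$) leaves two configurations. Either $b_1=b_2=:b$ and $a_2<a_1$, or $a_1=a_2=:a$ and $b_2>b_1$. A shared letter $a_1=b_2$ or $b_1=a_2$ is impossible because $a_i\le k<b_j$.

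In each configuration we write $ut_1t_2=u\cdot c$, where $c$ is a $3$-cycle on the three positions involved. We then use the factorization $t_1t_2=t_2(t_2t_1t_2)=(t_1t_2t_1)t_1$. The transposition $t_1t_2t_1$ or $t_2t_1t_2$ is exactly the pair of positions lying on the same side of $k$. It is $(a_1\ a_2)$ in the first configuration and $(b_1\ b_2)$ in the second. The plan is to exhibit a Bruhat cover $u\lessdot u\cdot(a_1\ a_2)$ or $u\lessdot u\cdot(b_1\ b_2)$ lying below $ut_1t_2\le x$. Such a cover is not a $k$-Bruhat cover, since it fixes $\pi_k(u)$, so it supplies the required $\bar u$.

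\textbf{First configuration} ($b_1=b_2=b$, $a_2<a_1\le k<b$). The values satisfy $u(a_1)<u(b)$, and after $t_1$ the position $b$ holds $u(a_1)$ and $a_2$ holds $u(a_2)$. The cover $ut_1\lessdot ut_1t_2$ forces $u(a_2)<u(a_1)$. We compare $u(a_2)$ with $u(b)$ using the inversion criterion of \Cref{lem:k-reflection} for the two covers; this should force $u(a_2)<u(a_1)<u(b)$. For the relative order $a_2<a_1<b$, this is precisely the case in which
\[u\lessdot u(a_2\ a_1)\lessdot u(a_2\ a_1)(a_2\ b)=ut_1t_2,\]
possibly with a different intermediate element. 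This is because in a rank-$2$ interval of the dihedral type generated by three values at three positions, there are two atoms. One of them is $ut_1$; the other must be $u\cdot(a_2\ a_1)$ or $u\cdot(a_2\ b)$. We show it is $u\cdot(a_2\ a_1)$. The interval $[u,ut_1t_2]$ has length $2$ and therefore has exactly two atoms. The elements of the form $u\cdot t$ below $ut_1t_2$ are limited to the transpositions among the three positions, and $u(a_2\ b)$ is excluded because $u(a_1)$ lies between $u(a_2)$ and $u(b)$ in position $a_1$ between them. So $\bar u=u(a_2\ a_1)$ works, provided this element is indeed a cover and lies $\le ut_1t_2$. We check both facts directly from \Cref{lem:k-reflection}-type inversion counts.

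\textbf{Second configuration} ($a_1=a_2=a$, $b_1<b_2$). The argument is symmetric, obtained by transposing the roles of the two sides of $k$. The other atom is $u(b_1\ b_2)$, a non-$k$ cover.

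The main obstacle is the bookkeeping of the possible value orderings: we must rule out that the second atom of $[u,ut_1t_2]$ is again a $k$-Bruhat cover. The cleanest route is the fact that Bruhat intervals of length $2$ have exactly two atoms. We then verify that the remaining candidate $k$-transposition fails to be a cover because of the intermediate value, or else yields a contradiction with $u\lessdot_k ut_1$.
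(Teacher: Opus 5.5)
Your proposal is correct and follows essentially the paper's proof. You use the same two-case split by the shared letter, the same value orderings ($u(a_2)<u(a_1)<u(b)$, resp.\ $u(a)<u(b_1)<u(b_2)$), and the same witness $\bar u=u\,t_1t_2t_1$. This witness is a non-$k$ cover because the length rises at each step of $u\lessdot u(t_1t_2t_1)\lessdot u(t_1t_2t_1)t_1=ut_1t_2$, and these two steps add exactly $2$.

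There is one slip. In the first configuration the displayed chain should end at $u(a_2\ a_1)(a_1\ b)$, not $u(a_2\ a_1)(a_2\ b)$, which is a different permutation. Once you use your own factorization $(t_1t_2t_1)t_1$ directly, you no longer need the atom-counting detour, nor the unproved claim that every atom of $[u,ut_1t_2]$ comes from a transposition of the three positions.
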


\begin{proof}
    We split into two cases.

    \begin{itemize}
        \item \textbf{Case 1}: $b_2>b_1$. If $a_1\ne a_2$, then $t_1$ and $t_2$ commute and we are done.

        Hence, assume $a_1 = a_2$. Then $u\lessdot ut_1\lessdot ut_1t_2\implies u(a)<u(b_1)<u(b_2)$, and thus $u\lessdot ut_1t_2t_1\lessdot u(t_1t_2t_1)t_1 = ut_1t_2$, with $u\lessdot ut_1t_2t_1$ a non $k$-Bruhat cover. Taking $\bar u = ut_1t_2t_1$ works. 

        \item \textbf{Case 2}: $b_2=b_1$ and $a_2<a_1$. Here, $u\lessdot ut_1\lessdot ut_1t_2\implies u(a_2)<u(a_1)<u(b)$. Then $u\lessdot ut_1t_2t_1\lessdot u(t_1t_2t_1)t_1$, with $u\lessdot ut_1t_2t_1$ a non $k$-Bruhat cover. Taking $\bar u = ut_1t_2t_1$ works.
    \end{itemize}
\end{proof}

The following lemma completes the proof of \Cref{the two sets are equal}.
\begin{lemma}
    We have $\calL_k'(u, w)\subset \calL_k(u, w)$.
\end{lemma}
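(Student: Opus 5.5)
We argue by induction on $\ell(x)-\ell(u)$, building the required chain greedily from the bottom with \Cref{lem:goodchain_induction_1}. Let $x\in\calL_k'(u,w)$, so every Bruhat cover $u\lessdot\bar u\le x$ is a $k$-Bruhat cover. Since $x>u$, some cover of $u$ lies below $x$. Among all transpositions $t=(a\ b)$ with $a\le k<b$ and $u\lessdot ut\le x$, we pick $b$ maximal and then $a$ minimal for that $b$, and call the result $t_1=(a_1\ b_1)$. By \Cref{lem:goodchain_induction_1}, every Bruhat cover of $ut_1$ in $[ut_1,x]$ is again a $k$-Bruhat cover. So $x\in\calL_k'(ut_1,w)$ when $x\neq ut_1$; when $x=ut_1$ we are already done. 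By induction we get a chain
\[ut_1\lessdot_k ut_1t_2\lessdot_k\cdots\lessdot_k ut_1t_2\cdots t_m=x\]
satisfying the ordering conditions of \Cref{AGH set}. To make the induction compatible, we strengthen the inductive statement: the chain produced at each stage is the greedy one, with $t_2$ again chosen with $b_2$ maximal and then $a_2$ minimal among covers of $ut_1$ inside $[ut_1,x]$.

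It remains to check that $t_1,t_2$ obey the AGH ordering, i.e.\ $b_1>b_2$, or $b_1=b_2$ and $a_1<a_2$. Suppose this fails, so $b_2>b_1$, or $b_2=b_1$ and $a_2<a_1$. \Cref{lem:goodchain_induction_2} applied to $u\lessdot_k ut_1\lessdot_k ut_1t_2\le x$ then gives one of two outcomes. The first is a $\bar u$ with $u\lessdot\bar u\le x$ that is not a $k$-cover, which contradicts $x\in\calL_k'(u,w)$. The second is that $t_1$ and $t_2$ commute. In that case $t_1,t_2$ have disjoint supports, so $ut_2$ differs from $u$ on $\{a_2,b_2\}$ exactly as $ut_1t_2$ differs from $ut_1$. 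The length change $\ell(ut_1t_2)-\ell(ut_1)=1$ should then force $u\lessdot ut_2$, giving $ut_2\le ut_1t_2\le x$. That makes $t_2$ a candidate at the first step, and it beats $t_1$ in the (max $b$, then min $a$) order, contradicting the choice of $t_1$.

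The main obstacle is the commuting case. Disjoint supports alone do not guarantee that $ut_2$ covers $u$: the interval $(a_2,b_2)$ may contain $a_1$ or $b_1$, and swapping $t_1$ first changes the values there. I would handle this with the cover criterion of \Cref{lem:k-reflection}, comparing the values of $u$ and $ut_1$ at positions strictly between $a_2$ and $b_2$. If $ut_2$ fails to cover $u$, there should be a shorter reflection $(a_2\ c)$ or $(c\ b_2)$ with $u\lessdot u(a_2\ c)\le x$. Such a reflection is either a non-$k$ cover, a contradiction, or a $k$-cover with larger $b$ or smaller $a$, contradicting the choice of $t_1$. If this local analysis becomes messy, the fallback is to argue by Bruhat rank counts $x[i,j]$ via \cite[Theorem 2.1.5]{Bjorner-Brenti}, in the style of the previous lemma.
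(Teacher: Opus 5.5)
Your proof is the paper's proof. It uses the same greedy choice of $t_1$ (maximal $b_1$, then minimal $a_1$), \Cref{lem:goodchain_induction_1} to get $x\in\calL_k'(ut_1,w)$, induction, and \Cref{lem:goodchain_induction_2} to force the AGH ordering of $t_1,t_2$. Your greedy strengthening of the inductive statement is harmless but unnecessary, since the final step works for whatever $t_2$ the induction supplies.

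The commuting case, where the paper simply asserts $u\lessdot ut_2$, closes by exactly the value comparison you propose, so no fallback is needed. Since $ut_1$ agrees with $u$ off $\{a_1,b_1\}$, the cover $ut_1\lessdot ut_1t_2$ gives $u(a_2)<u(b_2)$. It also rules out values in $(u(a_2),u(b_2))$ at every position of $(a_2,b_2)$ other than $a_1,b_1$. If both $a_1,b_1$ lie in $(a_2,b_2)$, their values are merely swapped by $t_1$, so they cause no obstruction. If only one lies there, then $a_2<a_1<b_2<b_1$ or $a_1<a_2<b_1<b_2$. The cover condition for $u\lessdot ut_1$ at position $b_2$ (resp.\ $a_2$) shows that $u(a_1)\in(u(a_2),u(b_2))$ forces $u(b_1)=ut_1(a_1)\in(u(a_2),u(b_2))$ (resp.\ $u(b_1)\in(u(a_2),u(b_2))$ forces $u(a_1)=ut_1(b_1)\in(u(a_2),u(b_2))$), which contradicts $ut_1\lessdot ut_1t_2$.
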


\begin{proof}
    We proceed by induction on $\ell(w)-\ell(u)$. When $u=w$, both sets are empty. Henceforth assume that $u\lneq w$.

    Suppose $x\in \calL_k'(u, w)$. Then every cover of $u$ in $[u,x]$ is a $k$-Bruhat cover. Pick $a_1\le k<b_1$ with $b_1$ maximal, and then $a_1$ minimal such that $u\lessdot ut_1<x$, where $t_1 = (a_1\ b_1)$. By \Cref{lem:goodchain_induction_1}, all Bruhat covers of $ut_1$ in $[ut_1, x]$ are $k$-Bruhat covers of $ut_1$. Thus $x\in \calL_k'(ut_1, w)$. By the induction hypothesis $x\in \calL_k(ut_1, w)$. Therefore, there exist transpositions $t_i = (a_i\ b_i)$ with $a_i\le k< b_i$ for $2\le i\le m$, and a chain 
    \[ut_1\lessdot _kut_1t_2\lessdot_k\cdots \lessdot_k ut_1t_2\cdots t_m = x\]
    where either $b_i>b_{i+1}$ or ($b_i = b_{i+1}$ and $ a_i<a_{i+1}$) for $2\le i\le m-1$.

    We claim finally that $b_1>b_2$ or ($b_1=b_2$ and $a_1<a_2$). If not, \Cref{lem:goodchain_induction_2} implies either that $t_1$ and $t_2$ commute, or that there exists $\bar u$ with $u\lessdot \bar u\le x$ but $u\nless_k \bar u$. The latter is impossible, for it would imply $x\notin \calL_k'(u, w)$. The former is impossible, for then $u\lessdot ut_2$, contradicting the choice of $b_1$ and $a_1$. Hence $b_1>b_2$ or ($b_1 = b_2$ and $a_1<a_2$), and the chain
    \[u\lessdot_k ut_1\lessdot_k ut_1t_2\lessdot_k \cdots \lessdot_k ut_1t_2\cdots t_m = x\]
    implies that $x\in \calL_k(u, w)$.
\end{proof}

\section{Further Directions}

We present here several natural questions that arose from our work. The first concerns the question of extending our results to other partial flag varieties in type A. It seems that the natural generalizations of positroids here are projected Richardson varieties (cf. \cite{KLS2}). 

In Kim's thesis \cite{Kim}, he shows that under the Gonciulea–Lakshmibai degeneration of the complete flag variety in \cite{Gonciulea-Lakshmibai}, a Richardson variety $X_u^w$ degenerates to the union of toric varieties corresponding to certain faces of the Gelfand–Tsetlin polytope associated to a regular dominant weight $\lambda$. These faces are indexed by certain pairs of pipe dreams. Let $\mathcal{G}_u^w$ denote the polyhedral complex consisting of these faces. We ask the following two questions in analogy with \Cref{regular CW complex structure}. 

\begin{question}\label[question]{question: Kim analogy to closed ball}
    Is $\mathcal{G}_u^w$ homeomorphic to a closed ball? 
\end{question}

\begin{question}\label[question]{question: Kim analogy of boundary}
    What is the boundary of $\mathcal{G}_u^w$? Does it consist of precisely the union of all $\mathcal{G}_{u'}^{w'}$ where $X_{u'}^{w'}\subset X_u^w$? 
\end{question}

We also ask about analogues in other partial flag varieties: 
\begin{question}\label[question]{question: partial flag varieties} 
    Under the Gonciulea–Lakshmibai degeneration, do projected Richardsons in partial flag varieties also degenerate to reduced unions of toric varieties?
\end{question}
Of course, if this is true, then one could also ask \Cref{question: Kim analogy to closed ball} and \Cref{question: Kim analogy of boundary} for partial flag varieties. Two of the main difficulties in extending our results to this setting are: 
\begin{enumerate}[(i)] 
    \item It is not clear what the analogy of fence diagrams is in the partial flag variety setting. 
    \item The Gelfand–Tsetlin polytope is no longer an order polytope, so it is unclear if it still has a natural unimodular triangulation to consider. 
\end{enumerate}

In analogy with pipe dreams, we ask for an analogue of \textit{mitosis}. Since mitosis is a combinatorial lift of the divided difference operator, it seems possible that an answer to the following question could give a more compact character formula for cyclic Demazure modules than the one \cite{AGH} proves. 

\begin{problem}\label[problem]{analogue of mitosis} 
Given all of the reduced fence diagrams of $\Pi_u^w$, give a combinatorial rule constructing all of the fence complexes of $\Pi_{u'}^{w'}$ such that  $\Pi_u^w\subset \Pi_{u'}^{w'}$ is a codimension one subvariety. 
\end{problem} 

Finally, while we utilize Galashin, Karp and Lam's result heavily to show that fence complexes give a presentation of $\Gr(k,n)_{\geq 0}$ as a regular CW complex, it is natural to wonder whether one could in fact use fence complexes to provide a new proof of their theorem. We therefore pose the following problem. 

\begin{problem}
    Construct an explicit homeomorphism between $\Gr(k,n)_{\geq 0}$ and $\calO(P_{k,n})$ that maps positroid cells to their corresponding fence complexes. 
\end{problem}

\appendix

\section{Weight Order Verifications for the AGH Gr{\"o}bner Basis}\label[appendix]{Weight Order Verifications for the AGH Gr{\"o}bner Basis}
\subsection{Almousa–Gao–Huang's Gröbner basis}

Knutson, Lam and Speyer prove that $S/\initial_{\revlex} (I_u^w) = SR(\pi_k(\Delta[u,w])) $ inductively, and in particular, do not provide an explicit Gröbner basis of $I_u^w$. In \cite{AGH}, Almousa, Gao and Huang provides such an explicit formulation for a Gröbner basis with respect to revlex. To describe it, we first observe that one can think about a monomial $p_{I_1}\cdots p_{I_d}\in S$ as a $k\times d$ tableau $T$, with $I_1$ in increasing order (from top to bottom) in the first column, $I_2$ in increasing order in the second column, etc. One can also think about $T$ with the columns permuted by $\sigma$, which we correspond to $\text{sign}(\sigma)p_{I_1} \cdots p_{I_d}$. Almousa, Gao and Huang give a description of the Gr{\"o}bner bases of positroid varieties in terms of the following combinatorics.

\begin{defn}
    Let $T$ be a $k \times d$ semistandard Young tableau with entry $t_{i,j}$ in cell $(i, j)$. A \textit{generalized antidiagonal $D = \{t_{i_1, j_1}, ..., t_{i_r, j_r}\}$ of $T$} is a set of elements of $T$ satisfying $t_{i_1, j_1} < \dots < t_{i_r, j_r}$, $i_1 <\dots< i_r$, and $j_1 \geq \dots \geq j_r$.
\end{defn}

\begin{figure}[H]
\centering
\def\borderInnerSep{2pt}
\begin{tikzpicture}[scale=.9]
\draw (0,0)-- (5,0)--(5,6)--(0,6)--(0,0);
\draw (0,1)--(5,1);
\draw (0,2)--(5,2);
\draw (0,3)--(5,3);
\draw (0,4)--(5,4);
\draw (0,5)--(5,5);
\draw (2,1)--(2,3);
\draw (0,1)--(2,1);
\draw (1,2)--(3,2);
\draw (1,0)--(1,6);
\draw (2,0)--(2,6);
\draw (3,0)--(3,6);
\draw (4,0)--(4,6);
\draw[maroon] (4.2, 5.2)--(4.8,5.2)--(4.8,5.8)--(4.2,5.8)--(4.2,5.2);
\draw[maroon] (3.2, 4.2)--(3.8,4.2)--(3.8,4.8)--(3.2,4.8)--(3.2,4.2);
\draw[maroon] (2.2, 2.2)--(2.8,2.2)--(2.8,2.8)--(2.2,2.8)--(2.2,2.2);
\draw[maroon] (1.2, 1.2)--(1.8,1.2)--(1.8,1.8)--(1.2,1.8)--(1.2,1.2);
\draw[maroon] (1.2, 0.2)--(1.8,0.2)--(1.8,0.8)--(1.2,0.8)--(1.2,0.2);
\def\nodescl{0.7}
\node (A) at (0.5,5.5) {1};
\node (A) at (1.5,5.5) {1};
\node (A) at (2.5,5.5) {2};
\node (A) at (3.5,5.5) {2};
\node (A) at (4.5,5.5) {3};
\node (A) at (0.5,4.5) {2};
\node (A) at (1.5,4.5) {3};
\node (A) at (2.5,4.5) {3};
\node (A) at (3.5,4.5) {4};
\node (A) at (4.5,4.5) {5};
\node (A) at (0.5,3.5) {3};
\node (A) at (1.5,3.5) {4};
\node (A) at (2.5,3.5) {4};
\node (A) at (3.5,3.5) {5};
\node (A) at (4.5,3.5) {6};
\node (A) at (0.5,2.5) {4};
\node (A) at (1.5,2.5) {5};
\node (A) at (2.5,2.5) {6};
\node (A) at (3.5,2.5) {7};
\node (A) at (4.5,2.5) {8};
\node (A) at (0.5,1.5) {5};
\node (A) at (1.5,1.5) {7};
\node (A) at (2.5,1.5) {8};
\node (A) at (3.5,1.5) {9};
\node (A) at (4.5,1.5) {9};
\node (A) at (0.5,0.5) {7};
\node (A) at (1.5,0.5) {9};
\node (A) at (2.5,0.5) {10};
\node (A) at (3.5,0.5) {11};
\node (A) at (4.5,0.5) {12};

\end{tikzpicture}\label{fig:generalized anti-diag}
\caption{A generalized antidiagonal, with terms boxed in purple, of a semistandard Young tableau.}
\end{figure}

In other words, if you read the columns of $T$ from right to left, and each column from top to bottom, a generalized antidiagonal is a sequence of strictly increasing entries in this reading order. We will abuse notation and refer to a generalized antidiagonal $D$ simply by the values of its entries, dropping the reference to their cell in $T$. That is, we will usually write $D=\{a_1<\cdots <a_r\}$, where each $a_t$ is implicitly assumed to live in a cell $(i_t,j_t)$ as in the definition above.

 Given a generalized antidiagonal $D=\{a_1<\cdots <a_r\}$ of $T$, we write $S_D$ for the symmetric group on letters $a_1,\ldots, a_r$. Then, $S_D$ naturally acts on the set of $k \times d$ tableaux with precisely the entries of $D$ in cells $(i_1, j_1), \dots, (i_r, j_r)$ and which otherwise agree with $T$. We will often identify $S_D$ with $S_r$, with $\sigma\in S_r$ acting on $a_1,\ldots, a_r$ via $\sigma\cdot a_i = a_{\sigma(i)}$. 

\begin{defn}
    Given a standard monomial $\mathbf{m} = p_{I_1} \cdots p_{I_d}$ for $Gr(k,n)$, and thinking of $I_i$ as a subset of $[n]$, we write $\mathbf{m}^\vee$ to denote the standard monomial for $Gr(n-k, n)$ given by $\mathbf{m}^\vee := p_{[n] \setminus I_1} \cdots p_{[n] \setminus I_d}$.

    Let $T$ be the tableau corresponding to $\mathbf{m}$. Then, we define $T^\vee$ to be the tableau corresponding to $\mathbf{m}^\vee$. Subsequent to a choice of integer $n$, $\vee$ induces an involutive bijection between $k \times d$ column increasing tableaux with entries at most $n$ and $(n - k) \times d$ with entries at most $n$.
\end{defn}

Recall the following fact, due to Hodge in \cite{Hodge_1943} (cf. \cite[Lemma 7.2.3]{Bruns_Herzog_1998}). 
\begin{lemma}
    The map from the set of standard monomials for $Gr(k,n)$ with $d$ factors and the set of $k \times d$ semistandard Young tableaux with fillings at most $n$ which sends $\bf{m}$ to $T$ is a bijection.
\end{lemma}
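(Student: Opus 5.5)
The statement to prove is Hodge's classical fact: sending a standard monomial $\mathbf{m}=p_{I_1}\cdots p_{I_d}$ with $I_1\leq\cdots\leq I_d$ to the $k\times d$ tableau $T$ whose $j$-th column lists $I_j$ in increasing order gives a bijection onto $k\times d$ semistandard Young tableaux with entries in $[n]$. The plan is to check that the map lands in semistandard tableaux, then produce an inverse.

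\emph{Step 1 (well-definedness).} Columns of $T$ are strictly increasing, because each $I_j$ is a set listed in increasing order. For the rows, write $I_j=\{i^j_1<\cdots<i^j_k\}$. The hypothesis $I_j\leq I_{j+1}$ in Bruhat (Gale) order means, by the description recalled in \Cref{pik notation}, that $i^j_m\leq i^{j+1}_m$ for every $m$. This says exactly that row $m$ of $T$ is weakly increasing from column $j$ to column $j+1$. Hence $T$ is semistandard.

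\emph{Step 2 (inverse map).} Given a semistandard $k\times d$ tableau $T$ with entries at most $n$, its columns are strictly increasing, so each column is a $k$-element subset $I_j\subset[n]$. Weak increase along each row gives $i^j_m\leq i^{j+1}_m$ for all $m$, which is the Gale condition $I_j\leq I_{j+1}$. So $p_{I_1}\cdots p_{I_d}$ is a standard monomial.

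\emph{Step 3 (mutual inverses).} A standard monomial is a commutative product, so it determines only the multiset $\{I_1,\dots,I_d\}$. This multiset is totally ordered by Gale order, and a chain can be written as $I_1\leq\cdots\leq I_d$ in only one way up to permuting equal entries. Therefore the column order of $T$ is canonically determined, and the two constructions are inverse to each other.

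No step is a real obstacle. The only point needing care is Step 3: one must make sure that reordering the factors of $\mathbf{m}$ never yields two different tableaux. This holds because a chain in a poset has a unique sorted order, up to repeated elements that produce identical columns.
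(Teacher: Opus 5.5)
Your proof is correct. The paper gives no argument for this lemma; it quotes it as a known fact of Hodge, with a pointer to Bruns--Herzog, where it sits alongside the substantive theorem that standard monomials form a $\mathbb{C}$-basis of the Pl\"ucker coordinate ring. You instead verify the bijection directly from the definitions:
\begin{itemize}
\item strict column increase is the same as each column being a $k$-subset of $[n]$;
\item weak row increase is exactly the coordinatewise Gale condition $i^j_m\le i^{j+1}_m$;
\item antisymmetry of Gale order makes the weakly increasing arrangement of a chain unique, so the commutative monomial determines the column order.
\end{itemize}
This route is elementary and self-contained. It shows the lemma is purely definitional once you have the coordinatewise description of Bruhat order on $\binom{[n]}{k}$. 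The citation adds nothing further for this particular statement, which the paper uses only to justify writing tableaux in place of standard monomials.
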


Therefore, to ease notation, we will write $T$ in place of its corresponding standard monomial in most formulas which follow. With this notation in place, we have the following explicit construction of a Gröbner basis due to \cite{AGH}. 
\begin{thm}[AGH]\label[theorem]{AGH Grobner basis}
    $\Pi_u^w$ has a Gröbner basis for $<_\mathrm{revlex}$ consisting of the straightening relations and $f_1,\ldots, f_m \in S$, where each $f_i$ is either of the form \[\sum_{w\in S_D} (-1)^{\ell(w)} w\cdot T\] or \[\left(\sum_{w\in S_r} (-1)^{\ell(w)} w\cdot T\right)^\vee.\] 
    Here, $T$ is a SSYT, $D$ is some generalized antidiagonal of length $r$ in $T$, and $S_r$ acts on the antidiagonal $D$ by replacing $a_i$ with $a_{w(i)}$. Furthermore, up to a scalar factor, $\initial_\revlex\left(\sum_{w\in S_D} (-1)^{\ell(w)} w\cdot T\right)$ equals $T$, and $\initial_\revlex\left(\sum_{w\in S_r} (-1)^{\ell(w)} w\cdot T\right)^\vee$ equals $T^\vee$. 
\end{thm}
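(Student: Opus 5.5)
We will follow the strategy of \cite{AGH}, organized around \Cref{initial ideal of I_u^w}. The proof has three steps.

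First, we show that every proposed generator lies in $I_u^w$. For a SSYT $T$ and a generalized antidiagonal $D=\{a_1<\cdots<a_r\}$, the alternating sum $\sum_{\sigma\in S_D}(-1)^{\ell(\sigma)}\sigma\cdot T$ is a generalized Garnir (shuffle) expression. Modulo $I_{k,n}$ it should be a combination of products of Plücker coordinates in which the entries $a_1,\ldots,a_r$ are antisymmetrized across columns. We would like to realize it as a product of a fixed monomial with an $r\times r$ minor-type expression in the entries of $D$. The claim to prove is that whenever $r$ exceeds a rank bound read off from the interval $[u,w]$, this expression vanishes on $\Pi_u^w$. The bound is a rank condition of the form $u[i,j]\le\cdot\le w[i,j]$, using the characterization of Bruhat order by the numbers $x[i,j]$ together with the cyclic-rank description of positroids. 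For the dual generators we apply the same argument on $\Gr(n-k,n)$, transported through the isomorphism $\Gr(k,n)\cong\Gr(n-k,n)$ that sends $p_I\mapsto \pm p_{[n]\setminus I}$ and $\Pi_u^w$ to the corresponding dual positroid.

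Second, we check the initial terms. For $\sigma\neq\mathrm{id}$, the tableau $\sigma\cdot T$ moves a larger entry of $D$ into an earlier cell in the reading order (right to left, top to bottom). After re-sorting columns and straightening, we want every resulting monomial to be strictly larger than $T$ in $<_\revlex$, using a total extension of Bruhat order. Granting this, the coefficient of $T$ is $\pm1$ and $\initial_\revlex$ of the sum is a nonzero scalar multiple of $T$. The same computation under $\vee$ gives the dual statement, since $\vee$ reverses the relevant orders.

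Third, we prove that these elements, together with the straightening relations, form a Gröbner basis. Let $J$ be the monomial ideal generated by all the initial terms. By the first two steps, $J\subseteq \initial_\revlex(I_u^w)$, and by \Cref{initial ideal of I_u^w} the latter is the Stanley–Reisner ideal of $\pi_k(\Delta[u,w])$ (plus the incomparability monomials). So it suffices to show that every minimal non-face $I_1\le\cdots\le I_t$ of $\pi_k(\Delta[u,w])$, viewed as a SSYT $T$, equals $T$ or $T^\vee$ for some admissible generalized antidiagonal $D$ of sufficient length. This is the step we expect to be the main obstacle.

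Our first approach would be as follows. A chain in Gale order fails to lift to a $k$-Bruhat chain in $[u,w]$ exactly when some rank inequality $x[i,j]\le w[i,j]$, or the reverse one against $u$, fails for every candidate lift. We would choose a violated inequality and extract from the columns of $T$ a strictly increasing sequence in reading order whose length witnesses the violation. We expect the upper bound coming from $w$ to produce an antidiagonal in $T$, and the lower bound coming from $u$ to produce one in $T^\vee$; this is why both families of generators are needed. Minimality of the non-face should ensure that the antidiagonal uses cells from every column, so that the generator's initial term is exactly $T$ rather than a proper multiple of it.
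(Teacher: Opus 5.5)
The paper gives no proof of this statement. It is imported from \cite{AGH} and used as a black box; the appendix only recomputes leading terms for row sum lex. So your outline must stand on its own. Its skeleton is sound: elements of $I_u^w$ whose leading terms generate the monomial ideal of \Cref{initial ideal of I_u^w} do form a Gr\"obner basis. But the step that carries the content is missing.

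\textbf{Step 3.} For every chain $I_1<\cdots<I_t$ of bases that is a (minimal) non-face of $\pi_k(\Delta[u,w])$, you need a \emph{single} witness. This means entries $a_1<\cdots<a_r$ forming a generalized antidiagonal of the tableau $T$ with columns $I_1,\ldots,I_t$ (or of the $(n-k)$-row tableau $T'$ with $T'^\vee=T$), on which the antisymmetrizer is forced to vanish along $\Pi_u^w$. A natural reason for vanishing is that $\{a_1,\ldots,a_r\}$ is dependent in the positroid $M$ of $\Pi_u^w$ (resp.\ in $M^*$).

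Your sentence about lifts does not supply this. Non-liftability only says that each candidate lift $x_1\le\cdots\le x_t$ violates \emph{some} inequality $u[i,j]\le x_1[i,j]$ or $x_t[i,j]\le w[i,j]$, possibly a different one for each lift. These are also conditions on permutations, not on a set of column indices. Converting them into one lift-independent, rank-deficient set that also appears as a strictly increasing sequence in the reading order of $T$ or $T'$ is the real content of the theorem, and nothing in the proposal does it.

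Your expectation that $w$ feeds the $T$ family and $u$ feeds the $T^\vee$ family is likewise unsupported. All conditions cutting out $\Pi_u^w$ are upper bounds on ranks of cyclic intervals of columns. The $\vee$ family is just the same construction for $M^*$, via $r_{M^*}(S)=|S|-k+r_M([n]\setminus S)$.

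Minimality plays no role in the leading term being $T$; Step 2 gives that for every $D$. Equivalently to Step 3, you could show that the $d$-column tableaux with no such antidiagonal in $T$ or $T^\vee$ number $\dim\bbC[\Pi_u^w]_d$, but that needs the same combinatorics.

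\textbf{Step 1.} You never fix which pairs $(T,D)$ are used. Take all $(T,D)$ with $\{a_1,\ldots,a_r\}$ dependent in $M$, and all $(T',D)$ with $\{a_1,\ldots,a_r\}$ dependent in $M^*$, i.e.\ $r_M([n]\setminus\{a_1,\ldots,a_r\})<k$; finitely many then suffice by Noetherianity. With this choice, membership needs neither a factorization into a monomial times a minor nor any Bruhat inequalities. Write points of $\Pi_u^w$ as row spans of $k\times n$ matrices with columns $v_1,\ldots,v_n$. The antisymmetrizer is an alternating multilinear function of $v_{a_1},\ldots,v_{a_r}$, and these vectors are dependent at every point of $\Pi_u^w$. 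So it vanishes on the affine cone and lies in $I_u^w$. The $\vee$ family is the same argument applied to $V^\perp$.

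\textbf{Step 2.} The initial term of $f$ is determined by the monomials $\sigma\cdot T$ themselves, after sorting columns and collecting terms. Straightening is beside the point: it replaces a non-standard monomial by monomials further from being initial, so comparing straightened terms with $T$ cannot certify that $T$ is the initial term. The coefficient of $T$ also need not be $\pm1$: if $D$ lies in one column then $f=r!\,T$; only nonvanishing matters.

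Finally, the dual case does not follow because ``$\vee$ reverses the relevant orders''. The map $J\mapsto[n]\setminus J$ reverses Gale order, so the dual claim is the non-dual claim for revlex with respect to a linear extension of the \emph{reverse} of Bruhat order on $\binom{[n]}{n-k}$. Your argument does not treat that order. Compare the paper's analogous computation for row sum lex, where the dual case (\Cref{dual case}) requires a separate and considerably longer argument than the non-dual one.
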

\subsection{Proof of \Cref{so we are done} } 
Recall from \Cref{Background on Grobner Degenerations} that for  $I=\{i_1<\cdots< i_k\}$, we assign the integral weight $\lambda(I) = i_1N^{k-1}+i_2N^{k-2}+\cdots +i_{k-1}N+i_k$. In this section, we show that the Gröbner basis of \cite{AGH} described in \Cref{Background on Grobner Degenerations} satisfies the conditions of \Cref{nice Gr{\"o}bner basis implies containment}. To do this we show that the $f_i$ defined in \Cref{AGH Grobner basis} satisfy the conditions of \Cref{nice Gr{\"o}bner basis implies containment}.

We start by fixing $N>>0$, so that all of the $\initial_\lambda (f)$ we consider are actually $\initial_\rsl(f)$ (we only consider finitely many, so we can do this without loss of generality by \Cref{suffices to consider finitely many}). Let $T$ be a SSYT with generalized antidiagonal $D=a_1<\cdots <a_r$. The following lemma allows us to reduce to considering $w\cdot T$ for special $w$. 

\begin{lemma} \label[lemma]{simplify permutation} 
    Let $T$ be a $k\times d$ SSYT tableau with generalized antidiagonal $D=\{a_1<\cdots < a_r\}$, and let $w\in S_r$. Then, there exists some $w'\in S_r$ such that:
    \begin{enumerate}[(1)] 
        \item $w\cdot T$ and $w' \cdot T$ are the same tableau up to reordering columns. 
        \item $a_i$ and $a_{w'(i)}$ are not in the same column for all $i$ such that $w'(i)\neq i$. 
        \item For each column $C$, if $a_{i_1}<\cdots <a_{i_t}$ are the entries of $D$ in $C$ not fixed by $w'$, we have $w'(a_{i_1})<\cdots <w'(a_{i_t})$. 
    \end{enumerate}
\begin{proof} 
If $a_{x_1},a_{x_2}$ are in the same column and $w(x_1)=x_2$, then we can replace $w$ with the permutation $v$ such that $v(x) = w(x)$ for all $x\neq x_1,x_2$, $v(x) = w(x)$, and $v(x_1) = w(x_2)$, $v(x_2) = x_2$. In other words, we set $v= (w(x_2),x_2) \cdot w$. By construction, $v\cdot T$ is the same as $w\cdot T$ up to rearranging the order in columns.  Repeating this process for $v$, and continuing until we no longer have any elements $x$ such that $a_{v(x)}$ and $a_x$ are in the same column where $v(x)\neq x$, we may (and will) assume that $w$ satisfies (1) and (2).

Now, take a column $C$ and let $i_1,\ldots, i_t$ be the entries of $D$ in $C$ not fixed by $w$. Observe that if $\sigma$ is a permutation of $i_1,\ldots, i_t$, fixing all other elements of $[r]$, then $(w\circ \sigma)\cdot T$ equals $w\cdot T$ up to rearranging columns. In particular, we may take $\sigma_C$ to be such that if $a_{i_1}<\cdots < a_{i_t}$, then $w(\sigma_C(a_{i_1}))<\cdots <w(\sigma_C(a_{i_t}))$. In addition, by (2) we have $w(i_j)\notin \{i_1,\ldots, i_t\}$ for each $j$, and thus $w(\sigma_C(i_j))\notin \{i_1\ldots, i_t\}$.

Precomposing $w$ with the product of the $\sigma_C$, ranging over all columns $C$ with multiple entries, gives $w'$, which satisfies properties (1), (2) and (3) as desired. 
\end{proof}
\end{lemma}

We first address the case where we don't have to dualize. 

\begin{prop}\label[proposition]{non dual case} 
    The initial term of $\sum_{w\in S_D} (-1)^{\ell(w)}w\cdot T$ with respect to row sum lex is $T$. In symbols, 
    \[\initial_{\rsl}\left(\sum_{w\in S_D} (-1)^{\ell(w)}w\cdot T\right) = c T\]  for some $c\in \bbC$. 
\begin{proof}
    First of all, observe that if $w$ is some stabilizer of $T$, then by our conventions on the signs of the monomials when one permutes columns we have that $(-1)^{\ell(w)}w\cdot T$ is a positive multiple of $T$. Hence, $T$ does appear in the sum with nontrivial coefficient.

    Now, suppose that $w\in S_D$ and $w\cdot T$ is some tableau with no repeated entries in a column, distinct from $T$ up to rearranging columns. It suffices to show that $w\cdot T>T$ in row sum lex. Using the notation of \Cref{simplify permutation}, we may replace $w$ with $w'$, and assume that $w$ satisfies condition (2).
    
    Let $i$ be the smallest element of $[r]$ such that $w(i)\neq i$. It follows that $w(i)>i$. Let $i$ be in cell $(r,c)$ of $T$, and by (2), $w(i)$ is in some column $c'$ with $c'\neq c$. Note that in $wT$, when one rearranges the entries to be column standard, we have that nothing above row $r$ in column $c$ changes. This is because every entry above row $r$ in column $c$ is not affected by $w$, and all the entries below row $r$ either remain the same, or are replaced by some number larger than all entries above row $r$ (as $a_i$ is the lowest number affected by $w$, and all other entries being swapped around are some $a_j>a_i$).

    In particular, since the $(r,c)$-entry of the standardization of $wT$ must either equal $a_{w(i)}$ or something larger than $a_i$, it follows that the $r$th row sum of $wT$ is larger than the $r$th row sum of $T$. For $r'<r$, the $r'$th row sum is not affected by $wT$. This is because the only columns $d$ that will change under applying $w$ are those with an entry $a_j$ in the antidiagonal, for $i<j$. In particular, $d\leq c$, and hence the $(r',d)$ entry of $T$ is less than $a_i$, as $T$ is a SSYT. It follows that the $(r',d)$ entry does not change when applying $w$ and column standardizing. 
\end{proof}
\end{prop} 

The case where we dualize is significantly more involved. We now introduce some notation. Fix $w\in S_r$, satisfying the conditions of \Cref{simplify permutation}. 

\begin{defn}
    Let $A_T=\{i\in [r]\mid w(i)>i\}$ and $D_T = \{i\in [r]\mid w(i)<i\}$. 
\end{defn}

Given a SSYT $T$ with antidiagonal $D=\{a_1<\cdots <a_r\}$, let $c_i$ denote the column that $a_i$ is in (note we may have $c_i = c_j$ for $i\neq j$). For a nonfixed point $i$ of $w$, we define \[f(i) = \begin{cases} a_i-|\col (c_i)\cap [a_i]|+1 & \textrm{if } i\in A_T \\ a_{w(i)} - |\col(c_i)\cap [a_{w(i)}]| & \textrm{if } i\in D_T  \end{cases}.\]

Here, $\col(c)$ denotes the set of numbers in column $c$ of $T$. 
\begin{lemma}\label[lemma]{f behaves well} 
Fix a column $c$ and let $a_{y_1}<\cdots < a_{y_t}$ be entries of column $c$ not fixed by $w$. Then, once columns in $(wT)^\vee$ are reordered in increasing order from top to bottom, $f(y_1)$ is precisely the smallest $r$ such that the box in row $r$, column $c$ of $(wT)^\vee$ is different from the corresponding box in $(T)^\vee$. Moreover, if $y_1<w(y_1)$, then the entry in $(r,c)$ decreases from $T^\vee$ to $(wT)^\vee$. If $y_1>w(y_1)$, the entry in $(r,c)$ increases from $T^\vee$ to $(wT)^\vee$. 
\end{lemma}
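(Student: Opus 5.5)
Throughout, $w\in S_r$ is normalized as in \Cref{simplify permutation}, so the non-fixed entries of $D$ in a column $c$ are sent to entries lying in other columns, and they are sent in an order-preserving way. The plan is to describe column $c$ of $(wT)^\vee$ as a complement and locate its first change. Let $X=\col(c)$ in $T$. In $wT$, column $c$ is the set
\[X'=X\setminus\{a_{y_1},\ldots,a_{y_t}\}\cup\{a_{w(y_1)},\ldots,a_{w(y_t)}\}.\]
Condition (2) guarantees that the new values are not already in $X$ among the moved entries. We may also assume they are not elsewhere in $X$ at all, since otherwise the monomial vanishes; I expect this is also how such terms are discarded. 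Hence $X'$ is again a $k$-set. Column $c$ of $(wT)^\vee$, after sorting, is the sorted list of $[n]\setminus X'$, and in $T^\vee$ it is the sorted list of $[n]\setminus X$.

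The key observation is elementary: two sorted lists of complements $[n]\setminus X$ and $[n]\setminus X'$ first differ at the position determined by the smallest element $m$ of the symmetric difference $X\triangle X'$. Write $g(z)=|[z]\setminus X|$ for the number of elements of $[z]$ not in $X$. If $m\in X\setminus X'$, then $m$ is a new element of the complement. Everything below $m$ agrees, so the first differing row is $g(m-1)+1$, and the entry there decreases, from the old next complement element down to $m$. If $m\in X'\setminus X$, then $m$ leaves the complement. It was in row $g(m)$, and the entry there increases.

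Next, identify $m$ using condition (3) together with the fact that $D$ is a generalized antidiagonal. The removed values are $a_{y_1}<\cdots<a_{y_t}$, and by (3) the inserted values $a_{w(y_1)}<\cdots<a_{w(y_t)}$ are sorted compatibly. So $m=\min(a_{y_1},a_{w(y_1)})$. If $y_1<w(y_1)$, i.e. $y_1\in A_T$, then $a_{y_1}<a_{w(y_1)}$. Here one must still check that no other inserted value $a_{w(y_j)}$ is smaller than $a_{y_1}$. I would attempt this via (3): $w(y_j)>w(y_1)>y_1$. If that route fails, I would use the antidiagonal row and column monotonicity instead. Granting this, $m=a_{y_1}\in X\setminus X'$, the changed row is
\[g(a_{y_1}-1)+1=a_{y_1}-|X\cap[a_{y_1}]|+1=f(y_1),\]
and the entry decreases. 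If $y_1\in D_T$, then $m=a_{w(y_1)}\in X'\setminus X$, the row is
\[g(a_{w(y_1)})=a_{w(y_1)}-|X\cap[a_{w(y_1)}]|=f(y_1),\]
and the entry increases.

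The main obstacle is the bookkeeping showing that $m$ really is $\min(a_{y_1},a_{w(y_1)})$. This means ruling out a smaller inserted value $a_{w(y_j)}$ with $j>1$ when $y_1\in A_T$, and a smaller removed value when $y_1\in D_T$. I will handle this with (3) alone, since order-preservation gives $a_{w(y_1)}<a_{w(y_j)}$ and $a_{y_1}<a_{y_j}$ directly. The remaining routine work is to check the off-by-one in counting $[z]\setminus X$ against the definition of $f$.
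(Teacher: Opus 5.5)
Your proposal is correct and takes essentially the same approach as the paper. In both, column $c$ of $(wT)^\vee$ is the complement of $\col(c)$ with each $a_{w(y_j)}$ traded for $a_{y_j}$; the normalization of $w$ makes $\min(a_{y_1},a_{w(y_1)})$ the smallest moved value, and the first changed row and the direction of change are read off from it. Your symmetric-difference formulation is slightly more careful than the paper's wording. It makes the implicit no-repeated-entries assumption explicit, and it handles the case where the entry just below $a_{w(y_1)}$ in $T^\vee$ is itself one of the removed values.
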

\begin{proof}
   Observe that for a given column $c$, if $a_{y_1}<\cdots < a_{y_t}$ are entries of column $c$ not fixed by $w$, then $f(y_1)$ is precisely the smallest row $r$ of the box in column $c$ in $(wT)^\vee$ that is different from the corresponding box in $(T)^\vee$. To see this, observe by assumption (3) of \Cref{simplify permutation}, column $c$ of $(wT)^\vee$ is precisely column $c$ of $(T)^\vee$ with $a_{w(y_1)},\ldots, a_{w(y_t)}$ replaced by $a_{y_1},\ldots, a_{y_t}$ respectively, and then reordered so that the column is increasing from top to bottom. By assumption (2) of \Cref{simplify permutation}, we have that $a_{w(y_1)}$ appears in a row above $a_{w(y_2)},\ldots, a_{w(y_t)}$ in column $c$ of $(T)^\vee$, and since $a_{y_1}>a_{y_i}$ for $i>1$, only the rows affected by replacing $a_{w(y_1)}$ with $a_{y_1}$ in $T^\vee$ and then reordering are candidates for $r$.

    If $y_1<w(y_1)$, then when we replace $a_{w(y_1)}$ with $a_{y_1}$ and reorder, $a_{y_1}$ will end up in row $a_{y_1}-|\col(c_{y_1})\cap [a_{y_1}]|+1$, and all rows above this will be unchanged. Observe that in this case, the entry in row $(r,c)$ decreases from $T^\vee$ to $(wT)^\vee$.  If $w(y_1)<y_1$, then replacing $a_{w(y_1)}$ with $a_{y_1}$ will affect only the row that $a_{w(y_1)}$ is in, and rows below it. This topmost affected row is precisely $a_{w(y_1)}-|\col(c_{y_1})\cap[a_{w(y_1)}]|$. Letting $x$ be the entry directly below $a_{w(y_1)}$ in $T^\vee$, the entry in $(r,c)$ in $(wT)^\vee$ will either be $a_{y_1}$ if $a_{y_1}<x$ or $x$ if $x<a_{y_1}$. In the second case, $x>a_{w(y_1)}$ since $T^\vee$ is increasing as you go down columns. Hence, either way, the entry in $(r,c)$ increases from $T^\vee$ to $(wT)^\vee$, as desired. 
    
    \end{proof}
\begin{lemma}\label[lemma]{top entry of column has smallest f} 
    With the same notation and assumptions as \Cref{f behaves well}, $f(y_1)\leq f(y_j)$ for all $j>1$. 
\begin{proof}
    Observe that $a-|\col(c)\cap [a]|$ is precisely the number of entries in column $c$ of $T^\vee$ that are less than or equal to $a$. In particular, for $a<b$, we have $a-|\col(a)\cap [a]|\leq b-|\col(b)\cap [b]|$. We now use this observation to handle all the cases. 
    
    First, suppose $y_1\in A_T$. First assume, $y_j\in A_T$. Then, we have $a_{y_1}<a_{y_j}$, so if $y_j$ is in $A_T$, we  have $f(y_1)\leq f(y_j)$. If $y_j\in D_T$, then by assumption (2) of \Cref{simplify permutation}, we have $w(y_j)>w(y_1)>y_1$, so $a_{w(y_j)}>a_{y_1}$. Thus, \[a_{y_1}-|\col(c)\cap [a_{y_1}]| \leq a_{w(y_j)}-|\col(c)\cap [a_{w(y_j)}]|.\] 
    Moreover, this inequality is strict, since $w(y_j)$ is in column $c$ of $T^\vee$. Hence $f(y_1)\leq f(y_j)$. 

    Now, suppose instead that $y_1\in D_T$. If $y_j\in A_T$, then since $w(y_1)<y_1<y_j$, we have $f(y_1)\leq f(y_j)$. If $y_j\in D_T$, then $w(y_1)<y_1<y_j<w(y_j)$ and so $f(y_1)\leq f(y_j)$.  
\end{proof}
\end{lemma}

\begin{lemma}\label[lemma]{rel min implies f not minimum} 
    Let $w$ be of the form of \Cref{simplify permutation}. Suppose $t\in D_T$, and $w(t)\in A_T$. Then, $f(w(t))> f(t)$. 
\begin{proof}
    We must show that 
    \[a_{w(t)}-|\col(c_{w(t)})\cap [a_{w(t)}]| = f(w(t))-1\geq f(t) = a_{w(t)}-|\col(c_t)\cap[a_{w(t)}]|.\] 
    This is equivalent to showing that \[\tag{$*$} |\col(c_t)\cap [a_{w(t)}]|\geq |\col(c_{w(t)})\cap [a_{w(t)}]|.\]  Since $t\in D_T$, $t>w(t)$, so $c_{w(t)}\geq c_t$. Then, ($*$) follows from the fact that $T$ is a SSYT. 
\end{proof}
\end{lemma}

\begin{lemma}\label[lemma]{y>x>w(y) implies not minimum} 
Suppose $x\in A_T$ and $y>x>w(y)$ for some $y\in [r]$. Then, $f(x)>f(y)$. 
\begin{proof}
    Since $y<w(y)$, we have $y\in D_T$. Thus, we must show the inequality 
    \[a_x-|\col(c_x)\cap [a_x]| \geq a_{w(y)} -|\col(c_y)\cap [a_{w(y)}]|. \] 
    In other words, if $a_x$ is in row $r_x$ of $T$, and $r_y$ is the largest row in column $c_y$ of $T$ with an entry less than $a_{w(y)}$, then since $r_x = |\col(c_x)\cap [a_x]|$ and $r_y = |\col(c_y)\cap [a_{w(y)}]|$ (here we use assumption (2) of \Cref{simplify permutation} to conclude $a_{w(y)}$ is not in column $c_y$), we wish to show that 
    \[r_x-r_y\leq a_x -a_{w(y)}.\]

    Since $x>w(y)$, it follows that $a_x-a_{w(y)}$ is positive, so if $r_x\leq r_y$, we are done. Henceforth, assume that $r_x>r_y$. Then, note that if $b_1$ is the $(r_y+1, c_y)$ entry of $T$, we have by definition of $r_y$ that $b_1>a_{w(y)}$. Let $b_2$ be the $(r_y+1, c_x)$ entry of $T$. Since $D$ is a generalized antidiagonal and $x<y$, we have $c_x\geq c_y$. Then, since $T$ is a SSYT, it follows that $b_2\geq b_1>a_{w(y)}$. 

    Note that the number of entries in $\col (c_x)$ in between $a_x$ and $b_2$, inclusive, is precisely $r_x-r_y$. Of course, this is less than or equal to $a_x-b_2+1$. Since $b_2>a_{w(y)}$, it follows that 
    \[r_x-r_y\leq a_x-b_2+1\leq a_x-a_{w(y)},\] 
    as desired. 
\end{proof}
\end{lemma}

The setting of the previous lemma happens rather often, as the next result shows. 
\begin{lemma} \label[lemma]{cycle lemma} 
    For $w\in S_r$, let $x\in [r]$ satisfy $w(x)>x$. Then, either $x$ is a minimum of its cycle (i.e. for all $k\in \bbN$, $w^k(x)\geq x$), or there exists some $y\in \{w^k(x)\mid k\in \bbN\}$ such that $y>x>w(y)$. 
\begin{proof}
    Suppose $x$ is not a minimum of its cycle. Denote the cycle $(z_1,\ldots, z_m)$, where $z_1$ is the maximum of the cycle, and let $z_l$ be the minimum, i.e. $z_l\leq z_j\leq z_1$ for all $j\in [m]$. By definition of a cycle, we have $z_t = w^{t-1}(z_1)$ for $1\leq t\leq m$, and $w(z_m) = z_1$. Define the periodic function $f:\bbZ\to \bbR$ given by $f(i) = z_i$ where indices are given mod $m$. Then, $f$ has a unique continuous extension $F:[1,m]\to \bbR$ such that $F$ is linear on $[i,i+1]$ for all $i\in [m-1]$.

    Note that $x\neq z_1$ since $w(x)>x$ and $x\neq z_l$ by assumption. Then, by the Intermediate Value Theorem, since $F(1)<x<F(l)$, there exists some $\alpha \in (1,l)$ such that $F(\alpha)=x$. Since $F$ is piecewise linear and with finitely many different linear parts, none of which are constant, on $[1,l]$, there exist finitely many such $\alpha$. Suppose $\alpha$ is the largest possible, i.e. $F(\beta)\neq x$ for $\beta \in (\alpha, l)$. 
    
    If $\alpha$ is an integer, then $F(\alpha+1)<F(\alpha)$, otherwise either $\alpha+1=l$, or  $F(\alpha+1)>x>F(l)$, and the Intermediate Value Theorem contradicts our choice of $\alpha$. In either case, $w(x)<x$, a contradiction. Hence, $\alpha$ is not an integer. Thus, $\alpha \in (i,i+1)$ where $1\leq i\leq i+1\leq l$. If $F$ is increasing on $(i,i+1)$, then $i+1<l$, and $F(i+1)>x> F(l)$, and once again the Intermediate Value Theorem contradicts our choice of $\alpha$. Hence, $F$ is decreasing on $(i,i+1)$, so $F(i)>x>F(i+1)$. In particular, $z_i>x>z_{i+1}= w(z_i)$, as desired. 
    
\end{proof}
\end{lemma}

\begin{lemma}\label[lemma]{dual case} 
    Let $T$ be a $k\times d$ SSYT. Then, for all $w$ such that $wT$ and $T$ are not equal as tableaux (up to permutation of columns), we have that $(wT)^\vee > T^\vee$ in row sum lex. In particular, 
    \[\mathrm{in}_{\rsl}\left(\sum_{w\in S_r} (wT)^\vee \right) \] is a scalar multiple of $T^\vee$. 
\begin{proof}
    We may choose $w$ to be in the form from \Cref{simplify permutation}. Let $r$ be the smallest row such that $(wT)^\vee$ and $T^\vee$ differ, let $c$ be a column, and $a_{x_1}<\ldots< a_{x_m}$ the entries of $D$ in $c$ that are affected by $w$ (i.e. $w(x_i)\neq x_i$ for each $i$). Suppose $x_1\in A_T$. In this case, we show that $f(x_1)>r$, and hence the only columns for which row $r$ changes from $T^\vee$ to $(wT)^\vee$ are those where the row $r$ entry increases. Take $c$ to have minimal $f(x_1)$ out of all such columns with $x_1\in A_T$. Now, we have two cases:
\begin{itemize}
    \item \textbf{Case 1}: $x_1$ is the minimum of its cycle in $w$. If the cycle is of length $2$, then $w^{-1}(x_1)>x_1$, so $w^{-1}(x_1)\in D_T$, and hence by \Cref{rel min implies f not minimum}, we have $f(w^{-1}(x_1))<f(x_1)$. In particular, if $a_s$ is the smallest entry of $c_{w^{-1}(x_1)}$ in $D$ such that $w(s)\neq s$, we have $f(s)\leq f(w^{-1}(x_1))<f(x_1)$ by \Cref{top entry of column has smallest f}, and since $r\leq f(s)$ by \Cref{f behaves well}, it follows that $f(x_1)>r$.

   \item  \textbf{Case 2}: $x_1$ is not the minimum of its cycle in $w$. Then, by \Cref{cycle lemma}, there exists some $y\in \{w^k(x_1):k\in \bbN\}$ such that $y>x_1>w(y)$. In this case, \Cref{y>x>w(y) implies not minimum} implies that $f(x_1)>f(y)$ for some other $a_y$. Note by \Cref{top entry of column has smallest f}, $a_y$ and $a_{x_1}$ are not in the same column. Let $a_z$ be the smallest entry of $D$ in column $c_y$ such that $w(z)\neq z$. Then, by \Cref{top entry of column has smallest f} again, $f(z)\leq f(y) <f(x_1)$. In particular, the smallest row changed in column $c_y$ from $T^\vee$ to $(wT)^\vee$ is less than the smallest row changed in column $c$ from $T^\vee$ to $(wT)^\vee$, so $f(x_1)>r$, as desired.
\end{itemize} 

    For the final statement of the lemma, the same argument as in \Cref{non dual case} implies that $T^\vee$ appears in $\sum_{w\in S_r} (-1)^{\ell(w)}(wT)^\vee $ with nonzero coefficient. The result now follows from the first statement of the lemma. 
\end{proof}
\end{lemma} 

\begin{proof}[Proof of \Cref{so we are done}.]
By \Cref{AGH Grobner basis} and our computations above, it follows that $\initial_\revlex(\initial_\rsl(f_i)) = \initial_\revlex(f_i)$ for all $f_i$ in the AGH Gröbner basis. Then, \Cref{non dual case} and \Cref{dual case} show that the AGH Gröbner basis satisfies our desired criteria. 
\end{proof}

\nocite{*} 

\bibliographystyle{plainurl}
\bibliography{refs}

@book {beck2007computing,
    AUTHOR = {Beck, Matthias and Robins, Sinai},
     TITLE = {Computing the continuous discretely},
    SERIES = {Undergraduate Texts in Mathematics},
   EDITION = {Second},
      NOTE = {Integer-point enumeration in polyhedra,
              With illustrations by David Austin},
 PUBLISHER = {Springer, New York},
      YEAR = {2015},
     PAGES = {xx+285},
      ISBN = {978-1-4939-2968-9; 978-1-4939-2969-6},
   MRCLASS = {11P21 (05A15 05B15 11-02 11H06 52B05 52B20)},
  MRNUMBER = {3410115},
       DOI = {10.1007/978-1-4939-2969-6},
       URL = {https://doi.org/10.1007/978-1-4939-2969-6},
}

@article{KLS1,
  title={Positroid Varieties: Juggling and Geometry},
  author={Knutson, Allen and Lam, Thomas and Speyer, David E},
  journal={Compositio Mathematica},
  volume={149},
  number={10},
  pages={1710--1752},
  year={2013},
  publisher={London Mathematical Society}
}

@book {CLS,
    AUTHOR = {Cox, David A. and Little, John B. and Schenck, Henry K.},
     TITLE = {Toric varieties},
    SERIES = {Graduate Studies in Mathematics},
    VOLUME = {124},
 PUBLISHER = {American Mathematical Society, Providence, RI},
      YEAR = {2011},
     PAGES = {xxiv+841},
      ISBN = {978-0-8218-4819-7},
   MRCLASS = {14M25 (05A15 05E45 52B12)},
  MRNUMBER = {2810322},
MRREVIEWER = {Ivan\ Arzhantsev},
       DOI = {10.1090/gsm/124},
       URL = {https://doi.org/10.1090/gsm/124},
}

@article{Lam, title={Cyclic {D}emazure Modules and Positroid Varieties}, volume={26}, url={https://www.combinatorics.org/ojs/index.php/eljc/article/view/v26i2p28}, DOI={10.37236/8383}, abstractNote={A positroid variety is an intersection of cyclically rotated Grassmannian Schubert varieties.  Each graded piece of the homogeneous coordinate ring of a positroid variety is the intersection of cyclically rotated (rectangular) Demazure modules, which we call the cyclic Demazure module.  In this note, we show that the cyclic Demazure module has a canonical basis, and define the cyclic Demazure crystal.}, number={2}, journal={The Electronic Journal of Combinatorics}, author={Lam, Thomas}, year={2019}, month={May}, pages={pp.2-28}}

@phdthesis{Kim,
    author = {Kim, Giwan} ,
    title = {Richardson Varieties in a Toric Degeneration of the Flag Variety},
    school = {The University of Michigan},
    year = {2015}
}

@article{Kogan-Miller,
title = {Toric degeneration of {S}chubert varieties and {G}elfand–{T}setlin polytopes},
journal = {Advances in Mathematics},
volume = {193},
number = {1},
pages = {1-17},
year = {2005},
issn = {0001-8708},
doi = {https://doi.org/10.1016/j.aim.2004.03.017},
url = {https://www.sciencedirect.com/science/article/pii/S000187080400129X},
author = {Mikhail Kogan and Ezra Miller}
}

@article{Darayon,
title = {Arithmetically {G}orenstein and
{G}orenstein {R}ichardson Varieties in
the {G}rassmannian},
journal = {(preprint)},
year = {2012},
author = {Darayon, Chayapa},
}

@book {Brion-Kumar,
    AUTHOR = {Brion, Michel and Kumar, Shrawan},
     TITLE = {Frobenius splitting methods in geometry and representation
              theory},
    SERIES = {Progress in Mathematics},
    VOLUME = {231},
 PUBLISHER = {Birkh\"auser Boston, Inc., Boston, MA},
      YEAR = {2005},
     PAGES = {x+250},
      ISBN = {0-8176-4191-2},
   MRCLASS = {14M15 (13A35 14C05 17B10 20G05)},
  MRNUMBER = {2107324},
MRREVIEWER = {Vikram\ B.\ Mehta},
}

@article {Lenart,
    AUTHOR = {Lenart, Cristian},
     TITLE = {A {$K$}-theory version of {M}onk's formula and some related
              multiplication formulas},
   JOURNAL = {J. Pure Appl. Algebra},
  FJOURNAL = {Journal of Pure and Applied Algebra},
    VOLUME = {179},
      YEAR = {2003},
    NUMBER = {1-2},
     PAGES = {137--158},
      ISSN = {0022-4049,1873-1376},
   MRCLASS = {14M15 (05E15 19L64)},
  MRNUMBER = {1958380},
MRREVIEWER = {Frank\ Sottile},
       DOI = {10.1016/S0022-4049(02)00208-6},
       URL = {https://doi.org/10.1016/S0022-4049(02)00208-6},
}

@article {Payne,
    AUTHOR = {Payne, Sam},
     TITLE = {Frobenius splittings of toric varieties},
   JOURNAL = {Algebra Number Theory},
  FJOURNAL = {Algebra \& Number Theory},
    VOLUME = {3},
      YEAR = {2009},
    NUMBER = {1},
     PAGES = {107--119},
      ISSN = {1937-0652,1944-7833},
   MRCLASS = {14M25 (13A35)},
  MRNUMBER = {2491910},
MRREVIEWER = {Magda\ Sebestean},
       DOI = {10.2140/ant.2009.3.107},
       URL = {https://doi.org/10.2140/ant.2009.3.107},
}

@book {Hartshorne,
    AUTHOR = {Hartshorne, Robin},
     TITLE = {Algebraic geometry},
    SERIES = {Graduate Texts in Mathematics},
    VOLUME = {No. 52},
 PUBLISHER = {Springer-Verlag, New York-Heidelberg},
      YEAR = {1977},
     PAGES = {xvi+496},
      ISBN = {0-387-90244-9},
   MRCLASS = {14-01},
  MRNUMBER = {463157},
MRREVIEWER = {Robert\ Speiser},
}

@book {Beck-Sanyal,
    AUTHOR = {Beck, Matthias and Sanyal, Raman},
     TITLE = {Combinatorial reciprocity theorems},
    SERIES = {Graduate Studies in Mathematics},
    VOLUME = {195},
      NOTE = {An invitation to enumerative geometric combinatorics},
 PUBLISHER = {American Mathematical Society, Providence, RI},
      YEAR = {2018},
     PAGES = {xiv+308},
      ISBN = {978-1-4704-2200-4},
   MRCLASS = {05-01 (05A15 05C30 06A07 11P21 52B05 52C35)},
  MRNUMBER = {3839322},
MRREVIEWER = {Philippe\ Nadeau},
       DOI = {10.1090/gsm/195},
       URL = {https://doi.org/10.1090/gsm/195},
}

@misc{KLS0,
      title={Positroid varieties {I}: juggling and geometry}, 
      author={Allen Knutson and Thomas Lam and David E Speyer},
      year={2009},
      eprint={0903.3694},
      archivePrefix={arXiv},
      primaryClass={math.AG},
      url={https://arxiv.org/abs/0903.3694}, 
}

@article{KLS2,
  title={Projections of {R}ichardson  Varieties},
  author={Knutson, Allen and Lam, Thomas and Speyer, David E},
  journal={Journal f{\"u}r die reine und angewandte Mathematik (Crelles Journal)},
  volume={2014},
  number={687},
  pages={133--157},
  year={2012},
  publisher={Walter de Gruyter GmbH}
}

@misc{speyer2024richardsonvarietiesprojectedrichardson,
      title={Richardson varieties, projected {R}ichardson varieties and positroid varieties}, 
      author={David E Speyer},
      year={2024},
      eprint={2303.04831},
      archivePrefix={arXiv},
      primaryClass={math.AG},
      url={https://arxiv.org/abs/2303.04831}, 
}

@book {EC1,
    AUTHOR = {Stanley, Richard P.},
     TITLE = {Enumerative Combinatorics. {V}olume 1},
    SERIES = {Cambridge Studies in Advanced Mathematics},
    VOLUME = {49},
   EDITION = {Second},
 PUBLISHER = {Cambridge University Press, Cambridge},
      YEAR = {2012},
     PAGES = {xiv+626},
      ISBN = {978-1-107-60262-5},
   MRCLASS = {05-02 (05A15 06-02)},
  MRNUMBER = {2868112},
}

@book {FultonAnderson,
    AUTHOR = {Anderson, David and Fulton, William},
     TITLE = {Equivariant cohomology in algebraic geometry},
    SERIES = {Cambridge Studies in Advanced Mathematics},
    VOLUME = {210},
 PUBLISHER = {Cambridge University Press, Cambridge},
      YEAR = {2024},
     PAGES = {xv+446},
      ISBN = {978-1-00-934998-7},
   MRCLASS = {14L30 (05E14 14-02 14F43 14Mxx 20G05 55N91)},
  MRNUMBER = {4655919},
MRREVIEWER = {Michael\ Orin\ Joyce},
}

@misc{AGH,
      title={Standard Monomials for Positroid Varieties}, 
      author={Ayah Almousa and Shiliang Gao and Daoji Huang},
      year={2024},
      eprint={2309.15384},
      archivePrefix={arXiv},
      primaryClass={math.AG},
      url={https://arxiv.org/abs/2309.15384}, 
}

@article {Galashin-Karp-Lam,
    AUTHOR = {Galashin, Pavel and Karp, Steven N. and Lam, Thomas},
     TITLE = {Regularity theorem for totally nonnegative flag varieties},
   JOURNAL = {J. Amer. Math. Soc.},
  FJOURNAL = {Journal of the American Mathematical Society},
    VOLUME = {35},
      YEAR = {2022},
    NUMBER = {2},
     PAGES = {513--579},
      ISSN = {0894-0347,1088-6834},
   MRCLASS = {14M15 (05E14 15B48 20G20)},
  MRNUMBER = {4374956},
MRREVIEWER = {Jorge\ A.\ Vargas},
       DOI = {10.1090/jams/983},
       URL = {https://doi.org/10.1090/jams/983},
}

@article {Rietsch-Williams,
    AUTHOR = {Rietsch, Konstanze and Williams, Lauren},
     TITLE = {Discrete {M}orse theory for totally non-negative flag
              varieties},
   JOURNAL = {Adv. Math.},
  FJOURNAL = {Advances in Mathematics},
    VOLUME = {223},
      YEAR = {2010},
    NUMBER = {6},
     PAGES = {1855--1884},
      ISSN = {0001-8708,1090-2082},
   MRCLASS = {57T99 (14M15 20G20 52B22)},
  MRNUMBER = {2601003},
MRREVIEWER = {Julianna\ Tymoczko},
       DOI = {10.1016/j.aim.2009.10.011},
       URL = {https://doi.org/10.1016/j.aim.2009.10.011},
}

@book{Bruns_Herzog_1998, place={Cambridge}, edition={2}, series={Cambridge Studies in Advanced Mathematics}, title={{C}ohen-{M}acaulay Rings}, publisher={Cambridge University Press}, author={Bruns, Winfried and Herzog, H. Jürgen}, year={1998}, collection={Cambridge Studies in Advanced Mathematics}}

@article{Hodge_1943, title={Some Enumerative Results in the Theory of Forms}, volume={39}, DOI={10.1017/S0305004100017631}, number={1}, journal={Mathematical Proceedings of the Cambridge Philosophical Society}, author={Hodge, W. V. D.}, year={1943}, pages={22–30}}

@article {Eisenbud-Sturmfels,
    AUTHOR = {Eisenbud, David and Sturmfels, Bernd},
     TITLE = {Binomial ideals},
   JOURNAL = {Duke Math. J.},
  FJOURNAL = {Duke Mathematical Journal},
    VOLUME = {84},
      YEAR = {1996},
    NUMBER = {1},
     PAGES = {1--45},
      ISSN = {0012-7094,1547-7398},
   MRCLASS = {13P10 (13A30 14M25)},
  MRNUMBER = {1394747},
MRREVIEWER = {P.\ Schenzel},
       DOI = {10.1215/S0012-7094-96-08401-X},
       URL = {https://doi.org/10.1215/S0012-7094-96-08401-X},
}

@book{EisenbudCAbook,
series = {Graduate texts in mathematics ; 150},
publisher = {Springer-Verlag},
isbn = {0387942696},
year = {1995},
title = {Commutative algebra with a view toward algebraic geometry},
language = {eng},
address = {New York},
author = {Eisenbud, David},
}

@book {FultonToricVarieties,
    AUTHOR = {Fulton, William},
     TITLE = {Introduction to toric varieties},
    SERIES = {Annals of Mathematics Studies},
    VOLUME = {131},
      NOTE = {The William H. Roever Lectures in Geometry},
 PUBLISHER = {Princeton University Press, Princeton, NJ},
      YEAR = {1993},
     PAGES = {xii+157},
      ISBN = {0-691-00049-2},
   MRCLASS = {14M25 (14-02 14J30)},
  MRNUMBER = {1234037},
MRREVIEWER = {T.\ Oda},
       DOI = {10.1515/9781400882526},
       URL = {https://doi.org/10.1515/9781400882526},
}

@article {Bergeron-Sottile,
    AUTHOR = {Bergeron, Nantel and Sottile, Frank},
     TITLE = {Schubert polynomials, the {B}ruhat order, and the geometry of
              flag manifolds},
   JOURNAL = {Duke Math. J.},
  FJOURNAL = {Duke Mathematical Journal},
    VOLUME = {95},
      YEAR = {1998},
    NUMBER = {2},
     PAGES = {373--423},
      ISSN = {0012-7094,1547-7398},
   MRCLASS = {05E15 (05E05 14M15 14N15)},
  MRNUMBER = {1652021},
MRREVIEWER = {Witold\ Kra\'skiewicz},
       DOI = {10.1215/S0012-7094-98-09511-4},
       URL = {https://doi.org/10.1215/S0012-7094-98-09511-4},
}

@article {Sturmfels-White,
    AUTHOR = {Sturmfels, Bernd and White, Neil},
     TITLE = {Gr\"obner bases and invariant theory},
   JOURNAL = {Adv. Math.},
  FJOURNAL = {Advances in Mathematics},
    VOLUME = {76},
      YEAR = {1989},
    NUMBER = {2},
     PAGES = {245--259},
      ISSN = {0001-8708,1090-2082},
   MRCLASS = {13-04 (15A75)},
  MRNUMBER = {1013672},
MRREVIEWER = {Ralf\ Fr\"oberg},
       DOI = {10.1016/0001-8708(89)90053-4},
       URL = {https://doi.org/10.1016/0001-8708(89)90053-4},
}

@incollection {Hibi,
    AUTHOR = {Hibi, Takayuki},
     TITLE = {Distributive lattices, affine semigroup rings and algebras
              with straightening laws},
 BOOKTITLE = {Commutative algebra and combinatorics ({K}yoto, 1985)},
    SERIES = {Adv. Stud. Pure Math.},
    VOLUME = {11},
     PAGES = {93--109},
 PUBLISHER = {North-Holland, Amsterdam},
      YEAR = {1987},
      ISBN = {0-444-70314-4},
   MRCLASS = {13H10 (06D99)},
  MRNUMBER = {951198},
MRREVIEWER = {Keiichi\ Watanabe},
       DOI = {10.2969/aspm/01110093},
       URL = {https://doi.org/10.2969/aspm/01110093},
}

@book {Bjorner-Brenti,
    AUTHOR = {Bj\"orner, Anders and Brenti, Francesco},
     TITLE = {Combinatorics of {C}oxeter groups},
    SERIES = {Graduate Texts in Mathematics},
    VOLUME = {231},
 PUBLISHER = {Springer, New York},
      YEAR = {2005},
     PAGES = {xiv+363},
      ISBN = {978-3540-442387; 3-540-44238-3},
   MRCLASS = {05-01 (05E15 20F55)},
  MRNUMBER = {2133266},
MRREVIEWER = {Jian-yi\ Shi},
}

@article {deConcini-Eisenbud-Procesi,
    AUTHOR = {de Concini, C. and Eisenbud, David and Procesi, C.},
     TITLE = {Young diagrams and determinantal varieties},
   JOURNAL = {Invent. Math.},
  FJOURNAL = {Inventiones Mathematicae},
    VOLUME = {56},
      YEAR = {1980},
    NUMBER = {2},
     PAGES = {129--165},
      ISSN = {0020-9910,1432-1297},
   MRCLASS = {14M12 (14L30 15A72 20C30)},
  MRNUMBER = {558865},
MRREVIEWER = {Vladimir\ L.\ Popov},
       DOI = {10.1007/BF01392548},
       URL = {https://doi.org/10.1007/BF01392548},
}

@article {Gonciulea-Lakshmibai, 
    AUTHOR = {Gonciulea, N. and Lakshmibai, V.},
     TITLE = {Degenerations of flag and {S}chubert varieties to toric
              varieties},
   JOURNAL = {Transform. Groups},
  FJOURNAL = {Transformation Groups},
    VOLUME = {1},
      YEAR = {1996},
    NUMBER = {3},
     PAGES = {215--248},
      ISSN = {1083-4362,1531-586X},
   MRCLASS = {14M15 (14M25)},
  MRNUMBER = {1417711},
MRREVIEWER = {E.\ Aky\i ld\i z},
       DOI = {10.1007/BF02549207},
       URL = {https://doi.org/10.1007/BF02549207},
}

@book {Sturmfels-degen,
    AUTHOR = {Sturmfels, Bernd},
     TITLE = {Gr\"obner bases and convex polytopes},
    SERIES = {University Lecture Series},
    VOLUME = {8},
 PUBLISHER = {American Mathematical Society, Providence, RI},
      YEAR = {1996},
     PAGES = {xii+162},
      ISBN = {0-8218-0487-1},
   MRCLASS = {13P10 (14M25 52B20)},
  MRNUMBER = {1363949},
MRREVIEWER = {P.\ Schenzel},
       DOI = {10.1090/ulect/008},
       URL = {https://doi.org/10.1090/ulect/008},
}

@incollection {Francisco2014ASO,
    AUTHOR = {Francisco, Christopher A. and Mermin, Jeffrey and Schweig,
              Jay},
     TITLE = {A survey of {S}tanley-{R}eisner theory},
 BOOKTITLE = {Connections between algebra, combinatorics, and geometry},
    SERIES = {Springer Proc. Math. Stat.},
    VOLUME = {76},
     PAGES = {209--234},
 PUBLISHER = {Springer, New York},
      YEAR = {2014},
      ISBN = {978-1-4939-0625-3; 978-1-4939-0626-0},
   MRCLASS = {13F55 (05E45)},
  MRNUMBER = {3213521},
MRREVIEWER = {Satoshi\ Murai},
       DOI = {10.1007/978-1-4939-0626-0\_5},
       URL = {https://doi.org/10.1007/978-1-4939-0626-0_5},
}

@incollection {Alexandersson-Alhajjar,
    AUTHOR = {Alexandersson, Per and Alhajjar, Elie},
     TITLE = {Ehrhart positivity and {D}emazure characters},
 BOOKTITLE = {Algebraic and geometric combinatorics on lattice polytopes},
     PAGES = {56--71},
 PUBLISHER = {World Sci. Publ., Hackensack, NJ},
      YEAR = {2019},
      ISBN = {978-981-120-047-2},
   MRCLASS = {52B20},
  MRNUMBER = {3971685},
}

@article{STANLEY197857,
title = {Hilbert functions of graded algebras},
journal = {Advances in Mathematics},
volume = {28},
number = {1},
pages = {57-83},
year = {1978},
issn = {0001-8708},
doi = {https://doi.org/10.1016/0001-8708(78)90045-2},
url = {https://www.sciencedirect.com/science/article/pii/0001870878900452},
author = {Richard P Stanley}
}

@misc{elizalde2025symmetryascentdescentdistributions,
      title={Symmetry of ascent and descent distributions on rectangular and staircase tableaux}, 
      author={Sergi Elizalde},
      year={2025},
      eprint={2501.07573},
      archivePrefix={arXiv},
      primaryClass={math.CO},
      url={https://arxiv.org/abs/2501.07573}, 
}

@misc{postnikovpositroids, 
title ={Total positivity, {G}rassmannians, and networks}, 
author = {Alexander Postnikov},
url = { https://math.mit.edu/∼apost/papers/tpgrass.pdf} 
}

@article {Speyer-Williams-Postnikov,
    AUTHOR = {Postnikov, Alexander and Speyer, David and Williams, Lauren},
     TITLE = {Matching polytopes, toric geometry, and the totally
              non-negative {G}rassmannian},
   JOURNAL = {J. Algebraic Combin.},
  FJOURNAL = {Journal of Algebraic Combinatorics. An International Journal},
    VOLUME = {30},
      YEAR = {2009},
    NUMBER = {2},
     PAGES = {173--191},
      ISSN = {0925-9899,1572-9192},
   MRCLASS = {20G20 (05B35 13F60 14M25 52B70)},
  MRNUMBER = {2525057},
MRREVIEWER = {T.\ Oda},
       DOI = {10.1007/s10801-008-0160-1},
       URL = {https://doi.org/10.1007/s10801-008-0160-1},
}

@article{REINER2005247,
title = {On the {C}harney–{D}avis and {N}eggers–{S}tanley conjectures},
journal = {Journal of Combinatorial Theory, Series A},
volume = {109},
number = {2},
pages = {247-280},
year = {2005},
issn = {0097-3165},
doi = {https://doi.org/10.1016/j.jcta.2004.09.003},
url = {https://www.sciencedirect.com/science/article/pii/S0097316504001360},
author = {Victor Reiner and Volkmar Welker}
}

@article {Stanley-twoposetpolytopes,
    AUTHOR = {Stanley, Richard P.},
     TITLE = {Two poset polytopes},
   JOURNAL = {Discrete Comput. Geom.},
  FJOURNAL = {Discrete \& Computational Geometry. An International Journal
              of Mathematics and Computer Science},
    VOLUME = {1},
      YEAR = {1986},
    NUMBER = {1},
     PAGES = {9--23},
      ISSN = {0179-5376,1432-0444},
   MRCLASS = {52A25 (52A40)},
  MRNUMBER = {824105},
MRREVIEWER = {P.\ McMullen},
       DOI = {10.1007/BF02187680},
       URL = {https://doi.org/10.1007/BF02187680},
}

@book {bruns2009polytopes,
    AUTHOR = {Bruns, Winfried and Gubeladze, Joseph},
     TITLE = {Polytopes, rings, and {$K$}-theory},
    SERIES = {Springer Monographs in Mathematics},
 PUBLISHER = {Springer, Dordrecht},
      YEAR = {2009},
     PAGES = {xiv+461},
      ISBN = {978-0-387-76355-2},
   MRCLASS = {19-02 (11H06 13F45 14M25 52-01 52B20)},
  MRNUMBER = {2508056},
MRREVIEWER = {T.\ Oda},
       DOI = {10.1007/b105283},
       URL = {https://doi.org/10.1007/b105283},
}

\end{document}